\newtheorem{theorem}{Theorem}[section]
\newtheorem{prop}{Proposition}[section]
\newtheorem{remark}{Remark}[section]
\newtheorem{lemma}{Lemma}[section]
\newtheorem{cor}{Corollary}[section]
\newtheorem*{assL}{Assumptions L}
\newtheorem*{assM}{Assumptions M}
\newtheorem*{assP}{Assumption P}
\numberwithin{equation}{section}
\newcommand{\N}{\mathbb{N}}
\newcommand{\R}{\mathbb{R}}
\newcommand{\Prob}{\mathbb{P}}
\newcommand{\abs}[1]{\left\vert #1 \right\vert}
\newcommand{\norm}[1]{\left\| #1 \right\|}
\definecolor{amarelo}{HTML}{F4D166}
\definecolor{laranxa}{HTML}{EB6349}
\definecolor{vermello}{HTML}{B71D3E}
\title{Granulometric Smoothing on Manifolds}
\author[1]{Diego Bolón}
\author[2]{Rosa M. Casais} 
\author[2]{Alberto Rodríguez-Casal}
\affil[1]{ECARES and Department of Mathematics, Université libre de Bruxelles, Brussels, 1050, Belgium} 
\affil[2]{Galician Center for Mathematical Research and Technology, CITMAga, Universidade de Santiago de Compostela, Santiago de Compostela, 15782, Spain} 
\date{}
\begin{document}
\maketitle

\begin{abstract}
	Given a random sample from a density function supported on a manifold $M$, a new method for the estimating highest density regions of the underlying population is introduced. The new proposal is based on the empirical version of the opening operator from mathematical morphology combined with a preliminary estimator of the density function. This results in an estimator that is easy-to-compute since it simply consists of a list of centers and a radius $r$ that are adequately selected from the data. The new estimator is shown to be consistent and its convergence rates in terms of the Hausdorff distance are provided. All consistency results are established uniformly on the level of the set and for any Riemannian manifold $M$ satisfying mild assumptions. The applicability of the procedure is shown by some illustrative examples.
\end{abstract}

\section{Introduction}
\label{sec:intro}


In data analysis, one of the first questions that the statistician has to answer is: ``Where are the data concentrated?''
Some of the statistical techniques that provide an answer to the previous question include density estimation \citep{Parzen1962, Cao1994, Wand1994, Loader1996}, mode hunting \citep{Silverman1981, Hartigan1985, Minnotte1993, Chaudhuri1999, Burman2009, AmeijeirasAlonso2019}, support estimation \citep{Devroye1980, Mammen1995, Cuevas1997, Klemelae2004, Biau2008} or cluster analysis \citep{Hartigan1975, Cuevas2001, Chacon2013, Wierzchon2018}.
For most of these exploratory techniques, highest density regions (HDRs in short) play a relevant role.
HDRs are the sets where the density function of the data exceeds a given level. More precisely, if $f$ is the population density function, HDRs are the sets such that
\begin{equation}
	\label{eq:HDR}
	L (\lambda) = f^{-1} \big( [\lambda, + \infty) \big), \quad \lambda > 0.
\end{equation}
Modes can be seen as limiting cases of HDRs where the value of $\lambda$ converges to the global maximum of the density function. In practice, the specific value of the level $\lambda$ is often irrelevant and the interest lies in recovering the HDR that satisfies a given probability content. That is, given any $\gamma \in (0, 1)$, the objective is to estimate the set $L (\lambda_{\gamma})$, where $\lambda_{\gamma} > 0$ is such that
\begin{equation}
	\label{eq:HDRgamma}
	\Prob \big[ L (\lambda_{\gamma}) \big] = 1 - \gamma.
\end{equation}

Values of $\gamma$ close to $0$ lead to large HDRs that approach to the population support (since their probability content is close to one), and, therefore, their corresponding level $\lambda_{\gamma}$ is close to zero. Estimation of these sets provides a natural way of detecting outliers \citep{Devroye1980, Baillo2001}. Reciprocally, if $\gamma$ is close to $1$, the corresponding level $\lambda_{\gamma}$ is large and $L (\lambda_{\gamma})$ converges to the largest modes of the population. This connection allows to investigate multimodality via HDR estimation, see \cite{Muller1991}, \cite{Cuevas2001}, and \cite{Rinaldo2010}. 

HDR estimation provides an easy-to-understand way to visualize a density in a general setting. Indeed, there are several practical problems regarding non-Euclidean data where HDR estimators may provide useful information. One example, which will be analyzed in detail, is the study of long-period comet orbits, a task previously addressed by \cite{Jupp2003}. Long-period comets (comets with periods larger than 200 years) are assumed to originate in a celestial formation called Oort cloud. The Oort cloud \citep{Weissman1990} is a theoretical cloud of planetesimals supposed to surround the Solar System at very large distances. It is though to be sphere shaped, with the asteroids roughly uniformly distributed within it. The Oort cloud is influenced by galactic tidal forces and gravity perturbations due to passing stars, bringing some of these planetesimals close enough to the sun to become long-period comets (see \citealp{Heisler1986}). This formation process explains why the orbits of these comets are nearly uniformly distributed, while short-period comets are highly concentrated around the ecliptic plane.

The uniformity of orbits can be assessed by testing the uniformity of their unit normal vectors. This way, each orbit orientation corresponds to a point on the 2-dimensional sphere $\mathbb{S}^2 = \{ x \in \R^3 \colon \norm{x} = 1\}$, and one can address this problem by testing the uniformity of a given sample in $\mathbb{S}^2$. Figure~\ref{fig:comets} shows the normal unit vectors of all the long-period comets detected up to 30th of May 2022. A more detailed explanation of these data can be found in Section~\ref{sec:comets}.
\begin{figure}[!b]
	\centering
	\includegraphics[width = 2in]{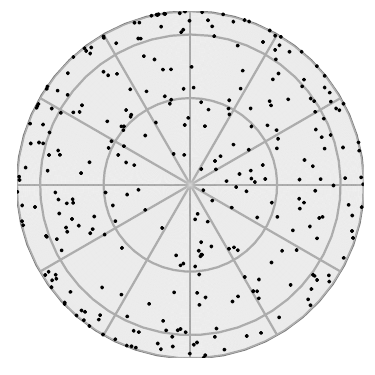}
	\hspace{0.5in}
	\includegraphics[width = 2in]{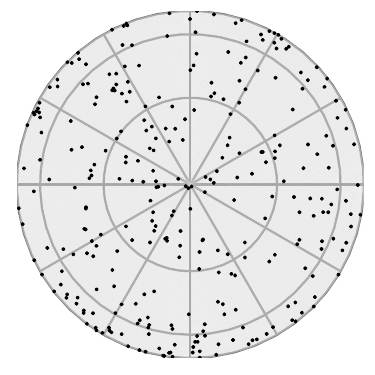}
	\caption{\label{fig:comets} Comet orbits data represented on the sphere using orthogonal projections centered on the north pole (left) and south pole (right).}
\end{figure}

The problem of testing the uniformity of the long-period comets has been a widely considered example in the statistical literature, and some authors like \cite{GarciaPortugues2023} noticed that their unit vectors do not follow a uniform spherical distribution. This is suggested to be caused by an observational bias (see \citealp{Jupp2003}). When astronomers look for objects in the space, they do not search uniformly in all the celestial sphere. Actually, they tend to focus on where it is more likely to find something, that is, the ecliptic plane. Since most of the observational effort is concentrated around it, long-period comets near the ecliptic plane are more likely to be detected, and therefore, they are overrepresented in the observed sample. 

One possible way to check this observational bias hypothesis is via HDR estimation. If the HDRs of the observed normal unit vectors are located around the north and south poles of the sphere, then this will mean that comets near the ecliptic are overrepresented in the registered data, which agrees with the observational bias theory. 

Furthermore, this analysis cannot be performed ignoring the geometry of the data support. See, for example, Figure~\ref{fig:mercomets}, where the orientations of the comet orbits are represented in spherical coordinates, that is, each unit vector $v \in \mathbb{S}^2$ is represented as the pair $(\varphi, \psi) \in [-\pi, \pi] \times [-\pi/2, \pi/2]$ such that $v = \big( \cos(\varphi) \cos(\psi), \sin(\varphi) \cos(\psi), \sin(\psi) \big)'$. This representation is known to distort distances in the sphere, especially in the regions near the north and south poles. Since these areas are precisely the areas of interest in this situation, any conclusion based on such a ``flat'' representation of the data will be misleading, and an HDR technique that takes into account the natural curvature and distance of the sphere should be employed.

Of course, this problematic is not specific to the sphere and data supported on any non-flat Riemannian manifold will show the same behavior. This kind of datasets are a current trend in modern science, with practical examples on the sphere, as previously illustrated, but also on the torus (\citealp{DiMarzio2011}, \citealp{AmeijeirasAlonso2020}), cylinder (\citealp{Jammalamadaka2006}, \citealp{AlonsoPena2023a}) and polysphere (\citealp{GarciaPortugues2023a}). Therefore, a global theory of HDR estimation is required, establishing consistency results for any Riemannian manifold $M$.

\begin{figure}[t]
	\centering
	\includegraphics[width = 3in]{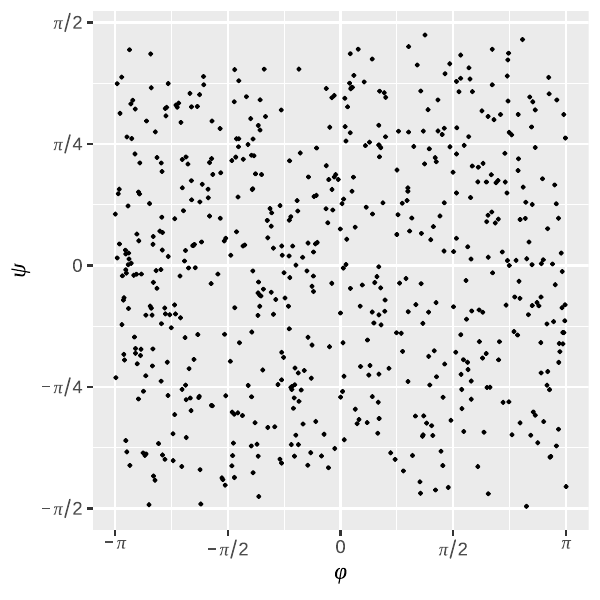}
	\caption{\label{fig:mercomets} Comet orbits data represented in spherical coordinates. $\varphi$ is the longitude, whereas $\psi$ is the latitude. This representation distort areas and distances: points with $\psi$ close to $\pm \pi/2$ appear further apart than they really are.}
\end{figure}

Due to its practical utility, HDR estimation in the Euclidean framework has been widely addressed in the literature. Some references in this topic include \cite{Polonik1995}, \cite{Tsybakov1997}, \cite{Cadre2006} and \cite{RodriguezCasal2022}. However, HDR estimation in other settings has not been considered until very recently. \cite{Cuevas2006} prove some theoretical results concerning the consistency and convergence rates of level set estimators in metric spaces (for a general function, not necessarily a density). \cite{SaavedraNieves2021} provide an estimator for HDRs on the $d$-dimensional sphere; while \cite{Jiang2017} and \cite{Cholaquidis2022} introduce two different HDR estimators for compact manifolds.  

All the contributions on HDR estimation for manifold data cited above present strengths and shortcomings. For instance, \cite{Jiang2017} proposes to recover $L (\lambda)$ with a subset of the sample that is selected via the DBSCAM algorithm \citep{Ester1996}. This results in an estimator that is easy to compute in practice, but it yields a discrete set as an estimator, which may be a handicap for some applications. Sometimes one is not interested in the HDR itself, but in some of its characteristics, like its volume, its boundary, whether it is a convex set or not, etc; and the particular form of this estimator makes it impossible to compute those directly form it. The other three proposals \citep{Cuevas2006, SaavedraNieves2021, Cholaquidis2022} follow a plug-in approach. That is, they propose an estimator for the underlying density $f$, say $f_n$, and then the HDR is reconstructed with the set $f_n^{-1} \big( [\lambda, + \infty) \big)$. The main advantage of this estimation technique is that it is theoretically simple to analyze, since its asymptotic properties can be related to those of $f_n$ as an estimator of $f$. However, this approach has some issues in practice. 
For example, visualizing this estimator requires evaluating $f_n$ in a large grid of values, which might be computationally expensive. In addition, and similarly to the proposal by \cite{Jiang2017}, if our interest lies in some specific characteristic of the HDR, like its boundary or the number of connected components, it is not straightforward to compute those for $f_n^{-1} \big( [\lambda, + \infty) \big)$. Finally, both approaches ignore the inner geometry of the problem: if one knows that the HDRs of $f$ fulfill some geometric property (e.g.~some shape condition), there is no guarantee that the plug-in or the DBSCAM estimator will satisfy it too.

In the euclidean context geometrically based/conditioned HDR estimators have been proposed in the literature. For instance, an HDR estimator of a convex HDR should intuitively be convex too. To do that, one possibility is to maximize the empirical estimator of the excess mass within a given family of sets, see \cite{Muller1991} and \cite{Polonik1995}. Of course, this approach guarantees that the HDR estimator satisfies the required geometric restriction. Nevertheless, for a general family of sets, the optimization problem is not feasible.
This fact hampers the use of this estimation technique for most practical purposes.

To overcome this issue, an easy-to-compute new estimator for HDRs on Riemannian manifolds is introduced in this paper. The new approach combines an estimator of the underlying density with some \textit{a priori} geometric information, which is appropriately included in the estimation method to simplify the final form of the estimator. More specifically, the new proposal can be viewed as an extension of the euclidean HDRs estimation technique, called granulometric smoothing, introduced by \cite{Walther1997} to Riemannian manifolds. It is assumed that the population HDR satisfies some smoothness assumptions which can be established in terms of some simple geometrical operations from mathematical morphology, see \cite{Serra1982}. Thanks to this assumption, the HDRs can be recovered with a finite union of balls of radius $r_n$, where this radius is a parameter that has to be adequately chosen.

Of course, the selection of this radius $r_n$ is a crucial topic for the consistency of this estimation technique. Surprisingly, the seminal work by \cite{Walther1997} does not provide any data-driven method to choose $r_n$. In fact, the problem of selecting $r_n$ from the data remained unaddressed until very recently, and the only contribution in this sense is given by \cite{RodriguezCasal2022}. However, their selector of $r_n$ is not suitable manifold data since it relies on a deep connection between this radius and the density function $f$ which remains unclear in the manifold setting. This paper presents a novel approach to this problem and provides an original data-driven selector of~$r_n$. The new selector is based on empirical versions of the geometrical operations from mathematical morphology introduced by \cite{Serra1982}. To the best of authors' knowledge, this idea has not been explored before (even in the Euclidean setting) and allows the construction of a selector of $r_n$ that is simple to compute and works for data supported on a general Riemannian manifold $M$.

The distribution of this paper is as follows. First, Section~\ref{sec:notation} includes the basic notation and definitions that are required throughout the manuscript. Section~\ref{sec:HDRproposal} introduces the new proposal of HDR estimator. In Section~\ref{sec:Lnconsistency}, the consistency of the new estimator is proved and its convergence rates are derived (uniformly in $\lambda$). Section~\ref{sec:HDRgamma} deals with the problem of estimating an HDR when, instead of a level $\lambda$, a probability content $\gamma$ is given. A general procedure for estimating $\lambda_\gamma$ is proposed and analyzed. In Section~\ref{sec:rn}, the choice of the parameter $r_n$ is discussed. Section~\ref{sec:realdataHDR} illustrates the proposed HDR estimator with two real-data examples. Finally, Section~\ref{sec:gurus} provides a final discussion and future work.

All proofs are collected in the two-part Supplementary Material. Supplement~A \citep{suppA} contains all the proofs related to the consistency and convergence rates of the proposed HDR estimator, along with some auxiliary results that are needed along the proofs. Supplement~B \citep{suppB} investigates the assumptions required on the manifold $M$ and the HDRs of the population to ensure the consistency of the estimator. Particularly, in Supplement~B, these assumptions are shown to hold for any manifold with non-negative and bounded sectional curvature and any differentiable density function, supporting the generality of our method.

\section{Preliminaries and notation}
\label{sec:notation}

The HDR estimator proposed by \cite{Walther1997} is based on Minkowski operations. So, extending this estimation method to Riemannian manifolds requires the generalization of these operations to the manifold setting. 
This section provides an appropriate generalization of these concepts jointly with some basic notation that will be used throughout this manuscript.

\subsection{Riemannian manifolds and density functions}
Let $M$ be a connected Riemannian manifold of dimension $d \in \N$. Given a point $x \in M$, we denote by $T_x M$ the tangent space to $M$ at $x$. Given any $v \in T_x M$, its norm (or length) is denoted by $\norm{v}_M$ (see \citealp[page 12]{Lee2018}). We also use $\norm{v}$ when there is no need to specify the manifold.

We denote by $\mathrm{Vol}_M$ the Riemann-Lebesgue volume measure of $M$ (see \citealp{Amann2009}, Ch.~XII). $\mathrm{Vol}_M$ is a Radon measure over the \hbox{$\sigma$-algebra} of Borel sets of $M$, and it allows one to extend the concept of density function to the manifold setting.

Let $(\Omega, \mathcal{A}, \Prob)$ be a probability space and $X: \Omega \rightarrow M$ a Borel-measurable function. $X$ is called a random point of $M$. $X$ has density function $f$, where $f: M \rightarrow [0, + \infty)$ is a Borel-measurable function, if
\begin{equation}
	\label{eq:densitydef}
	\Prob (X \in A) = \int_A f d \mathrm{Vol}_M 
\end{equation}
for all Borel sets $A \subset M$. Actually, the Radon--Nikodym theorem (see \citealp{Billingsley1995}, Th.~32.2) ensures that $X$ has a density function if and only if the probability measure induced by $X$ is absolutely continuous with respect to $\mathrm{Vol}_M$, and the density function $f$ coincides (almost everywhere) with the Radon--Nikodym derivative.

\subsection{Distance and topology}
Let $\mathcal{d} (\cdot, \cdot)$ be the geodesic distance of $M$ (note that the geodesic distance function is well defined since $M$ is connected, see \citealp{Lee2012}, Ch.~13). The open and closed ball with center $x \in M$ and radius $r > 0$ will be denoted as $B_r (x)$ and $B_r [x]$, respectively. That is, given a point $x \in M$ and a radius $r > 0$, define
\begin{equation*}
	B_r (x) = \left\lbrace y \in M \colon \mathcal{d} (x, y) < r \right\rbrace, \qquad
	B_r [x] = \left\lbrace y \in M \colon \mathcal{d}
	(x, y) \leq r \right\rbrace.
\end{equation*}
Given a subset $A \subset M$, the interior, closure and boundary of $A$ are denoted as $\mathring{A}$, $\overline{A}$ and $\partial A$, respectively.

\subsection{Minkowski operations}
Given a subset $A \subset M$ and a radius $r > 0$, the Minkowski sum and difference are defined as
\begin{align}
	\label{eq:Minkplus}
	A \oplus r B &= \bigcup_{x \in A \phantom{^c}} B_r [x] = \big\lbrace x \in M \colon B_r [x] \not\subset A^c \big\rbrace,
	\\
	\label{eq:Minkminus}
	A \ominus r B &= \bigcap_{x \in A^c} \big( B_r [x] \big)^c = \big\lbrace x \in M \colon B_r [x] \subset A \big\rbrace;
\end{align}
where $A^c$ is the complement of the set $A$. The notation of the Minkowski operations is directly taken from \cite{Walther1997}. Note that there is a straightforward relation between these two operations via the De Morgan's laws:
\begin{equation}
	\label{eq:DeMorgan}
	(A \ominus r B)^c = A^c \oplus r B.
\end{equation}

\subsection{Distance between two sets and between a set and a point}
\label{sec:distset}
Let $A_1 , A_2 \subset M$ be two non empty subsets of $M$. The point distance can be used to define a distance between two sets, as
\begin{equation*}
	\mathcal{d} (A_1, A_2) = \inf \left\lbrace \mathcal{d} (a_1, a_2) \colon a_1 \in A_1, a_2 \in A_2 \right\rbrace.
\end{equation*}
One can define the distance between a point $x \in M$ and a non empty set $A_1 \subset M$ using the previous definition:
\begin{equation*}
	\mathcal{d} (x, A_1) = \mathcal{d} \big( \{ x \}, A_1 \big) = \inf \{ \mathcal{d} (x, a_1) \colon a_1 \in A_1 \}.
\end{equation*}

\subsection{Hausdorff distance}
The distance between sets introduced in Section~\ref{sec:distset} is indeed useful, but it does not provide a metric space structure to the set
\begin{equation*}
	F(M) =  \big\lbrace A \subset M \colon A \textit{ is compact and non empty} \big\rbrace.
\end{equation*}
Since our aim is to make inference with the subsets of $M$, a metric space structure on $F(M)$ is required. We must then introduce a new distance between two sets. Given two compact and non empty subsets of $M$, $A_1 , A_2 \in F(M)$, the Hausdorff distance between $A_1$ and $A_2$ is given by
\begin{equation*}
	\mathcal{d}_H (A_1, A_2) = \inf \big\lbrace r > 0 \colon A_1 \subset A_2 \oplus r B \textit{ and } A_2 \subset A_1 \oplus r B \big\rbrace.
\end{equation*}

\subsection{Eventually almost sure boundedness:}
Let $(\Omega, \mathcal{A}, \Prob)$ a probability space. Given a sequence of events $\{ A_n \}_{n \in \N} \subset \mathcal{A}$, define
\begin{equation}
	\liminf_{n \in \N} A_n = \bigcup_{n \in \N} \bigcap_{k \geq n} A_k,
	\qquad
	\limsup_{n \in \N} A_n = \bigcap_{n \in \N} \bigcup_{k \geq n} A_k.
\end{equation}
$A_n$ occur eventually almost sure or eventually almost surely (in short, e.a.s.) if
\begin{equation*}
	\Prob \left( \liminf_{n \in \N} A_n \right) = 1.
\end{equation*}
Finally, given two sequences of random variables $X_n$ and $Y_n$, the notation
\begin{equation*}
	X_n = O_{\textrm{a.s.}} (Y_n)
\end{equation*}
means that there exists a constant $C > 0$ such that the events
\begin{equation*}
	A_n = \big\lbrace \abs{X_n} \leq C \abs{Y_n} \big\rbrace
\end{equation*}
occur e.a.s. Notice that $X_n = O_{\textrm{a.s.}} (Y_n)$ implies $X_n = O_{\textrm{P}} (Y_n)$ by Fatou's Lemma.

\section{The new HDR estimator}
\label{sec:HDRproposal}

Let $X$ be a random point on a manifold $M$ with density function $f$. The main objective of this section is to introduce an estimator of the HDRs of $f$. That is, given a constant $\lambda$ such that $0 < \lambda < \sup f$, the aim is to recover the set
\begin{equation}
	\label{eq:Llambda}
	L (\lambda) = f^{-1} \big( [\lambda, + \infty) \big),
\end{equation}
from a finite i.i.d.~sample of $X$.

The new HDR estimator follows from a hybrid approach, combining a pilot estimator of the density function with some shape conditions on the true HDRs of the population. These shape conditions are based on the assumptions made by \cite{Walther1997} for HDR estimation on Euclidean spaces, that are strongly related to the notion of ``granulometry'' introduced by \cite{Matheron1975}. Given a subset $A\subset M$ and a radius $r > 0$, define the functional
\begin{equation}
	\label{eq:granulometry}
	\Psi_r (A)  = (A \ominus rB) \oplus rB = \bigcup_{B_r [x] \subset A} B_r [x].
\end{equation}

The shape conditions required by the new estimation method can be expressed in terms of this functional $\Psi_r$. Precisely, the HDRs are assumed to be fixed-points of $\Psi_r$, i.e., it is assumed that there exists a positive constant $r$ such that
\begin{equation}
	\label{eq:psirfix}
	L (\lambda) = \Psi_r \big[ L(\lambda) \big] =
	\bigcup_{B_r [x] \subset L(\lambda) } B_r [x].
\end{equation}
Hence, the HDRs can be written as the union of closed balls of a positive radius $r$. However, one readily sees that this radius $r$ is not unique. If Equation~\eqref{eq:psirfix} holds for $r_0 > 0$, then it also holds for any $r \in (0, r_0]$ (see Proposition~A.1.2d in Supplementary A, \citealp{suppA}). To avoid ambiguity, we consider the supremum of all solutions $r$ of Equation~\eqref{eq:psirfix}. That is, given $\lambda \in (0, \sup f)$, define
\begin{equation}
	\label{eq:defr}
	r_0(\lambda)
	=
	\sup \Big\lbrace r > 0 \colon L (\lambda) = \Psi_r \big[ L(\lambda) \big] \Big\rbrace
	,
\end{equation}
where the supremum of the empty set is taken as $0$. If $r_0 (\lambda) > 0$, then the set $L (\lambda)$ can be expressed as a union of balls for any radius $r$ between $0$ and $r_0 (\lambda)$. The proposed estimation method takes advantage of this fact recovering $L (\lambda)$ as a union of balls for some $r \in (0, r_0 (\lambda) )$. The idea is to ``inflate'' the high density points of the sample, distinguishing high and low density points using a density function estimator. In practice, the radius $r_0 (\lambda)$ is unknown, so it is also replaced by an estimator.

Let $\mathcal{X}_n = \{ X_1, \ldots, X_n \}$ be an i.i.d.~sample from $X$. The estimation technique consists of the following steps.
\begin{enumerate}[label = \textit{Step \arabic*:}, leftmargin = 0.5in]
	\item First, consider an estimator of the underlying density, say $f_n$, and a choice for the radius, namely $r_n (\lambda)$.
	\item Divide the sample $\mathcal{X}_n$ into low and high density points using $f_n$:
	\begin{equation*}
		\mathcal{X}^{+}_n ( \lambda ) = \{ X_i \colon f_n (X_i) \geq \lambda \}, \qquad \mathcal{X}^{-}_n ( \lambda )= \{ X_i \colon f_n (X_i) < \lambda \}.
	\end{equation*}
	\item The proposed estimator of $L (\lambda)$ is
	\begin{align}
		L_n \big( \lambda \big) 
		&= \Big[ \mathcal{X}^{+}_n ( \lambda ) \cap \big( \mathcal{X}^{-}_n ( \lambda ) \oplus r_n (\lambda) B \big)^c \Big] \oplus r_n (\lambda) B
		\nonumber
		\\
		&= \bigcup_{
			\begin{scriptsize}
				\begin{array}{c}
					X_i \in \mathcal{X}^{+}_n ( \lambda ) \\
					B_{r_n (\lambda)} [X_i] \cap \mathcal{X}^{-}_n ( \lambda ) = \emptyset
				\end{array}
			\end{scriptsize}
		} B_{r_n(\lambda)} [X_i] .
		\label{eq:setest}
	\end{align}
\end{enumerate}

The definition of the Minkowski difference in~\eqref{eq:Minkminus} yields $L ( \lambda ) \ominus r B \subset L(\lambda)$ for all $r > 0$. Taking this into account jointly with~\eqref{eq:DeMorgan}, we can rewrite the functional $\Psi_r$ in~\eqref{eq:granulometry} as
\begin{equation*}
	\Psi_r (A) = \big[ A \cap ( A^c \oplus r B )^c \big] \oplus r B.
\end{equation*}
Hence, if $r_n (\lambda) \in (0, r_0 (\lambda))$, then $L (\lambda) = \Psi_{r_n (\lambda)} ( L (\lambda))$ and so
\begin{equation}
	\label{eq:conditionLlamda}
	L(\lambda)
	=
	\Big[ L ( \lambda ) \cap \big( L ( \lambda )^c \oplus r_n(\lambda) B \big)^c \Big] \oplus r_n (\lambda) B
	.
\end{equation}
If $f_n$ is a consistent estimator of $f$, one expects that $\mathcal{X}_n^{+} (\lambda) \approx L(\lambda)$ and $\mathcal{X}_n^{-} (\lambda) \approx L(\lambda)^c$, respectively. Thus, the proposed estimator in~\eqref{eq:setest} is just a plug-in estimator based on formula~\eqref{eq:conditionLlamda}.
Notice that $r_n (\lambda)$ is not required to converge to $r_0 (\lambda)$ and one only needs that $r_n (\lambda) \in (0, r_0 (\lambda))$. In that sense, the role of $r_n (\lambda)$ is similar to that of a smoothing parameter: consistency is achieved if $r_n (\lambda)$ is small enough. For this reason, most references in the literature avoid referring to $r_n (\lambda)$ as an estimator of $r_0 (\lambda)$ and they rather say that $r_n (\lambda)$ is a ``choice'' or ``selection'' of the radius function.

Figure~\ref{fig:exampleLn} illustrates the behavior in practice of this HDR estimator. The three plots show $L_n (\lambda)$ with $\lambda = 0.45$ for a mixture of two different von Mises-Fisher distributions. The mixture consists of the distributions $\mathrm{M}_2 (\mu_1, 10)$ and $\mathrm{M}_2 (\mu_2, 10)$ with equal weights, where
\begin{equation}
	\label{eq:mus}
	\begin{array}{l}
		\mu_1 = \big( \cos(-\pi/6),\sin(-\pi/6),0 \big)',\\
		\mu_2 = \big( \cos(\pi/6) \cos(\pi/6), \cos(\pi/6) \sin(\pi/6), \sin(\pi/6) \big)'. 
	\end{array}
\end{equation}
The sample size is different in each plot, with $n = 400$ in Figure~\ref{fig:exampleLn400}, $n = 800$ in Figure~\ref{fig:exampleLn800}, and $n = 1600$ in Figure~\ref{fig:exampleLn1600}. In all cases, $r_n (\lambda) = 0.05$ and $f_n$ is the kernel density estimator with von Mises-Fisher kernel (\citealp{GarciaPortugues2013}) and concentration parameter chosen by cross-validation. For the smallest sample size, the estimator $L_n (\lambda)$ (red line) is very different from the true HDR $L(\lambda)$ (blue line), with an irregular boundary and empty spaces within it. As $n$ grows, $\mathcal{X}^{+}_n ( \lambda )$ and $\mathcal{X}^{-}_n ( \lambda )$ become denser discretizations of $L(\lambda)$ and $L(\lambda)^c$ and consequently $L_n (\lambda)$ approaches $L(\lambda)$.

\begin{figure}[!t]
	\centering
	\begin{subfigure}{1.7in}
		\centering
		\includegraphics[width = 1.7in]{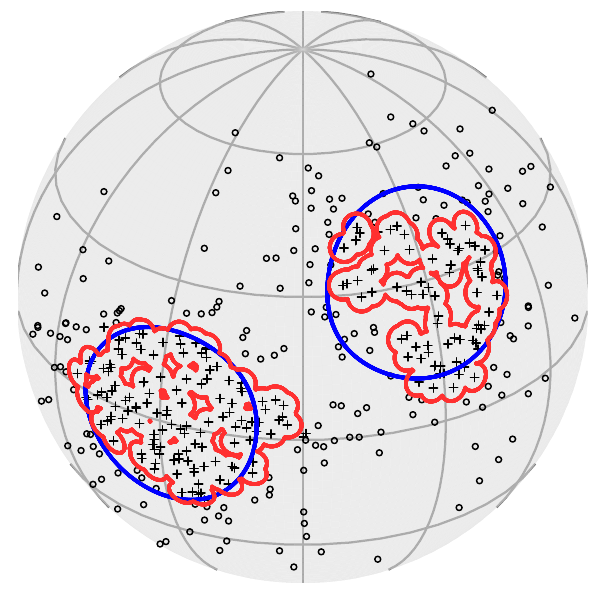}
		\caption{\label{fig:exampleLn400}}
	\end{subfigure}
	\hfill
	\begin{subfigure}{1.7in}
		\centering
		\includegraphics[width = 1.7in]{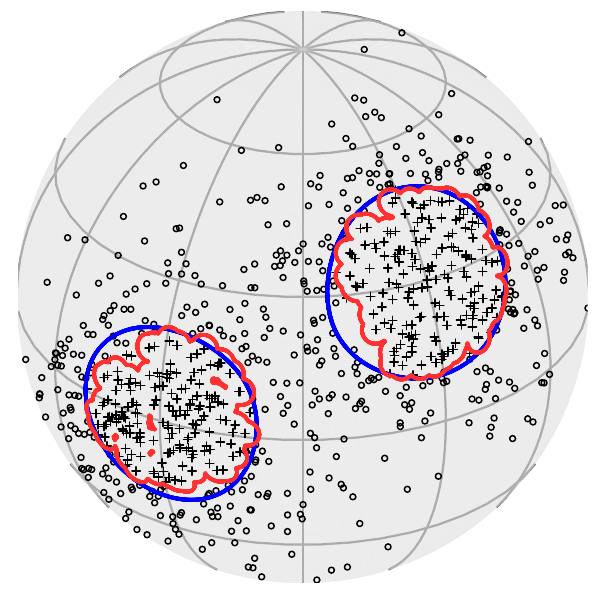}
		\caption{\label{fig:exampleLn800}}
	\end{subfigure}
	\hfill
	\begin{subfigure}{1.7in}
		\centering
		\includegraphics[width = 1.7in]{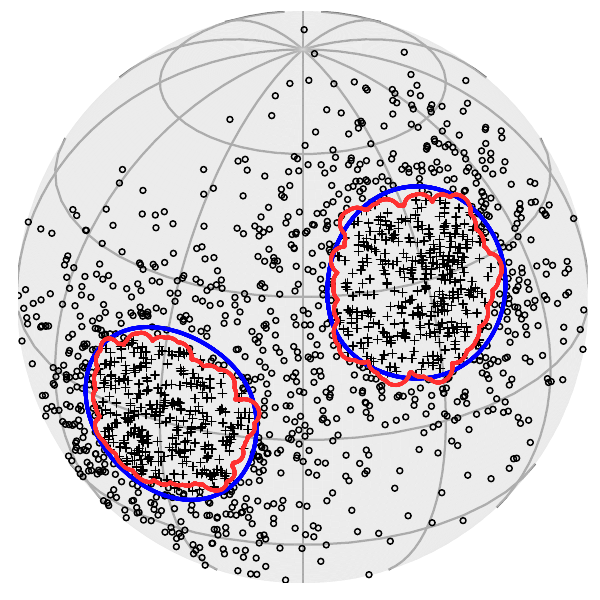}
		\caption{\label{fig:exampleLn1600}}
	\end{subfigure}
	\caption{\label{fig:exampleLn} HDR estimation of a mixture of two von Mises-Fisher distributions. All three graphs show the boundary of the estimator $L_n (\lambda)$ with $\lambda = 0.45$ for a mixture of the distributions $\mathrm{M}_2 (\mu_1, 10)$ and $\mathrm{M}_2 (\mu_2, 10)$ with equal weights (see equation~\eqref{eq:mus} for a definition of $\mu_1$ and $\mu_2$). Three sample sizes are considered, one for each graph: $n = 400$~(a), $n = 800$~(b) and $n = 1600$~(c). The boundary of $L_n (\lambda)$ is plotted in red in all graphs, whereas the boundary of the true HDR $L (\lambda)$ is depicted as a blue line. For reference, the points of the subsamples $\mathcal{X}^{+}_n ( \lambda )$ and $\mathcal{X}^{-}_n ( \lambda )$ are shown in the graphs as `$+$' and `$\circ$', respectively. In all three cases, \mbox{$r_n (\lambda) = 0.05$} and $f_n$ is the kernel density estimator with von Mises-Fisher kernel (\citealp{GarciaPortugues2013}) and concentration parameter chosen via cross-validation.}
\end{figure}

It should be remarked that the estimator $L_n (\lambda)$ does not rely solely on the shape assumption, but also on two auxiliary parameters: an estimator of the density function, $f_n$, and a choice of the radius, $r_n (\lambda)$. There are several proposals for density estimators in the manifold setting. For example, if $M$ is a compact $d$-dimensional submanifold of the Euclidean space $\R^D$ with $d < D$, then \cite{Wu2021} propose the following estimator of~$f$
\begin{equation}
	\label{eq:kdemanifold}
	f_n (x) = \frac{1}{n \mathcal{h}_n^d} \sum_{i = 1}^n \mathcal{K} \bigg( \frac{\norm{x - X_i}_{\R^D}}{\mathcal{h}_n}\bigg),
\end{equation}
where the bandwidth $\mathcal{h}_n$ converges to zero at a suitable rate and $\mathcal{K}: [0, + \infty) \rightarrow \R$ is a kernel function. If $f$ is Hölder continuous and $\mathcal{K}$ satisfies some regularity conditions, \cite{Wu2021} proved the global consistency of such an estimator and computed its convergence rate.

On the contrary, the choice of the radius $r_n (\lambda)$ has received almost no attention in statistical literature, even in the Euclidean setting. In the seminal work by \cite{Walther1997}, the radius is chosen as a real sequence slowly converging to zero. Thus, $r_n (\lambda) \in (0, r_0 (\lambda))$ will eventually hold regardless of the value of $r_0 (\lambda)$. However, \cite{Walther1997} does not provide any data-driven method for choosing the sequence $r_n (\lambda)$ and the statistician may choose $r_n (\lambda)$ with their own criterion. To the best of our knowledge, the only data-driven choice of the radius for Euclidean data is provided by \cite{RodriguezCasal2022}. Their proposal relies on the relation between $r_0 (\lambda)$ and the gradient of $f$, namely $\nabla f$, shown by \cite{Walther1997}. The main advantage of their approach is that it provides a sequence of radii such that $r_n (\lambda) \in (0, r_0 (\lambda))$ with probability one that does not converge to zero. Thanks to this fact, their HDR estimator achieves a slightly faster consistency rate than the original estimator by \cite{Walther1997}. Unfortunately, the relation between $\nabla f$ and $r_0 (\lambda)$ required for applying similar arguments in our context remains unclear in the manifold setting, and therefore this choice of the radius cannot be directly extended to manifold data.

The aforementioned works by \cite{Walther1997} and \cite{RodriguezCasal2022} avoid estimating the true radius $r_0 (\lambda)$. Instead, they just provide a sequence $r_n (\lambda)$ that is guaranteed to satisfy $r_n (\lambda) \in (0, r_0 (\lambda))$, since it is the only condition required for the consistency of $L_n (\lambda)$. This paper follows a different approach and faces directly the problem of estimating $r_0 (\lambda)$. A consistent estimator of this quantity for manifold data is introduced, that is, a sequence of radii $r_n (\lambda)$ such that $r_n (\lambda) \to r_0 (\lambda)$ almost surely, and this estimator is used as a choice for the radius in the HDR estimator $L_n (\lambda)$.

Furthermore, trying to keep the theoretical results as general as possible, this paper does not stick to a particular choice of $f_n$ and $r_n (\lambda)$ for studying the consistency of the HDR estimator $L_n (\lambda)$. In the results along this manuscript both quantities are assumed to fulfill some mild conditions, namely that $r_n (\lambda) \in (0, r_0 (\lambda))$ and $f_n$ is a globally consistent estimator of $f$. Then, the consistency of the estimator $L_n (\lambda)$ is shown, and its consistency rate is derived as a function of the global error of $f_n$. This approach broadens the applicability of our results, since the consistency of $L_n (\lambda)$ is guaranteed for a large variety of choices of $f_n$ and $r_n (\lambda)$.
Besides, if a new estimator of $f$ converging faster than the one defined on \eqref{eq:kdemanifold} is found, then our results will also guarantee that the estimator $L_n (\lambda)$ relying on such a new estimator will converge faster to $L(\lambda)$.

\section{Consistency of the HDR estimator}
\label{sec:Lnconsistency}

In this section, the consistency of the estimator $L_n (\lambda)$ defined at \eqref{eq:setest} is derived. First, Sections~\ref{sec:assM} and~\ref{sec:assA} detail the required conditions for the consistency of $L_n (\lambda)$. Specifically, Section~\ref{sec:assM} describes the manifold setting that is considered throughout this paper, while Section~\ref{sec:assA} details the shape conditions on the HDRs of $f$ that are required for the consistency of the HDR estimator. Finally, in Section~\ref{sec:convergence} the consistency of $L_n (\lambda)$ is shown and its asymptotic convergence rate is derived uniformly on~$\lambda$.

\subsection{Assumptions on the manifold $M$}
\label{sec:assM}

As we have seen in Section~\ref{sec:HDRproposal}, $L_n (\lambda)$ uses the Minkowski operations defined in~\eqref{eq:Minkplus} and~\eqref{eq:Minkminus} to recover the HDRs of the underlying density $f$. To guarantee the good behavior of the Minkowski operations, and therefore the consistency of $L_n (\lambda)$, one needs to impose the following assumptions on the manifold $M$.

\begin{assM}
	
	$M$ is a $d$-dimensional Riemannian manifold verifying:
	
	\begin{enumerate}[label = {(M\arabic*)}]
		\item \label{ass:M1} $M$ is complete and connected.
		\item \label{ass:M2} There exist two constants, $a, \rho > 0$, such that
		\begin{equation*}
			\mathrm{Vol}_{M}  \big( B_{r_1} [x_1] \cap B_{r_2} [x_2] \big) \geq a \min \{ r_1, r_2, \rho \}^{(d-1)/2} \min \{ \varepsilon, \rho \}^{(d+1)/2}.
		\end{equation*}
		for every pair of points $x_1, x_2 \in M$ and radii $r_1, r_2 > 0$ such that $\mathcal{d} (x_1, x_2) < r_1 + r_2$ and every $\varepsilon \leq r_1 + r_2 - \mathcal{d} (x_1, x_2)$.
	\end{enumerate}
	
\end{assM}

The role of Assumption~\ref{ass:M1} is twofold: the connectedness of $M$ ensures that the geodesic distance is well defined, while the completeness guarantees a good behavior of the Minkowski operations and the mapping $\Psi_r$ (see \citealp{suppA}, Section~A.1). Assumption~\ref{ass:M2} is required for showing the consistency of $L_n (\lambda)$. By \eqref{eq:setest}, $L_n (\lambda)$ is a union of balls of the same radius. So, in order to compute the convergence rate of $L_n (\lambda)$, a lower bound of the probability of the intersection of two balls must be determined. Since probability and volume are related through the density function $f$, Assumption~\ref{ass:M2} guarantees the existence of such a bound. Assumption~\ref{ass:M2} also provides a uniform bound for the packing numbers of any subset of $M$, which will be useful throughout the proofs. 


\begin{prop}
	\label{prop:M2impliesM3}
	Suppose that $M$ is a $d$-dimensional Riemannian manifold satisfying Assumption~\ref{ass:M2}. Given a bounded subset $A \subset M$, define:
	\begin{equation*}
		D(\varepsilon, A) = \max \left\lbrace \mathrm{card} (T) \colon T \subset A, \mathcal{d} (x, y) > \varepsilon \textit{ for all } x, y \in T, x \neq y \right\rbrace.
	\end{equation*}
	Then, there exists a constant $K$ such that $D(\varepsilon, A) \leq K \varepsilon^{-d}$ for all $\varepsilon \leq \rho$, where $\rho > 0$ is the same constant as in Assumption~\ref{ass:M2}.
\end{prop}

At first sight, Assumption~\ref{ass:M2} may seem an \textit{ad-hoc} condition for this problem.
Nevertheless, any manifold satisfying some mild conditions verify Assumption~\ref{ass:M2}, as the next result shows.

\begin{theorem}
	\label{th:manifold}
	Let $M$ be a complete and connected $d$-dimensional Riemannian manifold satisfying:
	\begin{enumerate}
		\item All sectional curvatures of $M$, denoted by $\kappa_M$, are non negative and bounded. That is, there exists a positive constant $\kappa > 0$ such that
		\[ 0 \leq \kappa_M < \kappa.\]
		\item $M$ has positive injectivity radius: $i_M > 0$.
	\end{enumerate}
	Then $M$ satisfies Assumption~\ref{ass:M2}.
\end{theorem}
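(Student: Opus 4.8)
The plan is to verify Assumption~\ref{ass:M2} by exhibiting, inside the lens $B_{r_1}[x_1]\cap B_{r_2}[x_2]$, an explicit ``puck''-shaped region whose volume is of the prescribed order. Fix $r_1,r_2>0$ and $x_1,x_2\in M$ with $D:=\mathcal{d}(x_1,x_2)<r_1+r_2$, and put $\delta:=\min\{\varepsilon,\rho\}$, where $\rho>0$ is a small constant to be fixed at the end, depending on $r_1,r_2,i_M,\kappa,d$ but not on $x_1,x_2$; since $\delta\le\varepsilon\le r_1+r_2-D$ it suffices to prove the stated bound with $\delta$ in place of $\varepsilon$. By Hopf--Rinow (using completeness and connectedness) there is a unit-speed minimizing geodesic $\gamma\colon[0,D]\to M$ from $x_1$ to $x_2$. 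The interval $\big[\max\{0,D-r_2+\delta/2\},\ \min\{D,r_1-\delta/2\}\big]$ is non-empty exactly because $\delta\le r_1+r_2-D$ and $\delta\le\rho\le\min\{r_1,r_2\}$; pick $t_0$ in it and set $p:=\gamma(t_0)$, so that $\ell_1:=\mathcal{d}(p,x_1)=t_0\le r_1-\delta/2$ and $\ell_2:=\mathcal{d}(p,x_2)=D-t_0\le r_2-\delta/2$. Thus $p$ lies ``at the waist'' of the lens with slack $\delta/2$ to each ball.

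Work in geodesic normal coordinates at $p$, and let $e_1\in T_pM$ be the unit vector for which $\exp_p(\ell_2 e_1)=x_2$, so that $\exp_p(-\ell_1 e_1)=x_1$. Consider
\[
\widetilde R=\Big\{\,s\,e_1+u\ :\ |s|\le \tfrac{\delta}{4},\ u\in e_1^{\perp},\ \norm{u}\le\tfrac12\sqrt{\min\{r_1,r_2\}\,\delta}\,\Big\}\subset T_pM,
\]
a cylinder whose Lebesgue measure is of order $\min\{r_1,r_2\}^{(d-1)/2}\delta^{(d+1)/2}$. The first key step is that $\exp_p(\widetilde R)\subset B_{r_1}[x_1]\cap B_{r_2}[x_2]$. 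Since all sectional curvatures are non-negative, $\exp_p$ is globally $1$-Lipschitz from $(T_pM,g_p)$ to $(M,\mathcal{d})$: Rauch's first comparison theorem against flat space gives $\norm{(d\exp_p)_w(\cdot)}\le\norm{\cdot}$ for every $w$, whence $\mathcal{d}(\exp_p\xi,\exp_p\eta)$ is bounded by the length of the $\exp_p$-image of the segment $\overline{\xi\eta}$, i.e.\ by $\norm{\xi-\eta}$. Hence for $w=se_1+u\in\widetilde R$,
\[
\mathcal{d}(x_1,\exp_p w)\ \le\ \norm{(-\ell_1 e_1)-w}\ =\ \sqrt{(\ell_1+s)^2+\norm{u}^2}\ \le\ \sqrt{(r_1-\tfrac{\delta}{4})^2+\tfrac14 r_1\delta}\ \le\ r_1,
\]
the last inequalities using $|s|\le\delta/4$, $\ell_1\le r_1-\delta/2$ and $\delta\le\min\{r_1,r_2\}$; the symmetric computation gives $\mathcal{d}(x_2,\exp_p w)\le r_2$.

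The second key step is the volume lower bound $\mathrm{Vol}_M(\exp_p\widetilde R)\gtrsim\mathrm{Leb}(\widetilde R)$. If $\rho$ is chosen small enough that $\widetilde R$ is contained in the ball of radius $\tfrac14\min\{i_M,\pi/\sqrt{\kappa}\}$ about the origin of $T_pM$ --- which forces $\rho\lesssim\min\{r_1,r_2\}$ and $\rho\lesssim\min\{i_M,\pi/\sqrt\kappa\}^2/\min\{r_1,r_2\}$ --- then $\exp_p$ restricts to a diffeomorphism on $\widetilde R$ and, by Günther's volume comparison (valid because $\kappa_M<\kappa$ and we stay below the first conjugate radius of the model space), its Jacobian $\sqrt{\det g}$ is bounded below on $\widetilde R$ by a positive constant $c=c(d,\kappa,i_M)$. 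Integrating,
\[
\mathrm{Vol}_M\big(B_{r_1}[x_1]\cap B_{r_2}[x_2]\big)\ \ge\ \mathrm{Vol}_M(\exp_p\widetilde R)\ \ge\ c\,\mathrm{Leb}(\widetilde R)\ \ge\ a\,\min\{r_1,r_2,\rho\}^{(d-1)/2}\,\min\{\varepsilon,\rho\}^{(d+1)/2}
\]
for a suitable $a=a(d,\kappa,i_M)>0$ (using $\min\{r_1,r_2\}\ge\min\{r_1,r_2,\rho\}$ and $\delta=\min\{\varepsilon,\rho\}$), which is Assumption~\ref{ass:M2}. The edge case in which one ball contains the other is covered by the same construction, with $t_0\in\{0,D\}$ and $p\in\{x_1,x_2\}$.

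The main obstacle is not any single deep ingredient but the geometric bookkeeping: choosing $p$ and calibrating the half-width $\delta/4$ and the transverse radius $\tfrac12\sqrt{\min\{r_1,r_2\}\delta}$ of $\widetilde R$ so that the elementary inequality of the first step closes while $\widetilde R$ still fits below the injectivity (and conjugate) radius, and --- crucially --- keeping $\rho$ and $a$ uniform in $x_1,x_2$ while allowing them to depend on $r_1,r_2$. That $\rho$ must depend on the radii is unavoidable: letting $r_1\to0$ with $r_2,D$ fixed and $\varepsilon$ of order one would otherwise contradict the inequality. The two comparison facts invoked --- $\exp_p$ being $1$-Lipschitz when $\kappa_M\ge0$, and the Jacobian being bounded below when $\kappa_M<\kappa$ and $i_M>0$ --- are precisely where all three hypotheses of Theorem~\ref{th:manifold} are used, and both are standard consequences of the Rauch and Günther comparison theorems.
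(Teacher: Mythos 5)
Your proof is correct and takes a genuinely different route from the paper's. The paper works in the exponential chart at one center $x_1$: it reduces $B_{r_1}[x_1]\cap B_{r_2}[x_2]$ to the Euclidean lens $B_{r_1}[0]\cap B_{r_2}[v_2]$ via the contraction property of $\exp_{x_1}$ (Lemma~\ref{lemma:contraction}) and then quotes Walther's Lemma~1 for the lens volume; to stay where its local comparisons apply, it first proves the bound for $r_1,r_2\leq\rho/2$ with the fixed $\rho=\min\{i_M,\pi/(2\sqrt\kappa)\}$ (Lemma~\ref{lemma:M3.1}) and then removes that restriction in two reduction steps (Lemma~\ref{lemma:condMrad} and the proof of Theorem~\ref{th:manifold}). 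You instead anchor $\exp_p$ at a waist point $p$ on the minimizing geodesic, build an explicit cylinder $\widetilde R\subset T_pM$, and verify by a direct Pythagorean estimate that $\exp_p\widetilde R$ lies inside the lens with the right volume. Two points deserve emphasis. First, the $1$-Lipschitz property of $\exp_p$ you use is \emph{global}, not merely on $B_\rho(0)$ as in Lemma~\ref{lemma:contraction}; this is legitimate for $\kappa_M\geq 0$ because the Rauch comparison against flat space requires no-conjugate-point control only in the flat model, and you genuinely need the global form since $\ell_1,\ell_2$ can be as large as $r_1,r_2$. Second, you let $\rho$ depend on $r_1,r_2$ (permitted by Assumption~\ref{ass:M2}, which only asks for independence of $x_1,x_2$) and correctly note that this dependence is unavoidable. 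What your approach buys concretely is that the cylinder handles, with no case-splitting, the regime where one ball is strictly inside the other ($\varepsilon>2\min\{r_1,r_2\}$): there the intersection volume is $\asymp\min\{r_1,r_2\}^d$ while $\min\{r_1,r_2\}^{(d-1)/2}\varepsilon^{(d+1)/2}$ is arbitrarily larger, so no uniform constants can exist. Walther's Lemma~1 in fact carries a hypothesis of roughly $\varepsilon\lesssim\min\{r_1,r_2\}$ that Lemma~\ref{lemma:M3.1} does not track, so your route also sidesteps a delicate point in the paper's argument. The price is that your $\rho$ is no longer a purely geometric quantity of $M$, but this is harmless downstream, where Assumption~\ref{ass:M2} is only ever invoked with radii in a bounded range.
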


Theorem~\ref{th:manifold} ensures that any manifold with bounded and non negative sectional curvatures fulfill Assumptions~M. This includes the Euclidean space $\R^d$, but also manifolds like the hypersphere $\mathbb{S}^d$, or the $d$-dimensional torus~$\mathbb{T}^d$. The proofs of Proposition~\ref{prop:M2impliesM3} and Theorem~\ref{th:manifold} can be found in Section~B.1 of Supplement B \citep{suppB}.

\subsection{Assumptions on the HDRs}
\label{sec:assA}

As discussed in Section~\ref{sec:HDRproposal}, the condition $r_0 (\lambda) > 0$ is the main requirement for the consistency of the new proposal. In practice, stronger assumptions on the HDRs of $f$ are required.

\begin{assL}
	Let $M$ be a connected Riemannian manifold, \hbox{$f: M \rightarrow \R$} a continuous density function, and $l$ and $u$ two constants such that $0 < l \leq u < \sup f$. Let $r_0 (\lambda)$ be the function defined in~\eqref{eq:defr}. 
	There exist two positive constants $k$ and $\delta$, with $k, \delta > 0$, satisfying
	\begin{enumerate}[label = {(L\arabic*)}]
		\item \label{ass:A1} $\delta < r_0 (\lambda) < + \infty$ for all $\lambda \in [l - \delta , u + \delta]$.
		\item \label{ass:A2} $L (\lambda)^c = \Psi_{\delta} \big[ L(\lambda)^c \big]$ for all $\lambda \in [l - \delta, u + \delta]$.
		\item \label{ass:A3} $L (\lambda)$ is a compact set for all $\lambda \in [l - \delta , u + \delta]$.
		\item \label{ass:A4} $\mathcal{d}_H \big( L(\lambda), L(\lambda + \eta) \big) \leq k \vert \eta \vert$ for all $\lambda \in [l, u]$ when $\eta \in [-\delta, \delta]$.
	\end{enumerate}
\end{assL}

\begin{remark}
	\label{rem:delta}
	The quantity $\delta$ in Assumptions~\ref{ass:A1}--\ref{ass:A4} should be understood as a ``small but positive constant''. Its specific value is not relevant, since it will not affect the convergence rate of the estimator $L_n (\lambda)$.
	
	Furthermore, assuming that $\delta$ is equal for all of Assumptions L is not restrictive. For example, if $f$ verifies \ref{ass:A1} with $\delta_1$, \ref{ass:A2} with $\delta_2$, \ref{ass:A3} with $\delta_3$, and \ref{ass:A4} with $\delta_4$; then $f$ also verifies all four assumptions with $\delta = \min \{ \delta_1, \delta_2 , \delta_3, \delta_4 \}$. Then, they will be assumed to be equal to simplify notation.
\end{remark}

The necessity of Assumption~\ref{ass:A1} for the consistency of $L_n (\lambda)$ is clear. This estimator relies entirely on the assumption that $L (\lambda)$ can be expressed as a union of balls, and this holds if and only if $r_0 (\lambda)$ is bounded away from zero. Moreover, note that $r_0 (\lambda) = + \infty$ if and only if that $L (\lambda) = M$. So, Assumption~\ref{ass:A1} also imposes $r_0 (\lambda) < + \infty$ to exclude the degenerate case where the HDR is the whole manifold. Assumption~\ref{ass:A3} serves merely as a technical condition to guarantee that the Hausdorff distance is well defined on the HDRs of~$f$. The consistency of $L_n (\lambda)$ will be derived in terms of the Hausdorff distance. Hence, $\mathcal{d}_H$ is required to be well defined on the highest density regions of $f$, and $\mathcal{d}_H$ is only defined for compact sets.

As mentioned in Section~\ref{sec:HDRproposal}, $L_n (\lambda)$ uses $\mathcal{X}^{+}_n ( \lambda )$ and $\mathcal{X}^{-}_n ( \lambda )$ as discretizations of the sets $L(\lambda)$ and $L(\lambda)^c$, respectively. The intuition is that, as $n$ gets larger, $\mathcal{X}^{+}_n ( \lambda )$ and $\mathcal{X}^{-}_n ( \lambda )$ should become finer and finer discretizations of $L(\lambda)$ and $L(\lambda)^c$, and, eventually, $L_n (\lambda)$ will recover $L (\lambda)$. However, $\mathcal{X}^{+}_n ( \lambda )$ and $\mathcal{X}^{-}_n ( \lambda )$ depend on the estimator $f_n$ and not on the underlying density $f$. So, without any further assumptions, it is not guaranteed that $\mathcal{X}^{+}_n ( \lambda ) \subset L(\lambda)$ and $\mathcal{X}^{-}_n ( \lambda ) \subset L(\lambda)^c$. Assumption~\ref{ass:A4} solves this problem, ensuring that, if $f_n$ is a consistent estimator of $f$, $\mathcal{X}^{+}_n ( \lambda ) \subset L(\lambda)$ and $\mathcal{X}^{-}_n ( \lambda ) \subset L(\lambda)^c$ will eventually hold. Thus, $\mathcal{X}^{+}_n ( \lambda )$ and $\mathcal{X}^{-}_n ( \lambda )$ will be valid discretizations of $L(\lambda)$ and $L(\lambda)^c$ for $n$ large.

In addition, for the consistency of $L_n (\lambda)$ both discretizations are required to become ``fine enough'' as the sample size grows. Assumption~\ref{ass:A2} guarantees that $\mathcal{X}^{-}_n ( \lambda )$ will become fine enough. Broadly speaking, the continuity of $f$ and Assumption \ref{ass:A2} ensure that there exists a set $A_ {\varepsilon} = L (\lambda - \varepsilon) \setminus L (\lambda)$ with $\Prob (A_{\varepsilon}) > 0$, which guarantees that $\mathcal{X}^{-}_n ( \lambda )$ will ``densely'' surround $L ( \lambda )$. For $\mathcal{X}^{+}_n ( \lambda )$ to be dense in $L ( \lambda )$ no further assumptions are required, since Assumption~\ref{ass:A1} will ensure this by similar arguments.


\begin{figure}[t]
	\centering
	\begin{subfigure}{2.5in}
		\centering
		\def\scale{1}
\begingroup%
  \makeatletter%
  \providecommand\color[2][]{%
    \errmessage{(Inkscape) Color is used for the text in Inkscape, but the package 'color.sty' is not loaded}%
    \renewcommand\color[2][]{}%
  }%
  \providecommand\transparent[1]{%
    \errmessage{(Inkscape) Transparency is used (non-zero) for the text in Inkscape, but the package 'transparent.sty' is not loaded}%
    \renewcommand\transparent[1]{}%
  }%
  \providecommand\rotatebox[2]{#2}%
  \newcommand*\fsize{\dimexpr\f@size pt\relax}%
  \newcommand*\lineheight[1]{\fontsize{\fsize}{#1\fsize}\selectfont}%
  \ifx\svgwidth\undefined%
    \setlength{\unitlength}{164.80857909bp}%
    \ifx\svgscale\undefined%
      \relax%
    \else%
      \setlength{\unitlength}{\unitlength * \real{\svgscale}}%
    \fi%
  \else%
    \setlength{\unitlength}{\svgwidth}%
  \fi%
  \global\let\svgwidth\undefined%
  \global\let\svgscale\undefined%
  \makeatother%
  \begin{picture}(1,0.6369688)%
    \lineheight{1}%
    \setlength\tabcolsep{0pt}%
    \put(0,0){\includegraphics[width=\unitlength,page=1]{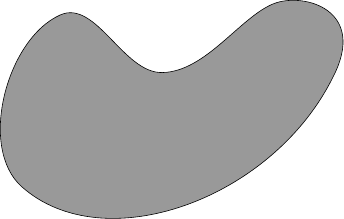}}%
    \put(0.41147607,0.24172244){\color[rgb]{0,0,0}\makebox(0,0)[lt]{\lineheight{1.25}\smash{\begin{tabular}[t]{l}$L(\lambda)$\end{tabular}}}}%
  \end{picture}%
\endgroup%

	\end{subfigure}
	\begin{subfigure}{2.5in}
		\centering
		\def\scale{1}
\begingroup%
  \makeatletter%
  \providecommand\color[2][]{%
    \errmessage{(Inkscape) Color is used for the text in Inkscape, but the package 'color.sty' is not loaded}%
    \renewcommand\color[2][]{}%
  }%
  \providecommand\transparent[1]{%
    \errmessage{(Inkscape) Transparency is used (non-zero) for the text in Inkscape, but the package 'transparent.sty' is not loaded}%
    \renewcommand\transparent[1]{}%
  }%
  \providecommand\rotatebox[2]{#2}%
  \newcommand*\fsize{\dimexpr\f@size pt\relax}%
  \newcommand*\lineheight[1]{\fontsize{\fsize}{#1\fsize}\selectfont}%
  \ifx\svgwidth\undefined%
    \setlength{\unitlength}{177.97678549bp}%
    \ifx\svgscale\undefined%
      \relax%
    \else%
      \setlength{\unitlength}{\unitlength * \real{\svgscale}}%
    \fi%
  \else%
    \setlength{\unitlength}{\svgwidth}%
  \fi%
  \global\let\svgwidth\undefined%
  \global\let\svgscale\undefined%
  \makeatother%
  \begin{picture}(1,0.57937966)%
    \lineheight{1}%
    \setlength\tabcolsep{0pt}%
    \put(0,0){\includegraphics[width=\unitlength,page=1]{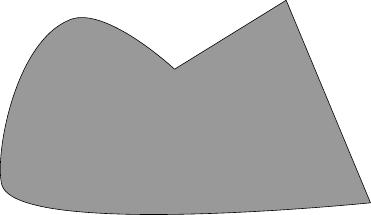}}%
    \put(0.41802588,0.21860739){\color[rgb]{0,0,0}\makebox(0,0)[lt]{\lineheight{1.25}\smash{\begin{tabular}[t]{l}$L(\lambda)$\end{tabular}}}}%
  \end{picture}%
\endgroup%

	\end{subfigure}
	\caption{\label{fig:AssA} Example of an HDR $L(\lambda)$ satisfying (left) and not satisfying (right) Assumptions \ref{ass:A1} and \ref{ass:A2}.}
\end{figure}

Actually, Assumptions~L can be viewed as smoothness conditions on the HDRs $L (\lambda)$. This is easy to see from Assumption~\ref{ass:A4}, since it is a Lipschitz condition on the mapping~$L$: \ref{ass:A4} states that $L$ is a (locally) Lipschitz continuous function with respect to the Hausdorff distance. Assumptions~\ref{ass:A1} and \ref{ass:A2} are smoothness conditions in a more geometric sense. If both $L (\lambda)$ and $L(\lambda)^c$ can be recovered as a union of balls of the same fixed radius, then the boundary of $L (\lambda)$ is smooth, without sharp edges (see Figure \ref{fig:AssA}). This fact implies that the boundary of the sets $L(\lambda)$ are regular submanifolds of $M$.

Since Assumptions L are smoothness conditions on the HDRs of $f$, it is no surprise that they translate into smoothness conditions on the density function $f$. For example, if $f$ is continuous, then the HDRs are guaranteed to be compact and Assumption~\ref{ass:A3} holds.

\begin{prop}
	\label{prop:assA3}
	Let $M$ be a Riemannian manifold under Assumptions M, let $\lambda > 0$ and $f: M \rightarrow \R$ a continuous density function. Suppose that $L(\lambda) = \Psi_{\delta} \big[ L (\lambda) \big]$ for some $\delta > 0$. Then, $L(\lambda)$ is a compact set.
\end{prop}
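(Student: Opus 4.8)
\emph{Proof plan.} The plan is to show that $L(\lambda)$ is both closed and bounded, and then to conclude by the Hopf--Rinow theorem, which applies because $M$ is complete by Assumption~\ref{ass:M1}. Closedness is immediate: since $f$ is continuous and $[\lambda, +\infty)$ is closed, the set $L(\lambda) = f^{-1}\big([\lambda, +\infty)\big)$ is closed in $M$. If $L(\lambda) = \emptyset$ there is nothing left to prove, so I would assume $L(\lambda) \neq \emptyset$. The hypothesis $L(\lambda) = \Psi_{\delta}\big[L(\lambda)\big] = \bigcup_{B_{\delta}[p] \subset L(\lambda)} B_{\delta}[p]$ then shows that the set $\mathcal{C} = \{ p \in M \colon B_{\delta}[p] \subset L(\lambda) \}$ is nonempty and that $L(\lambda) = \bigcup_{p \in \mathcal{C}} B_{\delta}[p]$.

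The heart of the argument is to prove boundedness of $L(\lambda)$ by contradiction. If $L(\lambda)$ were unbounded, then so would be $\mathcal{C}$, because $\mathcal{C} \subset B_R[x_0]$ would force $L(\lambda) \subset B_{R + \delta}[x_0]$ by the triangle inequality. Using that every finite union of closed balls is bounded, I would build greedily an infinite sequence in $\mathcal{C}$: pick $p_1 \in \mathcal{C}$, and having chosen $p_1, \dots, p_k \in \mathcal{C}$, choose $p_{k+1} \in \mathcal{C} \setminus \bigcup_{i=1}^{k} B_{2\delta}[p_i]$, which is possible since $\mathcal{C}$ is unbounded while $\bigcup_{i=1}^{k} B_{2\delta}[p_i]$ is bounded. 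This yields $(p_k)_{k \geq 1} \subset \mathcal{C}$ with $\mathcal{d}(p_i, p_j) > 2\delta$ for all $i \neq j$, hence the closed balls $B_{\delta}[p_k]$ are pairwise disjoint, and each is contained in $L(\lambda)$ by definition of $\mathcal{C}$.

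Next I would derive a uniform positive lower bound for $\Prob\big(B_{\delta}[p_k]\big)$. This is precisely where Assumptions~M are used: applying Assumption~\ref{ass:M2} with $x_1 = x_2 = p_k$, $r_1 = r_2 = \delta$ (so $\mathcal{d}(x_1, x_2) = 0 < 2\delta = r_1 + r_2$) and $\varepsilon = \delta$ gives $\mathrm{Vol}_M\big(B_{\delta}[p_k]\big) \geq a \min\{\delta, \rho\}^{(d-1)/2} \min\{\delta, \rho\}^{(d+1)/2} = a \min\{\delta, \rho\}^{d} =: c > 0$, a bound independent of $k$. Since $B_{\delta}[p_k] \subset L(\lambda)$ we have $f \geq \lambda$ on $B_{\delta}[p_k]$, so $\Prob\big(B_{\delta}[p_k]\big) = \int_{B_{\delta}[p_k]} f \, d\mathrm{Vol}_M \geq \lambda c$. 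By disjointness, $1 = \Prob(M) \geq \sum_{k \geq 1} \Prob\big(B_{\delta}[p_k]\big) \geq \sum_{k \geq 1} \lambda c = +\infty$, a contradiction. Hence $L(\lambda)$ is bounded, and being a closed bounded subset of the complete Riemannian manifold $M$, it is compact by Hopf--Rinow.

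The genuinely routine parts are the closedness and the final Hopf--Rinow step; the only place requiring care is the uniform lower bound on $\mathrm{Vol}_M\big(B_{\delta}[p_k]\big)$, which is exactly what Assumption~\ref{ass:M2} is designed to provide (one could alternatively extract the contradiction from the packing bound in Assumption~\ref{ass:M3}, but the volume–probability route is shorter). I would also double-check the greedy construction: that $p_{k+1}$ can always be found and that the resulting family really is $2\delta$-separated, so that the balls $B_{\delta}[p_k]$ are indeed pairwise disjoint.
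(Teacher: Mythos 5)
Your proof is correct and follows essentially the same strategy as the paper's: Hopf--Rinow reduces the problem to showing that the closed set $L(\lambda)$ is bounded, and boundedness is obtained by contradiction via a $2\delta$-separated family of ball centers whose disjoint $\delta$-balls lie in $L(\lambda)$, forcing $\Prob[L(\lambda)] = +\infty$ through the volume lower bound from Assumption~\ref{ass:M2}. The only cosmetic difference is that you work directly with the set of ball centers $\mathcal{C}$ and build a $2\delta$-separated sequence there, whereas the paper first extracts a $4\delta$-separated sequence $\{x_n\}$ in $L(\lambda)$ and then passes to the centers $y_n$ of balls containing each $x_n$; both yield the same disjoint-balls contradiction.
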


If $f$ is a differentiable function, the gradient $\nabla f$ is the direction in which $f$ varies the fastest, and $\norm{\nabla f}_M$ measures the variation rate in that direction. Hence, controlling $\norm{\nabla f}_M$ allows to control the distance between the HDRs, as the next result shows.

\begin{prop}
	\label{prop:assA4}
	Let $M$ be a $d$-dimensional Riemannian manifold under Assumptions~M, and $f: M \rightarrow \R$ a density function. Let $0 < \delta < l \leq u < \sup f$, and let $U$ be an open set such that $f^{-1} \big( [l - \delta, u + \delta] \big)\subset U$.
	
	Suppose that $f$ is continuously differentiable on $U$ and that there exists a positive constant $c > 0$ such that $\norm{ \nabla f }_M \geq c$ in $U$. Then:
	\begin{equation*}
		\mathcal{d}_H \big( L (\lambda), L(\lambda + \eta) \big) \leq \frac{\vert \eta \vert}{c}
	\end{equation*}
	for all $\eta \in [- \delta, \delta]$ and $\lambda \in [l , u]$.
\end{prop}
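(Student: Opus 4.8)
The plan is to reduce the Hausdorff bound to a one-sided point-to-set estimate and then establish that estimate by flowing along the normalized gradient of $f$. Since $L$ is monotone — $L(\lambda+\eta)\subseteq L(\lambda)\subseteq L(\lambda-\eta)$ — one of the two inclusions defining $\mathcal d_H\big(L(\lambda),L(\lambda\pm\eta)\big)$ holds for free with any radius, so it is enough to control the ``excess'' inclusions: that every $x\in L(\lambda)$ lies within $\eta/c$ of $L(\lambda+\eta)$, and every $x\in L(\lambda-\eta)$ lies within $\eta/c$ of $L(\lambda)$. Both are instances of a single claim: if $x\in M$ with $l-\delta\le f(x)<v\le u+\delta$, then there is $y\in M$ with $f(y)\ge v$ and $\mathcal d(x,y)\le (v-f(x))/c$ — the first case with $v=\lambda+\eta$, the second with $v=\lambda$, using $\lambda\in[l,u]$ and $\eta\le\delta$, and noting that points already in the smaller set contribute distance $0$.

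To prove the claim I would use that $x\in f^{-1}([l-\delta,u+\delta])\subseteq U$, so near $x$ the vector field $V:=\nabla f/\norm{\nabla f}_M$ is well defined, continuous, and of unit norm (here $\norm{\nabla f}_M\ge c>0$ on $U$ is essential). Let $\gamma$ be a maximal unit-speed integral curve of $V$ in $U$ with $\gamma(0)=x$, which exists by Peano's theorem; the chain rule gives
\[
\frac{d}{ds}f(\gamma(s))=\big\langle \nabla f(\gamma(s)),V(\gamma(s))\big\rangle=\norm{\nabla f(\gamma(s))}_M\ge c,
\]
so $f\circ\gamma$ increases with $f(\gamma(s))\ge f(x)+cs$. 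Setting $s^\ast=\inf\{s\ge0: f(\gamma(s))\ge v\}$, for $s<s^\ast$ one has $f(\gamma(s))\in[l-\delta,v)\subseteq[l-\delta,u+\delta]$, hence $\gamma(s)\in U$, and $cs\le f(\gamma(s))-f(x)<v-f(x)$, so $s^\ast\le(v-f(x))/c<\infty$. One then checks the curve is actually defined up to $s^\ast$: if it escaped $U$ at some finite $T\le s^\ast$, completeness of $M$ (Assumption~\ref{ass:M1}, via Hopf--Rinow) would give a limit point $\lim_{s\to T^-}\gamma(s)$, which by continuity of $f$ still lies in $f^{-1}([l-\delta,u+\delta])\subseteq U$, so $V$ is continuous there and Peano extends $\gamma$ past $T$ — a contradiction. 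Thus $y:=\gamma(s^\ast)$ satisfies $f(y)=v$ and $\mathcal d(x,y)\le\mathrm{length}(\gamma|_{[0,s^\ast]})=s^\ast\le(v-f(x))/c$.

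The routine parts — the chain-rule computation, and tracking which of the two Minkowski inclusions is automatic — are immediate. The delicate point, and the heart of the argument, is the confinement of the gradient flow: one must guarantee that the integral curve cannot leave $U$ (the only region where $\nabla f$ and the lower bound $\norm{\nabla f}_M\ge c$ are available) before $f$ along it has risen to the target level $v$. This is precisely what the monotonicity of $f\circ\gamma$ combined with the hypothesis $f^{-1}([l-\delta,u+\delta])\subseteq U$ provides, with completeness of $M$ supplying the limit point needed to exclude an escape in finite time.
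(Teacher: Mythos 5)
Your proof is correct and takes essentially the same route as the paper's: reduce by monotonicity of $\lambda\mapsto L(\lambda)$ to a single inclusion, flow along the normalized gradient $V=\nabla f/\norm{\nabla f}_M$, integrate $\frac{d}{ds}f(\gamma(s))=\norm{\nabla f(\gamma(s))}_M\ge c$ to get $f(\gamma(s))\ge f(x)+cs$, and use the unit-speed bound $\mathcal d(\gamma(0),\gamma(s))\le s$ to control the distance. The one place you genuinely diverge is the confinement argument. The paper invokes the Escape Lemma applied to the slab $f^{-1}\big([\lambda,\lambda+\eta]\big)$, which implicitly requires that slab to be a compact subset of $U$; compactness of the level sets is not actually guaranteed by the hypotheses of this proposition (it appears only in Assumption~\ref{ass:A3}, which is not assumed here). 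You instead argue directly that a maximal solution cannot have a finite right endpoint $T\le s^\ast$: unit speed plus metric completeness of $M$ (Assumption~\ref{ass:M1}) forces $\gamma(s)$ to converge as $s\to T^-$, the limit lies in $f^{-1}\big([l-\delta,u+\delta]\big)\subset U$ by continuity and monotonicity of $f\circ\gamma$, and Peano existence then extends the curve past $T$. This is a self-contained re-derivation of the relevant Escape Lemma content that avoids the compactness hypothesis, so your version is actually slightly more robust under the stated assumptions. Everything else (the chain rule, the definition of $s^\ast$, the two symmetric cases) matches the paper.
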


The proofs of Propositions~\ref{prop:assA3} and~\ref{prop:assA4} are collected in Section~B.2 of Supplement~B \citep{suppB}. These results ensure that any sufficiently smooth density function $f$ verifies Assumptions~\ref{ass:A3} and~\ref{ass:A4}. If $M = \R^d$, \cite{Walther1997} showed that this is also true for Assumptions~\ref{ass:A1} and~\ref{ass:A2} (see \citealp{Walther1997}, Th.~2). In the manifold setting, the relation between the differentiability of $f$ and Assumptions~\ref{ass:A1} and~\ref{ass:A2} is unclear, yet it is conjectured that all densities $f$ under the conditions of Proposition~\ref{prop:assA4} will satisfy them too.

\subsection{Convergence rate of the new estimator}
\label{sec:convergence}

Under Assumptions~M and Assumptions~L, the uniform behavior of the $L_n (\lambda)$ estimator can be studied. Theorem~\ref{th:uniform}, the main result in this work, ensures that $L_n (\lambda)$ is a consistent estimator of $L (\lambda)$, and its convergence rate (in Hausdorff distance) is derived in terms of the global error of the density estimator $f_n$.  The proof of this result is detailed in Section~A.2 of Supplement~A \citep{suppA}.

\begin{theorem}
	\label{th:uniform}
	Let $M$ be a $d$-dimensional Riemannian manifold under Assumptions~M. Let $X$ be a random point from $M$ with density function $f$, and suppose that $f$ satisfies Assumptions~L. Let $\mathcal{X}_n = \{ X_1, \ldots, X_n \}$ be an i.i.d.~sample from $X$, and let $f_n$ be an estimator of $f$ such that:
	\begin{equation*}
		D_n = \sup_{x \in M} \abs{f(x) - f_n(x)} \rightarrow 0
	\end{equation*}
	almost surely. Furthermore, for any $\lambda \in [l, u]$, let $r_n (\lambda)$ be a sequence of positive random variables fulfilling
	\begin{equation}
		\label{eq:Rn}
		\delta < r_n (\lambda) < r_0 (\lambda) - \delta, \quad \forall \lambda \in [l, u]
	\end{equation}
	e.a.s., where $r_0 (\lambda)$ is defined as~\eqref{eq:defr} and $\delta > 0$ is the same constant as in Assumptions~L.
	
	Then, if $L_n (\lambda)$ is the (random) set defined in \eqref{eq:setest},
	\begin{equation*}
		\sup_{\lambda \in [l,u]} \mathcal{d}_H \big( L(\lambda), L_n(\lambda) \big) = O_{\textrm{\rm a.s.}} \left( \max \left\lbrace D_n, \left( \frac{\log n}{n} \right)^{\frac{2}{d+1}} \right\rbrace \right).
	\end{equation*}
\end{theorem}

\begin{remark}
	\label{rem:Dn}
	If the manifold $M$ is compact, the density function $f$ is a Lipschitz continuous function, and $f_n$ is the kernel density estimator defined in \eqref{eq:kdemanifold} with bandwidth $\mathcal{h}_n$ of order $(\log (n) / n)^{1/(d + 2)}$ with $\epsilon > 0$, Theorem~2.4 of \cite{Wu2021} guarantees that, for some kernels $\mathcal{K}$,
	\begin{equation*}
		D_n
		=
		O_{\textrm{a.s.}}
		\bigg(
		\left( \frac{\log n}{n} \right)^{\frac{1}{d + 2}}
		\bigg)
	\end{equation*}
	Hence, in this specific case, $L_n (\lambda)$ will converge at rate $(\log (n)/n)^{1/(d+2)}$. This illustrates that the new estimator suffers from the so-called ``curse of dimensionality''. These rates are known to be optimal in the minimax sense for estimating $L (\lambda)$ in the Euclidean setting (see \citealp{Tsybakov1997}). 
\end{remark}

Theorem~\ref{th:uniform}, jointly with Assumptions~M and~L, imposes some restrictions on the primary estimators~$f_n$ and~$r_n (\lambda)$. These requirements were already discussed in Section~\ref{sec:HDRproposal}: $f_n$ must be globally consistent and $r_n (\lambda)$ has to be smaller than $r_0 (\lambda)$, otherwise $L_n (\lambda)$ will not recover $L (\lambda)$. However, the restriction imposed on~$r_n (\lambda)$ is slightly stronger than the one introduced in Section~\ref{sec:HDRproposal}. In principle, one only need to ensure that $L (\lambda)$ can be recovered as a union of balls of radius $r_n (\lambda)$ for consistency, and this holds if and only if \mbox{$r_n (\lambda) \leq r_0 (\lambda)$}. Nevertheless, in Equation \eqref{eq:Rn} $r_n (\lambda)$ is forced to be strictly smaller than~$r_0 (\lambda)$. This condition is required to ensure that $\Prob \big[ L (\lambda) \ominus r_n (\lambda) B \big] \neq 0$. Note that
$$
L (\lambda) \ominus r_n(\lambda)B = L(\lambda) \cap \big( L (\lambda)^c  \oplus r_n(\lambda)B \big)^c.
$$	
Consequently, if $\Prob \big[ L (\lambda) \ominus r_n (\lambda) B \big] = 0$, then $\mathcal{X}_n^{+} (\lambda) \cap (\mathcal{X}_n^{-} (\lambda) \oplus r_n (\lambda)B)^c = \emptyset$ will happen with positive probability and $L_n (\lambda)$ will not recover $L(\lambda)$. Moreover, Equation \eqref{eq:Rn} forces the sequence of random variables $r_n (\lambda)$ to not converge to zero. Section~\ref{sec:rn} will show that this is not a limitation in practice, since it is possible to provide a choice of $r_n (\lambda)$ that is bounded away from zero.

Theorem~\ref{th:uniform} shows that $L_n (\lambda)$ achieves the same consistency rate as the proposal by \cite{RodriguezCasal2022}. The comparison with the original estimator by \cite{Walther1997} is not direct, since the approaches to choose the radius are different. If $r_0 (\lambda)$ is known, \cite{Walther1997} proposes to select $r_n (\lambda) = r_0 (\lambda)$ (or slightly smaller), and in this case his proposal has the same rate of convergence as $L_n (\lambda)$. If $r_0 (\lambda)$ is unknown, \cite{Walther1997} chooses the radius as a sequence slowly converging to zero, $r_n \to 0$, but this results in a penalization factor of the form $r_n^{-(d-1)/(d+1)}$ in the term $( \log (n) /n)^{2/(d+1)}$ (see Theorem 3 in \citealp{Walther1997}).


The derived upper bound for the Hausdorff distance between $L(\lambda)$ and $L_n (\lambda)$ is larger than $D_n$, which means that we cannot guarantee that $L_n (\lambda)$ will converge faster than the plug-in HDR estimator. The new proposal compensates this with a much simpler form (it is a union of balls) that makes it much easier to handle in practice. Moreover, the term $( \log (n) /n)^{2/(d+1)}$ in the consistency rate can be improved by resampling (see Remark~3 in \citealp{Walther1997}). 
Thus, technically, one can always take a resample that is large enough so that the resulting HDR estimator converges at the same rate as the plug-in estimator.
In addition, $D_n$ will be the dominant term of the consistency rate of $L_n (\lambda)$ in some specific situations. For example, if the density function is Lipschitz continuous, under the Assumptions of Remark~\ref{rem:Dn}, $D_n$ is of order $(\log (n)/n )^{1/(d+2)}$ and therefore $L_n (\lambda)$ will converge at the same rate than $D_n$. Obviously, if $f$ is smoother, faster rates for $D_n$ are known in the Euclidean setting (see Remark~3.1 in \citealp{RodriguezCasal2022}). To the best of the authors' knowledge, the problem of finding the minimax rates for $D_n$ in the manifold setting has not been addressed in the literature yet.

\section{HDRs by probability content}
\label{sec:HDRgamma}

In Sections~\ref{sec:HDRproposal} and~\ref{sec:Lnconsistency}, we have studied HDRs as sets where the value of a given density function $f$ exceeds some threshold $\lambda$, as expressed in equation~\eqref{eq:Llambda}. This is convenient from a theoretical point of view, but the specific value of $\lambda$ is often irrelevant in practice. Instead, the interest is usually focused on recovering the HDR verifying a (previously fixed) probability content. That is, given a probability $\gamma \in (0, 1)$, the goal is to recover the set
\begin{equation}
	L(\lambda_{\gamma}) = f^{-1} \big( [\lambda_{\gamma}, + \infty ) \big) = \{ x \in M \colon f(x) \geq \lambda_{\gamma}\}
\end{equation}
where
\begin{equation}
	\label{eq:lambdagamma}
	\lambda_{\gamma} = \sup \Big\lbrace \lambda \in \R \colon \Prob \big[ L ( \lambda ) \big] \geq 1 - \gamma \Big\rbrace.
\end{equation}
Thus, $L(\lambda_{\gamma})$ is the smallest HDR such that $\Prob \big[ L ( \lambda_{\gamma } ) \big] \geq 1 - \gamma$. This perspective links HDR estimation with the problem of estimating the minimum volume set satisfying a probability content (see \citealp{Polonik1997}).

Theorem~\ref{th:uniform} states that the convergence of the estimator $L_n (\lambda)$ is uniform with respect to the level $\lambda$. This fact allows to address the problem of recovering the set $L(\lambda_{\gamma})$ using this estimator via a plug-in process. First, one computes an estimator of $\lambda_{\gamma}$, namely $\tilde{\lambda}_{\gamma, n}$, and then, $L(\lambda_{\gamma})$ is recovered by $L_n(\tilde{\lambda}_{\gamma, n})$. In Section~\ref{sec:plug-in}, the consistency of such a plug-in technique is studied, while Section~\ref{sec:lambdan} introduces an estimator of the level~$\lambda_{\gamma}$.

\subsection{Consistency of the plug-in estimation technique}
\label{sec:plug-in}

If $\tilde{\lambda}_{\gamma, n}$ is a consistent estimator of $\lambda_{\gamma}$, then $L_n(\tilde{\lambda}_{\gamma, n})$ is also a consistent estimator of $L(\lambda_{\gamma})$, as the following theorem shows.

\begin{theorem}
	\label{th:probs}
	Let $M$ be a $d$-dimensional Riemannian manifold under Assumptions~M. Let $X$ be a random point from $M$ with density function $f$, and suppose that $f$ satisfies Assumptions~L. Let $\mathcal{X}_n = \{ X_1, \ldots, X_n \}$ be an i.i.d.~sample from $X$, and let $f_n$ be an estimator of $f$ such that:
	\begin{equation*}
		D_n = \sup_{x \in M} \abs{f(x) - f_n(x)} \rightarrow 0
	\end{equation*}
	almost surely. Furthermore, for any $\lambda \in [l, u]$, let $r_n (\lambda)$ be a sequence of positive random variables fulfilling
	\begin{equation*}
		\delta < r_n (\lambda) < r_0 (\lambda) - \delta, \quad \forall \lambda \in [l, u]
	\end{equation*}
	e.a.s., where $r_0 (\lambda)$ is defined as~\eqref{eq:defr} and $\delta > 0$ is the same constant as in Assumptions~L.
	
	Let $0 < \underline{\gamma} \leq \overline{\gamma} < 1$ such that $[\lambda_{\underline{\gamma}}, \lambda_{\overline{\gamma}}] \subset (l, u)$. Let $\tilde{\lambda}_{\gamma, n}$ be an estimator of $\lambda_{\gamma}$ verifying:
	\begin{equation*}
		T_n = \sup_{\gamma \in [\underline{\gamma}, \overline{\gamma}]} \abs{\tilde{\lambda}_{\gamma, n} - \lambda_{\gamma}} \rightarrow 0
	\end{equation*}
	almost surely.
	
	Hence:
	\begin{equation*}
		\sup_{\gamma \in [\underline{\gamma}, \overline{\gamma}]} \mathcal{d}_H \big( L(\lambda_{\gamma}), L_n(\tilde{\lambda}_{\gamma, n}) \big) = O_{\textrm{\rm a.s.}} \left( \max \left\lbrace D_n, T_n, \left( \frac{\log n}{n} \right)^{\frac{2}{d+1}} \right\rbrace \right).
	\end{equation*}
\end{theorem}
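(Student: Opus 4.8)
The plan is to bound the target Hausdorff distance by a triangle inequality that separates the error into a \emph{level-estimation} part and a \emph{set-estimation} part,
\begin{equation*}
	\mathcal{d}_H \big( L(\lambda_{\gamma}), L_n(\tilde{\lambda}_{\gamma, n}) \big)
	\leq
	\mathcal{d}_H \big( L(\lambda_{\gamma}), L(\tilde{\lambda}_{\gamma, n}) \big)
	+
	\mathcal{d}_H \big( L(\tilde{\lambda}_{\gamma, n}), L_n(\tilde{\lambda}_{\gamma, n}) \big),
\end{equation*}
and then control the first summand with the Lipschitz property~\ref{ass:A4} and the second with the main result, Theorem~\ref{th:uniform}.

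First I would record that $\gamma \mapsto \lambda_{\gamma}$ is non-decreasing: this is immediate from~\eqref{eq:lambdagamma}, since the set $\{ \lambda \colon \Prob[L(\lambda)] \geq 1 - \gamma \}$ grows with $\gamma$. Hence $\lambda_{\gamma} \in [\lambda_{\underline{\gamma}}, \lambda_{\overline{\gamma}}]$ for every $\gamma \in [\underline{\gamma}, \overline{\gamma}]$, and because $[\lambda_{\underline{\gamma}}, \lambda_{\overline{\gamma}}]$ is a compact subset of the open interval $(l, u)$, there is $\varepsilon_0 > 0$ with $[\lambda_{\underline{\gamma}} - \varepsilon_0, \lambda_{\overline{\gamma}} + \varepsilon_0] \subset [l, u]$. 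Since $T_n \to 0$ almost surely, eventually almost surely $T_n < \min\{ \varepsilon_0, \delta \}$; on that event, simultaneously for all $\gamma \in [\underline{\gamma}, \overline{\gamma}]$, one has both $\tilde{\lambda}_{\gamma, n} \in [l, u]$ and $\abs{\tilde{\lambda}_{\gamma, n} - \lambda_{\gamma}} \leq T_n \leq \delta$. Note also that for such levels $L(\lambda)$ is nonempty (as $\lambda < \sup f$) and compact by~\ref{ass:A3}, so the Hausdorff distances above are well defined.

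On this event, the level-estimation term is handled by Assumption~\ref{ass:A4} applied at $\lambda = \lambda_{\gamma} \in [l, u]$ with $\eta = \abs{\tilde{\lambda}_{\gamma, n} - \lambda_{\gamma}} \leq \delta$, since $\tilde{\lambda}_{\gamma, n} = \lambda_{\gamma} \pm \eta$; this gives $\sup_{\gamma} \mathcal{d}_H ( L(\lambda_{\gamma}), L(\tilde{\lambda}_{\gamma, n}) ) \leq k\, T_n = O(T_n)$ almost surely. For the set-estimation term, the crucial point is that Theorem~\ref{th:uniform} provides a bound that is \emph{uniform} over all deterministic $\lambda \in [l, u]$; since $\tilde{\lambda}_{\gamma, n} \in [l, u]$ on our event, it may be evaluated at the random level $\tilde{\lambda}_{\gamma, n}$, whence
\begin{equation*}
	\sup_{\gamma \in [\underline{\gamma}, \overline{\gamma}]} \mathcal{d}_H \big( L(\tilde{\lambda}_{\gamma, n}), L_n(\tilde{\lambda}_{\gamma, n}) \big)
	\leq
	\sup_{\lambda \in [l, u]} \mathcal{d}_H \big( L(\lambda), L_n(\lambda) \big)
	= O \left( \max \left\lbrace D_n, \left( \frac{\log n}{n} \right)^{\frac{2}{d+1}} \right\rbrace \right)
\end{equation*}
almost surely, the hypotheses of Theorem~\ref{th:uniform} being exactly those assumed here. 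Combining the two estimates through the triangle inequality and absorbing constants yields the claimed rate $O\big( \max\{ D_n, T_n, (\log n / n)^{2/(d+1)} \}\big)$.

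The argument is essentially bookkeeping: the only step needing genuine care is the one highlighted above — that one must invoke the \emph{supremum} form of Theorem~\ref{th:uniform} rather than its pointwise version, precisely because $\tilde{\lambda}_{\gamma, n}$ is random — together with the routine but necessary step of intersecting the several ``eventually almost surely'' events (the one for $r_n$, and those for $T_n < \varepsilon_0$ and $T_n < \delta$) into a single almost sure event on which all the estimates hold simultaneously.
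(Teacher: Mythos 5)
Your proof is correct and takes essentially the same route as the paper's: a triangle-inequality decomposition into a level-error term controlled by Assumption~\ref{ass:A4} (for which you need $\lambda_\gamma\in[l,u]$ and $|\tilde\lambda_{\gamma,n}-\lambda_\gamma|\le\delta$, both holding eventually almost surely) and a set-error term controlled by the uniform statement of Theorem~\ref{th:uniform} evaluated at the random level $\tilde\lambda_{\gamma,n}\in[l,u]$. The only difference is cosmetic: you make explicit the monotonicity of $\gamma\mapsto\lambda_\gamma$ and the compactness margin $\varepsilon_0$, whereas the paper simply works with the threshold $\min\{u-\lambda_{\overline\gamma},\,\lambda_{\underline\gamma}-l\}$, which encodes the same fact.
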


Theorem~\ref{th:probs} (see the proof in Section~A.3 of Supplement A, \citealp{suppA}) shows that $L_n (\tilde{\lambda}_{\gamma , n})$ is a consistent estimator of $L (\lambda_{\gamma})$ and its convergence rate depends on the global error of the primary estimator of the level $\tilde{\lambda}_{\gamma , n}$, in such a way that, if $\tilde{\lambda}_{\gamma , n}$ converges slowly to $\lambda_{\gamma}$, the estimator $L_n (\tilde{\lambda}_{\gamma , n})$ will then inherit that convergence rate and it will also converge slowly to $L (\lambda_{\gamma})$. Therefore, the choice of $\tilde{\lambda}_{\gamma , n}$ is crucial. Not only it must be a consistent estimator of the true level $\lambda_{\gamma}$, but it should converge as fast as possible to avoid any penalization on the convergence rate of estimator $L_n (\tilde{\lambda}_{\gamma , n})$. The next section explores this problem and provides an estimator of the level that does not penalize the convergence rate of the final HDR estimator.

\subsection{Estimation of the level $\lambda_{\gamma}$}
\label{sec:lambdan}

As Theorem~\ref{th:probs} states, estimating $L (\lambda_{\gamma})$ reduces to choosing a consistent estimator of the level $\lambda_{\gamma}$ defined in~\eqref{eq:lambdagamma}. Obviously, there is no unique choice, and several estimators of $\lambda_{\gamma}$ have been provided in the Euclidean setting (see \citealp{Hyndman1996}, \citealp{Walther1997} and \citealp{Cadre2006}, among others). In this paper, we extend the proposal by \cite{Hyndman1996} to manifold data. That is, we propose to estimate $\lambda_{\gamma}$ with
\begin{equation}
	\label{eq:lambdan}
	\hat{\lambda}_{\gamma, n} = \sup \left\lbrace \lambda \in \R \colon \Prob_n \Big[ f_n^{-1} \big( [\lambda, + \infty) \big) \Big] \geq 1 - \gamma \right\rbrace,
\end{equation}
where $\Prob_{n}$ is the empirical probability distribution. The main advantage of this proposal is its simplicity. Notice that
\begin{equation*}
	\Prob_n \Big[ f_n^{-1} \big( [\lambda, + \infty) \big) \Big] = \frac{1}{n} \sum_{i = 1}^{n} \mathbbm{1} \big( f_n (X_i) \geq \lambda \big),
\end{equation*}
and therefore
\begin{equation*}
	\hat{\lambda}_{\gamma, n} = \sup \left\lbrace \lambda \in \R \colon \frac{1}{n} \sum_{i = 1}^{n} \mathbbm{1} \big( f_n (X_i) \geq \lambda \big) \geq 1 - \gamma \right\rbrace.
\end{equation*}
So, the estimator $\hat{\lambda}_{\gamma, n}$ is the $\gamma$-quantile of the empirical distribution of the transformed sample $f_n (X_1), \ldots, f_n (X_n)$ and, consequently, $\hat{\lambda}_{\gamma, n}$ can be computed quite efficiently in practice.

In order to guarantee the uniform convergence of the estimator $\hat{\lambda}_{\gamma, n}$ defined in~\eqref{eq:lambdan}, a smoothness assumption on $\lambda_{\gamma}$ is required.

\begin{assP}
	Define
	$\lambda_{\gamma} = \sup \Big\lbrace \lambda \in \R \colon \Prob \big[ L ( \lambda ) \big] \geq 1 - \gamma \Big\rbrace.$
	There exist $0 < \underline{\gamma} \leq \overline{\gamma}$ such that
	\begin{equation*}
		\sup_{\gamma \in [\underline{\gamma}, \overline{\gamma}]} \abs{\lambda_{\gamma} - \lambda_{\gamma + \eta}} \leq k \abs{\eta}
	\end{equation*}
	for all $\eta \in [- \delta, \delta]$, where $\delta$ and $k$ are the same constants as Assumptions~L.
\end{assP}

Assumption~P states that $\lambda_{\gamma}$ is a (locally) Lipschitz continuous function of~$\gamma$. Hence, it can be seen as another smoothness condition on the level sets of $f$. Like Assumptions~\ref{ass:A3} and~\ref{ass:A4}, $f$ will verify Assumption~P if it is smooth enough. More specifically, the next result holds (see Section~B.3 of Supplementary~B, \citealp{suppB}).

\begin{prop}
	\label{prop:assA5}
	Let $M$ be a Riemannian manifold under Assumptions M. Let \mbox{$f: M \rightarrow \R$} be a density function. Let $0 < l \leq u < \sup f$, and let $U$ be an open set such that $f^{-1} \big( [l, u] \big)\subset U$. Suppose that $f$ is a $\mathcal{C}^{1}$ function in $U$ and there exists a positive constant $c > 0$ satisfying such that $\norm{ \nabla f }_M \geq c$ in $U$.
	
	Let $0 < \underline{\gamma} \leq \overline{\gamma} < 1$  and $\delta > 0$ such that $l < \lambda_{\underline{\gamma} - \delta} \leq \lambda_{\overline{\gamma} + \delta} < u$. Then, there exists a constant $k > 0$ such that
	\begin{equation*}
		\sup_{\gamma \in [\underline{\gamma}, \overline{\gamma}]} \abs{\lambda_{\gamma} - \lambda_{\gamma + \eta}} \leq k \abs{\eta}, \quad \text{ for all } \eta \in [-\delta, \delta].
	\end{equation*}
\end{prop}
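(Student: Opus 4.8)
The plan is to recast the statement as a one-dimensional quantile estimate. Write $H(\lambda):=\Prob\big[L(\lambda)\big]=\Prob\big[f(X)\geq\lambda\big]$ and $J:=[\lambda_{\underline{\gamma}-\delta},\lambda_{\overline{\gamma}+\delta}]$; by hypothesis $J$ is a compact subinterval of $(l,u)$, so $f^{-1}(J)\subset f^{-1}([l,u])\subset U$ and $f$ is $\mathcal{C}^1$ on $f^{-1}(J)$ with $\norm{\nabla f}_M\geq c>0$ there. By the implicit function theorem each level set $f^{-1}(t)$, $t\in J$, is a $\mathcal{C}^1$ hypersurface, hence $\mathrm{Vol}_M\big(f^{-1}(t)\big)=0$, $\Prob[f(X)=t]=0$, and $H$ is continuous and non-increasing on $J$; consequently $\gamma\mapsto\lambda_\gamma$ is non-decreasing, $\lambda_\gamma\in J$ whenever $\gamma\in[\underline{\gamma}-\delta,\overline{\gamma}+\delta]$, and $H(\lambda_\gamma)=1-\gamma$ for such $\gamma$. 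The proposition then follows at once from the quantitative \emph{lower} bound
\begin{equation*}
	H(\lambda_1)-H(\lambda_2)\ \geq\ \kappa\,(\lambda_2-\lambda_1)\qquad\text{for all }\lambda_1\leq\lambda_2\text{ in }J,
\end{equation*}
valid for some $\kappa>0$: if $\gamma\in[\underline{\gamma},\overline{\gamma}]$ and $\eta\in[0,\delta]$ then $\lambda_\gamma\leq\lambda_{\gamma+\eta}$, both in $J$, and $\eta=H(\lambda_\gamma)-H(\lambda_{\gamma+\eta})\geq\kappa(\lambda_{\gamma+\eta}-\lambda_\gamma)$, so $\abs{\lambda_\gamma-\lambda_{\gamma+\eta}}\leq\eta/\kappa$; the case $\eta\in[-\delta,0]$ is symmetric, and taking the supremum over $\gamma$ gives the claim with $k=1/\kappa$.

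To prove the displayed bound I would invoke the coarea formula on $M$, available because $f\in\mathcal{C}^1$ and $\nabla f\neq0$ on $f^{-1}([l,u])\supset\{\lambda_1\leq f<\lambda_2\}$: writing $\mathcal{H}^{d-1}$ for the $(d-1)$-dimensional Hausdorff measure,
\begin{equation*}
	H(\lambda_1)-H(\lambda_2)=\int_{\{\lambda_1\leq f<\lambda_2\}}f\,d\mathrm{Vol}_M=\int_{\lambda_1}^{\lambda_2}\int_{f^{-1}(t)}\frac{f}{\norm{\nabla f}_M}\,d\mathcal{H}^{d-1}\,dt .
\end{equation*}
On $f^{-1}(J)$ one has $f\geq l>0$ and $\norm{\nabla f}_M\leq C'<\infty$ (by continuity of $\nabla f$ and compactness of $f^{-1}([l,u])$, which is the setting of interest and holds, e.g., under \ref{ass:A3}), so the right-hand side is at least $\tfrac{l}{C'}\int_{\lambda_1}^{\lambda_2}\mathcal{H}^{d-1}\big(f^{-1}(t)\big)\,dt$, and the whole matter reduces to a \emph{uniform} positive lower bound $\mathcal{H}^{d-1}\big(f^{-1}(t)\big)\geq c_0>0$ for $t\in J$.

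For that bound I would fix, once and for all, points $q_0,p_0\in M$ with $f(q_0)<\lambda_{\underline{\gamma}-\delta}$ (which exists since $\Prob[f(X)<\lambda_{\underline{\gamma}-\delta}]=\underline{\gamma}-\delta>0$) and $f(p_0)>\lambda_{\overline{\gamma}+\delta}$ (which exists since $\sup f>u$), together with a minimising geodesic $\sigma$ from $q_0$ to $p_0$ (available by completeness, \ref{ass:M1}). For $\rho_0>0$ small enough (less than the injectivity radius along $\sigma$, so that $T:=\sigma\oplus\rho_0 B$ is an embedded cylinder $B^{d-1}(\rho_0)\times[0,\mathcal{d}(q_0,p_0)]$, and small enough that $f$ stays below $\lambda_{\underline{\gamma}-\delta}$ on the face of $T$ through $q_0$ and above $\lambda_{\overline{\gamma}+\delta}$ on the face through $p_0$), for every $t\in J$ the set $f^{-1}(t)\cap T$ separates the two faces of this cylinder; a standard degree/projection argument then bounds $\mathcal{H}^{d-1}\big(f^{-1}(t)\cap T\big)$—and therefore $\mathcal{H}^{d-1}\big(f^{-1}(t)\big)$—below by the $(d-1)$-volume of a cross-section of $T$, which is at least $c_1\rho_0^{d-1}$ for a constant $c_1>0$ depending only on the metric of $M$ near $\sigma$. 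This yields the uniform bound with $c_0=c_1\rho_0^{d-1}$, hence $\kappa=c_0\,l/C'$ and $k=C'/(c_0\,l)$.

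The routine points—validity of the coarea formula, nullity of $\mathcal{C}^1$ level sets, and the continuity and monotonicity of $H$ and of $\gamma\mapsto\lambda_\gamma$—I would dispatch briefly. The genuine obstacle is the uniform lower bound $\mathcal{H}^{d-1}\big(f^{-1}(t)\big)\geq c_0>0$: one must prevent the level sets $\partial L(t)=f^{-1}(t)$ from collapsing as $t$ ranges over $J$, and this is exactly what the tube around a fixed geodesic joining a point lying outside every $L(t)$ to a point lying inside every $L(t)$ accomplishes. An isoperimetric inequality for $M$ (which Assumptions~M supply, the volumes of the $L(t)$ being squeezed between $\mathrm{Vol}_M\big(L(\lambda_{\overline{\gamma}+\delta})\big)>0$ and $\mathrm{Vol}_M(\{f\geq l\})\leq 1/l<\infty$) would be an alternative, but controlling the isoperimetric profile at the—possibly large—volumes of the $L(t)$ is more delicate, so the tube argument looks the cleaner route.
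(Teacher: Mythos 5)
Your reduction is sound: the Lipschitz bound for $\gamma \mapsto \lambda_\gamma$ does follow from a two-sided inequality of the form $H(\lambda_1) - H(\lambda_2) \geq \kappa\,(\lambda_2 - \lambda_1)$ on $J$, and the coarea rewriting of $H(\lambda_1)-H(\lambda_2)$ is legitimate since $f$ is $\mathcal{C}^1$ with non-vanishing gradient on $f^{-1}(J)\subset U$. But the uniform lower bound $\mathcal{H}^{d-1}\big(f^{-1}(t)\big) \geq c_0 > 0$, which you yourself flag as the genuine obstacle, is not proven. The tube construction is a sketch, not a proof: the embeddedness of $\sigma \oplus \rho_0 B$ as a product cylinder, the separation claim, and the ``standard degree/projection argument'' each need careful justification. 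More seriously, the separation/IVT step requires $f$ to be continuous along every fibre of the tube, but the geodesic $\sigma$ (and hence $T$) need not stay inside $U$, which is the only set on which $f$ is assumed $\mathcal{C}^1$; the proposition makes no regularity assumption on $f$ outside $U$, so the intermediate-value argument is unavailable as written.

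The paper's proof sidesteps this entirely. Proposition~\ref{prop:difprobs} uses the flow of $\nabla f/\norm{\nabla f}^2$ and the Flowout/Escape lemmas to build a diffeomorphism $\Upsilon$ from $f^{-1}(l)\times[0,u-l]$ onto $f^{-1}\big([l,u]\big)$, from which it computes $H'$ explicitly and shows $H'$ is continuous and strictly negative on $(l,u)$. The Inverse Function Theorem then gives $\lambda_\gamma = H^{-1}(1-\gamma)$ differentiable with continuous derivative $-1/H'(\lambda_\gamma)$, and the Lipschitz constant $k$ is extracted by boundedness of a continuous difference-quotient function on the compact set $[\underline{\gamma},\overline{\gamma}]\times[-\delta,\delta]$. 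Thus the uniform positive lower bound on $\abs{H'}$---exactly the $\kappa$ your argument needs---falls out of continuity plus compactness, with no geometric estimate of $\mathcal{H}^{d-1}\big(f^{-1}(t)\big)$ required. If you want to rescue your coarea route, the natural repair is to use that same flow diffeomorphism to transport $f^{-1}(l)$ onto each $f^{-1}(t)$ with uniformly controlled Jacobian, but at that point the paper's direct computation of $H'$ is strictly shorter.
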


\begin{remark}
	Notice that Proposition~\ref{prop:assA5} does not establish any relation between $c$ and $k$, whereas Proposition~\ref{prop:assA4} in Section~\ref{sec:assA} ensures that Assumption~\ref{ass:A4} holds for $k = 1/c$.
\end{remark}

Under Assumption~P, $\hat{\lambda}_{\gamma , n}$ in~\eqref{eq:lambdan} is a globally consistent estimator of $\lambda_{\gamma}$ and its convergence rate is known, as the next theorem shows.

\begin{theorem}
	\label{th:lambdaconvergence}
	Let $M$ be a Riemannian manifold and $\mathcal{X}_n = \{ X_1, \ldots, X_n \}$ an i.i.d.~sample from a random point $X$ from $M$. Suppose that $X$ has a density function $f$ satisfying Assumption~P. Let $f_n$ be an estimator of $f$ verifying:
	\begin{equation*}
		D_n = \sup_{x \in M} \abs{f(x) - f_n(x)} \rightarrow 0
	\end{equation*}
	almost surely.
	
	Then,
	\begin{equation*}
		\sup_{\gamma \in [\underline{\gamma}, \overline{\gamma} ] } \abs{\lambda_{\gamma} - \hat{\lambda}_{\gamma, n}} = O_{\textrm{\rm a.s.}} \left( \max \left\lbrace D_n, \sqrt{\frac{\log n}{n}} \right\rbrace \right).
	\end{equation*}
\end{theorem}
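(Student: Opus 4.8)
The plan is to recognize $\hat{\lambda}_{\gamma,n}$ as an empirical quantile of the real random variables $f_n(X_1),\dots,f_n(X_n)$ and to compare it with $\lambda_\gamma$, the corresponding population quantile of $f(X)$, by a two-term decomposition: one term measuring the effect of replacing $f$ by $f_n$ (controlled by $D_n$) and one term measuring ordinary sampling error (controlled by a one-dimensional empirical process bound). Write $G(\lambda)=\Prob[L(\lambda)]=\Prob\big(f(X)\geq\lambda\big)$, and let $G_n(\lambda)=\Prob_n[L(\lambda)]=\frac1n\sum_{i=1}^n\mathbbm{1}(f(X_i)\geq\lambda)$ and $\hat{G}_n(\lambda)=\Prob_n\big[f_n^{-1}([\lambda,+\infty))\big]=\frac1n\sum_{i=1}^n\mathbbm{1}(f_n(X_i)\geq\lambda)$ be the two empirical counterparts, so that $\lambda_\gamma=\sup\{\lambda:G(\lambda)\geq1-\gamma\}$ and $\hat{\lambda}_{\gamma,n}=\sup\{\lambda:\hat{G}_n(\lambda)\geq1-\gamma\}$. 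Since $\sup_x|f_n(x)-f(x)|=D_n$, the inclusions $\{f(X_i)\geq\lambda+D_n\}\subset\{f_n(X_i)\geq\lambda\}\subset\{f(X_i)\geq\lambda-D_n\}$ hold for every $i$ and every $\lambda$, hence $G_n(\lambda+D_n)\leq\hat{G}_n(\lambda)\leq G_n(\lambda-D_n)$ for all $\lambda$. On the other hand, $f(X_1),\dots,f(X_n)$ are i.i.d.\ real random variables, so the Dvoretzky--Kiefer--Wolfowitz inequality applied to their common distribution gives $\Prob\big(\sup_\lambda|G_n(\lambda)-G(\lambda)|>\varepsilon\big)\leq 2e^{-2n\varepsilon^2}$; taking $\varepsilon$ a fixed multiple of $\sqrt{\log n/n}$ and invoking Borel--Cantelli yields $\Delta_n=\sup_\lambda|G_n(\lambda)-G(\lambda)|=O\big(\sqrt{\log n/n}\big)$ almost surely.

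Combining these two facts, $\hat{G}_n(\lambda)\leq G(\lambda-D_n)+\Delta_n$ and $\hat{G}_n(\lambda)\geq G(\lambda+D_n)-\Delta_n$ for every $\lambda$. I now convert these into bounds on the quantile $\hat{\lambda}_{\gamma,n}$, and this is the only point where Assumption~P enters. Set $\varepsilon_n=\max\{D_n,\Delta_n\}$ and note that for $n$ large $\Delta_n\leq\delta$ and $\gamma\pm\Delta_n\in(0,1)$, so that Assumption~P applies at those arguments. For the upper bound: if $\lambda>\lambda_{\gamma+\Delta_n}+D_n$, then $\lambda-D_n>\lambda_{\gamma+\Delta_n}$, so by definition of $\lambda_{\gamma+\Delta_n}$ we have $G(\lambda-D_n)<1-(\gamma+\Delta_n)$, whence $\hat{G}_n(\lambda)\leq G(\lambda-D_n)+\Delta_n<1-\gamma$; consequently $\hat{\lambda}_{\gamma,n}\leq\lambda_{\gamma+\Delta_n}+D_n$. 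For the lower bound: using that $G$ is left-continuous, so the supremum defining $\lambda_{\gamma-\Delta_n}$ is attained and $G(\lambda_{\gamma-\Delta_n})\geq1-(\gamma-\Delta_n)$, evaluating at $\lambda=\lambda_{\gamma-\Delta_n}-D_n$ gives $\hat{G}_n(\lambda_{\gamma-\Delta_n}-D_n)\geq G_n(\lambda_{\gamma-\Delta_n})\geq G(\lambda_{\gamma-\Delta_n})-\Delta_n\geq1-\gamma$, so $\hat{\lambda}_{\gamma,n}\geq\lambda_{\gamma-\Delta_n}-D_n$. Finally, Assumption~P bounds $|\lambda_{\gamma\pm\Delta_n}-\lambda_\gamma|\leq k\Delta_n$ uniformly over $\gamma\in[\underline{\gamma},\overline{\gamma}]$, so
\[
  \sup_{\gamma\in[\underline{\gamma},\overline{\gamma}]}\bigl|\hat{\lambda}_{\gamma,n}-\lambda_{\gamma}\bigr|
  \leq k\Delta_n+D_n\leq(k+1)\varepsilon_n
  = O\!\left(\max\left\{D_n,\sqrt{\frac{\log n}{n}}\right\}\right)
\]
almost surely, which is the assertion. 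Note that neither Assumptions~M nor the remaining content of Assumptions~L is needed: only the Lipschitz statement in Assumption~P and the classical one-dimensional empirical process bound are used.

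The step I expect to require the most care is precisely this passage from uniform control of $\hat{G}_n-G$ (together with the $D_n$-shift) to control of the quantiles, because $G$ need not be continuous: Assumption~P forbids flat stretches of $G$ near the relevant levels but still allows jumps, so one must argue entirely with the defining suprema and the left-continuity of $G$ rather than with a genuine inverse function, and must be careful with strict versus non-strict inequalities. The deliberate perturbation of the level from $\gamma$ to $\gamma\pm\Delta_n$ (that is, comparing $\hat{\lambda}_{\gamma,n}$ with $\lambda_{\gamma\pm\Delta_n}$ instead of $\lambda_\gamma$) is exactly what absorbs the non-strictness built into the definitions of $\lambda_\gamma$ and $\hat{\lambda}_{\gamma,n}$. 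The two inclusions, the DKW/Borel--Cantelli estimate, and the closing application of Assumption~P are all routine.
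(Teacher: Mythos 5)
Your proof is correct and follows essentially the same route as the paper. The paper packages the argument into two auxiliary propositions: one (Proposition~\ref{prop:levymetric}) derives the sandwich $\lambda_{\gamma-M_n}-M_n\leq\hat{\lambda}_{\gamma,n}\leq\lambda_{\gamma+M_n}+M_n$ with $M_n=\max\{D_n,S_n\}$, proved by manipulating the defining sets $\{\lambda:\cdot\geq 1-\gamma\}$ directly (so no appeal to left-continuity is needed); the other (Proposition~\ref{prop:convergenceprob}) is exactly your DKW/Borel--Cantelli bound on $S_n$; Assumption~P then closes the argument as you do. Your version keeps $D_n$ and $\Delta_n$ separate rather than taking their max, and uses a pointwise argument invoking left-continuity of $G$ in place of the paper's set-inclusion manipulation of suprema---both are minor stylistic variants of the same idea, and your care about attainment of the supremum and strict versus non-strict inequalities is well placed.
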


The proof of Theorem~\ref{th:lambdaconvergence} can be found in Section~A.3 of Supplement A \citep{suppA}. From Theorems~\ref{th:probs} and~\ref{th:lambdaconvergence} it immediately follows that $L_n(\hat{\lambda}_{\gamma, n})$ is a consistent estimator of $L (\lambda_{\gamma})$ and its convergence rate is derived.

\begin{theorem}
	\label{th:uniflevel}
	Let $M$ be a $d$-dimensional Riemannian manifold under Assumptions~M. Let $X$ be a random point from $M$ with density function $f$, and suppose that $f$ satisfies Assumptions~L. Let $\mathcal{X}_n = \{ X_1, \ldots, X_n \}$ be an i.i.d.~sample from $X$, and let $f_n$ be an estimator of $f$ such that:
	\begin{equation*}
		D_n = \sup_{x \in M} \abs{f(x) - f_n(x)} \rightarrow 0
	\end{equation*}
	almost surely. Furthermore, for any $\lambda \in [l, u]$, let $r_n (\lambda)$ be a sequence of positive random variables fulfilling
	\begin{equation*}
		\delta < r_n (\lambda) < r_0 (\lambda) - \delta, \quad \forall \lambda \in [l, u]
	\end{equation*}
	e.a.s.
	
	Hence,
	\begin{equation*}
		\sup_{\gamma \in [\underline{\gamma}, \overline{\gamma}]} \mathcal{d}_H \big( L(\lambda_{\gamma}), L_n(\hat{\lambda}_{\gamma, n}) \big)
		=
		O_{\textrm{\rm a.s.}} \left( \max \left\lbrace D_n, \sqrt{\frac{\log n}{n}}, \left( \frac{\log n}{n} \right)^{\frac{2}{d+1}} \right\rbrace \right).
	\end{equation*}
\end{theorem}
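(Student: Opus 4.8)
The plan is to combine Theorem~\ref{th:probs} with Theorem~\ref{th:lambdaconvergence} by taking $\tilde{\lambda}_{\gamma,n} = \hat{\lambda}_{\gamma,n}$, the estimator defined in~\eqref{eq:lambdan}. First I would check that the hypotheses of Theorem~\ref{th:probs} are all met: $M$ satisfies Assumptions~M, $f$ satisfies Assumptions~L, the sample is i.i.d., $D_n \to 0$ almost surely, and $\delta < r_n \leq r - \delta$ eventually almost surely. All of these are assumed directly in the statement of Theorem~\ref{th:uniflevel}. The additional input needed is that $\hat{\lambda}_{\gamma,n}$ is a valid choice of $\tilde{\lambda}_{\gamma,n}$ in Theorem~\ref{th:probs}, i.e.\ that $T_n = \sup_{\gamma \in [\underline{\gamma},\overline{\gamma}]} \abs{\hat{\lambda}_{\gamma,n} - \lambda_{\gamma}} \to 0$ almost surely, and this is precisely the conclusion of Theorem~\ref{th:lambdaconvergence}, whose hypotheses (Assumption~P on $f$ and $D_n \to 0$ a.s.) are again part of the assumptions here.

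Next I would track the rate. Theorem~\ref{th:probs} gives, almost surely,
\begin{equation*}
	\sup_{\gamma \in [\underline{\gamma}, \overline{\gamma}]} \mathcal{d}_H \big( L(\lambda_{\gamma}), L_n(\hat{\lambda}_{\gamma, n}) \big) = O \left( \max \left\lbrace D_n, T_n, \left( \frac{\log n}{n} \right)^{\frac{2}{d+1}} \right\rbrace \right),
\end{equation*}
while Theorem~\ref{th:lambdaconvergence} gives $T_n = O\big( \max\{ D_n, \sqrt{\log n / n}\, \} \big)$ almost surely. Substituting the second bound into the first and using that the $\max$ of an $O(\cdot)$ family is controlled by the $\max$ of the bounds, the term $T_n$ is absorbed into $\max\{ D_n, \sqrt{\log n/n}\, \}$, yielding
\begin{equation*}
	\sup_{\gamma \in [\underline{\gamma}, \overline{\gamma}]} \mathcal{d}_H \big( L(\lambda_{\gamma}), L_n(\hat{\lambda}_{\gamma, n}) \big) = O \left( \max \left\lbrace D_n, \sqrt{\tfrac{\log n}{n}}, \left( \tfrac{\log n}{n} \right)^{\frac{2}{d+1}} \right\rbrace \right)
\end{equation*}
almost surely, which is the claimed conclusion. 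A minor point to verify is that all the ``almost surely'' events (the a.s.\ convergence $D_n \to 0$, the e.a.s.\ bound on $r_n$, and the a.s.\ rates from both invoked theorems) hold simultaneously on an event of probability one, which follows since a countable intersection of probability-one events has probability one.

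Since both Theorem~\ref{th:probs} and Theorem~\ref{th:lambdaconvergence} are already established, there is essentially no obstacle here: the argument is a bookkeeping composition of two previously proved results, the only care needed being to confirm that Assumption~P (required by Theorem~\ref{th:lambdaconvergence}) together with the condition $[\lambda_{\underline{\gamma}}, \lambda_{\overline{\gamma}}] \subset (l,u)$ (required by Theorem~\ref{th:probs}) are both available from the hypotheses of Theorem~\ref{th:uniflevel}, which they are. The statement is therefore an immediate corollary; the ``hard part,'' such as it is, was already done in the proofs of the two ingredient theorems in Appendix~\ref{sec:lambdarate} and Appendix~\ref{sec:proofset}.
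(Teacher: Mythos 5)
Your argument is exactly the paper's: Theorem~\ref{th:uniflevel} is presented there as an immediate consequence of Theorems~\ref{th:probs} and~\ref{th:lambdaconvergence}, obtained by taking $\tilde{\lambda}_{\gamma,n}=\hat{\lambda}_{\gamma,n}$ and substituting the rate bound for $T_n$. The bookkeeping you carry out (checking hypotheses and intersecting the almost-sure events) is correct and matches the paper's intent.
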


The term $(\log (n) / n)^{1/2}$ in the convergence rate of $L_n(\hat{\lambda}_{\gamma, n})$ comes from the estimation of the level and does not suppose a penalization in practice, since it gets dominated by $D_n$ under some mild conditions. For example, under the assumptions of Remark~\ref{rem:Dn}, $D_n$ is of order $(\log (n)/n)^{1/(d + 2)}$ and, consequently, Theorem~\ref{th:uniflevel} ensures that $L_n(\hat{\lambda}_{\gamma, n})$ converges at rate $(\log (n)/n)^{1/(d + 2)}$ too.

\section{The choice of $r_n (\lambda)$}
\label{sec:rn}

The estimator $L_n (\lambda)$ depends on a choice of the radius $r_n (\lambda)$ and the density function estimator $f_n$. As previously discussed, there exist several density function estimators in the manifold setting. An example is the kernel density estimator provided by Equation~\eqref{eq:kdemanifold}. However, to the best of the authors' knowledge, the problem of selecting a radius $r_n (\lambda)$ has only been addressed in the Euclidean setting (see \citealp{RodriguezCasal2022}).
This section addresses this problem with full generality and provides a data-driven choice of the radius for manifold data.

The new choice of $r_n (\lambda)$ is an estimator of the true radius $r_0 (\lambda)$. Our proposal is to recover $r_0 (\lambda)$ with
\begin{equation}
	\label{eq:defrn}
	r_n (\lambda) = \sup \Big\lbrace r \geq 0 \colon \mathcal{d}_H \Big( \mathcal{X}_n^{+} (\lambda) \cap \big( \mathcal{X}_n^{-} (\lambda) \oplus r B \big)^c , \mathcal{X}_n^{+} (\lambda) \Big) \leq r + h_n \Big\rbrace;
\end{equation}
where $h_n$ is a sequence of positive random variables converging to zero. This estimator is based on the very same ideas as $L_{n} (\lambda)$. Just note that
\begin{equation*}
	L (\lambda) = \Psi_r \big[ L(\lambda) \big]
	\Leftrightarrow
	\mathcal{d}_H \big(L (\lambda) \ominus r B , L (\lambda) \big) \leq r
\end{equation*}
Since $L (\lambda) \ominus r B = L (\lambda) \cap \big[ L (\lambda)^c \oplus r B\big]^c$, this implies
\begin{equation}
	\label{eq:char}
	r_0(\lambda)
	=
	\sup \Big\lbrace r > 0 \colon \mathcal{d}_H \Big(L (\lambda) \cap \big[ L (\lambda)^c \oplus r B\big]^c , L (\lambda) \Big) \leq r \Big\rbrace.
\end{equation}
The estimator $r_n (\lambda)$ relies on the expression of $r_0 (\lambda)$ given by \eqref{eq:char} and replaces $L(\lambda)$ and $L(\lambda)^{c}$ by their discretizations $\mathcal{X}_n^{+} (\lambda)$ and $\mathcal{X}_n^{-} (\lambda)$, respectively. However, this substitution adds some random error, in the sense that
\begin{equation}
	\label{eq:approxerror}
	\mathcal{d}_H \big( L (\lambda) \cap \big[ L (\lambda)^c \oplus r B\big]^c , L (\lambda) \big)
	-
	\mathcal{d}_H \big( \mathcal{X}_n^{+} (\lambda) \cap \big( \mathcal{X}_n^{-} (\lambda) \oplus r B \big)^c , \mathcal{X}_n^{+} (\lambda) \big)
\end{equation}
will be different from zero with probability one. The role of the sequence $h_n$ in Equation~\eqref{eq:defrn} is to deal with this random error that naturally occurs when $L(\lambda)$ and $L(\lambda)^{c}$ are replaced by their discretizations. If $h_n$ goes to zero slower than \eqref{eq:approxerror}, then $r_n (\lambda)$ will be a consistent estimator of $r_0 (\lambda)$.

\begin{theorem}
	\label{th:rconsistent}
	Let $M$ be a Riemannian manifold under Assumptions M. Let $X$ be a random point from $M$ with density function $f$, and suppose that $f$ satisfies Assumptions L. Let $\mathcal{X}_n = \{ X_1, \ldots ,X_n \}$ be an i.i.d.~sample from X, and $f_n$ an estimator of $f$ such that
	\begin{equation*}
		D_n = \sup_{x \in M} \vert f_n(x) - f(x) \vert \to 0
	\end{equation*}
	almost surely. Take $r_0 (\lambda)$ and $r_n (\lambda)$ as in Equations~\eqref{eq:defr} and~\eqref{eq:defrn}, respectively.
	
	If $r_0 (\lambda)$ is continuous in $\lambda \in [l, u]$ (where $l$ and $u$ are the constants in Assumptions L), $h_n \to 0$ almost surely, and
	\begin{equation}
		\label{eq:hn}
		h_n^{-1} \bigg( \dfrac{\log n}{n} \bigg)^{1/d} \to 0,
		\quad \text{ and } \quad
		h_n^{-1} D_n \to 0,
	\end{equation}
	almost surely, then
	\begin{equation*}
		\sup_{\lambda \in [l, u] } \vert r_n (\lambda) - r_{0}(\lambda) \vert \to 0
	\end{equation*}
	almost surely.
\end{theorem}

The proof of Theorem~\ref{th:rconsistent} can be found in Section~A.4 of Supplement~A \citep{suppA}. As previously discussed, $h_n$ should converge to zero slower than \eqref{eq:approxerror} for consistency, and this is ensured by Equation~\eqref{eq:hn} (see Corollary~A.4.1 in \citealp{suppA}). Under the assumptions of Remark~\ref{rem:Dn}, $D_n$ is of order $(\log (n)/n)^{1/(d+2)}$, so Equation~\eqref{eq:hn} holds for $h_n = n^{-1/(d+3)}$. The more conservative choice $h_n = 1/\log(n)$ satisfies Equation~\eqref{eq:hn} for any dimension. It is also possible to select $h_n$ in a way that takes the scale of the data into account, for example by using $h_n = \sigma_n / \log (n)$, where $\sigma_n$ is the average between two different observations, i.e., $\sigma_n = 2 n^{-1} (n-1)^{-1} \sum_{1 \leq i < j \leq n} \mathcal{d} (X_i, X_j)$.

Theorem~\ref{th:rconsistent} assumes that $r_0$ is a continuous function, a hypothesis also required by \cite{RodriguezCasal2022}. It is conjectured that the continuity of $r_0$ is guaranteed if $f$ is smooth enough, although we were unable to find a proof. Besides, the continuity of $r_0$ is only needed to ensure that the convergence is uniform in $\lambda$. The estimator $r_n (\lambda)$ will always converge pointwise to $r_0 (\lambda)$ if Equation~\eqref{eq:hn} holds, even though $r_0$ is not continuous.

Theorem~\ref{th:uniform} requires that the choice of the radius satisfies Equation~\eqref{eq:Rn} for $L_n (\lambda)$ to be consistent. However, Theorem~\ref{th:rconsistent} yields $r_n (\lambda) \to r_0(\lambda)$ almost surely, and therefore $r_n (\lambda)$ does not satisfies Equation~\eqref{eq:Rn} and cannot be used directly as a choice of the radius. This problem is readily solved taking $\tilde{r}_n (\lambda) = \nu r_n (\lambda)$, where $\nu$ is any number between $0$ and $1$. This new quantity $\tilde{r}_n (\lambda)$ satisfies Equation~\eqref{eq:Rn}, so the estimator $L_n (\lambda)$ based on this radius choice is consistent and the convergence rates are known. Notice that the estimation of $r_0 (\lambda)$ does not add any penalty factor to the convergence rate of $L_n (\lambda)$.

\begin{figure}[t!]
	\centering
	\begin{subfigure}{1.7in}
		\centering
		\includegraphics[width = 1.7in]{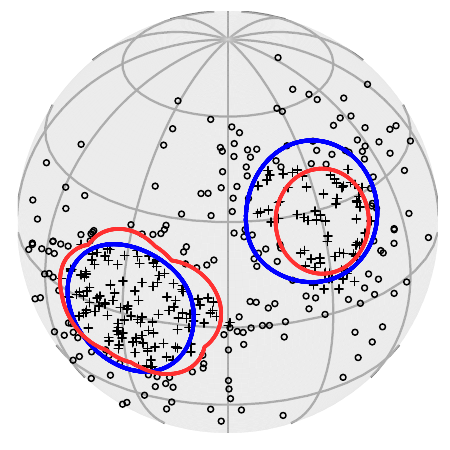}
		\caption{\label{fig:exampleLnrn400}}
	\end{subfigure}
	\hfill
	\begin{subfigure}{1.7in}
		\centering
		\includegraphics[width = 1.7in]{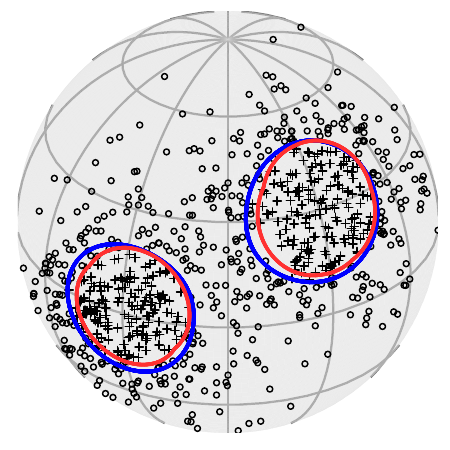}
		\caption{\label{fig:exampleLnrn800}}
	\end{subfigure}
	\hfill
	\begin{subfigure}{1.7in}
		\centering
		\includegraphics[width = 1.7in]{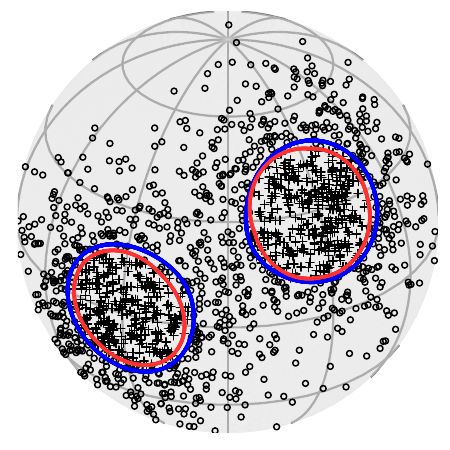}
		\caption{\label{fig:exampleLnrn1600}}
	\end{subfigure}
	\caption{\label{fig:exampleLnrn}
		HDR estimation of a mixture of two von Mises-Fisher distributions. All three graphs show the boundary of the estimator $L_n (\lambda)$ with $\lambda = 0.45$ for a mixture of the distributions $\mathrm{M}_2 (\mu_1, 10)$ and $\mathrm{M}_2 (\mu_2, 10)$ with equal weights (see equation~\eqref{eq:mus} for a definition of $\mu_1$ and $\mu_2$). Three sample sizes are considered, one for each graph: $n = 400$~(a), $n = 800$~(b) and $n = 1600$~(c). The boundary of $L_n (\lambda)$ is plotted in red in all graphs, whereas the boundary of the true HDR $L (\lambda)$ is depicted as a blue line. For reference, the points of the subsamples $\mathcal{X}^{+}_n ( \lambda )$ and $\mathcal{X}^{-}_n ( \lambda )$ are shown in the graphs as `$+$' and `$\circ$', respectively. In all three cases, the radius is chosen as $0.99r_n (\lambda)$ with $h_n = 1/\log (n)$ and $f_n$ is the kernel density estimator with von Mises-Fisher kernel (\citealp{GarciaPortugues2013}) and concentration parameter selected via cross-validation.}
\end{figure}

We apply the estimator $r_n (\lambda)$ to the same simulated data used in Figure~\ref{fig:exampleLn} in order to illustrate its behavior in practice. The values of $r_n (\lambda)$ obtained for the three sample sizes are \mbox{$r_n (\lambda) = 0.254$} ($n = 400$), $r_n (\lambda) = 0.291$ ($n = 800$), and $r_n (\lambda) = 0.296$ ($n = 1600$), where all three values were computed using $h_n = 1/\log(n)$ and rounded to three decimal places. We have also computed the corresponding HDR estimators $L_n (\lambda)$ with radii $0.99r_n (\lambda)$, which are shown in Figure \ref{fig:exampleLnrn}. The differences with Figure~\ref{fig:exampleLn} are remarkable, specially for $n = 400$ and $n = 800$. This illustrates that, even though the asymptotics are the same in both situations (in terms of consistency rates), the choice of the radius may notably impact the finite-sample behavior of the estimator.

This example also allows us to illustrate that choosing a larger radius might be counterproductive. Figure~\ref{fig:exampleLnr0} shows the same situation as in Figure~\ref{fig:exampleLnrn} but using $r_n (\lambda)$ directly as a radius for $L_n (\lambda)$, without shrinking. This slight modification drastically changes the behavior of the estimator. Now, $L_n (\lambda)$ fails to recover one of the two connected components of $L(\lambda)$. Hence, the condition $r_n (\lambda) < r_0 (\lambda)$ is not just a technical assumption required for our proof of Theorem~\ref{th:uniform}, but a true necessary condition for the consistency of $L_n (\lambda)$ that has tangible consequences in practical examples.

\begin{figure}[t]
	\centering
	\begin{subfigure}{1.7in}
		\centering
		\includegraphics[width = 1.7in]{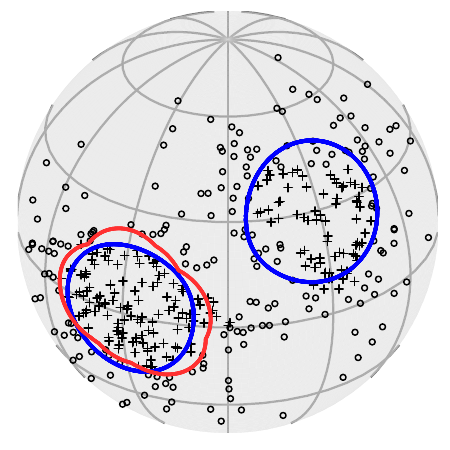}
		\caption{\label{fig:exampleLnr0400}}
	\end{subfigure}
	\hfill
	\begin{subfigure}{1.7in}
		\centering
		\includegraphics[width = 1.7in]{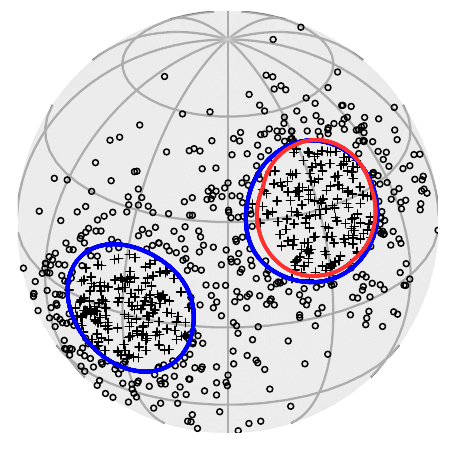}
		\caption{\label{fig:exampleLnr0800}}
	\end{subfigure}
	\hfill
	\begin{subfigure}{1.7in}
		\centering
		\includegraphics[width = 1.7in]{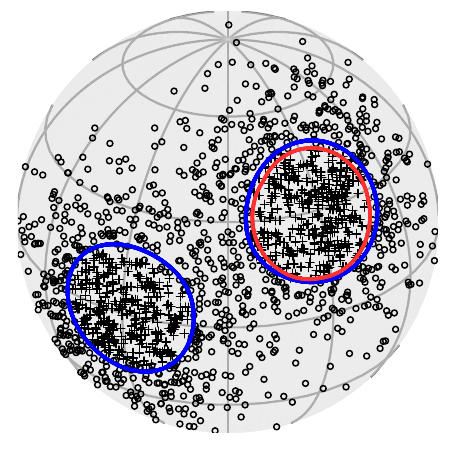}
		\caption{\label{fig:exampleLnr01600}}
	\end{subfigure}
	\caption{\label{fig:exampleLnr0} Same setup as in Figure~\ref{fig:exampleLnrn}, but the radius is chosen directly as $r_n (\lambda)$ with $h_n = 1/\log n$.}
\end{figure}

\begin{remark}
	Similarly to what was discussed in Section~\ref{sec:HDRgamma}, the problem of estimating $r_0$ could be posed in terms of the probability content $\gamma\in(0,1)$ instead of the level. The fact that $r_n (\lambda)$ converges to $r_0 (\lambda)$ uniformly in $\lambda$ guarantees that $r_n(\hat{\lambda}_{\gamma, n})$ is a consistent estimator of $r_0(\lambda_{\gamma})$ (where $\hat{\lambda}_{n, \tau}$ is the level estimator given by \eqref{eq:lambdan}), and the proof is analogous to the proof of Theorem~\ref{th:probs}. Nevertheless, notice that both Theorems~\ref{th:probs} and~\ref{th:uniflevel} only require consistency of $r_n$ with respect to the level $\lambda$, and not in terms of the probability content $\gamma$. In consequence, consistency of $r_n$ in with respect to $\gamma$ is not relevant in the problem under scope.
\end{remark}

\section{Real data illustration}
\label{sec:realdataHDR}

The performance in practice of the new estimator $L_n (\hat{\lambda}_{\gamma, n})$ is illustrated with two real data examples, supported on two different manifolds. In the first case data are a point cloud in the 2--dimensional sphere ($M = \mathbb{S}^2$), whereas in the second data are registered in a torus ($M = \mathbb{S}^1 \times \mathbb{S}^1$). 

\subsection{Long-period comets}
\label{sec:comets}

The first example is the comet orbits data introduced in Section~\ref{sec:intro}.
These data are available from the JPL Small-Body Database (\url{https://ssd.jpl.nasa.gov/tools/sbdb_query.html}) by NASA. In this repository, orbits of objects in the solar system are registered measuring two angles called inclination, $i \in [0, \pi]$, and longitude of the ascending node, $\Omega \in [0, 2\pi)$. Given an orbit with parameters $(i, \Omega)$, its normal unit vector is $v = (\sin(i) \sin(\Omega), - \sin(i) \cos( \Omega), \cos(i))' \in \mathbb{S}^2$, where the direction of the vector depends on the rotation direction of the comet (see \citealp{Jupp2003}). We considered the normal unit vectors of all the long-period comets detected up to 30th of May 2022. This data set is known to have duplicate entries, so, similarly to \cite{GarciaPortugues2023}, only comets with distinct $(i, \Omega)$ up to the second decimal position are considered, obtaining $n = 612$ comets in total. The normal unit vector of these $n = 612$ comet orbits are represented in Figure~\ref{fig:comets}.

\begin{figure}[!b]
	\centering
	\includegraphics{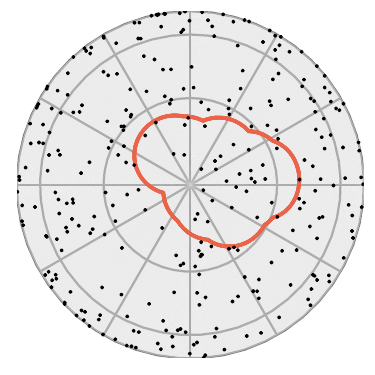}
	\hspace{0.5cm}
	\includegraphics{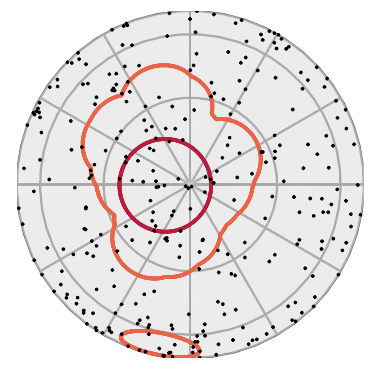}
	\caption{\label{fig:hdrcomets} Comet orbits data and their HDR estimators $L_n (\hat{\lambda}_{\gamma, n})$ for $\gamma = 0.8$ (in~{\color{laranxa}\faCircle}) and $\gamma = 0.95$ (in~{\color{vermello}\faCircle}). The data and the estimators are represented on the sphere using orthogonal projections centered on the north pole (left) and south pole (right).}
\end{figure}

As explained in Section~\ref{sec:intro}, the orbits of this kind of comets are expected to be uniformly distributed due to their formation process. Nevertheless, the observed sample have been shown to be not uniform (see \citealp{GarciaPortugues2023}), a situation that is suggested to be caused by an observational bias: astronomers search for new objects near the ecliptic more frequently than elsewhere on the celestial sphere, and therefore observations corresponding to comets located near the ecliptic plane are overrepresented within the sample. HDR estimation allows to check whether this hypothesis has empirical evidence or not, since, if true, the HDRs of the populations should be located near the north and south poles, as such normal vectors correspond to orbits near the ecliptic.

Figure~\ref{fig:hdrcomets} shows the boundary of the estimator $L_n (\hat{\lambda}_{\gamma, n})$ for $\gamma = 0.8$ and $\gamma = 0.95$. For the two choices of $\gamma$, the HDR estimator was computed with radii $0.99 r_n (\hat{\lambda}_{\gamma, n})$, choosing \mbox{$h_n = 1/\log n$ }for computing $r_n$. In both cases, $f_n$ was the kernel density estimator with von Mises-Fisher kernel (\citealp{GarciaPortugues2013}) and concentration parameter chosen via Likelihood Cross-Validation (LCV). For \mbox{$\gamma = 0.8$}, $L_n (\hat{\lambda}_{\gamma, n})$ consists of three connected components, two of them approximately centered near the north and south pole, and a third one near the equator. For $\gamma = 0.95$, $L_n (\hat{\lambda}_{\gamma, n})$ only has one connected component located near the south pole. Surprisingly, this seems to point out that one direction of rotation is slightly more common than the other. However, since normal vectors near the south pole correspond to orbits near the ecliptic plane, this behavior still provides empirical evidence supporting the observational bias theory.


We also compute the plug-in HDR estimator for directional data introduced by \cite{SaavedraNieves2021} for comparison. Figure~\ref{fig:hdrplugcomets} shows the boundary of the set $f_n^{-1} \big( [\hat{\lambda}_{\gamma, n}, + \infty] \big)$ for $\gamma = 0.8$ and $\gamma = 0.95$, where $f_n$ is the kernel density estimator with von Mises-Fisher kernel (\citealp{GarciaPortugues2013}) and LCV concentration parameter. The shape of the plug-in HDR estimators is quite similar to the ones shown in Figure~\ref{fig:hdrcomets}, having the same number connected components with roughly the sample location. Notice that, even though we draw the same conclusions from both estimators, the new proposal is much simpler to compute, store and represent than the plug-in estimator, since it just consists of a list of centers and a radius.


\begin{figure}[!b]
	\centering
	\includegraphics{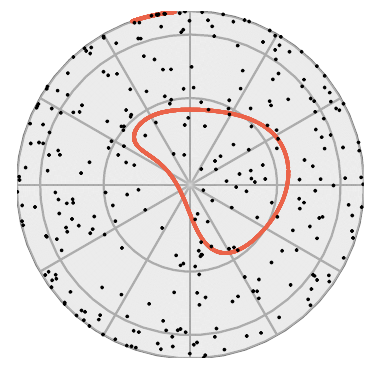}
	\hspace{0.5cm}
	\includegraphics{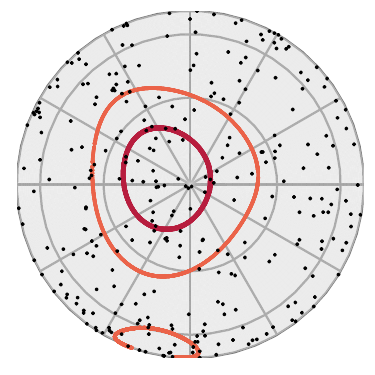}
	\caption{\label{fig:hdrplugcomets} Comet orbits data and their plug-in HDR estimators $f_n^{-1} \big( [\hat{\lambda}_{\gamma, n}, + \infty] \big)$ for $\gamma = 0.8$ (in~{\color{laranxa}\faCircle}) and $\gamma = 0.95$ (in~{\color{vermello}\faCircle}). The data and the estimators are represented on the sphere using orthogonal projections centered on the north pole (left) and south pole (right).}
\end{figure}

\subsection{Circadian gene expressions}

Several organisms present biological processes that are synchronized with the solar day. For example, mammals have multiple physiological parameters that fluctuate in an approximately 24 hours period, like blood pressure, heartbeat rate, and liver metabolism (see, for instance, \citealp{Balsalobre1998}). These biochemical oscillators are called circadian clocks, and they allow the organism to prepare to the usual activities that occur during a day.  

There are a large variety of genes involved in circadian clocks. The expression of these genes is characterized by exhibiting a periodical behavior, with a time of maximum expression that repeats every 24 hours and receives the name of \textit{phase}. This periodicity should be taken into account to correctly analyze circadian gene data. Thus, phases are often regarded as circular data, i.e., points of the unit circle $\mathbb{S}^1$. Indeed, circadian clocks are classical example of circular data, see \cite{Brown1976} and \cite{Batschelet1981}.

The genetic circadian behavior has been shown to be tissue-specific. That is, the same gene may have a circadian expression in one tissue and not in another. Or, even when a gene shows a circadian behavior in both tissues, its phase may be different in every tissue. In order to explore this tissue-specific behavior, \cite{Liu2006} analyzed 48 common circadian genes present in the heart and liver of mice detected by \cite{Storch2002}. \cite{Liu2006} studied the difference between the phases in both tissues and they discovered two groups of genes: one consisting of 30 genes with similar phases in heart and liver, and a smaller group of 10 genes with an average lag of 8 hours between the two.

The new HDR estimator allows to analyze both phases jointly, and therefore to perform a more detailed exploration of the data. Since both phases are points in the unit circle $\mathbb{S}^1$, the joint sample is a point cloud in the two-dimensional torus $\mathbb{T}^2 = \mathbb{S}^1 \times \mathbb{S}^1$. Figure~\ref{fig:hdrcircadian} shows the heart and liver phases of all 48 genes and the boundary of the estimator $L_n (\hat{\lambda}_{\gamma, n})$ for three different values of $\gamma$: $\gamma = 0.25$, $\gamma = 0.5$, and $\gamma = 0.75$. In all cases, $f_n$ was the kernel density estimator with von Mises kernel and LCV concentration parameter (\citealp{DiMarzio2011}). The three HDR estimators were computed with radii $0.99 r_n (\hat{\lambda}_{\gamma, n})$, choosing $h_n = 1/\log n$ for computing $r_n$. 


\begin{figure}[!b]
	\centering
	\includegraphics[width = 2.5in]{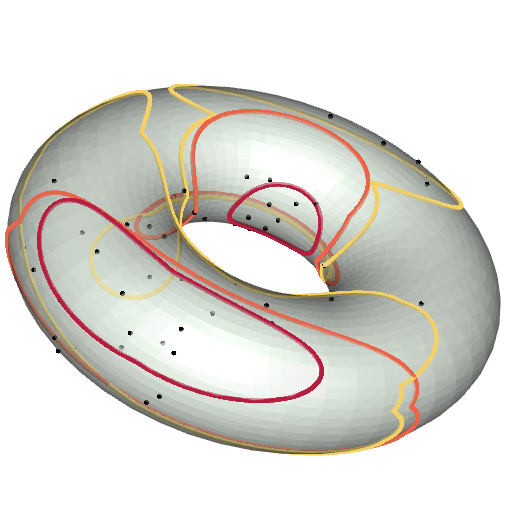}
	\hspace{0.5cm}
	\includegraphics[width = 2.5in]{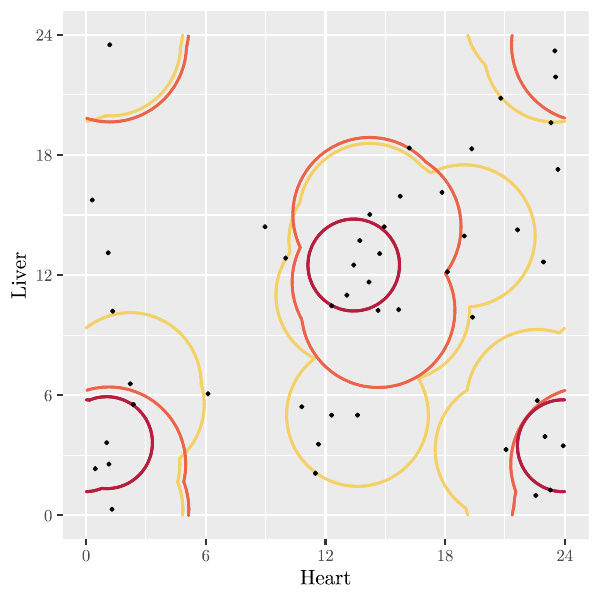}
	\caption{\label{fig:hdrcircadian} Circadian gene data (\citealp{Liu2006}) and their HDR estimator for $\gamma = 0.25$ (in~{\color{amarelo} \faCircle}), $\gamma = 0.5$ (in~{\color{laranxa} \faCircle}) and $\gamma = 0.75$ (in {\color{vermello} \faCircle}). The data are represented in the torus $\mathbb{T}^2$ (left) and in the $2$-dimensional plane (right). Phases were rescaled between $0$ and $24$ for better interpretation.}
\end{figure}

Figure~\ref{fig:hdrcircadian} shows that $L_n (\hat{\lambda}_{\gamma, n})$ is relatively close to the diagonal for the three values of $\gamma$. This fact seems to point out that circadian genes commonly exhibit a similar behavior in both tissues. In addition, the HDR estimators reveal the presence of two different clusters within the data: one centered at $(0,0)$ and other around $(12, 12)$. One group consists of \textit{morning genes}, i.e., genes with both phases near $0$, indicating that the expression of these genes peaks around the beginning of circadian day in both tissues. The second group is composed of \textit{night genes}, with phases close to the start of circadian night in heart and liver. Figure~\ref{fig:hdrcircadian} also illustrates that the new estimator does not have to be monotonic in $\gamma$ (or $\lambda$), since $L_n (\hat{\lambda}_{0.5, n})$ is not included in $L_n (\hat{\lambda}_{0.25, n})$. This non-monotonic behavior is shared by other non plug-in estimation techniques (see, for example, \citealp{Polonik1995a}). If the monotonicity of $L_n(\lambda)$ is an important feature for a particular application, the practitioner could use $\inf_{\lambda \in \Lambda} r_n (\lambda)$ as a choice for the radius, where $\Lambda \subset (0, + \infty)$ is the set of all considered levels.

\begin{figure}[!b]
	\centering
	\includegraphics[width = 2.5in]{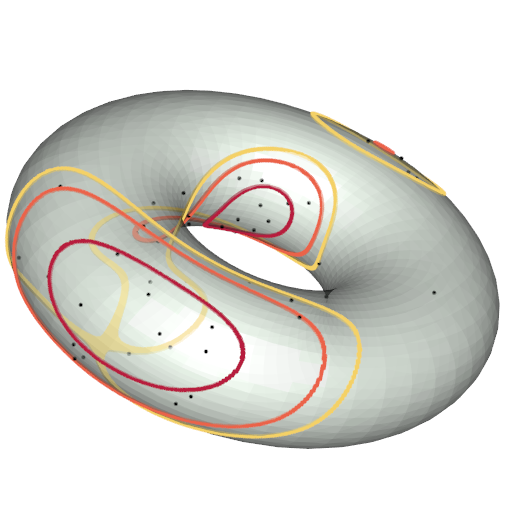}
	\hspace{0.5cm}
	\includegraphics[width = 2.5in]{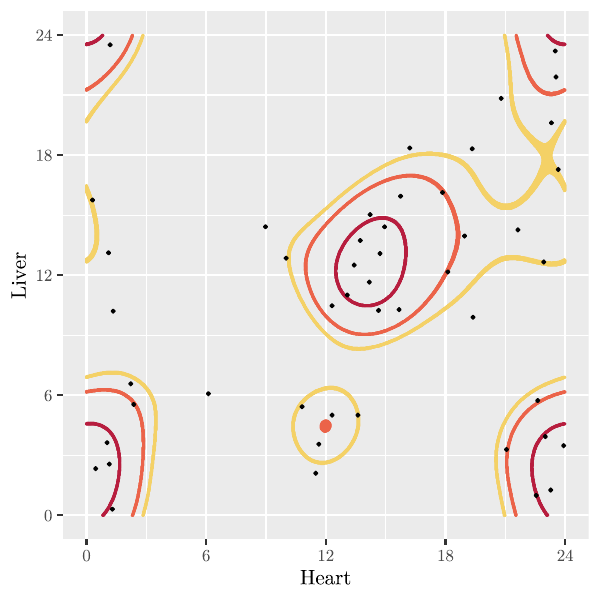}
	\caption{\label{fig:hdrplugcircadian} Circadian gene data (\citealp{Liu2006}) and their plug-in HDR estimators $f_n^{-1} \big( [\hat{\lambda}_{\gamma, n}, + \infty] \big)$ for $\gamma = 0.25$ (in~{\color{amarelo} \faCircle}), $\gamma = 0.5$ (in~{\color{laranxa} \faCircle}) and $\gamma = 0.75$ (in {\color{vermello} \faCircle}). The data are represented in the torus $\mathbb{T}^2$ (left) and in the $2$-dimensional plane (right). Phases were rescaled between $0$ and $24$ for better interpretation.}
\end{figure}

For comparison, we also estimate the HDRs of the circadian gene data using the plug-in estimator for manifold data studied by \cite{Cholaquidis2022}. Figure~\ref{fig:hdrplugcircadian} shows the boundary of $f_n^{-1} \big( [\hat{\lambda}_{\gamma, n}, + \infty] \big)$ for $\gamma = 0.25$, $\gamma = 0.5$, and $\gamma = 0.75$, where $f_n$ is the kernel density estimator with von Mises kernel and LCV concentration parameter (\citealp{DiMarzio2011}). The general conclusions drawn from Figure~\ref{fig:hdrcircadian} still hold with the plug-in HDR estimator. The three estimators in Figure~\ref{fig:hdrplugcircadian} are located near the diagonal, which indicates that most circadian genes have a similar behavior in heart and liver. The distinction between morning and night genes described above can also be appreciated in Figure~\ref{fig:hdrplugcircadian}. However, the shape of the plug-in estimators are quite different from the ones in Figure~\ref{fig:hdrcircadian}. For example, for $\gamma = 0.25$, the distinction between morning and night genes in $f_n^{-1} \big( [\hat{\lambda}_{\gamma, n}, + \infty] \big)$ is not that clear, and a new small cluster appears at $(12, 6)$. For $\gamma = 0.5$, the plug-in estimator has a third connected component with no points on it that was not present in $L_n (\hat{\lambda}_{\gamma, n})$ (see Figure~\ref{fig:hdrcircadian}). These differences may be caused by the well-known fact that cross-validation techniques lead to undersmoothed estimators of the density function (see \citealp{Hall1987a}). Of course, this behavior of the LCV bandwidth especially affects the plug-in estimator, while in the new proposal the incorporation of the shape conditions into the estimation method reduces the effect that the undersmoothed density estimator $f_n$ has on the final form of the HDR estimator $L_n (\hat{\lambda}_{\gamma, n})$.

\section{Discussion and future work}\label{sec:gurus}

We have introduced a new estimation technique for HDRs in Riemannian manifolds. This new proposal is based on the HDR estimator for Euclidean data introduced by~\cite{Walther1997} adapted to a (quite general) manifold setting. The central result is Theorem~\ref{th:uniform}, which proves the consistency of the proposed estimator and establishes its convergence rate. Theorem~\ref{th:uniform} also ensures that the convergence of this estimator is uniform with respect to the level~$\lambda$. This allows one to use the new estimator for recovering the smallest set that satisfies a specified probability content through a plug-in approach. This technique is explored in Section~\ref{sec:plug-in}, where its consistency is proved. Additionally, Section~\ref{sec:lambdan} introduces an estimator for the corresponding level, and its convergence rate is derived. The new HDR estimator depends on a choice of a radius $r_n$ for its construction. There exists a proposal for selecting $r_n$ in the Euclidean setting, but this selector cannot be extended to manifold data. To solve this issue, a data-driven selector of $r_n$ for manifold data is provided following a novel approach, and its theoretical properties are studied in Section~\ref{sec:rn}. Finally, to highlight its flexibility, the new estimator is applied to analyze two real datasets supported on the sphere and the torus.

Perspectives for new research are rich and diverse. For example, the proposed HDR estimator may studied under weaker smoothness assumptions, that is, for levels $\lambda$ where the density is not continuous, such as a uniform or a mixture of uniform distributions. The seminal paper by \cite{Walther1997} addresses this point in the Euclidean context. In Section~3.2, he considers a refinement of $L_n (\lambda)$ specially suited for this nonsmooth setting that uses Monte-Carlo resampling combined with excess mass ideas. The resulting estimator achieves the same consistency rates as the excess mass approach \citep{Polonik1995} for dimension $d > 2$, which are known to be minimax (see Remarks 6 and 7 in \citealp{Walther1997}). However, this refinement is computer-intensive, so this extension of $L_n (\lambda)$ loses one of its main advantages: its computational feasibility. For that reason, it would be extremely valuable not simply to adapt this refinement for manifold data, but to do so in a way that does not compromise the applicability of the estimator.

The choice of the shrinking factor $\nu$ is also an interesting issue for future research. We have chosen $\nu = 0.99$ in all our examples, but Figure~\ref{fig:exampleLnr0} points out that selecting a value of $\nu$ too close to $1$ may be problematic. One could try to address this issue providing an optimal choice of $\nu$. The fact that the consistency rate of $L_n (\lambda)$ does not depend on $r_n(\lambda)$ and that the biggest differences between Figure~\ref{fig:exampleLn} and Figure~\ref{fig:exampleLnrn} are found when $n = 400$ make us think that one should rely on finite-sample results in order to provide a choice of $\nu$ which is useful in practice, not on asymptotics. Since the finite-sample behavior of $L_n (\lambda)$ is beyond the scope of this paper, we leave this question for future work.

The analysis of the two data examples in Section~\ref{sec:realdataHDR} raises the question of how to visualize variability in this context, i.e., how to make inference on the HDRs of a population supported on a manifold. In the Euclidean setting, \cite{Mason2009} show the asymptotic normality of the HDR plug-in estimators in terms of Lebesgue measure distance:
\begin{equation*}
	\mathcal{d}_{\mathrm{Vol}_{\R^d}} \big( L (\lambda), f_n^{-1} [\lambda, + \infty) \big) = \mathrm{Vol}_{\R^d} \big(L (\lambda) \triangle f_n^{-1} [\lambda, + \infty) \big)
\end{equation*}
where $A \triangle B = (A \setminus B) \cup (B \setminus A)$ for all $A, B \subset \R^d$. This distance, yet useful, does not allow one to develop confidence regions for the HDRs of the population. In this sense, \cite{Chen2017} derive the asymptotic distribution of the Hausdorff distance between the HDR of level $\lambda$ and its plug-in estimator, $\mathcal{d}_{H} \big( L (\lambda), f_n^{-1} [\lambda, + \infty) \big)$, and develop a bootstrap method to establish a confidence region for the boundary of $L (\lambda)$. \cite{Mammen2013} also propose a method based on the (Euclidean) plug-in HDR estimator to construct confidence regions for $L (\lambda)$ but relying on a entirely different approach that does not involve the Hausdorff distance. To the best of our knowledge, all the contributions in this line are restricted to the Euclidean context and the plug-in estimator, and this problem remains unexplored for other kinds of HDR estimators in a general manifold. Deriving the asymptotic distribution of $\mathcal{d}_{H} \big( L (\lambda), L_n (\lambda) \big)$ will not only provide a reliable technique to construct confidence regions for $L(\lambda)$, but will also allow to find a lower bound of the number of connected components of $L(\lambda)$, which is related with the number of clusters within the population.

The HDR estimator introduced in~\eqref{eq:setest} can be an interesting preliminary tool for several tasks in data analysis. For example, as mentioned above, the different connected components of~$L (\lambda)$ can be regarded as population clusters, and consequently, HDR estimation can be used for cluster analysis. This approach has been explored in the literature for Euclidean data, see \cite{Cuevas2001}, but, to the best of the authors' knowledge, it has not been treated in the manifold context yet. In addition, the number of connected components of $L_n (\lambda)$ is an estimator of the number of clusters, which is a crucial initial parameter to set in many clustering techniques. HDR estimation is also useful for outlier detection. For small values of $\tau$, $L (\lambda_{\tau})$ approaches the substantial support of the population, and a new observation is classified as an outlier if lies outside of it. Similarly to cluster analysis, this application of HDR estimation has been investigated in the Euclidean context (see \citealp{Devroye1980} and \citealp{Baillo2001}), but it has not been considered yet for data on manifolds.
Comparison of two (or more) populations is another possible application, as one can distinguish density functions by measuring the Hausdorff distance between their HDRs. This approach allows for different weights to be assigned to different values of~$\lambda$, thereby prioritizing differences in the central parts of the populations over the tails, or vice versa.

\bibliographystyle{apalike}
\bibliography{biblio}

\appendix

\section{Properties of Minkowski operations on manifolds}
\label{sec:propMink}

\cite{Matheron1975} studied the properties of the $\Psi_r$ mapping defined in Equation~(3.2) in the main manuscript \citep{main} for subsets of the Euclidean space. For example, in the Euclidean setting $\Psi_r$ satisfies:
\begin{itemize}
	\item If $s \leq r$, then $\Psi_r (A) \subset \Psi_s (A)$.
	\item If $A \subset A'$, then $\Psi_r (A) \subset \Psi_s (A')$.
	\item If $s \leq r$, then $\Psi_s \circ \Psi_r = \Psi_r$.
\end{itemize}
In this appendix, we prove that the properties of monotonicity and idempotence are preserved in the manifold setting. First, some simple statements regarding the monotonicity of $\Psi_r$ are shown.

\begin{prop}
	\label{prop:trivial}
	Let $M$ be a Riemannian manifold, $A, A' \subset M$ two arbitrary subsets, and $r > 0$ a positive constant. Then:
	\begin{enumerate}[label = (\alph*)]
		\item $\Psi_r (A) \subset A$.
		\item If $A \subset A'$, then $\Psi_r (A) \subset \Psi_r (A')$.
		\item $\cup_{A \in \mathcal{F}} \Psi_r (A) \subset \Psi_r ( \cup_{A \in \mathcal{F}} A )$, where $\mathcal{F} \subset \mathcal{P} (M)$.
		\item $\Psi_r (A \oplus r B) = A \oplus r B$.
	\end{enumerate}
\end{prop}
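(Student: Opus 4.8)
The plan is to establish the four items in order, each one directly from the set-theoretic definitions of $\oplus$, $\ominus$ and $\Psi_r$ in \eqref{eq:Minkplus}, \eqref{eq:Minkminus} and \eqref{eq:granulometry}. Nothing about the Riemannian structure of $M$ is needed beyond the symmetry of the geodesic distance $\mathcal{d}$; in particular completeness (Assumption~\ref{ass:M1}) plays no role here.

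For (a), I would simply unfold $\Psi_r(A)=\bigcup_{B_r[x]\subset A}B_r[x]$: any $y\in\Psi_r(A)$ lies in some ball $B_r[x]$ with $B_r[x]\subset A$, hence $y\in A$. For (b), if $y\in\Psi_r(A)$ then $y\in B_r[x]$ for some $x$ with $B_r[x]\subset A\subset A'$, so $B_r[x]$ is one of the balls in the union defining $\Psi_r(A')$ and therefore $y\in\Psi_r(A')$. Item (c) is then immediate from (b): for each $A\in\mathcal{F}$ one has $A\subset\bigcup_{A'\in\mathcal{F}}A'$, so $\Psi_r(A)\subset\Psi_r\big(\bigcup_{A'\in\mathcal{F}}A'\big)$, and taking the union over $A\in\mathcal{F}$ gives the claim.

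For (d), the inclusion $\Psi_r(A\oplus rB)\subset A\oplus rB$ is the instance of (a) with $A$ replaced by $A\oplus rB$. For the reverse inclusion, I would take $y\in A\oplus rB$; by the second form of \eqref{eq:Minkplus} there is $x\in A$ with $y\in B_r[x]$. The key observation is that $B_r[x]\subset A\oplus rB$: indeed, for any $w\in B_r[x]$ we have $\mathcal{d}(w,x)\le r$, hence $x\in B_r[w]$, and since $x\in A$ this means $B_r[w]\not\subset A^c$, i.e.\ $w\in A\oplus rB$. Thus $B_r[x]$ is one of the balls appearing in the union defining $\Psi_r(A\oplus rB)$ and it contains $y$, so $y\in\Psi_r(A\oplus rB)$.

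The only step carrying any content is this last observation in (d) — that inflating $A$ by $r$ already produces a set containing the entire ball $B_r[x]$ around each of its ``centers'' $x\in A$ — and it rests solely on the symmetry of $\mathcal{d}$, so there is no real obstacle; (d) expresses precisely the fact that sets of the form $A\oplus rB$ are fixed points of $\Psi_r$, which is exactly the shape condition exploited by the estimator in \eqref{eq:setest}.
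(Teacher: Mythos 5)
Your proof is correct and is exactly the direct set-theoretic argument the paper has in mind (the paper itself leaves Proposition~\ref{prop:trivial} as an exercise). In particular you correctly isolate the one non-tautological step, in (d), that $B_r[x]\subset A\oplus rB$ for $x\in A$, which follows from the symmetry of $\mathcal{d}$ alone.
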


\begin{proof}
	The proof of this result is simple and is left to the reader.
\end{proof}

If $M$ is assumed to be a connected and complete manifold, the Hopf-Rinow Theorem (\citealp{Lee2018}, Cor.~6.21) guarantees that each pair of points in $M$ can be joined with a length-minimizing geodesic. Specifically, given two points in $M$, $x, y \in M$, there exists a geodesic curve $\alpha: [a, b] \rightarrow M$ such that
$$\alpha(a) = x, \quad \alpha(b) = y, \quad \text{ and } \quad \mathcal{d}(x, y) = \int_{a}^{b} \Vert \alpha' (t) \Vert dt.$$
Every admissible curve can be parameterized by arc length, so it will be always assumed that $\alpha: [0, \mathcal{d} (x, y)] \rightarrow M$ without loss of generality. Furthermore, since $\alpha$ minimizes the distance between $x$ and $y$, it also minimizes the distance between any two points within its path. That is to say  
\begin{equation*}
	\mathcal{d} \big( \alpha (t), \alpha (s) \big) = \abs{t - s}, \text{ for all } 0 \leq t, s \leq \mathcal{d}(x, y).
\end{equation*}

This simple property guarantees that the $\Psi_{\bullet}$ mapping and Minkowski operations behave satisfactorily when two radii are used, as the following two propositions show.
\begin{prop}
	\label{prop:randspsi}
	Let $M$ be a connected and complete Riemann manifold, $A \subset M$ and $0 < s \leq r$. The following statements hold:
	\begin{enumerate}[label = (\alph*)]
		\item $B_r [x] = \Psi_s \big( B_r [x] \big)$ for all $x \in M$.
		\item $\Psi_{r} ( A ) = \Psi_s \big( \Psi_{r} ( A ) \big)$.
		\item $\Psi_{r} ( A ) \subset \Psi_s ( A ).$
		\item If $A = \Psi_r(A)$, then $A = \Psi_s(A)$.
	\end{enumerate}
\end{prop}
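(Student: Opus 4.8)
The plan is to establish part (a) first, since it is the only place where the Riemannian geometry of $M$ genuinely enters; parts (b), (c) and (d) will then follow by purely formal manipulations with Proposition~\ref{prop:trivial}.

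For part (a), one inclusion, $\Psi_s\big(B_r[x]\big) \subset B_r[x]$, is immediate from Proposition~\ref{prop:trivial}(a). For the reverse inclusion it suffices to show that every $y \in B_r[x]$ belongs to some closed ball of radius $s$ that is contained in $B_r[x]$. Here I would use the completeness and connectedness of $M$: by the Hopf--Rinow theorem there is a unit-speed minimizing geodesic $\alpha\colon [0, \mathcal{d}(x,y)] \to M$ with $\alpha(0) = x$ and $\alpha(\mathcal{d}(x,y)) = y$, and, as recalled before the statement, $\mathcal{d}\big(\alpha(t), \alpha(s')\big) = \abs{t - s'}$ for all admissible $t, s'$. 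Setting $z = \alpha\big(\max\{0, \mathcal{d}(x,y) - s\}\big)$ one gets $\mathcal{d}(x, z) \le r - s$ (using $\mathcal{d}(x,y) \le r$ and $s \le r$) and $\mathcal{d}(z, y) = \min\{\mathcal{d}(x,y), s\} \le s$, so the triangle inequality gives $B_s[z] \subset B_r[x]$ and $y \in B_s[z]$, hence $y \in \Psi_s\big(B_r[x]\big)$. The edge case $\mathcal{d}(x,y) < s$, where $z = x$, has to be checked separately but is trivial.

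Part (b): the inclusion $\Psi_s\big(\Psi_r(A)\big) \subset \Psi_r(A)$ is Proposition~\ref{prop:trivial}(a). For the other inclusion, write $\Psi_r(A) = \bigcup \{ B_r[x] : B_r[x] \subset A \}$; by part (a) each such ball equals $\Psi_s\big(B_r[x]\big)$, and since $B_r[x] \subset \Psi_r(A)$, monotonicity (Proposition~\ref{prop:trivial}(b)) yields $\Psi_s\big(B_r[x]\big) \subset \Psi_s\big(\Psi_r(A)\big)$; taking the union over all such $x$ gives $\Psi_r(A) \subset \Psi_s\big(\Psi_r(A)\big)$. Part (c) then follows at once, since $\Psi_r(A) = \Psi_s\big(\Psi_r(A)\big) \subset \Psi_s(A)$ by Proposition~\ref{prop:trivial}(a),(b). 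Finally, part (d) is immediate from part (b): if $A = \Psi_r(A)$, then $\Psi_s(A) = \Psi_s\big(\Psi_r(A)\big) = \Psi_r(A) = A$.

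The main obstacle is the reverse inclusion in part (a): one must produce, for each point $y$ of $B_r[x]$, a suitably ``retracted'' center $z$, and this is exactly the step that forces the completeness hypothesis, so that a length-minimizing geodesic between $x$ and $y$ is available and can be truncated. Everything after that is bookkeeping with the definition of $\Psi_{\bullet}$ and the monotonicity properties already recorded in Proposition~\ref{prop:trivial}.
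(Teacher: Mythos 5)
Your proof is correct and follows essentially the same route as the paper: part (a) is established by truncating a length-minimizing geodesic (available by Hopf--Rinow on a complete connected manifold) to produce the retracted center $z$, and parts (b)--(d) follow by the monotonicity bookkeeping of Proposition~\ref{prop:trivial}. The only cosmetic differences are that you merge the paper's two cases into one via $\max\{0,\mathcal{d}(x,y)-s\}$, and in (b) you invoke Proposition~\ref{prop:trivial}(b) rather than the union inclusion~(c), but the underlying argument is the same.
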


\begin{proof}
	Step by step.
	\begin{enumerate}[label = (\alph*)]
		\item Let $y \in B_{r} [x]$. Denote by $\alpha$ the length-minimizing geodesic joining $x$ with $y$. We distinguish two cases:
		\begin{itemize}
			\item If $\mathcal{d} \big( x, y \big) \leq s$, then $y \in B_s [x] \subset B_{r} [x]$ and therefore $y \in \Psi_s \big( B_{r} [x] \big)$.
			\item If $\mathcal{d} \big( x, y \big) > s$, the value $t = \mathcal{d} \big( x, y \big) - s $ satisfies that $\mathcal{d} \big( x, \alpha (t) \big) = \mathcal{d} \big( x, y \big) - s $ and $\mathcal{d} \big( y, \alpha (t) \big) = s$. Furthermore, given any $z \in B_s \big[ \alpha(t) \big]$:
			\begin{equation*}
				\mathcal{d} \big( y, z \big) \leq \mathcal{d} \big( y, \alpha(t) \big) + \mathcal{d} \big( \alpha(t), z \big) \leq \mathcal{d} \big( x, y \big) - s + s = \mathcal{d} \big( x, y \big) \leq r.
			\end{equation*}
			So, $y \in B_s \big[ \alpha (t) \big] \subset B_{r} [x]$ and that implies $y \in \Psi_s \big( B_{r} [x] \big)$.
		\end{itemize}
		\item From (a) and Propositions~\ref{prop:trivial}a and~\ref{prop:trivial}c, the next chain of inclusions follows:
		\begin{multline*}
			\Psi_r (A) = \left(\bigcup_{B_{r} [x] \subset A} B_{r} [x]\right) =  \left(\bigcup_{B_{r} [x] \subset A} \Psi_s \big( B_{r}[x] \big) \right) \subset \\
			\subset \Psi_s \left( \bigcup_{B_{r} [x] \subset A} B_{r} [x]\right) = \Psi_s \big( \Psi_r (A) \big) \subset \Psi_r (A).
		\end{multline*}
		Hence, the equality also holds.
		\item It is straightforward from (b), $\Psi_r( A ) \subset A$ and Proposition~\ref{prop:trivial}b.
		\item From (c) and Proposition~\ref{prop:trivial}a, we deduce the next chain of inclusions
		\begin{equation*}
			A = \Psi_r ( A ) \subset \Psi_s (A) \subset A,
		\end{equation*}
		so the equality holds.\qedhere
	\end{enumerate}
\end{proof}

\begin{prop}
	\label{prop:randsmin}
	Let $M$ be a connected and complete Riemann manifold, $A \subset M$ and $r, s > 0$. Then:
	\begin{enumerate}[label = (\alph*)]
		\item $(A \oplus r B) \oplus s B = A \oplus (r + s) B$.
		\item $(A \ominus r B) \ominus s B = A \ominus (r + s) B$.
	\end{enumerate}
\end{prop}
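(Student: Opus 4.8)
The plan is to prove part (a) directly, using the triangle inequality for the easy inclusion and the existence of length-minimizing geodesics for the reverse one, and then to obtain part (b) from (a) by complementation via De Morgan's law~\eqref{eq:DeMorgan}.

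For (a), I would work with the representation $A \oplus rB = \bigcup_{x \in A} B_r[x]$ from~\eqref{eq:Minkplus}. The inclusion $(A \oplus rB) \oplus sB \subset A \oplus (r+s)B$ is immediate: if $z$ lies in the left-hand side, there are $y \in A \oplus rB$ and $x \in A$ with $\mathcal{d}(y,z) \leq s$ and $\mathcal{d}(x,y) \leq r$, so $\mathcal{d}(x,z) \leq r+s$ and hence $z \in B_{r+s}[x] \subset A \oplus (r+s)B$. For the reverse inclusion, take $z \in A \oplus (r+s)B$, so that $\mathcal{d}(x,z) \leq r+s$ for some $x \in A$. If $\mathcal{d}(x,z) \leq r$, then $z \in B_r[x] \subset A \oplus rB$ and trivially $z \in B_s[z] \subset (A \oplus rB) \oplus sB$. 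Otherwise, by the Hopf--Rinow theorem there is a length-minimizing geodesic $\alpha \colon [0, \mathcal{d}(x,z)] \to M$, parameterized by arc length, joining $x$ to $z$; setting $y = \alpha(r)$ gives $\mathcal{d}(x,y) = r$ and $\mathcal{d}(y,z) = \mathcal{d}(x,z) - r \leq s$, by the identity $\mathcal{d}\big(\alpha(t),\alpha(t')\big) = \abs{t-t'}$ recalled just before Proposition~\ref{prop:randspsi}. Thus $y \in A \oplus rB$ and $z \in B_s[y] \subset (A \oplus rB) \oplus sB$, which finishes (a).

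For (b), I would simply take complements and invoke~\eqref{eq:DeMorgan} together with part (a):
\[
	\big((A \ominus rB) \ominus sB\big)^c = (A \ominus rB)^c \oplus sB = (A^c \oplus rB) \oplus sB = A^c \oplus (r+s)B = \big(A \ominus (r+s)B\big)^c,
\]
and since two subsets of $M$ with the same complement coincide, the desired equality follows.

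The only genuinely non-trivial point is the reverse inclusion in (a), where one must actually exhibit the intermediate point $y$ lying at distance exactly $r$ from $x$ along a geodesic toward $z$; this is precisely where completeness and connectedness of $M$ are used, exactly as in the proof of Proposition~\ref{prop:randspsi}(a). The remaining steps are routine manipulations with the triangle inequality and De Morgan's laws, so I do not anticipate any further obstacle.
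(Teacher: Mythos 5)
Your proof is correct and follows essentially the same route as the paper's: the triangle inequality for the easy inclusion, the length-minimizing geodesic (via Hopf--Rinow) to exhibit the intermediate point $\alpha(r)$ for the reverse inclusion, and De Morgan's law~\eqref{eq:DeMorgan} together with part (a) for part (b). The only difference is notational (you name the points $x,y,z$ where the paper uses $y,\alpha(r),x$), so there is nothing further to flag.
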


\begin{proof}
	Let $x \in (A \oplus r B) \oplus s B$. Then, there exists a point $y \in A \oplus r B$ such that $x \in B_s [y]$. In addition, there exists a point $z\in A$ satisfying that $y \in B_r [z]$. From the triangular inequality, one derives
	\begin{equation*}
		\mathcal{d} (x, z) \leq \mathcal{d} (x, y) + \mathcal{d} (y, z) \leq r + s.
	\end{equation*}
	Therefore $x \in B_{r + s} [z]$ and then $x \in A \oplus (r + s)B$.
	
	Now let $x \in A \oplus (r + s)B$. There is a point $y \in A$ such that $x \in B_{r + s} [y]$. We distinguish two cases:
	\begin{itemize}
		\item If $\mathcal{d} (x, y) \leq r$, then $x \in A \oplus rB \subset (A \oplus r B) \oplus s B$.
		\item Assume that $\mathcal{d} (x, y) > r$. Let $\alpha: \big[ 0, \mathcal{d} (x, y) \big] \rightarrow M$ be the length-minimizing geodesic joining $y$ with $x$, that is, $\alpha$ satisfies
		$$\alpha(0) = y, \quad \alpha \big( \mathcal{d} (x, y) \big) = x, \quad \text{ and } \quad \mathcal{d}(x, y) = \int_{0}^{\mathcal{d} (x, y)} \Vert \alpha' (t) \Vert dt.$$
		Thus, $\alpha ( r ) \in B_r [ y ]$ and then $\alpha (r) \in A \oplus r B$. Moreover,
		$$d \big( \alpha(r), x \big) = d \big( x, y \big) - r \leq s$$
		Therefore $x \in B_s \big[ \alpha (r) \big]$ and so $x \in (A \oplus r B) \oplus s B$.
	\end{itemize}
	Hence, (a) holds. Part (b) is a direct consequence of (a):
	\begin{equation*}
		(A \ominus r B) \ominus s B = \big[ (A^c \oplus r B) \oplus s B \big]^c = \big[ A^c \oplus (r + s) B \big]^c = A \ominus (r + s) B. \qedhere
	\end{equation*}
\end{proof}

For closed subsets, Minkowski operations are closely related to the distance function.

\begin{prop}
	\label{prop:dist}
	Let $M$ be a connected and complete manifold, $A \subset M$ a closed subset, and $r > 0$. The following statements hold.
	\begin{enumerate}[label = (\alph*)]
		\item $A \oplus r B = \{ x \in M \colon \mathcal{d} (x, A) \leq r \}.$
		\item $A \ominus r B = \{ x \in M \colon \mathcal{d} (x, A^c) \geq r \}.$
		\item $A \oplus r B$, $A \ominus r B$ and $\Psi_{r} (A)$ are closed subsets of $M$.
	\end{enumerate} 
\end{prop}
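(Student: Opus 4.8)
The plan is to prove the three parts in order, deriving~(c) from~(a) and~(b) together with the continuity of the distance-to-a-set function, and arguing~(b) directly (rather than through De Morgan's law~\eqref{eq:DeMorgan}, which would force one to handle the fact that $A^c$ need not be closed).

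For~(a), recall from~\eqref{eq:Minkplus} that $A \oplus rB = \bigcup_{a \in A} B_r[a]$, so $x \in A \oplus rB$ holds precisely when $\mathcal{d}(x,a) \le r$ for some $a \in A$; this immediately gives $A \oplus rB \subset \{x : \mathcal{d}(x,A) \le r\}$, with no hypothesis on $M$ needed. For the converse inclusion, fix $x$ with $\mathcal{d}(x,A) \le r$ and choose $a_n \in A$ with $\mathcal{d}(x,a_n) \to \mathcal{d}(x,A)$. All the $a_n$ lie in the closed ball $B_{r+1}[x]$, which is compact because $M$ is complete (Hopf--Rinow / Heine--Borel), so along a subsequence $a_n \to a$ for some $a \in A$ (using that $A$ is closed), and continuity of $\mathcal{d}(x,\cdot)$ yields $\mathcal{d}(x,a) = \mathcal{d}(x,A) \le r$, hence $x \in B_r[a] \subset A \oplus rB$.

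For~(b), if $B_r[x] \subset A$ then every $y \in A^c$ satisfies $y \notin B_r[x]$, i.e. $\mathcal{d}(x,y) > r$, so $\mathcal{d}(x,A^c) \ge r$. Conversely, assume $\mathcal{d}(x,A^c) \ge r$ and suppose, for contradiction, that some $y \in A^c$ has $\mathcal{d}(x,y) \le r$. Since $\mathcal{d}(x,A^c) \ge r > 0$ forces $x \notin A^c$, in particular $y \ne x$, so Hopf--Rinow provides a unit-speed length-minimizing geodesic $\alpha : [0, \mathcal{d}(x,y)] \to M$ from $x$ to $y$ with $\mathcal{d}(x,\alpha(t)) = t$; because $A^c$ is open, $\alpha(t) \in A^c$ for $t$ close enough to $\mathcal{d}(x,y)$, and for any such $t < \mathcal{d}(x,y) \le r$ we get $\mathcal{d}(x,A^c) \le \mathcal{d}(x,\alpha(t)) = t < r$, a contradiction. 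Hence $B_r[x] \subset A$, that is, $x \in A \ominus rB$.

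For~(c), the map $x \mapsto \mathcal{d}(x,A)$ is $1$-Lipschitz, hence continuous, so by~(a) the set $A \oplus rB$ is the preimage of the closed set $[0,r]$ and is therefore closed; likewise $A \ominus rB = \{x : \mathcal{d}(x,A^c) \ge r\}$ is the preimage of $[r, +\infty]$ under the continuous map $x \mapsto \mathcal{d}(x,A^c)$, hence closed. Finally $\Psi_r(A) = (A \ominus rB) \oplus rB$ by~\eqref{eq:granulometry}, and since $A \ominus rB$ is now known to be a closed subset of $M$, applying the first assertion of~(c) to it shows that $\Psi_r(A)$ is closed. The only genuinely non-formal ingredient is the use of completeness of $M$ in the converse part of~(a) (and, more mildly, the existence of minimizing geodesics in~(b)), and this is the step I expect to be the crux: on a non-complete manifold the infimum defining $\mathcal{d}(x,A)$ need not be attained even when it equals $r$, and then $A \oplus rB$ can be strictly smaller than $\{x : \mathcal{d}(x,A) \le r\}$.
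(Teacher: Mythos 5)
Your proof is correct and follows essentially the same route as the paper: both use completeness of $M$ (via Hopf--Rinow) to guarantee that the infimum defining $\mathcal{d}(x,A)$ is attained when proving~(a), and both obtain~(c) as an immediate consequence of~(a) and~(b). The only differences are presentational: the paper reaches the limit point in~(a) through Cantor's intersection theorem applied to $\bigcap_n B_{r+1/n}[x] \cap A$ rather than a convergent subsequence, and in~(b) it simply asserts $\overline{B_r(x)} = B_r[x] \subset A$ where you make the same geometric fact explicit via a minimizing geodesic; your remark at the end about where completeness is genuinely used is accurate.
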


\begin{proof}
	
	Let $x \in A \oplus rB$. Then, there exists $a \in A$ such that $x \in B_r [a]$ and so $\mathcal{d} (x, A) \leq r$. Reciprocally, let $x \in M$ such that $\mathcal{d} (x, A) \leq r$. Then,
	$B_{r + 1/n} [x] \cap A \neq \emptyset$ holds for all $n \in \N$. From Cantor's intersection theorem, it follows that
	\begin{equation*}
		B_{r} [x] \cap A = \bigcap_{n \in N} B_{r + 1/n} [x] \cap A \neq \emptyset.
	\end{equation*}
	So, there exists a point $a \in B_{r} [x] \cap A$. Therefore, $x \in B_{r} [a]$ and $x \in A \oplus r B$. This concludes the proof of (a).
	
	The proof of (b) is similar. Let $x \in A \ominus rB$. Then $B_r [x] \subset A$. Therefore, $B_r[x] \cap A^c = \emptyset$ and $\mathcal{d} (x, A^c) \geq r$. Assume then that $x \in M$ fulfills $\mathcal{d} (x, A^c) \geq r$. Consequently, $B_r (x) \cap A^c = \emptyset$, that is to say, $B_r (x) \subset A$. Since $A$ is closed, it follows that $ \overline{B_r (x)} = B_r [x] \subset A$ and then $x \in A \ominus rB$. 
	
	Part (c) is a direct consequence of (a) and (b).
\end{proof}

\begin{remark}
	Note that, since $(A \oplus rB)^c = A^c \ominus rB$ for every $A \subset M$, the previous proposition guarantees that Minkowski operations and $\Psi_r$ also preserve open subsets.
\end{remark}

By Proposition~\ref{prop:dist}c, the Minkowski sum is a functional that maps non--empty compact sets onto non--empty compact sets. So, it make sense to wonder whether this mapping is continuous with respect to the Hausdorff distance. The answer is affirmative: the Minkowski sum is continuous both on the set $A$ and the radius $r$. However, we can be more precise stating how this continuity works. For that matter, we need to define the upper Hausdorff distance between two sets. Given two compact and non-empty subsets of $M$, $A_1, A_2 \in F(M)$, the upper Hausdorff distance between $A_1$ and $A_2$ is
\begin{equation}
	\label{eq:uphaus}
	\overrightarrow{\mathcal{d}_H} ( A_1, A_2 )
	=
	\inf \{ r > 0 \colon A_1 \subset A_2 \oplus r B \}.
\end{equation}
Notice that $\overrightarrow{\mathcal{d}_H} ( A_1, A_2 ) = 0$ if and only if $A_1 \subset A_2$ and
\begin{equation}
	\label{eq:haus}
	\mathcal{d}_H ( A_1, A_2 )
	=
	\max \{ \overrightarrow{\mathcal{d}_H} ( A_1, A_2 ), \overrightarrow{\mathcal{d}_H} ( A_2, A_1 ) \}.
\end{equation}
Then, $\overrightarrow{\mathcal{d}_H} ( A_1, A_2 )$ may be thought as a one--sided version of the Hausdorff distance.

The functional $\overrightarrow{\mathcal{d}_H}$ is not a proper metric, since it is not symmetric neither positive definite. Nevertheless, it $\overrightarrow{\mathcal{d}_H}$ is always non--negative and fulfills the triangular inequality. That is to say, for any $A_1, A_2, A_3 \in F(M)$, it holds that
\begin{equation*}
	\overrightarrow{\mathcal{d}_H} ( A_1, A_3 )
	\leq
	\overrightarrow{\mathcal{d}_H} ( A_1, A_2 ) + \overrightarrow{\mathcal{d}_H} ( A_2, A_3 )
\end{equation*}
(the order of the sets $A_1, A_2$ and $A_3$ in the inequality above is important, since $\overrightarrow{\mathcal{d}_H}$ is not symmetric). For this reason, some authors call $\overrightarrow{\mathcal{d}_H}$ a "pseudoquasimetric", although this term is not standard.

The upper Hausdorff distance allows for more precise characterization of the continuity of the Minkowski sum. This continuity turns out to be a consequence of the monotonicity of $\overrightarrow{\mathcal{d}_H}$ with respect to the Minkowski sum.

\begin{prop}
	\label{prop:distdecr}
	Let $M$ be a connected and complete manifold. Let $A_1, A_2 \in F(M)$ two compact and non-empty subsets of $M$ and $r > 0$ a positive constant. The following two inequalities hold:
	\begin{enumerate}[label = (\alph*)]
		\item $\overrightarrow{\mathcal{d}_H} ( A_1 \oplus r B , A_2 \oplus r B ) \leq \overrightarrow{\mathcal{d}_H} ( A_1 , A_2 )$.
		\item $\mathcal{d}_H ( A_1 \oplus r B , A_2 \oplus r B ) \leq \mathcal{d}_H ( A_1 , A_2 )$.
	\end{enumerate}
\end{prop}

\begin{proof}
	Let $\varepsilon > 0$ such that $\overrightarrow{\mathcal{d}_H} ( A_1, A_2 ) \leq \varepsilon$. Therefore,
	\begin{equation*}
		A_1 \subset A_2 \oplus \varepsilon B
		\Rightarrow
		A_1 \oplus r B \subset A_2 \oplus (r + \varepsilon) B
		\Rightarrow
		\overrightarrow{\mathcal{d}_H} ( A_1 \oplus r B , A_2 \oplus r B ) \leq \varepsilon,
	\end{equation*}
	where we have applied Proposition~\ref{prop:randsmin}a. Part~(b) follows from~(a) and \eqref{eq:haus}.
\end{proof}

\begin{prop}
	\label{prop:continuousoplus}
	Let $M$ be a connected and complete manifold. Let $A_n \subset M$ be a sequence of non--empty compact sets, $A \subset M$ a non-empty compact set, and $r_n \in [0, + \infty)$ a bounded sequence of positive numbers.
	\begin{enumerate}[label = (\alph*)]
		\item If $\overrightarrow{\mathcal{d}_H} ( A_n, A ) \to 0$, then
		$$
		\overrightarrow{\mathcal{d}_H} \big( A _n \oplus r_n B, A \oplus \limsup_{n \to + \infty} r_n B \big) \to 0.
		$$
		\item If $\overrightarrow{\mathcal{d}_H} ( A , A_n ) \to 0$, then
		$$
		\overrightarrow{\mathcal{d}_H} \big( A \oplus \liminf_{n \to + \infty} r_n B, A \oplus r_n B \big) \to 0.
		$$
		\item If $\mathcal{d}_H ( A_n, A ) \to 0$ and $r_n \to r$, then
		$$
		\mathcal{d}_H \big( A_n \oplus r_n B, A \oplus r B \big) \to 0.
		$$
	\end{enumerate}
\end{prop}

\begin{proof}
	Taking into account that $\limsup_{n \to + \infty} r_n \leq \sup_{m \geq n} r_m$ and $r_n \leq \sup_{m \geq n} r_m$ hold for every $n \in \N$, we have
	\begin{equation*}
		A_n \oplus r_n B \subset
		\big( A \oplus \limsup_{n \to + \infty} r_n B \big) \oplus (\sup_{m \geq n} r_m - \limsup_{n \to + \infty} r_n) B.
	\end{equation*}
	Then, triangular equality and Proposition~\ref{prop:distdecr}a yield
	\begin{multline*}
		\overrightarrow{\mathcal{d}_H} \big( A_n \oplus r_n B, A \oplus \limsup_{n \to + \infty} r_n B \big)
		\\
		\leq
		\overrightarrow{\mathcal{d}_H} \big( A_n \oplus r_n B, A_n \oplus \limsup_{n \to + \infty} r_n B \big)
		+
		\overrightarrow{\mathcal{d}_H} \big( A_n \oplus \limsup_{n \to + \infty} r_n B, A \oplus \limsup_{n \to + \infty} r_n B \big)
		\\
		\leq 
		\sup_{m \geq n} r_m - \limsup_{n \to + \infty} r_n + \overrightarrow{\mathcal{d}_H} \big( A_n, A \big)
		\to 0,
	\end{multline*}
	showing that Part~(a) holds.
	
	The proof of Part~(b) is completely analogous. Part~(c) is a consequence of~(a) and~(b).
\end{proof}

Proposition~\ref{prop:continuousoplus}c implies both $\overrightarrow{\mathcal{d}_H} ( A _n \oplus r_n B, A \oplus r B ) \to 0$ and $\overrightarrow{\mathcal{d}_H} ( A \oplus r B, A _n \oplus r_n B ) \to 0$, but Parts (a) and (b) of the same result ensure that these two convergences also hold under much weaker assumptions.

The relation between Hausdorff distance and Minkowski difference is way more involved, though.

\begin{prop}
	\label{prop:continuousominus}
	Let $M$ be a connected and complete manifold. Let $A_n \subset M$ be a sequence of compact sets, $A \subset M$ a compact set, and $r_n \in [0, + \infty)$ a bounded sequence of positive numbers.
	
	If $A_n \ominus r_n B \neq \emptyset$ for all $n \in \N$ and $\overrightarrow{\mathcal{d}_H} ( A _n, A ) \to 0$, then
	$$
	\overrightarrow{\mathcal{d}_H} \big( A _n \ominus r_n B, A \ominus \liminf_{n \to + \infty} r_n B \big) \to 0.
	$$
\end{prop}

\begin{proof}
	Both $A \ominus \liminf_{n \to + \infty} r_n B \subset A$ and $A_n \ominus r_n B \subset A$ hold for all $n \in \N$. Since $A$ is a compact set, Lemma 2.14 of \cite{Evans2024} ensures that we only need to show
	\begin{equation*}
		\bigcap_{n = 1}^{+\infty} \bigg( \overline{ \bigcup_{m = n}^{+\infty} A _m \ominus r_m B } \bigg)
		\subset
		A \ominus \liminf_{n \to + \infty} r_n B
	\end{equation*}
	to prove the result.
	
	The inclusion
	\begin{equation*}
		A _k \ominus r_k B
		\subset
		\bigg( \overline{ \bigcup_{m = n}^{+\infty} A _m } \bigg) \ominus \inf_{m \geq n} r_m B
	\end{equation*}
	holds for all $k \geq n$. This and Proposition~\ref{prop:dist}c guarantee that
	\begin{equation*}
		\bigcap_{n = 1}^{+\infty} \bigg( \overline{ \bigcup_{m = n}^{+\infty} A _m \ominus r_m B } \bigg)
		\subset
		\bigcap_{n = 1}^{+\infty} \Bigg( \bigg( \overline{ \bigcup_{m = n}^{+\infty} A _m } \bigg) \ominus \inf_{m \geq n} r_m B \Bigg).
	\end{equation*}
	We will show that the set in the right-hand side is contained in $A \ominus \liminf_{n \to + \infty} r_n B$. Let $x \in \bigcap_{n = 1}^{+\infty} \Big( \big( \overline{ \bigcup_{m = n}^{+\infty} A _m } \big) \ominus \inf_{m \geq n} r_m B \Big)$. Hence, 
	$$
	B_{\inf_{m \geq n} r_m} [x] \subset \overline{ \bigcup_{m = n}^{+\infty} A _m }
	$$
	for all $n \in \N$. Let $\varepsilon > 0$. There exists a natural number, $n_0 \in \N$, such that $\inf_{m \geq n} r_m \geq \liminf_{n \to + \infty} r_n - \varepsilon$ for any $n \geq n_0$. Then,
	\begin{align*}
		B_{(\liminf_{n \to + \infty} r_n - \varepsilon)} [x]
		\subset&
		\bigcap_{n = n_0}^{+\infty} B_{\inf_{m \geq n} r_m} [x]
		\\
		\subset&
		\bigcap_{n = n_0}^{+\infty} \bigg( \overline{ \bigcup_{m = n}^{+\infty} A _m } \bigg)
		=
		\bigcap_{n = 1}^{+\infty} \bigg( \overline{ \bigcup_{m = n}^{+\infty} A _m } \bigg)
		\subset
		A,
	\end{align*}
	where the last inclusion follows from Lemma 2.14 of \cite{Evans2024}. This holds for any $\varepsilon > 0$, so $B_{\liminf_{n \to + \infty} r_n} (x) \subset A$. Since $A$ is a closed set, we have
	\begin{equation*}
		\overline{B_{\liminf_{n \to + \infty} r_n} (x)}
		=
		B_{\liminf_{n \to + \infty} r_n} [x]
		\subset
		A,
	\end{equation*}
	and $x \in A \ominus \liminf_{n \to + \infty} r_n B$. Consequently, 
	\begin{equation*}
		\bigcap_{n = 1}^{+\infty} \bigg( \overline{ \bigcup_{m = n}^{+\infty} A _m \ominus r_m B } \bigg)
		\subset
		\bigcap_{n = 1}^{+\infty} \Bigg( \bigg( \overline{ \bigcup_{m = n}^{+\infty} A _m } \bigg) \ominus \inf_{m \geq n} r_m B \Bigg)
		\subset
		A \ominus \liminf_{n \to + \infty} r_n B
	\end{equation*}
	and the result holds.
\end{proof}

Broadly speaking, Proposition~\ref{prop:continuousominus} states that the Minkowski sum is upper semicontinuous with respect to the Hausdorff metric. Full continuity does not hold in general. Take, for example, the following subsets of $\R^2$:
\begin{equation*}
	A_n = B_{2(n-1)/n} \big[(0,0)\big] \cup B_{(n-1)/n} \big[(4,0)\big], \quad A = B_{2} \big[(0,0)\big] \cup B_{1} \big[(4,0)\big].
\end{equation*}
It holds that $\mathcal{d}_H (A_n, A) \to 0$, but $\mathcal{d}_H (A_n \ominus 1B, A \ominus 1B)$ does not converge to zero since $\mathcal{d}_H (A_n \ominus 1B, A \ominus 1B) \geq 3$ for every $n \in \N$.

The functional $\Psi_{r}$ has a similar behavior to the Minkowski difference. One readily sees that it is not continuous: taking $A_n$ and $A$ as before, one has that $\mathcal{d}_H ( \Psi_1 (A_n) , \Psi_1 (A) )$ does not converge to zero even though $\mathcal{d}_H (A_n, A) \to 0$. But it follows directly from Propositions~\ref{prop:continuousoplus}a and~\ref{prop:continuousominus} that $\Psi_{r}$ is upper semicontinuous.

\begin{cor}
	\label{cor:continuouspsi}
	Let $M$ be a connected and complete manifold. Let $A_n \subset M$ be a sequence of compact sets, $A \subset M$ a compact set, and $r_n \in [0, + \infty)$ a convergent sequence of positive numbers, $r_n \to r$.
	
	If $A_n \ominus r_n B \neq \emptyset$ for all $n \in \N$ and $\overrightarrow{\mathcal{d}_H} ( A _n, A ) \to 0$, then
	$$
	\overrightarrow{\mathcal{d}_H} \big( \Psi_{r_n} (A_n), \Psi_{r} (A) \big) \to 0.
	$$
\end{cor}

Distance and Minkowski operations have a stronger connection than Proposition~\ref{prop:dist} shows. For example, given a closed subset $A$, the distances $\mathcal{d} (x, A^c)$ and $\mathcal{d} (x, A \ominus r B)$ are linked by a simple and useful equation.

\begin{prop}
	\label{prop:geodesic}
	Let $M$ be a connected and complete manifold, $A \subset M$ be a closed set and $r, \delta > 0$. Assume that $\Psi_{r} (A) = A$ and $\Psi_{\delta} (A^c) = A^c$. Then 
	\begin{equation*}
		\mathcal{d} (x, A^c) + \mathcal{d} (x, A \ominus r B) = r,
	\end{equation*}
	for all $x \in A \setminus \big[ A \ominus r B \big]$.
\end{prop}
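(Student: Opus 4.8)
The plan is to prove the two inequalities $\mathcal{d}(x,A^c)+\mathcal{d}(x,A\ominus rB)\ge r$ and $\mathcal{d}(x,A^c)+\mathcal{d}(x,A\ominus rB)\le r$ separately. Throughout I write $g(y)=\mathcal{d}(y,A^c)$ and recall from Proposition~\ref{prop:dist}b that $A\ominus rB=\{y\in M\colon g(y)\ge r\}$, and from Proposition~\ref{prop:dist}c that this set is closed. First I would record the trivial facts: since $x\in A$ but $x\notin A\ominus rB$ we have $0\le\rho:=g(x)<r$; since $\Psi_r(A)=A$ and $x\in A$, the set $A\ominus rB$ is nonempty and $x\in A=(A\ominus rB)\oplus rB$, so Proposition~\ref{prop:dist}a gives $\mathcal{d}(x,A\ominus rB)\le r$, and both distances in the statement are finite. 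For the lower bound, take any $y\in A\ominus rB$; then $g(y)\ge r$, so for every $z\in A^c$ the triangle inequality yields $r\le\mathcal{d}(y,z)\le\mathcal{d}(x,y)+\mathcal{d}(x,z)$; taking the infimum over $z\in A^c$ and then over $y\in A\ominus rB$ gives $\mathcal{d}(x,A^c)+\mathcal{d}(x,A\ominus rB)\ge r$. In particular, when $\rho=0$ the claimed identity already follows from $\mathcal{d}(x,A\ominus rB)\le r$, so from now on I may assume $\rho>0$, i.e.\ $x\in\mathring{A}$, and it remains only to prove $\mathcal{d}(x,A\ominus rB)\le r-\rho$.

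To do this I would produce a good point of $A\ominus rB$ near $x$ via a distance‑realizing geodesic. Since $A\ominus rB$ is closed and closed balls in $M$ are compact (Hopf--Rinow), there is $p\in A\ominus rB$ with $\mathcal{d}(x,A\ominus rB)=\mathcal{d}(x,p)=:\tau$, and $\tau>0$ because $x\notin A\ominus rB$. The minimizer $p$ must lie on $\partial(A\ominus rB)$: otherwise a whole neighbourhood of $p$ would lie in $A\ominus rB$ and moving from $p$ slightly toward $x$ would stay in $A\ominus rB$ while decreasing the distance to $x$. Since $g$ is continuous and $A\ominus rB=\{g\ge r\}$, being a boundary point forces $g(p)=r$, that is $\mathcal{d}(p,A^c)=r$ exactly. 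Now (again by Hopf--Rinow and compactness) choose $q\in\overline{A^c}$ with $\mathcal{d}(p,q)=\mathcal{d}(p,A^c)=r$ and a unit‑speed minimizing geodesic $\gamma\colon[0,r]\to M$ with $\gamma(0)=p$ and $\gamma(r)=q$. Along $\gamma$ one has, for every $t\in[0,r]$, both $g(\gamma(t))\le\mathcal{d}(\gamma(t),q)=r-t$ (as $q\in\overline{A^c}$) and $g(\gamma(t))\ge g(p)-\mathcal{d}(p,\gamma(t))=r-t$ (as $g$ is $1$‑Lipschitz); hence $g(\gamma(t))=r-t$. Thus $\gamma$ realizes the distance to $A^c$ throughout, and $g$ decreases along $\gamma$ at unit rate from $r$ down to $0$.

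To conclude I would show that the minimizing geodesic from $p$ to $x$ is an initial segment of $\gamma$, so that $x=\gamma(\tau)$; then $\rho=g(x)=g(\gamma(\tau))=r-\tau$, whence $\mathcal{d}(x,A^c)+\mathcal{d}(x,A\ominus rB)=\rho+\tau=r$, completing the proof. The reason the two geodesics share an initial segment is that both issue from $p$ in the direction of steepest descent of $g=\mathcal{d}(\cdot,A^c)$: for $\gamma$ this is the content of the previous paragraph, while for the segment from $p$ to $x$ it follows from $p$ being the nearest point of the region $\{g\ge r\}=A\ominus rB$ to the exterior point $x$, so that this segment leaves $\{g\ge r\}$ normally. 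Making this rigorous — i.e.\ establishing that $g$ has a unique, well‑defined direction of steepest descent at the boundary point $p$, equivalently that no cut point of $A^c$ is encountered along $\gamma$ before arc length $r$ and that $\partial(A\ominus rB)$ is regular at $p$ — is the main obstacle, and it is exactly here that the hypotheses are used: $\Psi_r(A)=A$ controls the geometry of $\partial A$ from the side of $A$ (note that $x\in A=\Psi_r(A)$ already sits inside some ball $B_r[p_0]\subseteq A$), and $\Psi_\delta(A^c)=A^c$ controls it from the side of $A^c$. Everything else is elementary metric geometry together with the Hopf--Rinow compactness used above.
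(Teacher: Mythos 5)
Your lower bound $\mathcal{d}(x,A^c)+\mathcal{d}(x,A\ominus rB)\geq r$ is correct (it is the same observation the paper uses at the end of its proof, namely $\mathcal{d}(A^c, A\ominus rB)\geq r$), but the upper bound contains the genuine gap that you yourself flag, and the route you sketch to close it cannot work. Consider $M=\R^d$, $A=B_r[0]$. Then $\Psi_r(A)=A$ and $\Psi_\delta(A^c)=A^c$ hold for every $\delta>0$, so the hypotheses are satisfied, yet $A\ominus rB=\{0\}$. Your $p$ is then forced to be the origin, $\partial(A\ominus rB)=\{0\}$ is certainly not a regular hypersurface, $g(w)=r-\|w\|$ has no well-defined direction of steepest descent at $p=0$, and a minimizing geodesic $\gamma$ from $p$ to $\overline{A^c}$ is an arbitrary radius of the ball which generically does not pass through $x$. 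So the claim ``the minimizing geodesic from $p$ to $x$ is an initial segment of $\gamma$'' is simply false for the $\gamma$ you construct, and the regularity you say needs establishing actually fails under the stated hypotheses, even though the proposition itself is true. The flaw is structural: starting from the $A\ominus rB$ side, $p$ and $\gamma$ are chosen without reference to $x$, and nothing ties $\gamma$ to the segment $px$.

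The paper avoids this by working from the opposite side. It takes $p'\in\partial A$ to be a nearest point of $A^c$ to $x$ (so $\mathcal{d}(x,p')=\mathcal{d}(x,A^c)$), and uses $\Psi_\delta(A^c)=A^c$ to produce a ball $B_\delta[y]$ with $p'\in B_\delta[y]$ and $B_\delta(y)\subset A^c$, and $\Psi_r(A)=A$ to produce a ball $B_r[q]\subset A$ with $p'\in\partial B_r[q]$; these are interior/exterior tangent balls at $p'$. The disjointness of $B_\delta(y)$ and $B_{\mathcal{d}(x,p')}(x)$ (one inside $A^c$, the other inside $A$) forces the concatenation $y\to p'\to x$ to be length-minimizing, as is $y\to p'\to q$; both are geodesics issuing from $y$ with the same initial segment, so by uniqueness of geodesics they agree and $x$ lies on the geodesic from $p'$ to $q$. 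The identity then drops out of the chain
\begin{equation*}
r \geq \mathcal{d}(p',q) = \mathcal{d}(p',x) + \mathcal{d}(x,q) = \mathcal{d}(x,A^c) + \mathcal{d}(x,q) \geq \mathcal{d}(x,A^c) + \mathcal{d}(x,A\ominus rB) \geq \mathcal{d}(A^c,A\ominus rB) \geq r,
\end{equation*}
which forces equality throughout. You already noticed both ingredients (the inscribed ball $B_r[p_0]\subset A$ and the fact that $\Psi_\delta(A^c)=A^c$ controls $\partial A$ from outside), but they must be deployed at the nearest point of $A^c$ to $x$, not at the nearest point of $A\ominus rB$.
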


\begin{proof}
	Let $x$ be an arbitrary point in $A \setminus \big[ A \ominus r B \big]$, and let $p \in \partial A$ such that $\mathcal{d} (x, p) = d(x, A^c)$. First, we need to show that there exists a point $y \in A^c$ satisfying $p \in B_{\delta} [y]$ and $B_{\delta} (y) \subset A^c$.
	
	Since $p \in \partial A$, there exists a sequence of points of $A^c$ satisfying $p_n \rightarrow p$. Keeping in mind that $A^c = \Psi_{\delta} (A^c)$, for every $p_n$ there exists a center $y_n$ such that $p_n \in B_{\delta} [y_n] \subset A^c$. The sequence $p_n$ converges to $p$, so there exists a constant $R > 0$ such that $\mathcal{d} (p, p_n) < R$ for all $n \in \N$. Thus, $\mathcal{d} (p, y_n) \leq \mathcal{d} (p, p_n) + \mathcal{d} (p_n, y_n) < R + \delta$ holds for all $n \in \N$ and the sequence $y_n$ is bounded. $M$ is a complete metric space, and so $y_n$ has a convergent subsequence. For simplicity, the convergent subsequence of centers and the corresponding $p_n$'s subsequence are named as the original ones, $\{y_n \}_{n \in \N}$ and $\{p_n\}_{n \in \N}$, $y_n \rightarrow y$, $p_n \rightarrow p$. Then $\mathcal{d} (y, p) = \lim_{n \rightarrow + \infty} \mathcal{d} (y_n, p_n) \leq \delta$ and $\mathcal{d} (y, A) = \lim_{n \rightarrow + \infty} \mathcal{d} (y_n, A) \geq \delta$. From the previous statements it follows that $p \in B_{\delta} [y]$ and $B_{\delta} (y) \subset A^c$, and since $p \in \partial A \subset A$, $\mathcal{d} (p, y) = \delta$.
	
	On the other hand, since $p \in A$ and $\Psi_r (A) = A$, there exists a point $q \in A$ such that $p \in B_r [q] \subset A$. Furthermore, $p \in \partial A$, so $d(p, q) = r$.
	
	\begin{figure}[b]
		\centering
		\def\scale{1}
\begingroup%
  \makeatletter%
  \providecommand\color[2][]{%
    \errmessage{(Inkscape) Color is used for the text in Inkscape, but the package 'color.sty' is not loaded}%
    \renewcommand\color[2][]{}%
  }%
  \providecommand\transparent[1]{%
    \errmessage{(Inkscape) Transparency is used (non-zero) for the text in Inkscape, but the package 'transparent.sty' is not loaded}%
    \renewcommand\transparent[1]{}%
  }%
  \providecommand\rotatebox[2]{#2}%
  \newcommand*\fsize{\dimexpr\f@size pt\relax}%
  \newcommand*\lineheight[1]{\fontsize{\fsize}{#1\fsize}\selectfont}%
  \ifx\svgwidth\undefined%
    \setlength{\unitlength}{141.73228346bp}%
    \ifx\svgscale\undefined%
      \relax%
    \else%
      \setlength{\unitlength}{\unitlength * \real{\svgscale}}%
    \fi%
  \else%
    \setlength{\unitlength}{\svgwidth}%
  \fi%
  \global\let\svgwidth\undefined%
  \global\let\svgscale\undefined%
  \makeatother%
  \begin{picture}(1,1)%
    \lineheight{1}%
    \setlength\tabcolsep{0pt}%
    \put(0,0){\includegraphics[width=\unitlength,page=1]{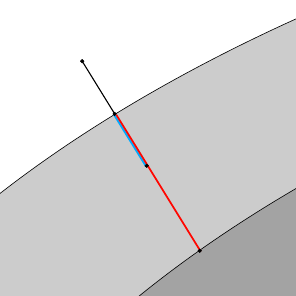}}%
    \put(0.24090717,0.81788258){\color[rgb]{0,0,0}\makebox(0,0)[lt]{\lineheight{1.25}\smash{\begin{tabular}[t]{l}$y$\end{tabular}}}}%
    \put(0.39060038,0.64757706){\color[rgb]{0,0,0}\makebox(0,0)[lt]{\lineheight{1.25}\smash{\begin{tabular}[t]{l}$p$\end{tabular}}}}%
    \put(0.49774361,0.45394236){\color[rgb]{0,0,0}\makebox(0,0)[lt]{\lineheight{1.25}\smash{\begin{tabular}[t]{l}$x$\end{tabular}}}}%
    \put(0.69335055,0.1129555){\color[rgb]{0,0,0}\makebox(0,0)[lt]{\lineheight{1.25}\smash{\begin{tabular}[t]{l}$q$\end{tabular}}}}%
    \put(0.36517134,0.47372873){\color[rgb]{0,0.66666667,1}\makebox(0,0)[lt]{\lineheight{1.25}\smash{\begin{tabular}[t]{l}$\beta_x$\end{tabular}}}}%
    \put(0.57577003,0.35152004){\color[rgb]{1,0.02352941,0}\makebox(0,0)[lt]{\lineheight{1.25}\smash{\begin{tabular}[t]{l}$\beta_q$\end{tabular}}}}%
    \put(0.27111843,0.66177039){\color[rgb]{0,0,0}\makebox(0,0)[lt]{\lineheight{1.25}\smash{\begin{tabular}[t]{l}$\alpha$\end{tabular}}}}%
  \end{picture}%
\endgroup%

		\caption{The four points $p, x, q$ and $y$, together with the length-minimizing segments $\alpha$, $\beta_x$ and $\beta_q$.}
		\label{fig:lemmadist}
	\end{figure}
	
	The next step is to prove that $p, x, q$ and $y$ lie on the same length-minimizing geodesic segment. This can be derived from uniqueness of geodesics. Let $\alpha$ be a geodesic segment joining $y$ and $p$; and $\beta_x$ a geodesic segment joining $p$ and $x$ (see Figure~\ref{fig:lemmadist} for an illustration). Since $p \in B_{\delta} [y] \cap B_{\mathcal{d} (x, p)} [x]$ and $B_{\delta} (y) \cap B_{\mathcal{d} (x, p)} (x) \subset A^c \cap A = \emptyset$, one derives that
	$$\mathcal{d} (x, y) = \mathcal{d} (x, p) + \delta = \mathcal{d} (x, p) + \mathcal{d} (p, y)$$
	and therefore, the curve
	\begin{equation*}
		\gamma_1 (t) =
		\left\lbrace
		\begin{array}{ll}
			\alpha (t), & t \in [0, \delta]\\
			\beta_x (t - \delta), & t \in [\delta, \delta + \mathcal{d}(x, p)]
		\end{array}
		\right.
	\end{equation*}
	satisfies
	\begin{equation*}
		\int_{0}^{\delta + \mathcal{d} (x, p)} \Vert \gamma_1' (t) \Vert dt = \delta + \mathcal{d} (x, p) = \mathcal{d} (x, c).
	\end{equation*}
	So, $\gamma_1$ is a length-minimizing curve joining $y$ and $x$, and then $\gamma_1$ is a geodesic segment.
	
	Let now $\beta_q$ be a geodesic segment from $p$ to $q$ (see Figure~\ref{fig:lemmadist}). Following a similar argument, one shows that
	\begin{equation*}
		\gamma_2 (t) =
		\left\lbrace
		\begin{array}{ll}
			\alpha (t), & t \in [0, \delta]\\
			\beta_q (t - \delta), & t \in [\delta, \delta + \mathcal{d} (q, p)]
		\end{array}
		\right.
	\end{equation*}
	is a length-minimizing curve and a geodesic segment from $y$ to $q$. But $\gamma_1 (t) = \gamma_2(t)$ for all $t \in [0, \delta]$, and since geodesics are unique, it follows that $\gamma_1 = \gamma_2$ in their common support. Particularly,
	\begin{equation*}
		\beta_q \big( \mathcal{d} (x, p) \big) = \gamma_2 \big( \delta + \mathcal{d} (x, p) \big) = \gamma_1 \big( \delta + \mathcal{d} (x, p) \big) = \beta_x \big( \mathcal{d} (x, p) \big) = x;
	\end{equation*}
	so the geodesic segment $\beta_q$ joins the points $p, x$ and $q$.
	
	Finally, one derives that
	\begin{multline*}
		r \geq \mathcal{d} (p, q) = \mathcal{d} (p, x) + \mathcal{d} (x, q) = \mathcal{d} (x, A^c) + \mathcal{d} (x, q) \geq \\
		\geq \mathcal{d} (x, A^c) + \mathcal{d} (x, A \ominus r B) \geq \mathcal{d} (A^c, A \ominus r B)  \geq r,
	\end{multline*}
	where the last inequality follows from $\Psi_r (A) = A$ and Proposition \ref{prop:dist}a. Consequently, $\mathcal{d} (x, A \ominus r B) + \mathcal{d} (x, A^c) = r$ holds.
\end{proof}

This section ends with three lemmas exploring how $\Psi_r$, Minkowski operations, and set difference are related. The first one yields that, if a set $A \subset M$ fulfills $\Psi_{r + s} (A) = A$, then $\Psi_s (A \ominus r B) = A \ominus r B$.

\begin{lemma}
	\label{lemma:psidifference}
	Let $M$ be a connected and complete Riemannian manifold. Let $A \subset M$ be a closed set and $s, r, \delta > 0$ some positive constants. Assume that $\Psi_{r + s} (A) = A$ and $\Psi_{\delta} (A^c) = A^c$. Then $\Psi_s (A \ominus r B) = A \ominus r B$.
\end{lemma}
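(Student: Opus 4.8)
The plan is to reduce both sides to Minkowski expressions and then read off the result from the distance identity of Proposition~\ref{prop:geodesic}. By Proposition~\ref{prop:trivial}a we already have $\Psi_s(A \ominus rB) \subset A \ominus rB$, so the whole content of the lemma is the reverse inclusion. First I would rewrite the right-hand side: unwinding the definition of $\Psi_s$ and applying Proposition~\ref{prop:randsmin}b,
\begin{equation*}
	\Psi_s(A \ominus rB) = \big( (A \ominus rB) \ominus sB \big) \oplus sB = \big( A \ominus (r+s)B \big) \oplus sB ;
\end{equation*}
and since $A \ominus (r+s)B$ is closed (Proposition~\ref{prop:dist}c), Proposition~\ref{prop:dist}a identifies this set with $\{ x \in M \colon \mathcal{d}(x, A \ominus (r+s)B) \le s \}$. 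So the lemma becomes the assertion that $\mathcal{d}\big( x, A \ominus (r+s)B \big) \le s$ for every $x \in A \ominus rB$. (If $A \ominus rB = \emptyset$ this is vacuous; otherwise $A \ne \emptyset$, and then $\Psi_{r+s}(A) = A$ forces $A \ominus (r+s)B \ne \emptyset$, so the distance is finite.)

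Next I would fix $x \in A \ominus rB$. Then $B_r[x] \subset A$, so in particular $x \in A$ and $\mathcal{d}(x, A^c) \ge r$ (any point of $A^c$ lies outside $B_r[x]$). If $x \in A \ominus (r+s)B$ there is nothing to prove. Otherwise $x \in A \setminus \big[ A \ominus (r+s)B \big]$, and since $\Psi_{r+s}(A) = A$ and $\Psi_\delta(A^c) = A^c$ by hypothesis, Proposition~\ref{prop:geodesic} applied with the radius $r+s$ in place of $r$ gives
\begin{equation*}
	\mathcal{d}(x, A^c) + \mathcal{d}\big( x, A \ominus (r+s)B \big) = r + s .
\end{equation*}
Combining this with $\mathcal{d}(x, A^c) \ge r$ yields $\mathcal{d}\big( x, A \ominus (r+s)B \big) \le s$, which is exactly what was needed.

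The argument is short once this reformulation is set up, and there is no real obstacle: the hard part is simply recognizing that Proposition~\ref{prop:geodesic}, invoked at radius $r+s$, already does all the work. The only items needing care are the algebra with iterated Minkowski differences in Proposition~\ref{prop:randsmin}b and the routine bookkeeping to check that the hypotheses of Proposition~\ref{prop:geodesic} match the two assumptions $\Psi_{r+s}(A) = A$ and $\Psi_\delta(A^c) = A^c$, together with the (trivial) verification that $A \ominus (r+s)B$ is nonempty so that the distance identity is not vacuous.
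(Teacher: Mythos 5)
Your proof is correct and follows essentially the same route as the paper's: both hinge on applying Proposition~\ref{prop:geodesic} at radius $r+s$, then combining the resulting identity $\mathcal{d}(x,A^c) + \mathcal{d}\big(x, A\ominus(r+s)B\big) = r+s$ with the bound $\mathcal{d}(x,A^c) \ge r$ (Proposition~\ref{prop:dist}b) to conclude $\mathcal{d}\big(x, A\ominus(r+s)B\big) \le s$, and then invoking Propositions~\ref{prop:randsmin}b and~\ref{prop:dist}a to translate this back to $x \in \Psi_s(A \ominus rB)$. The only difference is a cosmetic one of bookkeeping: the paper splits $A\ominus rB$ into $A\ominus(r+s)B$ (included trivially) and its complement, whereas you phrase everything uniformly as a distance estimate; your remark on nonemptiness is a harmless extra check.
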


\begin{proof}
	The content $\Psi_s (A \ominus r B) \subset A \ominus r B$ is straightforward by Proposition~\ref{prop:trivial}a. It follows from the definition of $\Psi_s$ that $A \ominus (r + s)B \subset \Psi_s (A \ominus r B)$. Let $x \in A \setminus \big[ A \ominus (r + s) B \big]$. Proposition \ref{prop:geodesic} guarantees that
	\begin{equation}
		\mathcal{d}(x, A^c) + \mathcal{d} \big( x, A \ominus (r + s) B \big) = r + s.
	\end{equation}
	Therefore, for all $x \in \big[ A \ominus r B \big] \setminus \big[ A \ominus (r + s) B \big] $, it holds that
	\begin{equation*}
		\mathcal{d} \big( x, A \ominus (r + s) B \big) = r + s - \mathcal{d}(x, A^c) \leq s;
	\end{equation*}
	where the last inequality is derived from Proposition \ref{prop:dist}b. Then, taking into account that Proposition~\ref{prop:randsmin}b ensures that $A \ominus (r + s) B = ( A \ominus rB ) \ominus sB$, Proposition \ref{prop:dist}a yields
	\begin{equation*}
		x \in \big[ (A \ominus rB) \ominus sB \big] \oplus s B = \Psi_s (A \ominus r B). \qedhere
	\end{equation*}
\end{proof}

\begin{remark}
	\label{rem:psidifference}
	It may not be obvious at first, but the hypothesis that $\Psi_{\delta} (A^c) = A^c$ for some $\delta > 0$ is necessary for Lemma \ref{lemma:psidifference} to hold. For example, consider the closed set $A \subset \R^2$ represented in Figure \ref{fig:remlemma1a} and defined by
	\[ A = B_{\sqrt{2}} \big[ (-1, 0) \big] \cup B_{\sqrt{2}} \big[ (1, 0) \big].\]
	This set satisfies that $\Psi_{\sqrt{2}} (A) = \Psi_{1 + (\sqrt{2} - 1)} (A) = A$, but $\Psi_{\delta} (A^c) = A^c$ does not hold for any $\delta > 0$.
	
	The set $A \ominus 1 B$ is represented in Figure \ref{fig:remlemma1b}. For this set, one can easily show that $\Psi_{\sqrt{2} - 1} (A \ominus 1 B) \neq A \ominus 1B$. In fact, $(0, 0) \in A \ominus 1B$ and $(0,0) \not\in \big(A \ominus \sqrt{2} B\big) \oplus (\sqrt{2} - 1) B$. Therefore \[\Psi_{\sqrt{2} - 1} (A \ominus 1B) = \big( A \ominus \sqrt{2} B \big) \oplus (\sqrt{2} - 1) B \neq A \ominus 1B.\]
	
	\begin{figure}[t]
		\centering
		\begin{subfigure}{2.7in}
			\centering
			\def\scale{1}
			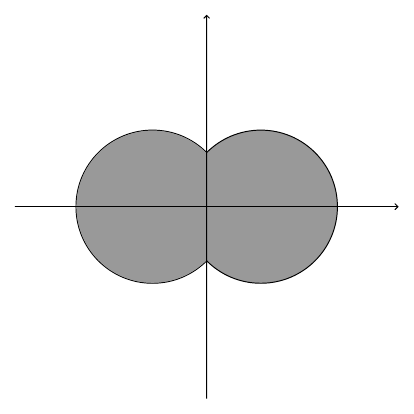
			\caption{$A$}
			\label{fig:remlemma1a}
		\end{subfigure}
		\begin{subfigure}{2.7in}
			\centering
			\def\scale{1}
			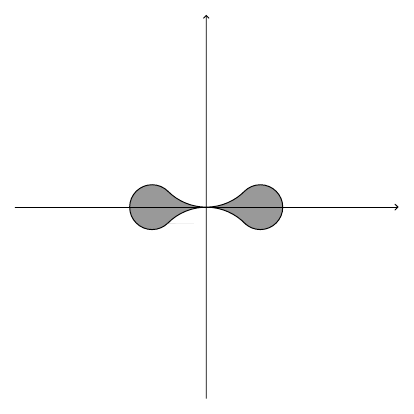
			\caption{$A \ominus 1B$}
			\label{fig:remlemma1b}
		\end{subfigure}
		\caption{\label{fig:remlemma1} The sets $A$ and $A \ominus 1B$ from Remark~\ref{rem:psidifference}.}
	\end{figure}
\end{remark}

The next lemma ensures that $\Psi_{s} \big( [A \oplus rB] \setminus A \big) = [A \oplus rB] \setminus A$ when $s$ is small enough.

\begin{lemma}
	\label{lemma:psiminusset}
	Let $M$ be a connected and complete manifold, $A \subset M$ a closed set and $r, s > 0$ two positive constants such that $s \leq r / 3$. If $\Psi_{s} (A^c) = A^c$ then $$\Psi_{s} \big( [A \oplus r B] \setminus A \big) = [A \oplus r B] \setminus A.$$
\end{lemma}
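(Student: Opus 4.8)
Since $\Psi_s(C)\subset C$ for every set $C\subset M$ by Proposition~\ref{prop:trivial}a, the plan is to establish only the reverse inclusion, namely that every point of $[A\oplus rB]\setminus A$ lies in some closed $s$-ball contained in $[A\oplus rB]\setminus A$. As $A$ is closed, Proposition~\ref{prop:dist}a lets me rewrite $[A\oplus rB]\setminus A=\{x\in M:0<\mathcal{d}(x,A)\leq r\}$. So I would fix such a point $x$, set $t=\mathcal{d}(x,A)\in(0,r]$, and aim to construct a center $y$ with $x\in B_s[y]$ and $s<\mathcal{d}(y,A)\leq r-s$. The first inequality forces $B_s[y]\cap A=\emptyset$, so $B_s[y]\subset A^c$; the second one, via $\mathcal{d}(z,A)\leq\mathcal{d}(z,y)+\mathcal{d}(y,A)$, forces $\mathcal{d}(z,A)\leq r$ for every $z\in B_s[y]$, so $B_s[y]\subset A\oplus rB$. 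Together these give $B_s[y]\subset[A\oplus rB]\setminus A$, hence $x\in\Psi_s\big([A\oplus rB]\setminus A\big)$, as required.

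I would then split into two cases according to the size of $t$. If $t\leq r-2s$ (a nonempty regime, since $r-2s\geq s>0$ by the hypothesis $s\leq r/3$), I would simply use $\Psi_s(A^c)=A^c$ at the point $x\in A^c$ to obtain a center $y$ with $x\in B_s[y]\subset A^c$; then the triangle inequality gives $\mathcal{d}(z,A)\leq 2s+t\leq r$ for every $z\in B_s[y]$, so this $y$ works. If instead $t>r-2s$ (which forces $t>s$, again because $s\leq r/3$), the crude center above need not satisfy $\mathcal{d}(y,A)\leq r-s$, so I would slide $x$ toward $A$ along a minimizing geodesic. Using completeness of $M$ I would first produce a nearest point $p\in A$ with $\mathcal{d}(x,p)=t$ (the infimum defining $\mathcal{d}(x,A)$ is attained on the compact set $A\cap B_{t+1}[x]$), and by the Hopf--Rinow theorem a unit-speed minimizing geodesic $\alpha\colon[0,t]\to M$ with $\alpha(0)=x$, $\alpha(t)=p$. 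The elementary fact I would verify here is that $\mathcal{d}(\alpha(\tau),A)=t-\tau$ for all $\tau\in[0,t]$ (``$\leq$'' from $\mathcal{d}(\alpha(\tau),p)=t-\tau$; ``$\geq$'' because any $a\in A$ with $\mathcal{d}(\alpha(\tau),a)<t-\tau$ would yield $\mathcal{d}(x,a)<t$, contradicting $\mathcal{d}(x,A)=t$). Taking $\tau_0=\max\{0,\,t-r+s\}\in[0,s]$ and $y=\alpha(\tau_0)$ then gives $x\in B_s[y]$ and $\mathcal{d}(y,A)=t-\tau_0=\min\{t,\,r-s\}$, which is strictly larger than $s$ (because $t>s$ and $r-s>s$) and at most $r-s$, as needed.

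The main obstacle I anticipate is the tension between the two constraints on the center $y$: separating the \emph{closed} ball $B_s[y]$ from the \emph{closed} set $A$ requires $\mathcal{d}(y,A)>s$, while keeping all of $B_s[y]$ inside $A\oplus rB$ requires $\mathcal{d}(y,A)\leq r-s$. Invoking $\Psi_s(A^c)=A^c$ directly at $x$ only bounds $\mathcal{d}(y,A)$ by $t+s$, which can exceed $r-s$ when $t$ is near $r$; this is precisely why the geodesic-sliding argument --- and the constant $r/3$, which is exactly what makes $r-2s\geq s$ so that the two cases cover all of $(0,r]$ --- is needed. Once both cases are settled, combining the reverse inclusion with Proposition~\ref{prop:trivial}a gives the claimed equality $\Psi_s\big([A\oplus rB]\setminus A\big)=[A\oplus rB]\setminus A$.
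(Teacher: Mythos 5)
Your proof is correct, but it takes a genuinely different route from the paper's in the regime where $x$ is near the outer boundary of $A\oplus rB$. The paper partitions $(0,r]$ into the three fixed intervals $(0,r/3]$, $(r/3,2r/3]$, and $(2r/3,r]$: the first case invokes $\Psi_s(A^c)=A^c$ exactly as you do; in the middle case the ball $B_s[x]$ centered at $x$ itself already lies in $[A\oplus rB]\setminus A$; and in the last case the paper exploits the idempotence $\Psi_s(A\oplus rB)=A\oplus rB$ (which follows for free from Propositions~\ref{prop:trivial}d and~\ref{prop:randspsi}d because $A\oplus rB$ is a dilation) to produce a center inside $A\oplus rB$ and then uses the triangle inequality to show its $s$-ball avoids $A$. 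You instead split into $(0,\,r-2s]$ and $(r-2s,\,r]$ and, in the second case, construct the center by sliding $x$ along a minimizing geodesic toward a nearest point of $A$, relying on Hopf--Rinow and the identity $\mathcal{d}(\alpha(\tau),A)=t-\tau$. The paper's route is slicker in that it sidesteps geodesics by reusing structure already at hand (the dilation $A\oplus rB$ is $\Psi_s$-invariant by construction), while yours is more constructive and makes the geometry explicit. Both hinge on $s\le r/3$: the paper needs it to fit a radius-$s$ ball between the thresholds $r/3$ and $2r/3$ and to keep the triangle-inequality slack nonnegative in the third case, whereas you need it to ensure $r-2s\ge s$, so that your second case forces $t>s$ and the slid center ends up at distance strictly greater than $s$ from $A$.
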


\begin{proof}
	If $[A \oplus r B] \setminus A = \emptyset$, the result is obvious. Contrarily, assume that $[A \oplus r B] \setminus A \neq \emptyset$. Let $x \in [A \oplus r B] \setminus A = [A \oplus r B] \cap A^c$. Since $A$ is a closed set, then $0 < \mathcal{d}(x, A) \leq r$. We distinguish three cases:
	\begin{enumerate}[label = {Case \arabic*:}, wide = \parindent, leftmargin = 0pt]
		\item $0 < \mathcal{d}(x, A) \leq r / 3$. Since $\Psi_{s} (A^c) = A^c$ there exists a point $y \in A^c$ satisfying $x \in B_{s} [y] \subset A^c$. Furthermore, every $z \in B_{s} [y]$ satisfies
		\begin{equation*}
			\mathcal{d}(z, A) \leq \mathcal{d}(z, y) + \mathcal{d}(y, x) + \mathcal{d}(x, A) \leq r;
		\end{equation*}
		so $B_{s} [y] \subset [A \oplus r B] \setminus A$. Consequently, $x \in \Psi_{s} \big( [A \oplus r B] \setminus A \big)$.
		\item $r / 3 < \mathcal{d}(x, A) \leq 2 r / 3$. Then, $B_{s}[x] \subset [A \oplus r B] \setminus A$ and it follows that $x \in \Psi_{s} \big( [A \oplus r B] \setminus A \big)$.
		\item $2 r /3 < \mathcal{d}(x, A) \leq r $. Therefore, Propositions \ref{prop:trivial}d and \ref{prop:randspsi}d guarantee that $A \oplus r B = \Psi_{s} (A \oplus r B)$. So, there exists a point $y \in A \oplus r B$ such that $x \in B_{s} [y] \subset A \oplus r B$. In addition, every $z \in B_{s} [y]$ satisfies
		\begin{multline*}
			\mathcal{d}(x, A) \leq \mathcal{d}(x, y) + \mathcal{d}(y, z) + \mathcal{d}(z, A) \leq 2 s + \mathcal{d}(z, A) \Rightarrow \\
			\Rightarrow \mathcal{d}(z, A) \geq \mathcal{d}(x, A) - 2s > \frac{2}{3}r - 2s > 0.
		\end{multline*}
		It follows that $ B_{s} [y] \subset [A \oplus r B] \setminus A$, and so $x \in \Psi_{s} \big( [A \oplus r B] \setminus A \big)$. \qedhere
	\end{enumerate}
\end{proof}

Finally, the last lemma states the relation between Minkowski operations and set difference.

\begin{lemma}
	\label{lemma:difsum}
	Let $M$ be a connected and complete Riemannian manifold. Let $A \subset M$ be a closed set and $r, s > 0$. Then $$ \big[ (A \oplus r B) \setminus A \big] \oplus sB = \big[ A \oplus (r + s) B \big] \setminus [A \ominus sB].$$
\end{lemma}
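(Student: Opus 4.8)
The plan is to prove the two set inclusions separately, after rewriting both sides. By De~Morgan's law \eqref{eq:DeMorgan} the right-hand side equals $\big[A\oplus(r+s)B\big]\cap\big(A^{c}\oplus sB\big)$, and by Proposition~\ref{prop:randsmin}a we also have $A\oplus(r+s)B=(A\oplus rB)\oplus sB$; the left-hand side is $\big[(A\oplus rB)\cap A^{c}\big]\oplus sB$. The inclusion ``$\subseteq$'' is then routine: if $z$ lies in the left-hand side, \eqref{eq:Minkplus} yields a point $x\in(A\oplus rB)\cap A^{c}$ with $\mathcal{d}(x,z)\le s$; from $x\in A\oplus rB$ we get $z\in(A\oplus rB)\oplus sB=A\oplus(r+s)B$, and from $x\in B_{s}[z]\cap A^{c}$ we get $B_{s}[z]\not\subseteq A$, i.e.\ $z\notin A\ominus sB$ by \eqref{eq:Minkminus}. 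Hence $z$ is in the right-hand side.

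The reverse inclusion ``$\supseteq$'' is the substantive part. Fix $z$ in the right-hand side; by \eqref{eq:Minkplus}--\eqref{eq:Minkminus} there are $a\in A$ with $\mathcal{d}(z,a)\le r+s$ and $w\in A^{c}$ with $\mathcal{d}(z,w)\le s$. It suffices to produce a point $x\in A^{c}$ lying within distance $r$ of some point of $A$ and within distance $s$ of $z$: such an $x$ belongs to $(A\oplus rB)\cap A^{c}$ by \eqref{eq:Minkplus}, and $z\in B_{s}[x]$ then places $z$ in $\big[(A\oplus rB)\cap A^{c}\big]\oplus sB$. To build $x$ I would travel along a minimizing unit-speed geodesic (which exists since $M$ is complete and connected, by the Hopf--Rinow theorem recalled above) and stop just after it meets $\partial A$. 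If $z\in A^{c}$, take a minimizing geodesic $\alpha$ from $z$ to $a$ and set $t_{1}=\inf\{t:\alpha(t)\in A\}$; since $A$ is closed, $0<t_{1}\le\mathcal{d}(z,a)\le r+s$, $\alpha(t_{1})\in A$, and $\alpha(t)\in A^{c}$ for $t<t_{1}$. Choose $x=\alpha(t_{1}-\varepsilon)$ with $\varepsilon=\tfrac12\min\{t_{1},r\}$ when $t_{1}\le s$, and $x=\alpha(s)$ when $t_{1}>s$: in both cases $x\in A^{c}$, $\mathcal{d}(x,z)\le s$, and $\mathcal{d}(x,\alpha(t_{1}))\le r$, the bound $t_{1}\le r+s$ being used in the second case. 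If instead $z\in A$, take a minimizing geodesic $\alpha$ from $z$ to $w$ and set $t_{1}=\sup\{t:\alpha(t)\in A\}$; closedness of $A$ gives $\alpha(t_{1})\in A$ and $0\le t_{1}<\mathcal{d}(z,w)\le s$ with $\alpha(t)\in A^{c}$ for $t>t_{1}$, and $x=\alpha(t_{1}+\varepsilon)$ with $\varepsilon=\tfrac12\min\{\mathcal{d}(z,w)-t_{1},r\}$ works. In every case $\mathcal{d}(x,\alpha(t_{1}))\le r$ with $\alpha(t_{1})\in A$, so $x\in A\oplus rB$, which finishes the argument.

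The delicate point — the main obstacle — is that $\oplus sB$ does not distribute over intersections, so ``$\supseteq$'' cannot be obtained formally and genuinely needs the geodesic construction; moreover one must handle the threshold situations (when $\mathcal{d}(z,a)$ is close to $r+s$, and when $r<s$) in which the single point $x$ must satisfy $\mathcal{d}(x,z)\le s$ and $\mathcal{d}(x,A)\le r$ at once. Splitting according to $t_{1}\le s$ versus $t_{1}>s$ is precisely what reconciles the two constraints, using $t_{1}\le r+s$; closedness of $A$ is what makes $\alpha(t_{1})\in A$ and $t_{1}>0$ (resp.\ $t_{1}<\mathcal{d}(z,w)$) legitimate.
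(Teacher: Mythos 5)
Your proof is correct and takes essentially the same route as the paper: both establish the easy inclusion by a formal/monotonicity argument, and both establish the reverse inclusion by walking along a length-minimizing geodesic (via Hopf--Rinow) to place a witness in $(A\oplus rB)\setminus A$ within distance $s$ of the given point. The only differences are bookkeeping — the paper splits into three cases according to whether the point lies in $A$, in $(A\oplus rB)\setminus A$, or outside $A\oplus rB$, and lands the witness at exactly distance $r$ from $\alpha(t_1)\in\partial A$, whereas you split on whether the point lies in $A$ or $A^{c}$ and land the witness an $\varepsilon$ past the boundary or at arc-length $s$.
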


\begin{proof}
	The first inclusion can be derived directly:
	\begin{align*}
		\big[ (A \oplus r B) \setminus A \big] \oplus sB =& \big[ (A \oplus r B) \cap A^c \big] \oplus sB \\
		\subset& \big[ A \oplus (r + s) B \big] \cap \big[ A^c \oplus s B \big] \\
		=& \big[ A \oplus (r + s) B \big] \cap \big[ A \ominus s B \big]^c \\
		=& \big[ A \oplus (r + s) B \big] \setminus \big[ A \ominus sB \big],
	\end{align*}
	where we have applied Proposition~\ref{prop:randsmin}a. Let $x \in \big[ A \oplus (r + s) B \big] \cap \big[ A^c \oplus sB \big]$. We distinguish three cases:
	\begin{enumerate}[label = Case \arabic*:, wide = \parindent, leftmargin = 0pt]
		\item $x \in A$. Since $x \in A^c \oplus sB$, there exists a point $y \in A^c$ such that $x \in B_s [y]$. If $y \in A \oplus r B$ the inclusion holds. Assume that $y \notin A \oplus r B$. Let $\alpha$ be a geodesic segment joining $x$ with $y$, $\alpha : [0, \mathcal{d} (x, y)] \rightarrow M$. Define
		\begin{equation*}
			t_1 = \max \Big\lbrace t \in \big[ 0, \mathcal{d} (x, y) \big] \colon \alpha (t) \in A \Big\rbrace.
		\end{equation*}
		By definition $\alpha ( t_1 ) \in A$. Since $y \notin A \oplus rB$, Proposition \ref{prop:dist}a ensures that 
		$$r < \mathcal{d} \big( \alpha (t_1), y \big) = \mathcal{d} (x, y) - t_1,$$
		or equivalently $r + t_1 < \mathcal{d} (x, y)$. Then, the point $\alpha (t_1 + r)$ satisfies $\alpha(t_1 + r) \in (A \oplus rB) \setminus A$ and $\mathcal{d} \big( x, \alpha(t_1 + r) \big) < \mathcal{d} (x, y) \leq s$, and so $x \in \big[ (A \oplus r B) \setminus A \big] \oplus sB$.
		\item $x \notin A$ and $x \in A \oplus rB$. It is straightforward that $x \in \big[ (A \oplus r B) \setminus A \big] \oplus sB$.
		\item $x \notin A \oplus rB$. Since $x \in (A \oplus r B) \oplus s B$ by Proposition~\ref{prop:randsmin}a, there exists a point $y \in A \oplus r B$ such that $x \in B_s [y]$. If $y \in A^c$ the result holds. Suppose that $y \in A$ and let $\alpha$ be a geodesic segment joining $x$ with $y$, $\alpha : \big[0, \mathcal{d} (x, y) \big] \rightarrow M$. Define
		\begin{equation*}
			t_2 = \min \Big\lbrace t \in \big[ 0, \mathcal{d} (x, y) \big] \colon \alpha ( t ) \in A \Big\rbrace.
		\end{equation*}
		By definition $\alpha ( t_2) \in A$. Since $x \notin A \oplus rB$, Proposition \ref{prop:dist}a shows that $$r < \mathcal{d} \big( x, \alpha (t_2) \big) = t_2.$$
		Then, $\alpha( t_2 - r) \in (A \oplus rB) \setminus A$ and 
		$$\mathcal{d} \big( x, \alpha( t_2 - r) \big) < \mathcal{d} (x, y) \leq s.$$
		So, $x \in \big[ (A \oplus r B) \setminus A \big] \oplus sB$. \qedhere
	\end{enumerate}
\end{proof}

\section{Consistency of $L_n (\lambda)$, $\hat{\lambda}_{\gamma, n}$ and $r_n (\lambda)$}
\label{sec:appB}

This appendix contains all the proofs related with the consistency of the estimators $L_n (\lambda)$, $\hat{\lambda}_{\gamma, n}$ and $r_n (\lambda)$. The appendix is organized in three sections. Section~\ref{sec:proofset} is fully devoted to the proof of Theorem~\ref{th:uniform}. Section~\ref{sec:lambdarate} contains the proofs of Theorems~\ref{th:probs} and~\ref{th:lambdaconvergence}. Finally, Section~\ref{sec:rnproof} contains the proof of Theorem~\ref{th:rconsistent}.

\subsection{Proof of Theorem~4.2 \label{sec:proofset}}

This section is fully devoted to the proof of Theorem~\ref{th:uniform}. The proof of this results relies on several auxiliary results that must be introduced. The basis of the HDR estimator $L_n(\lambda)$ defined in~\eqref{eq:setest} consists in using $\mathcal{X}^{+}_n (\lambda) \cap \big( \mathcal{X}^{-}_n (\lambda) \oplus r_n (\lambda) B \big)^c$ as a discretization of the set $L (\lambda) \ominus r_n (\lambda) B$, and then recovering $L (\lambda)$ by inflating this discretization. Lemma~\ref{lemma:covering} below studies the consistency of the last step of this procedure. If $\mathcal{X}_n$ is an i.i.d.~sample from a given density, will $(\mathcal{X}_n \cap A) \oplus r B$ eventually recover the set $A \oplus r B$? The answer is affirmative, assuming some general conditions on the set $A$.

\begin{lemma}
	\label{lemma:covering}
	Let $M$ be a Riemannian manifold under Assumptions M and $f:M \rightarrow \R$ a density function. Let $\mathcal{X}_n = \{ X_1, \ldots, X_n \}$ be an i.i.d.~sample from the density function $f$.
	
	Let $A \subset M$ a bounded set such that $f(x) \geq b > 0$ for all $x \in A$. Let $ r, \delta, \varepsilon > 0$ satisfy $\varepsilon < \delta \leq r$.
	\begin{enumerate}[label = (\alph*)]
		\item If $\Psi_{\delta} (A) = A$, then
		\begin{multline*}
			\Prob \big[ A \oplus (r - \varepsilon) B \not\subset (\mathcal{X}_n \cap A) \oplus r B \big] \leq D \left( \frac{\varepsilon}{2}, A \oplus r B \right) \times \\
			\times \exp \left[ -nab \min{\left\lbrace r - \frac{\varepsilon}{2}, \delta, \rho \right\rbrace}^{(d-1)/2} \min \left\lbrace \frac{\varepsilon}{2} , \rho \right\rbrace^{(d+1)/2} \right];
		\end{multline*}
		where $\rho$ and $a$ are the constants in Assumption (M2) and $D (\cdot, \cdot)$ is the functional defined in Proposition~\ref{prop:M2impliesM3}.
		\item The following inequality holds:
		\begin{multline*}
			\Prob \Big[ S \oplus (r - \varepsilon) B \not\subset (\mathcal{X}_n \cap S) \oplus r B  \text{ for some } S \subset A \text{ such that } \Psi_{\delta} (S) = S \Big]  \\
			\leq D \left( \frac{\varepsilon}{4}, A \oplus r B \right)^2 \exp \left[ -nab \min{\left\lbrace \frac{\delta}{2}, \rho \right\rbrace}^{(d-1)/2} \min \left\lbrace \frac{\varepsilon}{4} , \rho \right\rbrace^{(d+1)/2} \right];
		\end{multline*}
		where $\rho$ and $a$ are the constants in Assumption (M2) and $D (\cdot, \cdot)$ is the functional defined in Proposition~\ref{prop:M2impliesM3}.
	\end{enumerate}
\end{lemma}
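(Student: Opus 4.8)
The plan is to prove both parts by the same mechanism: replace the uncountable collection of events by finitely many by discretising $A \oplus r B$ with a net of mesh proportional to $\varepsilon$; bound each elementary event by using Assumption~\ref{ass:M2} to exhibit a ``lens'' $B_{r_1}[\cdot]\cap B_{r_2}[\cdot]$ of guaranteed volume sitting inside the relevant set; pass from volume to probability via $f\ge b$ on $A$, and from probability to an exponential tail via independence of the sample; finally take a union bound, controlling the number of net points (or pairs) through Assumption~\ref{ass:M3} (Proposition~\ref{prop:M2impliesM3}). Part~(a), where the target set $A$ is fixed, is the clean special case; part~(b) is its version made uniform in $S$.

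For part~(a): let $T$ be a maximal $\varepsilon/2$-separated subset of $A \oplus (r - \varepsilon) B$, so $|T| \leq D(\varepsilon/2, A \oplus r B)$ and $T$ is an $\varepsilon/2$-net of $A \oplus (r - \varepsilon) B$. For $z \in T$ pick $c_z \in A$ with $\mathcal{d}(z, c_z) \leq r - \varepsilon$ and, using $\Psi_\delta(A) = A$, a closed ball $B_\delta[p_z] \subset A$ with $c_z \in B_\delta[p_z]$; then $\mathcal{d}(z, p_z) \leq r - \varepsilon + \delta$. Put $V_z = B_{r - \varepsilon/2}[z] \cap B_\delta[p_z] \subset B_r[z] \cap A$. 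Since the sum of the two radii exceeds $\mathcal{d}(z, p_z)$ by at least $\varepsilon/2$, Assumption~\ref{ass:M2} gives $\mathrm{Vol}_M(V_z) \geq a \min\{r - \varepsilon/2, \delta, \rho\}^{(d-1)/2}\min\{\varepsilon/2, \rho\}^{(d+1)/2}$, hence $\Prob(\mathcal{X}_n \cap V_z = \emptyset) \leq (1 - b\,\mathrm{Vol}_M(V_z))^n \leq \exp(-n a b \min\{r - \varepsilon/2, \delta, \rho\}^{(d-1)/2}\min\{\varepsilon/2, \rho\}^{(d+1)/2})$. On the complement of $\bigcup_{z \in T}\{\mathcal{X}_n \cap V_z = \emptyset\}$ every $x \in A \oplus (r - \varepsilon) B$ has some $z \in T$ with $\mathcal{d}(x,z) \leq \varepsilon/2$ and some $X_i \in \mathcal{X}_n \cap V_z \subset \mathcal{X}_n \cap A$ with $\mathcal{d}(x, X_i) \leq \varepsilon/2 + (r - \varepsilon/2) = r$, i.e.\ $x \in (\mathcal{X}_n \cap A) \oplus r B$; a union bound over $T$ gives~(a).

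For part~(b) I would first reduce to the uniform statement: with the stated probability, for every closed ball $B_\delta[p] \subset A$ and every $x$ with $\mathcal{d}(x, B_\delta[p]) \leq r - \varepsilon$ there is $X_i \in \mathcal{X}_n \cap B_\delta[p]$ with $\mathcal{d}(x, X_i) \leq r$. This suffices, since for $S \subset A$ with $\Psi_\delta(S) = S$ and $x \in S \oplus (r - \varepsilon) B$ one picks $c \in S$ with $\mathcal{d}(x,c) \leq r - \varepsilon$ and a ball $B_\delta[p] \subset S$ with $c \in B_\delta[p]$, so $\mathcal{d}(x, B_\delta[p]) \leq r - \varepsilon$ and the statement yields $X_i \in \mathcal{X}_n \cap S$ within $r$ of $x$, i.e.\ $x\in(\mathcal{X}_n\cap S)\oplus rB$. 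To prove the uniform statement I would discretise simultaneously the evaluation point $x$ (by a net of $A \oplus r B$) and the centre $p$ (by a net of $\{p : B_\delta[p] \subset A\} \subset A$), both of mesh a small fixed multiple of $\varepsilon$, with cardinalities $\le D(\varepsilon/4, A \oplus r B)$; for each ``relevant'' pair $(z_x, z_p)$ of net points (those with $\mathcal{d}(z_x, z_p)$ below a threshold chosen to keep the lens non-degenerate) form $W_{z_x, z_p} = B_{r'}[z_x] \cap B_{\delta'}[z_p]$, where $r'$ and $\delta'$ are $r$ and $\delta$ decreased by the mesh, arranged so that $W_{z_x, z_p} \subset B_r[x] \cap B_\delta[p]$ for every $(x, B_\delta[p])$ that the pair approximates while Assumption~\ref{ass:M2} still applies and yields $\mathrm{Vol}_M(W_{z_x, z_p}) \geq a \min\{\delta/2, \rho\}^{(d-1)/2}\min\{\varepsilon/4, \rho\}^{(d+1)/2}$. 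As in part~(a), $\Prob(\mathcal{X}_n \cap W_{z_x, z_p} = \emptyset) \le \exp(-n a b \min\{\delta/2,\rho\}^{(d-1)/2}\min\{\varepsilon/4,\rho\}^{(d+1)/2})$, and a union bound over the at most $D(\varepsilon/4, A \oplus r B)^2$ relevant pairs gives~(b).

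The conversions $\mathrm{Vol}_M \rightsquigarrow \Prob \rightsquigarrow \exp$ and the net-size bookkeeping via Assumption~\ref{ass:M3} are routine. The main obstacle is the geometry underlying part~(b): the two mesh sizes, the radii $r',\delta'$, and the relevant-pair threshold must be chosen so that \emph{one fixed pair of nets} works for every $\delta$-regular $S \subset A$ and every $x$ at once. This forces a tight budgeting of the mesh against the only slack available, the gap $\mathcal{d}(x, S) \le r - \varepsilon < r$: too coarse a mesh makes $W_{z_x,z_p}$ degenerate in the extremal configuration where $\mathcal{d}(x,S) = r - \varepsilon$ is attained at a boundary point of a single $\delta$-ball that lies geodesically opposite $x$, and the argument must still land exactly on the exponent and the power $D(\varepsilon/4, A \oplus r B)^2$ in the statement.
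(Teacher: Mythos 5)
Part~(a) of your proposal is correct and matches the paper's argument: an $\varepsilon/2$-net of $A\oplus(r-\varepsilon)B$, a lens $B_{r-\varepsilon/2}[z]\cap B_\delta[p_z]$ with $p_z$ supplied by $\Psi_\delta(A)=A$, Assumption~\ref{ass:M2} for the volume, $f\ge b$ on $A$, independence, and a union bound.

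Part~(b) has a genuine gap, and it is exactly the one you flag in your last paragraph but do not resolve. Your reduction to the uniform statement (``every ball $B_\delta[p]\subset A$ and every $x$ with $\mathcal{d}(x,B_\delta[p])\le r-\varepsilon$ is covered'') is fine. What fails is the two-dimensional net with both centers of the lens on nets. If both $N_x$ and $N_p$ have mesh $m$ and you shrink the radii to $r'\le r-m$, $\delta'\le\delta-m$ so that $W_{z_x,z_p}\subset B_r[x]\cap B_\delta[p]$, then in the extremal configuration $\mathcal{d}(x,p)=r-\varepsilon+\delta$ you get $\mathcal{d}(z_x,z_p)\le r+\delta-\varepsilon+2m$ and $r'+\delta'\le r+\delta-2m$, so the lens excess is at most $\varepsilon-4m$. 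To reach excess $\ge\varepsilon/4$ you need $m\le 3\varepsilon/16$; but to land on the cardinality $D(\varepsilon/4,A\oplus rB)^2$ you need $m\ge\varepsilon/4$. These are incompatible, so no choice of mesh and shrunk radii closes your argument for the constants in the statement.

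The paper's proof sidesteps this by \emph{not} putting both lens centers on nets. It takes a single $\varepsilon/4$-net $T\subset A\ominus\delta B$ of ball centers. Given $S$ with $\Psi_\delta(S)=S$ and a witness $x$, it writes $S\oplus(r-\varepsilon)B=(S\ominus\delta B)\oplus(r+\delta-\varepsilon)B$, picks $s\in S\ominus\delta B$ with $\mathcal{d}(x,s)\le r+\delta-\varepsilon$ and then $t\in T$ with $\mathcal{d}(s,t)\le\varepsilon/4$, and observes that the failure transfers to the single fixed set $B_{\delta-\varepsilon/4}[t]$ (since $B_{\delta-\varepsilon/4}[t]\subset B_\delta[s]\subset S$ and $x\in B_{\delta-\varepsilon/4}[t]\oplus(r-\varepsilon/2)B$). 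Part~(a) is then invoked with $A'=B_{\delta-\varepsilon/4}[t]$, $\delta'=\delta/2$, $\varepsilon'=\varepsilon/2$; inside that application the second lens center $y$ (with $B_{\delta/2}[y]\subset A'$) is chosen \emph{adaptively} for each net point $t'$, not from a net, which is what recovers the needed $\varepsilon/4$ of excess. The two $D(\varepsilon/4,\cdot)$ factors come from the net $T$ over centers and from the net inside part~(a), and the exponent has $\min\{\delta/2,\rho\}^{(d-1)/2}$ because the inner ball has radius $\delta/2$. To fix your proof, replace the simultaneous two-net construction by this sequential one, keeping the second lens center adaptive rather than discretised.
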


	

\begin{proof}
	Let $T \subset A \oplus (r - \varepsilon) B $ be a subset such that
	\begin{equation*}
		\mathcal{d} (x, y) > \frac{\varepsilon}{2}, \quad \forall x, y \in T, x \neq y
	\end{equation*}
	with maximal cardinality. This implies that $ A \oplus (r - \varepsilon) B  \subset T \oplus \varepsilon/2 B $. Thus, given any subset $S \subset M$, it follows that
	\begin{equation*}
		T \subset S \Rightarrow A \oplus (r - \varepsilon) B \subset S \oplus \varepsilon/2 B.
	\end{equation*}
	
	Then, keeping in mind that $\big[ ( \mathcal{X}_n \cap A ) \oplus (r - \varepsilon/2) B \big] \oplus \varepsilon/2 B = ( \mathcal{X}_n \cap A ) \oplus r B$, one deduces
	\begin{align*}
		\Prob \big[ A \oplus (r - \varepsilon) B \not\subset ( \mathcal{X}_n \cap A ) & \oplus r B \big] \\
		\leq & \Prob \big[ T \not\subset ( \mathcal{X}_n \cap A ) \oplus (r - \varepsilon/2) B \big] \\
		\leq & \sum_{t \in T} \Prob \big[ t \notin ( \mathcal{X}_n \cap A ) \oplus (r - \varepsilon/2) B \big] \\
		\leq & D \big( \varepsilon/2, A \oplus r B \big) \sup_{t \in A \oplus (r - \varepsilon) B} \Prob \big( B_{r - \varepsilon/2} [t] \cap \mathcal{X}_n \cap A  = \emptyset \big);
	\end{align*}
	where the last inequality holds because $T \subset A \oplus (r - \varepsilon) B$. Then, showing that
	\begin{multline}
		\label{eq:ineqsup}
		\sup_{t \in A \oplus (r - \varepsilon) B} \Prob \big( B_{r - \varepsilon/2} [t] \cap \mathcal{X}_n \cap A  = \emptyset \big) \\
		\leq \exp \left[ -nab \min{\left\lbrace r - \frac{\varepsilon}{2}, \delta, \rho \right\rbrace}^{(d-1)/2} \min \left\lbrace \frac{\varepsilon}{2} , \rho \right\rbrace^{(d+1)/2} \right]
	\end{multline}
	will complete the proof of part (a).
	
	Fix a point $t \in A \oplus (r - \varepsilon) B$. It satisfies that
	\begin{equation*}
		\Prob \big( B_{r - \varepsilon/2} [t] \cap \mathcal{X}_n \cap A  = \emptyset \big)
		= \Big[ 1 - \Prob \big(X_1 \in A \cap B_{r - \varepsilon/2} [t] \big) \Big]^n.
	\end{equation*}
	Since $t \in A \oplus (r - \varepsilon) B$, there exists a point $x \in A$ such that $\mathcal{d} (x, t) \leq r - \varepsilon$. Furthermore, $\Psi_{\delta} (A) = A$, so there exists another point $y \in A$ that satisfies $x \in B_{\delta} [y] \subset A$. Therefore, \mbox{$\mathcal{d} (t, y) \leq \delta + r - \varepsilon$} and
	\begin{align*}
		\Prob \big( B_{r - \varepsilon/2} [t] \cap \mathcal{X}_n \cap A  = \emptyset \big)
		\leq & \Big[ 1 - \Prob \big( X_1 \in B_{\delta} [y] \cap B_{r - \varepsilon/2} [t] \big) \Big]^n \\
		\leq & \exp \Big[ - n \Prob \big( X_1 \in B_{\delta} [y] \cap B_{r - \varepsilon/2} [t] \big) \Big] \\
		\leq & \exp \Big[ - n b \mathrm{Vol}_M \big( B_{\delta} [y] \cap B_{r - \varepsilon/2} [t] \big) \Big].
	\end{align*}
	Following Assumption (M2), one deduces
	\begin{equation*}
		\Prob \big( B_{r - \varepsilon/2} [t] \cap \mathcal{X}_n \cap A  = \emptyset \big)
		\leq \exp \left[ -nab \min{\left\lbrace r - \frac{\varepsilon}{2}, \delta, \rho \right\rbrace}^{(d-1)/2} \min \left\lbrace \frac{\varepsilon}{2} , \rho \right\rbrace^{(d+1)/2} \right].
	\end{equation*}
	This inequality holds for all $t \in A \oplus (r - \varepsilon) B$, so \eqref{eq:ineqsup} holds too, concluding the proof of part (a).
	
	Denote $\mathcal{S}_{\delta} (A) = \{ S \subset A \colon \Psi_{\delta} (S) = S \}$. To prove part (b), the aim is to show:
	\begin{multline*}
		\Prob \left[ \bigcup_{S \in \mathcal{S}_{\delta} (A)} \big\lbrace S \oplus (r - \varepsilon) B \not\subset (\mathcal{X}_n \cap S) \oplus r B \big\rbrace \right] \leq  D \left( \frac{\varepsilon}{4}, A \oplus r B \right)^2 \times \\
		\times \exp \left[ -nab \min{\left\lbrace \frac{\delta}{2}, \rho \right\rbrace}^{(d-1)/2} \min \left\lbrace \frac{\varepsilon}{4} , \rho \right\rbrace^{(d+1)/2} \right].
	\end{multline*}
	The key fact is that one does not have to check the condition $$S \oplus (r - \varepsilon) B \not\subset (\mathcal{X}_n \cap S) \oplus r B$$ for all $S \in \mathcal{S}_{\delta} (A)$. It is enough to check it for a finite number of sets in $\mathcal{S}_{\delta/2} (A)$.
	
	Let $T \subset A \ominus \delta B$ such that
	\begin{equation*}
		\mathcal{d} (x, y) > \frac{\varepsilon}{4}, \quad \forall x, y \in T, x \neq y
	\end{equation*}
	with maximal cardinality. Consider the following two statements:
	\begin{enumerate}
		\item[(S)] \label{stat:inftysets} There exists a $S \in \mathcal{S}_{\delta} (A)$ such that $S \oplus (r - \varepsilon) B \not\subset (\mathcal{X}_n \cap S) \oplus r B$.
		\item[(T)] \label{stat:ftysets} There exists a $t \in T$ such that $B_{\delta - \varepsilon/4} [t] \oplus (r - \varepsilon/2) B \not\subset \big( \mathcal{X}_n \cap B_{\delta - \varepsilon/4} [t] \big) \oplus r B$.
	\end{enumerate}
	We are going to show that (S) implies (T).
	
	Suppose that there exists a set $S \in \mathcal{S}_{\delta} (A)$ such that
	\[S \oplus (r - \varepsilon) B \not\subset (\mathcal{X}_n \cap S) \oplus r B.\]
	Hence, there exists a point $x \in S \oplus (r - \varepsilon) B$ satisfying $x \not\in (\mathcal{X}_n \cap S) \oplus r B$. From $S \in \mathcal{S}_{\delta} (A)$ and Proposition~\ref{prop:randsmin}a, it follows that
	\[x \in S \oplus (r - \varepsilon) B = \big[ (S \ominus \delta B) \oplus \delta B \big] \oplus (r - \varepsilon) B = (S \ominus \delta B) \oplus (r + \delta - \varepsilon) B.\]
	So, there exists a point $s \in (S \ominus \delta B)$ such that $x \in B_{r + \delta - \varepsilon} [s]$. The definition of $T$ yields that $T \oplus \varepsilon/4 B \supset A \ominus \delta B \supset S \ominus \delta B$. Therefore, there exists $t \in T$ satisfying $\mathcal{d}(s, t) \leq \varepsilon/4$. We will show that this point $t$ satisfies the required condition in (T).
	
	On the one hand:
	\[\mathcal{d}(x, t) \leq \mathcal{d}(x, s) + \mathcal{d}(s, t) \leq r + \delta - \frac{3}{4} \varepsilon.\]
	Consequently, 
	\begin{equation}
		\label{eq:xin}
		x \in B_{r + \delta - 3 \varepsilon / 4} [t] = B_{\delta - \varepsilon / 4} [t] \oplus (r - \varepsilon / 2) B.
	\end{equation}
	On the other hand, $\mathcal{d}(s, t) \leq \varepsilon/4$ and the triangular inequality guarantee that
	$B_{\delta - \varepsilon/4} [t] \subset B_{\delta} [s] \subset S$
	where the last content follows from $s \in S \ominus \delta B$. Hence,
	\[\big( \mathcal{X}_n \cap B_{\delta - \varepsilon/4} [t] \big) \oplus r B \subset (\mathcal{X}_n \cap S) \oplus r B.\]
	and so
	\begin{equation}
		\label{eq:xnotin}
		x \not\in \big( \mathcal{X}_n \cap B_{\delta - \varepsilon/4} [t] \big) \oplus r B.
	\end{equation}
	Finally, from \eqref{eq:xin} and \eqref{eq:xnotin} one deduces
	\[ B_{\delta - \varepsilon / 4} [t] \oplus (r - \varepsilon / 2) B \not\subset \big( \mathcal{X}_n \cap B_{\delta - \varepsilon/4} [t] \big) \oplus r B \]
	and therefore (S) implies (T).
	
	Taking this into account, it follows that
	\begin{align*}
		\Prob \Bigg[ &\bigcup_{S \in \mathcal{S}_{\delta} (A)} \big\lbrace S \oplus (r - \varepsilon) B \not\subset (\mathcal{X}_n \cap S) \oplus r B \big\rbrace \Bigg]  \\
		&\leq \Prob \left[ \bigcup_{t \in T} \big\lbrace B_{\delta - \varepsilon / 4} [t] \oplus (r - \varepsilon / 2) B \not\subset \big( \mathcal{X}_n \cap B_{\delta - \varepsilon/4} [t] \big) \oplus r B \big\rbrace \right] \\
		&\leq \sum_{t \in T} \Prob \Big[ B_{\delta - \varepsilon / 4} [t] \oplus (r - \varepsilon / 2) B \not\subset \big( \mathcal{X}_n \cap B_{\delta - \varepsilon/4} [t] \big) \oplus r B \Big] \\
		&\leq D \left( \frac{\varepsilon}{4}, A \oplus r B \right) \sup_{t \in A \ominus \delta B} \Prob \Big[ B_{\delta - \varepsilon / 4} [t] \oplus (r - \varepsilon / 2) B \not\subset \big( \mathcal{X}_n \cap B_{\delta - \varepsilon/4} [t] \big) \oplus r B \Big],
	\end{align*}
	where the last inequality follows from $A \ominus \delta B \subset A \oplus r B$ and $T \subset A \ominus \delta B$. So, it only remains to show that 
	\begin{multline}
		\label{eq:lemma5sup}
		\sup_{t \in A \ominus \delta B} \Prob \Big[ B_{\delta - \varepsilon / 4} [t] \oplus (r - \varepsilon / 2) B \not\subset \big( \mathcal{X}_n \cap B_{\delta - \varepsilon/4} [t] \big) \oplus r B \Big] \\
		\leq D \left( \frac{\varepsilon}{4}, A \oplus r B \right) \exp \left[ -nab \min{\left\lbrace \frac{\delta}{2}, \rho \right\rbrace}^{(d-1)/2} \min \left\lbrace \frac{\varepsilon}{4} , \rho \right\rbrace^{(d+1)/2} \right]
	\end{multline}
	to conclude the proof of part (b).
	
	Let $t$ be an arbitrary point in $A \ominus \delta B$. One gets that:
	\begin{itemize}
		\item $B_{\delta - \varepsilon / 4} [t] \subset B_{\delta} [t] \subset A$, then $f(x) \geq b > 0$ for all $x \in B_{\delta - \varepsilon / 4} [t]$.
		\item Since $\varepsilon < \delta$, then $\delta / 2 < \delta - \varepsilon/4$ and Proposition \ref{prop:randspsi}a ensures that $\Psi_{\delta/2} \big( B_{\delta - \varepsilon / 4} [t] \big) = B_{\delta - \varepsilon / 4} [t]$.
	\end{itemize}
	
	Hence, part (a) guarantees that:
	\begin{multline*}
		\Prob \Big[ B_{\delta - \varepsilon / 4} [t] \oplus (r - \varepsilon / 2) B \not\subset \big( \mathcal{X}_n \cap B_{\delta - \varepsilon/4} [t] \big) \oplus r B \Big] \\
		\leq D \left( \frac{\varepsilon}{4}, B_{\delta - \varepsilon / 4} [t] \oplus r B \right) \exp \left[ -nab \min{\left\lbrace r - \frac{\varepsilon}{4}, \frac{\delta}{2}, \rho \right\rbrace}^{(d-1)/2} \min \left\lbrace \frac{\varepsilon}{4} , \rho \right\rbrace^{(d+1)/2} \right] \\
		\leq  D \left( \frac{\varepsilon}{4}, A \oplus r B \right) \exp \left[ -nab \min{\left\lbrace  \frac{\delta}{2}, \rho \right\rbrace}^{(d-1)/2} \min \left\lbrace \frac{\varepsilon}{4} , \rho \right\rbrace^{(d+1)/2} \right].
	\end{multline*}
	Since this bound holds for every $t \in A \ominus \delta B$, \eqref{eq:lemma5sup} also holds, finishing the proof.
\end{proof}

Three conditions are required on the set $A$ for Lemma~\ref{lemma:covering}a to hold. The first one is a technical detail: $A$ is needed to be bounded. This ensures that $D \left( \varepsilon / 2, A \oplus r B \right) < + \infty$ and therefore the result is nontrivial.

The second condition is that there exists some positive constant $b$ such that $f \geq b$ in~$A$. It could be interesting to find an analogous result without this restriction, allowing $f$ to vanish in a small subset of $A$ (in terms of measure). But we focus just on HDR estimation, so this assumption does not implies an important limitation and no further exploration will be done in that sense.

The last condition is the cornerstone of the result. Assuming that $\Psi_{\delta} (A) = A$ for some $\delta > 0$ implies that $A$ is a set with a non-zero measure and so $\Prob (A) \neq 0$. Moreover, this condition forces the set to be smooth, without any sharp edge or too narrow part. This fact combined with Assumption (M2) guarantees that the discretization $\mathcal{X}_n \cap A$ will eventually be fine enough to recover the set $A \oplus r B$ with the required convergence rate.

As noticed before, the proposed estimator recovers $L(\lambda)$ by inflating a discretization of the set $L(\lambda) \ominus r_n (\lambda) B$. However, it should be noted that, since $r_n(\lambda)$ is a random variable, $L(\lambda) \ominus r_n (\lambda) B$ is a random set. Lemma~\ref{lemma:covering}a assumes that $A$ is a fixed set, and therefore, one cannot apply this result directly to show the consistency of $L_n (\lambda)$. Lemma~\ref{lemma:covering}b offers a solution for this issue, transforming the result of Lemma~\ref{lemma:covering}a into a uniform one.

A direct consequence of Lemma~\ref{lemma:covering} is that $\mathcal{d}_H \big( (\mathcal{X}_n \cap S) \oplus r B,  S \oplus r B \big)$ converges to zero uniformly with respect to $S$.

\begin{cor}
	\label{cor:1}
	Let $M$ be a Riemannian manifold under Assumptions M and $f:M \rightarrow \R$ a density function. Let $\mathcal{X}_n = \{ X_1, \ldots, X_n \}$ be an i.i.d.~sample from the density function $f$.
	
	Let $A \subset M$ a bounded set such that $f(x) \geq b > 0$ for all $x \in A$, and let $r, \delta > 0$ such that $\delta \leq r$. Then,
	\begin{equation*}
		\sup_{S \in \mathcal{S}_{\delta} (A)} \mathcal{d}_H \big( (\mathcal{X}_n \cap S) \oplus rB, S \oplus r B)
		=
		O_{\textrm{\rm a.s.}} \Bigg( \bigg( \frac{\log n}{n} \bigg)^{2/(d+1)} \Bigg),
	\end{equation*}
	where $\mathcal{S}_{\delta} (A) = \{ S \subset A \colon \Psi_{\delta} (S) = S \}$.
\end{cor}

\begin{proof}
	First, take into account that, for any $\varepsilon > 0$,
	\begin{align*}
		(\mathcal{X}_n \cap S) \oplus (r - \varepsilon) B \subset S \oplus r B
		&\Rightarrow
		(\mathcal{X}_n \cap S) \oplus r B \subset S \oplus (r + \varepsilon) B
		\\
		&\Rightarrow
		\mathcal{d}_H \big( (\mathcal{X}_n \cap S) \oplus rB, S \oplus r B) < \varepsilon.
	\end{align*}
	So, 
	\begin{multline*}
		\Prob \Big[ \sup_{S \in \mathcal{S}_{\delta} (A)} \mathcal{d}_H \big( (\mathcal{X}_n \cap S) \oplus rB, S \oplus r B) > \varepsilon \Big]
		\\
		\leq
		\Prob \Big[ S \oplus (r - \varepsilon) B \not\subset (\mathcal{X}_n \cap S) \oplus r B  \text{ for some } S \subset A \text{ such that } \Psi_{\delta} (S) = S \Big].
	\end{multline*}
	
	Now, let
	\begin{equation*}
		\varepsilon_n = \bigg( \frac{c \log n}{n} \bigg)^{2/(d+1)}
	\end{equation*}
	where $c > 0$ is a constant that will be specified later on. Since $\varepsilon_n \to 0$, $\varepsilon_n < \min \{ \delta, \rho \}$ for large $n$, where $\rho > 0$ is the constant present on Assumption (M2). If so, Lemma~\ref{lemma:covering}b ensures that
	\begin{multline*}
		\Prob \Big[ \sup_{S \in \mathcal{S}_{\delta} (A)} \mathcal{d}_H \big( (\mathcal{X}_n \cap S) \oplus rB, S \oplus r B) > \varepsilon_n \Big]
		\\
		\leq
		D \left( \frac{\varepsilon_n}{4}, A \right)^2 \exp \left[ - nab \min \bigg\lbrace \frac{\delta}{2}, \rho \bigg\rbrace^{(d-1)/2} \bigg\lbrace \frac{\varepsilon_n}{4} \bigg\rbrace^{(d+1)/2} \right]
		\\
		=
		O
		\bigg(
		\varepsilon_n^{-2d}
		\exp \left[ - C n \varepsilon_n^{(d+1)/2} \right]
		\bigg),
	\end{multline*}
	where $C = ab \min \lbrace \frac{\delta}{2}, \rho \rbrace^{(d-1)/2} 4^{-(d+1)/2}$ and we have used Proposition~\ref{prop:M2impliesM3}. Furthermore,
	\begin{equation*}
		\varepsilon_n^{-2d}
		\exp \left[ - C n \varepsilon_n^{(d+1)/2} \right]
		=
		o
		\big(
		n^{4d/(d+1) - cC}
		\big)
	\end{equation*}
	Taking $c$ large enough so that $$4d/(d+1) - cC < -2$$ guarantees that
	\begin{equation*}
		\Prob \Big[ \sup_{S \in \mathcal{S}_{\delta} (A)} \mathcal{d}_H \big( (\mathcal{X}_n \cap S) \oplus rB, S \oplus r B \big) > \varepsilon_n \Big]
		=
		o
		\big(
		n^{-2}
		\big).
	\end{equation*}
	Therefore,
	\begin{equation*}
		\sum_{n = 1}^{+ \infty}\Prob \Big[ \sup_{S \in \mathcal{S}_{\delta} (A)} \mathcal{d}_H \big( (\mathcal{X}_n \cap S) \oplus rB, S \oplus r B \big) > \varepsilon_n \Big]
		< + \infty
	\end{equation*}
	and the result holds by the Borel-Cantelli Lemma.
\end{proof}

The following lemma guarantees that $\mathcal{X}_n^{+} (\lambda)$ and $\mathcal{X}_n^{-} (\lambda)$ will eventually be equal to \mbox{$\mathcal{X}_n \cap  L(\lambda)$} and $\mathcal{X}_n \cap  L(\lambda)^{c}$, respectively. From now on and until the end of Section~\ref{sec:lambdarate}, $l, u, \delta$ and $k$ are the same constants as in Assumptions~L.

\begin{lemma}
	\label{lemma:X+andX-}
	Under the hypotheses of Theorem~4.2, if $D_n < \delta$ and $k D_n < \delta$, the following four inclusions hold for all $\lambda \in [l, u]$:
	\begin{align}
		\label{eq:X+large}&\mathcal{X}_n \cap \big( L(\lambda) \ominus k D_n B \big) \subset \mathcal{X}_n^{+} (\lambda); \\
		\label{eq:X-large}&\mathcal{X}_n \cap \big( L(\lambda)^c \ominus k D_n B \big) \subset \mathcal{X}_n^{-} (\lambda); \\
		\label{eq:X+small} &\mathcal{X}_n^{+} (\lambda) \subset L(\lambda) \oplus k D_n B; \\
		\label{eq:X-small} &\mathcal{X}_n^{-} (\lambda) \subset L(\lambda)^c \oplus k D_n B.
	\end{align}
\end{lemma}

\begin{proof}
	The key is dealing with the error made when $f$ is estimated by $f_n$. Given any sample point, $X_i \in \mathcal{X}_n$, one derives that
	\begin{align*}
		X_i \in L(\lambda + D_n) \Rightarrow f(X_i) \geq \lambda + D_n \Rightarrow \lambda \leq f(X_i) - D_n \leq f_n(X_i) \Rightarrow X_i \in \mathcal{X}_n^{+} (\lambda), \\
		X_i \notin L(\lambda - D_n) \Rightarrow f(X_i) < \lambda - D_n \Rightarrow \lambda > f(X_i) + D_n \geq f_n(X_i) \Rightarrow X_i \in \mathcal{X}_n^{-} (\lambda).
	\end{align*}
	Consequently $\mathcal{X}_n \cap L(\lambda + D_n) \subset \mathcal{X}_n^{+}$ and $\mathcal{X}_n \cap L(\lambda - D_n)^c \subset \mathcal{X}_n^{-}$.
	
	Since $D_n < \delta$ and $k D_n < \delta$, one deduces from Assumption (L4) that
	\begin{align*}
		\mathcal{d}_H \big( L(\lambda), L(\lambda - D_n) \big) < k D_n &\Rightarrow L (\lambda) \oplus k D_n B \supset L(\lambda - D_n) \Rightarrow \\
		&\Rightarrow L (\lambda)^{c} \ominus k D_n B \subset L(\lambda - D_n)^c \\
		\mathcal{d}_H \big( L(\lambda), L(\lambda + D_n) \big) < k D_n &\Rightarrow L(\lambda) \subset L(\lambda + D_n) \oplus k D_n B \Rightarrow \\
		&\Rightarrow L(\lambda) \ominus k D_n B \subset \big[ L(\lambda + D_n) \oplus k D_n B \big] \ominus k D_n B = \\
		& \qquad = \Big[ \Psi_{k D_n} \big( L(\lambda + D_n)^c \big) \Big]^c = L(\lambda + D_n);
	\end{align*}
	when the last equality follows from Assumption (L2). All the above implies that
	\begin{align*}
		&X_i \in L(\lambda) \ominus k D_n B \Rightarrow X_i \in \mathcal{X}_n^{+} (\lambda), \\
		&X_i \in L(\lambda)^{c} \ominus k D_n B \Rightarrow X_i \in \mathcal{X}_n^{-} (\lambda);
	\end{align*}
	for all $X_i \in \mathcal{X}_n$. The first two inclusions follow directly from these ones:
	\begin{align*}
		&\mathcal{X}_n \cap \big( L(\lambda) \ominus k D_n B \big) \subset \mathcal{X}_n^{+} (\lambda); \\
		&\mathcal{X}_n \cap \big( L(\lambda)^c \ominus k D_n B \big) \subset \mathcal{X}_n^{-} (\lambda).
	\end{align*}
	The other two inclusions are derived from these two taking complements:
	\begin{align*}
		&\mathcal{X}_n^{+} (\lambda) = \mathcal{X}_n \setminus \mathcal{X}_n^{-} (\lambda) \subset \mathcal{X}_n \setminus \big( L(\lambda)^c \ominus k D_n B \big) \subset L(\lambda) \oplus k D_n B; \\
		&\mathcal{X}_n^{-} (\lambda) = \mathcal{X}_n \setminus \mathcal{X}_n^{+} (\lambda) \subset \mathcal{X}_n \setminus \big( L(\lambda) \ominus k D_n B \big) \subset L(\lambda)^c \oplus k D_n B. \qedhere
	\end{align*}
\end{proof}

Lastly, the proof of Theorem~\ref{th:uniform} ultimately relies on Lemma~\ref{lemma:transU} below.

\begin{lemma}
	\label{lemma:transU}
	Under the hypotheses of Theorem~\ref{th:uniform}, let
	\begin{equation*}
		\xi = \min_{x \in L(l)} \mathcal{d} \big( x, L(l/2)^c \big) = d \big( L(l), L (l/2)^{c} \big);
	\end{equation*}
	which exists since $L(l)$ is compact. Consider the random sets
	\[S_U (\lambda) = \big( L(\lambda) \oplus \xi B \big) \setminus \big( L(\lambda) \oplus k D_n B \big) \]
	and take
	\begin{equation*}
		\Delta_n = \sup_{\lambda \in [l,u]} \mathcal{d}_H \Big( S_U (\lambda) \oplus r_n (\lambda)/2 B , \big( \mathcal{X}_n \cap S_U (\lambda) \big) \oplus r_n (\lambda)/2 B \Big).
	\end{equation*}
	Then, $\xi > 0$ and 
	\begin{equation*}
		\Delta_n
		=
		O_{\textrm{\rm a.s.}} \bigg( \left( \frac{\log n}{n} \right)^{\frac{2}{d+1}} \bigg).
	\end{equation*}
\end{lemma}

\begin{proof}
	First, let us show that $\xi > 0$. By compactness, there exists $x_{\mathrm{min}} \in L(l)$ such that
	\begin{equation*}
		\mathcal{d} \big( x_{\mathrm{min}}, L(l/2)^c \big) = \min_{x \in L(l)} \mathcal{d} \big( x, L(l/2)^c \big) = \xi.
	\end{equation*}
	Since $f$ is continuous at $x_{\mathrm{min}} \in L(l)$, there exists $\epsilon > 0$ such that every $y \in B_{\epsilon} (x_{\mathrm{min}})$ satisfies:
	\begin{equation*}
		\frac{l}{2} - f(x_{\mathrm{min}}) < f(y) - f(x_{\mathrm{min}}) \Rightarrow \frac{l}{2} < f(y).
	\end{equation*}
	Therefore $B_{\epsilon} (x_{\mathrm{min}}) \subset L(l/2)$ and so $\xi = \mathcal{d} \big( x_{\mathrm{min}}, L(l/2)^c \big) \geq \epsilon > 0$. Note that from the definition of $\xi$ it follows that
	\begin{equation*}
		L(\lambda) \oplus \xi B \subset L(l) \oplus \xi B \subset L \left( \frac{l}{2} \right)
	\end{equation*}
	holds for all $\lambda \in [l, u]$.
	
	Assume now that $kD_n < \min \{ \delta/2, \xi/2 \} $ and $r_n (\lambda) > \delta$ for all $\lambda \in [l, u]$. Then, Lemma~\ref{lemma:psidifference} and Assumption (L2) ensure that 
	\begin{equation*}
		(L(\lambda) \oplus k D_n B)^c = \Psi_{\delta/2}  \big( (L(\lambda) \oplus k D_n B)^c \big).
	\end{equation*}
	This and Lemma~\ref{lemma:psiminusset} guarantee that, for $c = \min \{ \delta/2, \xi/6 \}$,
	\begin{equation*}
		\Psi_{c}  \big( S_U (\lambda) \big) = S_U (\lambda).
	\end{equation*}
	Furthermore, $S_U (\lambda) \subset L (l/2)$. Then, if $kD_n < \min \{ \delta/2, \xi/2 \} $ and $r_n (\lambda) > \delta$ for all $\lambda \in [l, u]$, one has
	\begin{align*}
		\Delta_n 
		\leq&
		\sup_{S \in \mathcal{S}_{c} (L (l/2))}
		\mathcal{d}_H
		\Big( S \oplus r_n (\lambda)/2 B , \big( \mathcal{X}_n \cap S \big) \oplus r_n (\lambda)/2 B \Big)
		\\
		\leq&
		\sup_{S \in \mathcal{S}_{c} (L (l/2))}
		\mathcal{d}_H
		\Big( S \oplus\delta/2 B , \big( \mathcal{X}_n \cap S \big) \oplus \delta/2 B \Big),
	\end{align*}
	where we have applied Propositions~\ref{prop:randsmin}a and~\ref{prop:distdecr}b. Since $kD_n < \min \{ \delta/2, \xi/2 \} $ and $r_n (\lambda) > \delta$ for all $\lambda \in [l, u]$ happen e.a.s., it follows from Corollary~\ref{cor:1} that
	\begin{equation*}
		\Delta_n
		=
		O_{\textrm{a.s.}} \bigg( \left( \frac{\log n}{n} \right)^{\frac{2}{d+1}} \bigg).
		\qedhere
	\end{equation*}
\end{proof}

We can finally proof Theorem~\ref{th:uniform}.

\begin{proof}[Proof of Theorem~\ref{th:uniform}]
	Notice that
	\begin{equation*}
		\sup_{\lambda \in [l,u]} \mathcal{d}_H \big( L(\lambda), L_n(\lambda) \big)
		\leq 
		\max \left\lbrace
		\sup_{\lambda \in [l,u]} \overrightarrow{\mathcal{d}_H} \big( L(\lambda), L_n(\lambda) \big),
		\sup_{\lambda \in [l,u]} \overrightarrow{\mathcal{d}_H} \big( L_n(\lambda), L(\lambda) \big)
		\right\rbrace,
	\end{equation*}
	where $\overrightarrow{\mathcal{d}_H}$ is upper Hausdorff distance defined in~\eqref{eq:uphaus}. We will prove the result showing that both $\sup_{\lambda \in [l,u]} \overrightarrow{\mathcal{d}_H} \big( L(\lambda), L_n(\lambda) \big)$ and $\sup_{\lambda \in [l,u]} \overrightarrow{\mathcal{d}_H} \big( L(\lambda), L_n(\lambda) \big)$ converge to zero at the required rate.
	
	Fix $\lambda \in [l, u]$, and take $S_L (\lambda) = L(\lambda) \ominus (r_n (\lambda) + k D_n) B$. Assume that $D_n < \min \{ \delta, \delta/2k \}$ and $r_n (\lambda) \in (\delta, r_0 (\lambda) - \delta)$. Lemma~\ref{lemma:X+andX-} yields
	\begin{equation*}
		\mathcal{X}_n \cap S_L (\lambda) \subset \mathcal{X}_n^{+} (\lambda) \cap \big( \mathcal{X}_n^{-} (\lambda) \oplus r_n (\lambda) B \big)^c.
	\end{equation*}
	Then,
	\begin{align}
		\nonumber
		\overrightarrow{\mathcal{d}_H} \big( L(\lambda), L_n (\lambda) \big)
		\leq&
		\overrightarrow{\mathcal{d}_H} \Big( L(\lambda), \big( \mathcal{X}_n \cap S_L (\lambda) \big) \oplus r_n (\lambda) B \Big)
		\\
		\nonumber
		\leq&
		\overrightarrow{\mathcal{d}_H} \Big( L(\lambda), S_L (\lambda) \oplus r_n (\lambda) B \Big)
		\\
		\label{eq:lower1}
		&+
		\overrightarrow{\mathcal{d}_H} \Big( S_L (\lambda) \oplus r_n (\lambda) B , \big( \mathcal{X}_n \cap S_L (\lambda) \big) \oplus r_n (\lambda) B \Big),
	\end{align}
	Assumption (L1) and Proposition~\ref{prop:randspsi}d ensure that $S_L (\lambda) \oplus (r_n (\lambda) + kD_n) B = L(\lambda)$. So,
	\begin{equation}
		\label{eq:lower2}
		\overrightarrow{\mathcal{d}_H} \big( L(\lambda), S_L (\lambda) \oplus r_n (\lambda) B \big)
		\leq
		k D_n.
	\end{equation}
	In addition, $S_L (\lambda) \subset L (l)$ and $\Psi_{\delta/2} ( S_L (\lambda) ) = S_L (\lambda)$ by Lemma~\ref{lemma:psidifference} and Assumptions (L1) and (L2). Then,
	\begin{multline}
		\label{eq:lower3}
		\overrightarrow{\mathcal{d}_H} \Big( S_L (\lambda) \oplus r_n (\lambda) B , \big( \mathcal{X}_n \cap S_L (\lambda) \big) \oplus r_n (\lambda) B \Big)
		\\
		\leq
		\sup_{S \in \mathcal{S}_{\delta/2} (L (l) )} \mathcal{d}_H \Big( S \oplus r_n (\lambda) B , \big( \mathcal{X}_n \cap S \big) \oplus r_n (\lambda) B \Big)
		\\
		\leq
		\sup_{S \in \mathcal{S}_{\delta/2} (L (l) )} \mathcal{d}_H \Big( S \oplus \delta B , \big( \mathcal{X}_n \cap S \big) \oplus \delta B \Big)
	\end{multline}
	where in the last line we have applied Propositions~\ref{prop:randsmin}a and~\ref{prop:distdecr}b.
	
	Joining \eqref{eq:lower1}, \eqref{eq:lower2} and \eqref{eq:lower3} leads to
	\begin{equation*}
		\sup_{\lambda \in [l,u]} \overrightarrow{\mathcal{d}_H} \big( L(\lambda), L_n (\lambda) \big)
		\leq
		\sup_{S \in \mathcal{S}_{\delta/2} (L (l) )} \mathcal{d}_H \Big( S \oplus \delta B , \big( \mathcal{X}_n \cap S \big) \oplus \delta B \Big)
		+
		k D_n,
	\end{equation*}
	which holds if $r_n (\lambda) \in (\delta, r_0 (\lambda) - \delta)$ for all $\lambda \in [l, u]$ and $D_n < \min \{ \delta, \delta/2k \}$. Under the assumptions of Theorem~\ref{th:uniform} these two events happen eventually almost surely. This fact and Corollary~\ref{cor:1} guarantee that
	\begin{equation}
		\label{eq:lowerbound}
		\sup_{\lambda \in [l,u]} \overrightarrow{\mathcal{d}_H} \big( L(\lambda), L_n (\lambda) \big)
		=
		O_\textrm{a.s.} \left( \max \left\lbrace D_n, \left( \frac{\log n}{n} \right)^{\frac{2}{d+1}} \right\rbrace \right).
	\end{equation}
	
	Now, fix $\lambda \in [l, u]$ and take $S_U (\lambda)$ and $\Delta_n$ as Lemma~\ref{lemma:transU}. Assume that these conditions hold: $\Delta_n < \delta/2$, $r_n (\lambda) \in ( \delta, r_0 (\lambda) - \delta)$ and \mbox{$D_n \leq \min \{ \delta, \delta/2k \}$}. It follows from the definition of $\Delta_n$ (see Lemma~\ref{lemma:transU}) that
	\begin{multline*}
		S_U (\lambda) \oplus r_n (\lambda)/2 B \subset \big( \mathcal{X}_n \cap S_U (\lambda) \big) \oplus (r_n (\lambda)/2 + \Delta_n) B
		\Rightarrow
		\\
		\Rightarrow
		S_U (\lambda) \oplus (r_n (\lambda) - \Delta_n) B \subset \big( \mathcal{X}_n \cap S_U (\lambda) \big) \oplus r_n (\lambda) B.
	\end{multline*}
	Lemma~\ref{lemma:X+andX-} guarantees that $ \mathcal{X}_n \cap S_U (\lambda) \subset \mathcal{X}_n^{-} (\lambda)$. So
	\begin{multline*}
		S_U (\lambda) \oplus (r_n (\lambda) - \Delta_n) B \subset \mathcal{X}_n^{-} (\lambda) \oplus r_n (\lambda) B
		\Rightarrow
		\\
		\Rightarrow
		\big( \mathcal{X}_n^{-} (\lambda) \oplus r_n (\lambda) B \big)^c
		\subset
		( S_U (\lambda) \oplus (r_n (\lambda) - \Delta_n) B )^c.
	\end{multline*}
	In addition, Lemmas~\ref{lemma:psidifference} and~\ref{lemma:difsum} ensure that 
	\begin{multline*}
		S_U (\lambda) \oplus (r_n (\lambda) - \Delta_n) B \\
		=
		\big( L(\lambda) \oplus ( \xi + r_n (\lambda) - \Delta_n ) B \big) \setminus \big( L(\lambda) \ominus (r_n (\lambda) - \Delta_n - k D_n) B \big).
	\end{multline*}
	Hence, if follows from Lemma~\ref{lemma:X+andX-} that
	\begin{multline*}
		\mathcal{X}_n^{+} (\lambda) \cap \big( \mathcal{X}_n^{-} (\lambda) \oplus r_n (\lambda) B \big)^c
		\\
		\subset
		\Big[ L (\lambda) \oplus kD_n B \Big] \cap
		\Big[
		\big( L(\lambda)^c \ominus ( \xi + r_n (\lambda) - \Delta_n ) B \big) \cup \big( L(\lambda) \ominus (r_n (\lambda) - \Delta_n - k D_n) B \big)
		\Big]
		\\
		=
		L(\lambda) \ominus (r_n (\lambda) - \Delta_n - k D_n) B,
	\end{multline*}
	where the last equality follows from $\xi + r_n (\lambda) - \Delta_n > kD_n$. Consequently,
	\begin{align*}
		L_n (\lambda)
		=&
		\Big[ \mathcal{X}_n^{+} (\lambda) \cap \big( \mathcal{X}_n^{-} (\lambda) \oplus r_n (\lambda) B \big)^c \Big]
		\oplus
		r_n (\lambda) B
		\\
		\subset&
		\big( L(\lambda) \ominus (r_n (\lambda) - \Delta_n - k D_n) B \big) \oplus r_n (\lambda) B
		\\
		=&
		\Psi_{r_n (\lambda) - \Delta_n - k D_n} \big( L(\lambda) \big) \oplus (\Delta_n + k D_n) B
		\\
		=&
		L(\lambda) \oplus (\Delta_n + k D_n) B
	\end{align*}
	where we have applied Proposition~\ref{prop:randsmin}a and Lemma~\ref{lemma:psidifference}. This holds for any $\lambda \in [l, u]$, so
	\begin{equation*}
		\sup_{\lambda \in [l,u]}
		\overrightarrow{\mathcal{d}_H} \big( L_n(\lambda), L (\lambda) \big)
		\leq
		\Delta_n + k D_n
	\end{equation*}
	also holds if $\Delta_n < \delta/2$, $D_n \leq \min \{ \delta, \delta/2k \}$ and $r_n (\lambda) \in ( \delta, r_0(\lambda) - \delta)$ for any $\lambda \in [l, u]$. Lemma~\ref{lemma:transU} and the assumptions of Theorem~\ref{th:uniform} guarantee that these three events happen eventually almost surely. This result and Lemma~\ref{lemma:transU} again yield
	\begin{equation}
		\label{eq:upperbound}
		\sup_{\lambda \in [l,u]} \overrightarrow{\mathcal{d}_H} \big( L_n(\lambda), L (\lambda) \big)
		=
		O_{\textrm{a.s.}} \left( \max \left\lbrace D_n, \left( \frac{\log n}{n} \right)^{\frac{2}{d+1}} \right\rbrace \right).
	\end{equation}
	Since both \eqref{eq:lowerbound} and \eqref{eq:upperbound} hold, the result holds.
\end{proof}

\subsection{Proofs for Section~5}
\label{sec:lambdarate}

Theorem~\ref{th:probs} is a direct consequence of Theorem~\ref{th:uniform} 

\begin{proof}[Proof of Theorem~\ref{th:probs}]
	From the triangular inequality one deduces that
	\begin{equation*}
		\sup_{\gamma \in [\underline{\gamma}, \overline{\gamma}]} \mathcal{d}_H \big( L(\lambda_{\gamma}), L_n(\tilde{\lambda}_{\gamma, n}) \big) \leq \sup_{\gamma \in [\underline{\gamma}, \overline{\gamma}]} \mathcal{d}_H \big( L(\lambda_{\gamma}), L(\tilde{\lambda}_{\gamma, n}) \big) +
		\sup_{\gamma \in [\underline{\gamma}, \overline{\gamma}]} \mathcal{d}_H \big( L(\tilde{\lambda}_{\gamma, n}), L_n(\tilde{\lambda}_{\gamma, n}) \big).
	\end{equation*}
	
	If $T_n < \delta$, then Assumption (L4) implies
	\begin{equation*}
		\sup_{\gamma \in [\underline{\gamma}, \overline{\gamma}]} \mathcal{d}_H \big( L(\lambda_{\gamma}), L(\tilde{\lambda}_{\gamma, n}) \big) \leq k T_n.
	\end{equation*}
	
	On the other hand, if $T_n < \min \{u - \lambda_{\overline{\gamma}}, \lambda_{\underline{\gamma}} - l \}$, then $\tilde{\lambda}_{\gamma, n} \in [l, u]$ for all $\gamma \in [\underline{\gamma}, \overline{\gamma}]$ and therefore
	\begin{equation*}
		\sup_{\gamma \in [\underline{\gamma}, \overline{\gamma}]} \mathcal{d}_H \big( L(\tilde{\lambda}_{\gamma, n}), L_n(\tilde{\lambda}_{\gamma, n}) \big) \leq \sup_{\lambda \in [l, u]} \mathcal{d}_H \big( L(\lambda), L_n(\lambda) \big).
	\end{equation*}
	
	Since $T_n \rightarrow 0$ almost surely, $T_n < \min \{ u - \lambda_{\overline{\gamma}}, \lambda_{\underline{\gamma}} - l, \delta \}$ happens e.a.s. Consequently, all the above jointly with Theorem~\ref{th:uniform} guarantee that there exists a constant $C > 0$ such that, e.a.s., it holds that
	\begin{align*}
		\sup_{\gamma \in [\underline{\gamma}, \overline{\gamma}]} \mathcal{d}_H \big( L(\lambda_{\gamma}), L_n(\tilde{\lambda}_{\gamma, n}) \big) \leq& k T_n + C \max \left\lbrace D_n, \left( \frac{\log n}{n} \right)^{\frac{2}{d+1}} \right\rbrace \\
		\leq& (k + C) \max \left\lbrace D_n, T_n, \left( \frac{\log n}{n} \right)^{\frac{2}{d+1}} \right\rbrace. \qedhere
	\end{align*}
\end{proof}

The proof of Theorem~\ref{th:lambdaconvergence} requires Propositions~\ref{prop:levymetric} and~\ref{prop:convergenceprob} below. The first of them allows to bound the distance between $\lambda_{\gamma}$ and $\hat{\lambda}_{\gamma , n}$ in a special type of metric called Lévy metric (check \citealp{Cadre2013} for more details about this metric).

\begin{prop}
	\label{prop:levymetric}
	Let $M$ be a Riemannian manifold and $f$ a density function on $M$. Let $X_1, \ldots, X_n$ be a random sample from $f$. Let $f_n$ be an estimator of $f$.
	
	Consider the random variables:
	\begin{equation*}
		D_n = \sup_{x \in M} \abs{f(x) - f_n(x)}, \qquad
		S_n = \sup_{\lambda \in \R} \abs{\Prob \big[ L (\lambda) \big] - \Prob_n \big[ L (\lambda) \big] }.
	\end{equation*}
	
	Hence:
	\begin{equation*}
		\lambda_{\gamma - M_n} - M_n \leq \hat{\lambda}_{\gamma , n} \leq \lambda_{\gamma + M_n} + M_n,
	\end{equation*}
	for all $\gamma \in (0, 1)$, where $M_n = \max \{D_n, S_n\}$.
\end{prop}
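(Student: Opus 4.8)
The plan is to reduce everything to the behaviour of $\Prob$ and $\Prob_n$ on the level sets of $f$, exploiting that the uniform error $D_n$ controls the gap between the random level set $f_n^{-1}\big([\lambda,+\infty)\big)$ and the true level sets $L(\lambda\pm D_n)$, while $S_n$ controls the gap between $\Prob$ and $\Prob_n$ on every set $L(\mu)$. First I would record the elementary sandwich
\[
	L(\lambda + D_n) \subset f_n^{-1}\big([\lambda,+\infty)\big) \subset L(\lambda - D_n), \qquad \lambda\in\R,
\]
which follows at once from the definition of $D_n$: if $f_n(x)\ge\lambda$ then $f(x)\ge f_n(x)-D_n\ge\lambda-D_n$, and if $f(x)\ge\lambda+D_n$ then $f_n(x)\ge f(x)-D_n\ge\lambda$. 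I would also use the three monotonicity facts that make the relevant suprema well behaved: $\mu\mapsto\Prob[L(\mu)]$ and $\mu\mapsto\Prob_n\big[f_n^{-1}([\mu,+\infty))\big]$ are non-increasing, and $\gamma\mapsto\lambda_\gamma$ is non-decreasing (a larger $\gamma$ enlarges the set $\{\lambda:\Prob[L(\lambda)]\ge 1-\gamma\}$).

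For the upper bound, take any $\lambda$ with $\Prob_n\big[f_n^{-1}([\lambda,+\infty))\big]\ge 1-\gamma$. Using the right-hand inclusion above, monotonicity of $\Prob_n$, and the definition of $S_n$ applied to the (random-level) set $L(\lambda-D_n)$, one gets
\[
	1-\gamma \;\le\; \Prob_n\big[L(\lambda-D_n)\big] \;\le\; \Prob\big[L(\lambda-D_n)\big] + S_n,
\]
so $\Prob[L(\lambda-D_n)]\ge 1-(\gamma+S_n)$, and by the definition of $\lambda_{\gamma+S_n}$ this forces $\lambda-D_n\le\lambda_{\gamma+S_n}$. Since $S_n\le M_n$ and $D_n\le M_n$, monotonicity of $\gamma\mapsto\lambda_\gamma$ then gives $\lambda\le\lambda_{\gamma+S_n}+D_n\le\lambda_{\gamma+M_n}+M_n$; taking the supremum over all such $\lambda$ yields $\hat\lambda_{\gamma,n}\le\lambda_{\gamma+M_n}+M_n$.

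For the lower bound, fix any $\mu<\lambda_{\gamma-M_n}$. Since $\mu$ is not an upper bound of $\{\lambda:\Prob[L(\lambda)]\ge 1-\gamma+M_n\}$ and $\Prob[L(\cdot)]$ is non-increasing, one has $\Prob[L(\mu)]\ge 1-\gamma+M_n$. By the left-hand inclusion, $L(\mu)\subset f_n^{-1}\big([\mu-D_n,+\infty)\big)$, so monotonicity of $\Prob_n$ and the definition of $S_n$ give
\[
	\Prob_n\Big[f_n^{-1}\big([\mu-D_n,+\infty)\big)\Big] \;\ge\; \Prob_n[L(\mu)] \;\ge\; \Prob[L(\mu)]-S_n \;\ge\; 1-\gamma+M_n-S_n \;\ge\; 1-\gamma,
\]
hence $\mu-D_n\le\hat\lambda_{\gamma,n}$. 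Letting $\mu\uparrow\lambda_{\gamma-M_n}$ gives $\lambda_{\gamma-M_n}-D_n\le\hat\lambda_{\gamma,n}$, and $D_n\le M_n$ closes the chain.

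I do not expect a genuine obstacle here; the content is a careful bookkeeping of inequalities. The one subtlety to keep in mind is that $S_n$ bounds the empirical--population discrepancy only on level sets of $f$, not on the data-dependent sets $f_n^{-1}([\lambda,+\infty))$, which is precisely why both bounds must be routed through the true level sets $L(\mu)$ via the sandwich, and why the two sources of error accumulate into $M_n=\max\{D_n,S_n\}$. It is also worth recording the convention $\lambda_{\gamma'}=+\infty$ for $\gamma'\ge 1$, so that the inequality is literally valid for every $\gamma\in(0,1)$ irrespective of the (random) size of $M_n$.
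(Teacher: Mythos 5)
Your proof is correct and follows essentially the same route as the paper's: the sandwich $L(\lambda+D_n)\subset f_n^{-1}\big([\lambda,+\infty)\big)\subset L(\lambda-D_n)$, passing from $\Prob_n$ to $\Prob$ at cost $S_n$, then combining both error terms via $M_n=\max\{D_n,S_n\}$ and the monotonicity of $\lambda\mapsto\Prob[L(\lambda)]$ and $\gamma\mapsto\lambda_\gamma$. The paper phrases the final step as inclusions between the sets defining the suprema and then takes $\sup$, whereas you argue elementwise (pick $\lambda$, pick $\mu\uparrow\lambda_{\gamma-M_n}$), but these are cosmetic variants of the same bookkeeping; your remark about conventions for $\lambda_{\gamma'}$ when $\gamma'$ leaves $(0,1)$ is a small but sensible addition that the paper leaves implicit.
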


\begin{proof}
	The definition of $D_n$ directly yields
	\begin{equation*}
		L ( \lambda + D_n ) 
		\subset f_n^{-1} \big( [\lambda, + \infty) \big) \subset
		L ( \lambda - D_n ),
	\end{equation*}
	for every $\lambda \in \R$.
	So,
	\begin{equation}
		\label{eq:contentPn}
		\Prob_n \big[ L ( \lambda + D_n ) \big] \leq \Prob_n \big[ f_n^{-1} \big( [\lambda, + \infty) \big) \big] \leq \Prob_n \big[ L ( \lambda - D_n ) \big], \quad \forall \lambda \in \R.
	\end{equation}
	On the other hand, from the definition of $S_n$, it follows that
	\begin{equation}
		\label{eq:approxSn}
		\Prob \big[ L ( \lambda ) \big] - S_n \leq \Prob_n \big[ L ( \lambda ) \big] \leq \Prob \big[ L ( \lambda )  \big] + S_n, \quad \forall \lambda \in \R.
	\end{equation}
	Consequently, \eqref{eq:contentPn} and \eqref{eq:approxSn} guarantee that
	\begin{equation*}
		\Prob \big[ L ( \lambda + D_n ) \big] - S_n \leq \Prob_n \big[ f_n^{-1} \big( [\lambda, + \infty) \big) \big] \leq \Prob \big[ L ( \lambda - D_n ) \big] + S_n , \quad \forall \lambda \in \R.
	\end{equation*}
	Taking into account that $\Prob \big[ L ( \lambda ) \big]$ is non-increasing in $\lambda$, this yields
	\begin{equation*}
		\Prob \big[ L ( \lambda + M_n ) \big] - M_n \leq \Prob_n \big[ f_n^{-1} \big( [\lambda, + \infty) \big) \big] \leq \Prob \big[ L ( \lambda - M_n ) \big] + M_n , \quad \forall \lambda \in \R.
	\end{equation*}
	
	From the previous inequality one deduces that the following two contents
	\begin{align*}
		\left\lbrace \lambda \in \R \colon \Prob_n \big[ f_n^{-1} \big( [\lambda, + \infty) \big) \big] \geq 1 - \gamma  \right\rbrace \subset &
		\left\lbrace \lambda \in \R \colon \Prob \big[ L ( \lambda - M_n ) \big] \geq 1 - \gamma - M_n \right\rbrace, \\
		\left\lbrace \lambda \in \R \colon \Prob_n \big[ f_n^{-1} \big( [\lambda, + \infty) \big) \big] \geq 1 - \gamma  \right\rbrace \supset &
		\left\lbrace \lambda \in \R \colon \Prob \big[ L ( \lambda + M_n ) \big] \geq 1 - \gamma + M_n \right\rbrace ;
	\end{align*}
	hold for all $\gamma \in (0,1)$. Hence,
	\begin{align*}
		\hat{\lambda}_{\gamma , n} =& \sup \left\lbrace \lambda \in \R \colon \Prob_n \big[ f_n^{-1} \big( [\lambda, + \infty) \big) \big] \geq 1 - \gamma  \right\rbrace \\
		\leq& \sup \left\lbrace \lambda \in \R \colon \Prob \big[ L ( \lambda - M_n ) \big] \geq 1 - \gamma - M_n \right\rbrace \\
		=& \sup \left\lbrace \lambda \in \R \colon \Prob \big[ L ( \lambda ) \big] \geq 1 - \gamma - M_n \right\rbrace + M_n 
		= \lambda_{\gamma + M_n} + M_n, \\
		\hat{\lambda}_{\gamma , n} =& \sup \left\lbrace \lambda \in \R \colon \Prob_n \big[ f_n^{-1} \big( [\lambda, + \infty) \big) \big] \geq 1 - \gamma  \right\rbrace \\
		\geq& \sup \left\lbrace \lambda \in \R \colon \Prob \big[ L ( \lambda + M_n ) \big] \geq 1 - \gamma + M_n \right\rbrace \\
		=& \sup \left\lbrace \lambda \in \R \colon \Prob \big[ L ( \lambda ) \big] \geq 1 - \gamma + M_n \right\rbrace - M_n
		= \lambda_{\gamma - M_n} - M_n,
	\end{align*}
	also hold for all $\gamma \in (0,1)$, completing the proof
\end{proof}

Once proven that the distance between $\lambda_{\gamma}$ and $\hat{\lambda}_{\gamma , n}$ is bounded by the maximum of $D_n$ and $S_n$, one needs to show how fast these quantities converge to zero. Proposition~\ref{prop:convergenceprob} ensures that $S_n$ is smaller than $(\log n / n)^{1/2}$ almost surely.

\begin{prop}
	\label{prop:convergenceprob}
	Let $M$ be a Riemannian manifold and $f$ a density function on $M$. Let $X_1, \ldots, X_n$ be a random sample from $f$. Then
	\begin{equation*}
		\sup_{\lambda \in \R} \abs{\Prob \big[ L (\lambda) \big] - \Prob_n \big[ L (\lambda) \big] } \leq \sqrt{\frac{\log n}{n}}
	\end{equation*}
	eventually almost surely, where
	\begin{equation*}
		\Prob_n \big[ L (\lambda) \big] = \frac{1}{n} \sum_{i = 1}^{n} \mathbbm{1} \big[ f (X_i) \geq \lambda \big].
	\end{equation*}
\end{prop}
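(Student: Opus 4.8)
The plan is to recognize $\sup_{\lambda \in \R}\abs{\Prob[L(\lambda)] - \Prob_n[L(\lambda)]}$ as a one-dimensional Kolmogorov--Smirnov statistic and then combine the Dvoretzky--Kiefer--Wolfowitz inequality with the Borel--Cantelli lemma. First I would set $Y_i = f(X_i)$, so that $Y_1,\dots,Y_n$ are i.i.d.\ real random variables with $\Prob[L(\lambda)] = \Prob[Y_1 \geq \lambda]$ and $\Prob_n[L(\lambda)] = \frac{1}{n}\sum_{i=1}^n \mathbbm{1}(Y_i \geq \lambda)$. Since $\mathbbm{1}(Y_i \geq \lambda) = \mathbbm{1}(-Y_i \leq -\lambda)$, the quantity to be bounded equals $\sup_{\lambda\in\R}\abs{\widehat F_n(-\lambda) - F(-\lambda)} = \sup_{s\in\R}\abs{\widehat F_n(s) - F(s)}$, where $\widehat F_n$ is the empirical distribution function of $-Y_1,\dots,-Y_n$ and $F$ is the distribution function of $-Y_1$. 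In particular the problem has been reduced to a distribution-free statement on $\R$, which explains why the proposition needs no assumption on $M$ or $f$.

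Next I would invoke the Dvoretzky--Kiefer--Wolfowitz inequality, which asserts that there is an absolute constant $C$ (Massart's theorem giving the sharp value $C = 2$) such that
\begin{equation*}
	\Prob\!\left[\, \sup_{s\in\R}\abs{\widehat F_n(s) - F(s)} > \varepsilon \,\right] \leq C\, e^{-2 n \varepsilon^2}, \qquad \varepsilon > 0.
\end{equation*}
Taking $\varepsilon = \varepsilon_n = \sqrt{\log n / n}$ gives $2 n \varepsilon_n^2 = 2\log n$, so
\begin{equation*}
	\sum_{n \geq 1} \Prob\!\left[\, \sup_{\lambda\in\R}\abs{\Prob[L(\lambda)] - \Prob_n[L(\lambda)]} > \sqrt{\tfrac{\log n}{n}} \,\right] \leq \sum_{n\geq 1} \frac{C}{n^2} < +\infty.
\end{equation*}
By the Borel--Cantelli lemma, with probability one there is $n_0 \in \N$ with $\sup_{\lambda\in\R}\abs{\Prob[L(\lambda)] - \Prob_n[L(\lambda)]} \leq \sqrt{\log n/n}$ for all $n \geq n_0$, which is the almost-sure statement of the proposition (the bound cannot hold for every $n$, e.g.\ for $n = 1$ the right-hand side is $0$).

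The argument is short and I do not expect a genuine obstacle; the one point deserving care is the calibration of $\varepsilon_n$ against the concentration bound. The rate $\sqrt{\log n/n}$ is tuned so that $n\varepsilon_n^2 = \log n$, and it is precisely the factor $2$ in the exponent of the DKW inequality, combined with its \emph{dimension-free} prefactor, that turns the tail into the summable $C/n^2$. A more hands-on approach --- Hoeffding's inequality applied at the points of a deterministic grid of levels, a union bound, and a monotonicity argument to bound the oscillation of $\widehat F_n - F$ between consecutive grid points --- would carry a grid-cardinality prefactor growing polynomially in $n$ and would therefore only deliver $\sup_\lambda\abs{\Prob[L(\lambda)] - \Prob_n[L(\lambda)]} \leq c\sqrt{\log n/n}$ for some constant $c > 1$; using DKW (equivalently, exploiting that the class of half-lines has VC dimension one) avoids this loss and gives the stated constant $1$.
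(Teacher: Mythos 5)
Your proof is correct and follows essentially the same route as the paper: both reduce the supremum to a one-dimensional Kolmogorov--Smirnov statistic for (the negation of) $f(X)$, invoke the Dvoretzky--Kiefer--Wolfowitz inequality with Massart's sharp constant at $\varepsilon_n = \sqrt{\log n/n}$ to get a $2/n^2$ tail, and conclude by Borel--Cantelli. Your closing remark that the statement is to be read ``eventually almost surely'' (and could not hold for small $n$) is also the correct reading of the paper's conclusion.
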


\begin{proof}
	Consider the random variable $Y = -f(X)$ and its cumulative distribution function defined by
	\begin{equation*}
		F_{Y} (t) = \Prob [Y \leq t] = \Prob[-f (X) \leq t] = \Prob [f(X) \geq -t] = \Prob [L(-t)].
	\end{equation*}
	One can estimate $F_Y$ from the random sample $-f(X_1), \ldots, -f(X_n)$ with the empirical distribution function:
	\begin{equation*}
		F_{Y, n} (t) = \frac{1}{n} \sum_{i = 1}^{n} \mathbbm{1} \big[ -f (X_i) \leq t \big] = \frac{1}{n} \sum_{i = 1}^{n} \mathbbm{1} \big[ f (X_i) \geq -t \big] = \Prob_n \big[ L (-t) \big].
	\end{equation*}
	Therefore
	\begin{equation*}
		\sup_{\lambda \in \R} \abs{\Prob \big[ L (\lambda) \big] - \Prob_n \big[ L (\lambda) \big] } = \sup_{t \in \R} \abs{ F_Y ( t ) - F_{Y, n} (t) }.
	\end{equation*}
	
	The Dvoretzky–Kiefer–Wolfowitz inequality \citep{Massart1990} yields
	\begin{equation*}
		\Prob \left( \sup_{t \in \R} \abs{ F_Y ( t ) - F_{Y, n} (t) } > \sqrt{\frac{\log n}{n}} \right) \leq 2 \exp (-2 \log n) = 2 n^{-2}.
	\end{equation*}
	Consequently
	\begin{equation*}
		\sum_{n = 1}^{+ \infty} \Prob \left( \sup_{\lambda \in \R} \abs{\Prob \big[ L (\lambda) \big] - \Prob_n \big[ L (\lambda) \big] } > \sqrt{\frac{\log n}{n}} \right) \leq 2 \sum_{n = 1}^{+ \infty} n^{-2} < + \infty.
	\end{equation*}
	And the Borel-Cantelli Lemma ensures that
	\begin{multline*}
		\Prob \left[ \limsup_{n \in \N} \left( \sup_{\lambda \in \R} \abs{\Prob \big[ L (\lambda) \big] - \Prob_n \big[ L (\lambda) \big] } > \sqrt{\frac{\log n}{n}} \right) \right] = 0 \Rightarrow \\
		\Rightarrow \Prob \left[ \liminf_{n \in \N} \left( \sup_{\lambda \in \R} \abs{\Prob \big[ L (\lambda) \big] - \Prob_n \big[ L (\lambda) \big] } \leq \sqrt{\frac{\log n}{n}} \right) \right] = 1;
	\end{multline*}
	showing that $\sup_{\lambda \in \R} \abs{\Prob \big[ L (\lambda) \big] - \Prob_n \big[ L (\lambda) \big] } \leq \sqrt{\frac{\log n}{n}}$ holds eventually almost surely.
\end{proof}

Theorem~\ref{th:lambdaconvergence} then follows from Propositions~\ref{prop:levymetric} and~\ref{prop:convergenceprob}.

\begin{proof}[Proof of Theorem~\ref{th:lambdaconvergence}]
	Let
	\begin{equation*}
		S_n = \sup_{\lambda \geq 0} \abs{\Prob \big[ L (\lambda) \big] - \Prob_n \big[ L (\lambda) \big] }
	\end{equation*}
	and $M_n = \max \{D_n, S_n\}$. Proposition \ref{prop:levymetric} guarantees that
	\begin{equation*}
		\lambda_{\gamma - M_n} - M_n \leq \hat{\lambda}_{\gamma, n} \leq \lambda_{\gamma + M_n} + M_n, \quad \forall \gamma \in [\underline{\gamma}, \overline{\gamma}].
	\end{equation*}
	If $M_n < \delta$, then Assumption~P yields
	\begin{equation*}
		\sup_{\gamma \in [\underline{\gamma}, \overline{\gamma} ] } \abs{\lambda_{\gamma} - \lambda_{\gamma \pm M_n}} \leq kM_n.
	\end{equation*}
	Therefore, if $M_n < \delta$, it follows that
	\begin{equation*}
		\lambda_{\gamma } - (1 + k) M_n \leq \hat{\lambda}_{\gamma, n} \leq \lambda_{\gamma} + (1 + k) M_n,  \quad \forall \gamma \in [\underline{\gamma}, \overline{\gamma}];
	\end{equation*}
	and, consequently,
	\begin{equation*}
		\sup_{\gamma \in [\underline{\gamma}, \overline{\gamma} ] } \abs{\lambda_{\gamma} - \hat{\lambda}_{\gamma, n}} \leq (1 + k) M_n.
	\end{equation*}
	
	Proposition \ref{prop:convergenceprob} ensures that $S_n$ converges to zero almost surely. Since $D_n$ is assumed to converge to zero almost surely, $M_n \rightarrow 0$ almost surely. Hence $M_n < \delta$ happens eventually almost surely, and then
	\begin{equation*}
		\sup_{\gamma \in [\underline{\gamma}, \overline{\gamma} ] } \abs{\lambda_{\gamma} - \hat{\lambda}_{\gamma, n}} \leq (1 + k) M_n,
	\end{equation*}
	holds eventually almost surely too. In addition, Proposition \ref{prop:convergenceprob} also shows that $S_n \leq \sqrt{\frac{\log n}{n}}$ eventually almost surely. Therefore,
	\begin{equation*}
		\sup_{\gamma \in [\underline{\gamma}, \overline{\gamma} ] } \abs{\lambda_{\gamma} - \hat{\lambda}_{\gamma, n}} \leq (1 + k) \max \left\lbrace D_n, \sqrt{\frac{\log n}{n}} \right\rbrace
	\end{equation*}
	eventually almost surely, concluding the proof.
\end{proof}

\subsection{Proof of Theorem~6.1}
\label{sec:rnproof}

This section contains the proof of Theorem~\ref{th:rconsistent}, which is long and requires several auxiliary results. The key to showing the consistency of $r_n (\lambda)$ is to control the error made when one approximates any set $S$ with the discretization $\mathcal{X}_n \cap S$, that is, to control how fast the Hausdorff distance $\mathcal{d}_H (\mathcal{X}_n \cap S , S)$ goes to zero. The next result ensures that the probability of $\mathcal{d}_H (\mathcal{X}_n \cap S , S) > \varepsilon$ goes to zero exponentially with~$n$.

\begin{lemma}
	\label{lemma:thesampleisdense}
	Let $M$ be a Riemannian manifold under Assumptions M and $f:M \rightarrow \R$ a density function. Let $\mathcal{X}_n = \{ X_1, \ldots, X_n \}$ be an i.i.d.~sample from the density function $f$.
	
	Let $A \subset M$ a bounded set such that $f(x) \geq b > 0$ for all $x \in A$. Let $\delta, \varepsilon > 0$ such that $2\varepsilon < \delta$. Then, the following inequality holds:
	\begin{equation*}
		\Prob \Big[ \sup_{S \in \mathcal{S}_{\delta} (A)} \mathcal{d}_H (\mathcal{X}_n \cap S , S) > \varepsilon \Big] \leq D \left( \frac{\varepsilon}{12}, A \right)^2 \exp \left[ -nab \min \left\lbrace \frac{\varepsilon}{12} , \rho \right\rbrace^{d} \right];
	\end{equation*}
	where $\mathcal{S}_{\delta} (A) = \{ S \subset A \colon \Psi_{\delta} (S) = S \}$, and $\rho$ and $a$ are the constants in Assumption (M2) and $D (\cdot, \cdot)$ is the functional defined in Proposition~\ref{prop:M2impliesM3}.
\end{lemma}

\begin{proof}
	It is a consequence of Lemma~\ref{lemma:covering}b. Let $S \in \mathcal{S}_{\delta} (A)$ and take
	\begin{equation*}
		S' = (S \ominus \delta B) \oplus (\delta - 2\varepsilon/3) B \quad \text{ and } \quad A' = (A \ominus \delta B) \oplus (\delta - 2\varepsilon/3) B.
	\end{equation*}
	Then,
	\begin{multline*}
		S' \oplus (2\varepsilon/3 - \varepsilon/3) B \subset (\mathcal{X}_n \cap S') \oplus 2\varepsilon/3 B
		\Rightarrow
		\\
		\Rightarrow
		S' \oplus 2\varepsilon/3 B = \Psi_{\delta} ( S ) = S  \subset (\mathcal{X}_n \cap S') \oplus \varepsilon B \subset (\mathcal{X}_n \cap S) \oplus \varepsilon B
		\Rightarrow
		\\
		\Rightarrow
		\mathcal{d}_H (\mathcal{X}_n \cap S , S) \leq \varepsilon,
	\end{multline*}
	where we have used Proposition~\ref{prop:randsmin}a. Furthermore, $S' \in \mathcal{S}_{2\varepsilon/3} (A')$ by Propositions~\ref{prop:trivial}d and~\ref{prop:randspsi}d. Consequently,
	\begin{align*}
		\Prob \Big[ \sup_{S \in \mathcal{S}_{\delta} (A)} & \mathcal{d}_H (\mathcal{X}_n \cap S , S) > \varepsilon \Big]
		\\
		=&
		\Prob \Big[ \mathcal{d}_H (\mathcal{X}_n \cap S , S) > \varepsilon \text{ for some } S \in \mathcal{S}_{\delta} (A) \Big]
		\\
		\leq&
		\Prob \Big[ S' \oplus (2\varepsilon/3 - \varepsilon/3) B \not\subset (\mathcal{X}_n \cap S') \oplus 2\varepsilon/3 B \text{ for some } S' \in \mathcal{S}_{2\varepsilon/3} (A') \Big].
	\end{align*}
	So, applying Lemma~\ref{lemma:covering}b with $r = \delta = 2\varepsilon/3$ and $\varepsilon = \varepsilon/3$ yields
	\begin{align*}
		\Prob \Big[ \sup_{S \in \mathcal{S}_{\delta} (A)} \mathcal{d}_H & (\mathcal{X}_n \cap  S , S) > \varepsilon \Big]
		\\
		\leq &
		D \left( \frac{\varepsilon}{12}, A' \right)^2 \exp \left[ -nab \min \left\lbrace \frac{\varepsilon}{3} , \rho \right\rbrace^{(d-1)/2} \min \left\lbrace \frac{\varepsilon}{12} , \rho \right\rbrace^{(d+1)/2} \right]
		\\
		\leq &
		D \left( \frac{\varepsilon}{12}, A \right)^2 \exp \left[ -nab \min \left\lbrace \frac{\varepsilon}{12} , \rho \right\rbrace^{d} \right]. \qedhere
	\end{align*}
	
\end{proof}

Lemma~\ref{lemma:thesampleisdense} and the Borel--Cantelli Lemma allow to show that $\mathcal{d}_H (\mathcal{X}_n \cap S , S)$ converges to zero uniformly on $S$, and how fast it does so.

\begin{cor}
	\label{cor:unifrate}
	Let $M$ be a Riemannian manifold under Assumptions M and $f:M \rightarrow \R$ a density function. Let $\mathcal{X}_n = \{ X_1, \ldots, X_n \}$ be an i.i.d.~sample from the density function $f$.
	
	Let $A \subset M$ a bounded set such that $f(x) \geq b > 0$ for all $x \in A$, and let $\delta > 0$. Then,
	\begin{equation*}
		\sup_{S \in \mathcal{S}_{\delta} (A)} \mathcal{d}_H (\mathcal{X}_n \cap S, S)
		=
		O_{\textrm{\rm a.s.}} \Bigg( \bigg( \frac{\log n}{n} \bigg)^{1/d} \Bigg)
	\end{equation*}
	where $\mathcal{S}_{\delta} (A) = \{ S \subset A \colon \Psi_{\delta} (S) = S \}$.
\end{cor}

\begin{proof}
	Let
	\begin{equation*}
		\varepsilon_n = \bigg( \frac{c \log n}{n} \bigg)^{1/d}
	\end{equation*}
	where $c > 0$ is a constant that will be specified later on. Since $\varepsilon_n \to 0$, $\varepsilon_n < \min \{ \delta/2, \rho \}$ for large $n$, where $\rho > 0$ is the constant from Assumption (M2). If so, Lemma~\ref{lemma:thesampleisdense}b ensures that
	\begin{align*}
		\Prob \Big[ \sup_{S \in \mathcal{S}_{\delta} (A)} \mathcal{d}_H (\mathcal{X}_n \cap S , S) > \varepsilon_n \Big]
		&\leq
		D \left( \frac{\varepsilon_n}{12}, A \right)^2 \exp \left[ - \bigg(\frac{ab}{12^d} \bigg) n \varepsilon_n^{d} \right]
		\\
		&=
		O
		\bigg(
		\varepsilon_n^{-2d} \exp \left[ - \bigg(\frac{ab}{12^d} \bigg) n \varepsilon_n^{d} \right]
		\bigg)
	\end{align*}
	where we have used Proposition~\ref{prop:M2impliesM3}. In addition,
	\begin{equation*}
		\varepsilon_n^{-2d} \exp \left[ - \bigg(\frac{ab}{12^d} \bigg) n \varepsilon_n^{d} \right]
		=
		o
		\big(
		n^{2 - abc/12^d}
		\big)
	\end{equation*}
	Taking $c$ large enough so that $$2 - abc/12^d \leq -2$$ guarantees that
	\begin{equation*}
		\Prob \Big[ \sup_{S \in \mathcal{S}_{\delta} (A)} \mathcal{d}_H (\mathcal{X}_n \cap S , S) > \varepsilon_n \Big]
		=
		o
		\big(
		n^{-2}
		\big).
	\end{equation*}
	Therefore,
	\begin{equation*}
		\sum_{n = 1}^{+ \infty}\Prob \Big[ \sup_{S \in \mathcal{S}_{\delta} (A)} \mathcal{d}_H (\mathcal{X}_n \cap S , S) > \varepsilon_n \Big]
		< + \infty
	\end{equation*}
	and the result holds by the Borel-Cantelli Lemma.
\end{proof}

Lastly, the proof of Theorem~\ref{th:rconsistent} requires two further technical results. The first one connects the distances $\mathcal{d}_H ( S \ominus rB , S \big)$ and $\mathcal{d}_H \big( \Psi_{r} (S), S \big)$ through a useful equation.

\begin{lemma}
	\label{lemma:dists-rs}
	Let $M$ be a manifold under Assumptions M, and let $S \subset M$ a non-empty compact subset such that $\Psi_{r} (S) \neq S$. Then,\footnote{For the sake of convenience, the statement of Lemma~\ref{lemma:dists-rs} assumes that $\mathcal{d}_H (\emptyset, S) = + \infty$ for any non--empty compact set $S$, and $+\infty + r = +\infty$. This allows to avoid treating the case $S \ominus r B = \emptyset$ separately.}
	$$\mathcal{d}_H ( S \ominus rB , S \big) =  \mathcal{d}_H \big( \Psi_{r} (S), S \big) + r.$$
\end{lemma}

\begin{proof}
	Let $\varepsilon > 0$. From Proposition~\ref{prop:randsmin}a, one deduces
	\begin{multline*}
		\mathcal{d}_H \big( \Psi_{r} (S), S \big) = \mathcal{d}_H \big( (S \ominus rB) \oplus rB, S \big) \leq \varepsilon
		\Rightarrow
		\\
		\Rightarrow
		S \subset \Big[ (S \ominus rB) \oplus rB \Big] \oplus \varepsilon B = (S \ominus rB) \oplus (r + \varepsilon) B  
		\Rightarrow
		\\
		\Rightarrow
		\mathcal{d}_H ( S \ominus rB , S \big) \leq r + \varepsilon.
	\end{multline*}
	Consequently, $\mathcal{d}_H ( S \ominus rB , S \big) \leq \mathcal{d}_H \big( \Psi_{r} (S), S \big) + r$.
	
	On the other hand, $\Psi_{r} (S) \neq S$ implies that $S \not\subset (S \ominus rB) \oplus rB$. Therefore, if $\mathcal{d}_H ( S \ominus rB , S ) \leq \varepsilon$, then $\varepsilon \geq r$. This and Proposition~\ref{prop:randsmin}a again guarantee that
	\begin{multline*}
		\mathcal{d}_H \big( S \ominus rB , S \big) \leq \varepsilon
		\Rightarrow
		\\
		\Rightarrow
		S \subset (S \ominus rB) \oplus \varepsilon B = \Big[ (S \ominus rB) \oplus rB \Big] \oplus (\varepsilon - r) B
		\Rightarrow
		\\
		\Rightarrow
		\mathcal{d}_H \big( \Psi_{r} (S), S \big) = \mathcal{d}_H \big( (S \ominus rB) \oplus rB, S \big) \leq \varepsilon - r.
	\end{multline*}
	Hence, $\mathcal{d}_H \big( \Psi_{r} (S), S \big) \leq \mathcal{d}_H ( S \ominus rB , S \big) - r$, concluding the proof.
\end{proof}

The second result ensures that some special quantity is not zero.

\begin{lemma}
	\label{lemma:positivedistance}
	Under the hypotheses of Theorem~\ref{th:rconsistent}, it holds that
	\begin{equation}
		\inf_{\lambda \in [l, u] }
		\mathcal{d}_H \big( \Psi_{r_0(\lambda)  + \varepsilon } \big( L(\lambda) \big), L (\lambda) \big) > 0,
	\end{equation}
	for any $\varepsilon > 0$, where $l$ and $u$ are the constants in Assumptions~L.
\end{lemma}

\begin{proof}
	We will show the result by contradiction. Assume that 
	\begin{equation*}
		\inf_{\lambda \in [l, u] }
		\mathcal{d}_H \big( \Psi_{r_0(\lambda)  + \varepsilon} \big( L(\lambda) \big), L (\lambda) \big) = 0.
	\end{equation*}
	Then, there exists a convergent sequence $\lambda_n \to \lambda_0 \in [l, u]$ such that
	\begin{equation*}
		\mathcal{d}_H \big( \Psi_{r_0(\lambda_n)  + \varepsilon } \big( L(\lambda_n) \big), L (\lambda_n) \big) \to 0.
	\end{equation*}
	Taking into account that $\Psi_{r_0(\lambda)  + \varepsilon } \big( L(\lambda) \big) \subset L (\lambda)$, one has
	\begin{multline*}
		\mathcal{d}_H \big( \Psi_{r_0(\lambda_0)  + \varepsilon } \big( L(\lambda_0) \big), L (\lambda_0) \big)
		=
		\overrightarrow{\mathcal{d}_H} \big( L (\lambda_0) , \Psi_{r_0(\lambda_0)  + \varepsilon } \big( L(\lambda_0) \big) \big)
		\\
		\leq
		\overrightarrow{\mathcal{d}_H} \big( L (\lambda_0) , L (\lambda_n) \big)
		+
		\overrightarrow{\mathcal{d}_H} \big( L (\lambda_n) , \Psi_{r_0(\lambda_n)  + \varepsilon } \big( L(\lambda_n) \big) \big)
		\\
		+
		\overrightarrow{\mathcal{d}_H} \big( \Psi_{r_0(\lambda_n)  + \varepsilon } \big( L(\lambda_n) \big) , \Psi_{r_0(\lambda_0)  + \varepsilon } \big( L(\lambda_0) \big) \big).
	\end{multline*}
	The right-hand side of the inequality tends to zero by Assumption (L4), the continuity of $r_0(\lambda)$, and Corollary~\ref{cor:continuouspsi}. Then, $\mathcal{d}_H \big( \Psi_{r_0(\lambda_0)  + \varepsilon } \big( L(\lambda_0) \big), L (\lambda_0) \big) = 0$, which is a contradiction with the definition of $r_0 (\lambda)$ (see Equation~\eqref{eq:setest}). Hence, the result holds.
\end{proof}

We can finally proof Theorem~\ref{th:rconsistent}. It should be noticed that, from now on, $l, u, \delta$ and $k$ are the constants in Assumptions~L.

\begin{proof}[Proof of Theorem~\ref{th:rconsistent}]
	Let $\varepsilon > 0$ such that $\varepsilon < \min_{\lambda \in [l, u]} r_0(\lambda)$. The main argument of the proof consists in bounding the random variables
	\begin{equation}
		\label{eq:auxsup}
		\sup_{\lambda \in [l, u] }
		\bigg(
		\sup_{r \in [0, r_0(\lambda) - \varepsilon)} \Big[ \mathcal{d}_H \Big( \mathcal{X}_n^{+} (\lambda) \cap \big( \mathcal{X}_n^{-} (\lambda) \oplus r B \big)^c , \mathcal{X}_n^{+} (\lambda) \Big) - r \Big]
		\bigg)
	\end{equation}
	and
	\begin{equation}
		\label{eq:auxinf}
		\inf_{\lambda \in [l, u] }
		\bigg(
		\inf_{r \geq r_0(\lambda) + \varepsilon} \Big[ \mathcal{d}_H \Big( \mathcal{X}_n^{+} (\lambda) \cap \big( \mathcal{X}_n^{-} (\lambda) \oplus r B \big)^c , \mathcal{X}_n^{+} (\lambda) \Big) - r \Big]
		\bigg)
		.
	\end{equation}
	If \eqref{eq:auxsup} is smaller than $h_n$ (almost surely), then, for any $\lambda \in [l,u]$, every $r$ smaller than $r_0(\lambda) - \varepsilon$ will fulfill
	\begin{equation*}
		\mathcal{d}_H \Big( \mathcal{X}_n^{+} (\lambda) \cap \big( \mathcal{X}_n^{-} (\lambda) \oplus r B \big)^c , \mathcal{X}_n^{+} (\lambda) \Big) \leq r + h_n,
	\end{equation*}
	and, therefore, $\inf_{\lambda \in [l, u] } (r_n (\lambda) - r_0(\lambda))$ will be larger than $- \varepsilon$. Reciprocally, \eqref{eq:auxinf} being larger than $h_n$ will imply that $\sup_{\lambda \in [l, u] } (r_n (\lambda) - r_0(\lambda))$ is also smaller than $\varepsilon$, proving the result.
	
	Let us start by bounding \eqref{eq:auxsup}. Take $\lambda \in [l, u]$, and fix $r \in [0, r_0(\lambda) - \varepsilon)$. Lemma~\ref{lemma:X+andX-} yields
	\begin{align*}
		\mathcal{X}_n \cap \big[ L(\lambda) \ominus (r + kD_n) B \big]
		\subset &
		\mathcal{X}_n^{+} (\lambda) \cap \big( \mathcal{X}_n^{-} (\lambda) \oplus r B \big)^c
		\\
		\subset&
		\mathcal{X}_n^{+} (\lambda)
		\subset
		L (\lambda) \oplus kD_n B.
	\end{align*}
	So,
	\begin{align}
		\nonumber
		\mathcal{d}_H \Big( \mathcal{X}_n^{+} (\lambda) \cap& \big( \mathcal{X}_n^{-} (\lambda) \oplus r B \big)^c , \mathcal{X}_n^{+} (\lambda) \Big)
		\\
		\nonumber
		\leq&
		\mathcal{d}_H \Big( \mathcal{X}_n \cap \big[ L(\lambda) \ominus (r + kD_n) B \big] , L (\lambda) \oplus kD_n B \Big)
		\\
		\nonumber
		\leq&
		\mathcal{d}_H \Big( \mathcal{X}_n \cap \big[ L(\lambda) \ominus (r + kD_n) B \big] , L(\lambda) \ominus (r + kD_n) B \Big)
		\\
		&+
		\label{eq:distL-rL}
		\mathcal{d}_H \Big( L(\lambda) \ominus (r + kD_n) B ,  L (\lambda) \oplus kD_n B \Big)
	\end{align}
	If $kD_n < \varepsilon/2$, Equation~\eqref{eq:defr} and Propositions~\ref{prop:randspsi}d and~\ref{prop:randsmin}a guarantee that
	$$ L (\lambda) \oplus kD_n B = \big( L(\lambda) \ominus (r + kD_n) B \big) \oplus (r + 2kD_n) B.$$
	Therefore, the second term in \eqref{eq:distL-rL} is smaller that $r + 2kD_n$. This in turn implies
	\begin{multline*}
		\mathcal{d}_H \Big( \mathcal{X}_n^{+} (\lambda) \cap \big( \mathcal{X}_n^{-} (\lambda) \oplus r B \big)^c , \mathcal{X}_n^{+} (\lambda) \Big) - r
		\\
		\leq
		\mathcal{d}_H \Big( \mathcal{X}_n \cap \big[ L(\lambda) \ominus (r + kD_n) B \big] , L(\lambda) \ominus (r + kD_n) B \Big)
		+ 2 kD_n.
	\end{multline*}
	If $kD_n < \varepsilon/2$, Equation~\eqref{eq:defr}, Assumption (L2) and Lemma~\ref{lemma:psidifference} ensure that $\Psi_{\varepsilon/2} \big( L(\lambda) \ominus (r + kD_n) B \big) =  L(\lambda) \ominus (r + kD_n) B$. Hence,
	\begin{equation*}
		\mathcal{d}_H \Big( \mathcal{X}_n^{+} (\lambda) \cap \big( \mathcal{X}_n^{-} (\lambda) \oplus r B \big)^c , \mathcal{X}_n^{+} (\lambda) \Big) - r
		\leq
		\sup_{S \in \mathcal{S}_{\varepsilon/2} ( L(l) )}
		\mathcal{d}_H ( \mathcal{X}_n \cap S , S ) + 2kD_n.
	\end{equation*}
	The right-hand side does not depend on $r$ or $\lambda$, so
	\begin{multline*}
		\sup_{\lambda \in [l, u] }
		\bigg(
		\sup_{r \in [0, r_0(\lambda) - \varepsilon)} \Big[ \mathcal{d}_H \Big( \mathcal{X}_n^{+} (\lambda) \cap \big( \mathcal{X}_n^{-} (\lambda) \oplus r B \big)^c , \mathcal{X}_n^{+} (\lambda) \Big) - r \Big]
		\bigg)
		\\
		\leq
		\sup_{S \in \mathcal{S}_{\varepsilon/2} ( L(l) )}
		\mathcal{d}_H ( \mathcal{X}_n \cap S , S ) + 2kD_n
	\end{multline*}
	also holds if $kD_n < \varepsilon/2$. By Corollary~\ref{cor:unifrate} and Equation~\eqref{eq:hn}, 
	$$
	h_n^{-1} 
	\bigg(
	\sup_{S \in \mathcal{S}_{\varepsilon/2} ( L(l) )}
	\mathcal{d}_H ( \mathcal{X}_n \cap S , S ) + 2kD_n
	\bigg)
	\to 
	0
	$$
	almost surely. Since $D_n \to 0$ almost surely, it follows that
	\begin{multline*}
		\Prob \Bigg[ \liminf_{n \to + \infty} \bigg( \sup_{\lambda \in [l, u] }
		\bigg(
		\sup_{r \in [0, r_0(\lambda) - \varepsilon)} \Big[ \mathcal{d}_H \Big( \mathcal{X}_n^{+} (\lambda) \cap \big( \mathcal{X}_n^{-} (\lambda) \oplus r B \big)^c , \mathcal{X}_n^{+} (\lambda) \Big) - r \Big]
		\bigg) \leq h_n \bigg)
		\Bigg]
		\\
		\geq
		\Prob \Bigg[ \liminf_{n \to + \infty} \bigg( \sup_{S \in \mathcal{S}_{\varepsilon/2} ( L(l) )}
		\mathcal{d}_H ( \mathcal{X}_n \cap S , S ) + 2kD_n \leq h_n \bigg)
		\Bigg]
		=
		1.
	\end{multline*}
	This last result and Equation~\eqref{eq:defrn} imply
	\begin{equation}
		\label{eq:rnlowerbound}
		\Prob \bigg[ \liminf_{n \to + \infty} \Big( \inf_{\lambda \in [l, u] } \big( r_n (\lambda) - r_0(\lambda) \big) \geq - \varepsilon \Big) \bigg]
		=
		1.
	\end{equation}
	
	Now we will try to bound \eqref{eq:auxinf}. Take $\xi  = \min_{x \in L(l) } \mathcal{d} (x, L(l/2)^c)$ for all $\lambda \in [l, u]$. We know from Lemma~\ref{lemma:transU} that $\xi > 0$ and $L(\lambda) \oplus \xi B \subset L(l/2)$. Define,
	\begin{equation*}
		\Delta_n =  \sup_{\lambda \in [l, u] }\mathcal{d}_H \Big( \mathcal{X}_n \cap \big( [L(\lambda) \oplus \xi B] \setminus [L(\lambda) \oplus kD_n B] \big) , [L(\lambda) \oplus \xi B] \setminus [L(\lambda) \oplus kD_n B] \Big).
	\end{equation*}
	If $k D_n < \min \{\xi/2, \delta/2\}$, Assumption (L2) and Lemma~\ref{lemma:psidifference} yield that
	\begin{align*}
		\Psi_{\delta/2} \big( [L(\lambda) \oplus kD_n B]^c \big)
		=&
		\Psi_{\delta/2} \big( L(\lambda)^c \ominus kD_n B \big)
		\\
		=& L(\lambda)^c \ominus kD_n B = [L(\lambda) \oplus kD_n B]^c.
	\end{align*}
	Then, taking $c = \min \{\xi/6, \delta/2\}$, Lemma~\ref{lemma:psiminusset} ensures that
	\begin{equation*}
		\Psi_{c} \Big( [L(\lambda) \oplus \xi B] \setminus [L(\lambda) \oplus kD_n B] \Big) = [L(\lambda) \oplus \xi B] \setminus [L(\lambda) \oplus kD_n B].
	\end{equation*}
	Hence, if $k D_n < \min \{\xi/2, \delta/2\}$,
	\begin{equation*}
		\Delta_n \leq \sup_{S \in \mathcal{S}_{c} ( L(l/2) )}
		\mathcal{d}_H ( \mathcal{X}_n \cap S , S ).
	\end{equation*}
	Since $D_n \to 0$ almost surely, $\Prob \big[ \liminf_{n \to + \infty } \big( k D_n < \min \{\xi/2, \delta/2\} \big) \big] = 1$. This result jointly with Corollary~\ref{cor:unifrate} and Equation~\eqref{eq:hn} guarantee
	\begin{equation}
		\label{eq:Deltan}
		h_n^{-1} \Delta_n \to 0, \quad \text{ almost surely.}
	\end{equation}
	
	Take now $\lambda \in [l, u]$ and fix $r \in [r_0(\lambda) + \varepsilon, + \infty)$. If $k D_n < \min \{ \xi, \varepsilon/2, \delta \}$  and $\Delta_n \leq \varepsilon/4$, then
	\begin{multline*}
		[L(\lambda) \oplus \xi B] \setminus [L(\lambda) \oplus kD_n B]
		\subset
		\Big( \mathcal{X}_n \cap \big( [L(\lambda) \oplus \xi B] \setminus [L(\lambda) \oplus kD_n B] \big) \Big) \oplus \Delta_n B
		\Rightarrow
		\\
		\Rightarrow
		[L(\lambda) \oplus \xi B] \setminus [L(\lambda) \oplus kD_n B]
		\subset
		\mathcal{X}_n^{-} (\lambda) \oplus \Delta_n B
		\Rightarrow
		\\
		\Rightarrow
		[L(\lambda) \oplus (\xi + r - \Delta_n) B] \setminus [ L(\lambda) \ominus (r - \Delta_n - kD_n) B]
		\subset
		\mathcal{X}_n^{-} (\lambda) \oplus r B
		\Rightarrow
		\\
		\Rightarrow
		\mathcal{X}_n^{+} (\lambda) \cap \big( \mathcal{X}_n^{-} (\lambda) \oplus r B \big)^c
		\subset
		L(\lambda) \ominus (r - \Delta_n - kD_n) B;
	\end{multline*}
	where we have used Assumption (L2), Proposition~\ref{prop:randsmin}b and Lemmas~\ref{lemma:psidifference}, \ref{lemma:difsum} and~\ref{lemma:X+andX-}. Hence, if $k D_n < \min \{ \xi, \varepsilon/2, \delta \}$  and $\Delta_n \leq \varepsilon/4$,
	\begin{align*}
		\mathcal{d}_H \big( \mathcal{X}_n^{+} (\lambda) & \cap \big( \mathcal{X}_n^{-} (\lambda) \oplus r B \big)^c , \mathcal{X}_n^{+} (\lambda) \big)
		\\
		\geq&
		\mathcal{d}_H \big( L(\lambda) \ominus (r - \Delta_n - kD_n) B , \mathcal{X}_n^{+} (\lambda) \big)
		\\
		\geq&
		\mathcal{d}_H \big(L(\lambda) \ominus (r - \Delta_n - kD_n) B , L (\lambda) \big)
		\\
		&-
		\mathcal{d}_H \big(L (\lambda), L (\lambda) \oplus kD_n B \big)
		-
		\mathcal{d}_H \big( \mathcal{X}_n^{+} (\lambda)  , L (\lambda) \oplus kD_n B \big)
		\\
		\geq&
		\mathcal{d}_H \big( \Psi_{r - \Delta_n  - kD_n } \big( L(\lambda) \big) , L (\lambda) \big) + r
		-
		\Delta_n - 2kD_n
		\\
		&-
		\mathcal{d}_H \big( \mathcal{X}_n \cap [L (\lambda) \oplus kD_n B]  , L (\lambda) \oplus kD_n B \big)
	\end{align*}
	where we have used the triangular inequality, that $r - \Delta_n  - kD_n > r_0(\lambda) + \varepsilon/4$, and Lemmas~\ref{lemma:dists-rs} and~\ref{lemma:X+andX-}. Moreover, $\Psi_{r - \Delta_n  - kD_n} \big( L(\lambda) \big) \subset \Psi_{r_0(\lambda) + \varepsilon/4 } \big( L(\lambda) \big)$ by Proposition~\ref{prop:randspsi}c. So, if $k D_n < \min \{ \xi, \varepsilon/2, \delta \}$ and $\Delta_n \leq \varepsilon/4$,
	\begin{align*}
		\mathcal{d}_H \big( \mathcal{X}_n^{+} (\lambda) \cap \big( \mathcal{X}_n^{-} (\lambda) &\oplus r B \big)^c , \mathcal{X}_n^{+} (\lambda) \big)
		- r
		\\
		\geq&
		\mathcal{d}_H \big( \Psi_{r_0(\lambda)  + \varepsilon/4 } \big( L(\lambda) \big) , L (\lambda) \big)
		- \Delta_n - 2kD_n
		\\
		&- \mathcal{d}_H \big( \mathcal{X}_n \cap [L (\lambda) \oplus kD_n B]  , L (\lambda) \oplus kD_n B \big).
	\end{align*}
	Furthermore, since $\varepsilon < \inf_{\lambda \in [l, u] } r_0(\lambda)$, Propositions~\ref{prop:trivial}d and~\ref{prop:randspsi}d ensure that $\Psi_{\varepsilon} \big( L (\lambda) \oplus kD_n B \big) = L (\lambda) \oplus kD_n B$. Hence,
	\begin{align*}
		\mathcal{d}_H \big( \mathcal{X}_n^{+} (\lambda) \cap \big( \mathcal{X}_n^{-} (\lambda) &\oplus r B \big)^c , \mathcal{X}_n^{+} (\lambda) \big)
		- r
		\\
		\geq&
		\mathcal{d}_H \big( \Psi_{r_0(\lambda)  + \varepsilon/4 } \big( L(\lambda) \big) , L (\lambda) \big)
		- \Delta_n - 2kD_n
		\\
		&- \sup_{S \in \mathcal{S}_{\varepsilon} ( L(l/2) )}\mathcal{d}_H \big( \mathcal{X}_n \cap S  , S \big).
	\end{align*}
	This bound holds for any $r > r_0(\lambda) + \varepsilon$ if $k D_n < \min \{ \xi, \varepsilon/2, \delta \}$ and $\Delta_n \leq \varepsilon/4$. So,
	\begin{align}
		\nonumber
		\inf_{\lambda \in [l, u] }
		\bigg(
		\inf_{r \geq r_0(\lambda) + \varepsilon}
		\Big[
		\mathcal{d}_H& \big( \mathcal{X}_n^{+} (\lambda) \cap \big( \mathcal{X}_n^{-} (\lambda) \oplus r B \big)^c , \mathcal{X}_n^{+} (\lambda) \big)
		- r
		\Big]
		\bigg)
		\\
		\nonumber
		\geq&
		\inf_{\lambda \in [l, u] }
		\mathcal{d}_H \big( \Psi_{r_0(\lambda)  + \varepsilon/4 } \big( L(\lambda) \big) , L (\lambda) \big)
		- \Delta_n - 2kD_n
		\\
		\label{eq:inflambda}
		&- \sup_{S \in \mathcal{S}_{\varepsilon} ( L(l/2) )}\mathcal{d}_H \big( \mathcal{X}_n \cap S  , S \big).
	\end{align}
	also holds.
	
	Since both $D_n$ and $\Delta_n$ converge to zero almost surely, $\Prob [ \liminf_{n \to + \infty } ( \Delta_n \leq \varepsilon/4) ] = 1$ and $\Prob [ \liminf_{n \to + \infty } ( k D_n < \min \{ \xi, \varepsilon/2, \delta \} ) ] = 1$. Lemma~\ref{lemma:positivedistance}, Equations~\eqref{eq:hn}, \eqref{eq:Deltan} and \eqref{eq:inflambda} and Corollary~\ref{cor:unifrate} ensure that
	\begin{equation*}
		h_n^{-1}
		\inf_{\lambda \in [l, u] }
		\bigg(
		\inf_{r \geq r_0(\lambda) + \varepsilon} \Big[ \mathcal{d}_H \big( \mathcal{X}_n^{+} (\lambda) \cap \big( \mathcal{X}_n^{-} (\lambda) \oplus r B \big)^c , \mathcal{X}_n^{+} (\lambda) \big)
		- r \Big] 
		\bigg)
		\to + \infty
	\end{equation*}
	almost surely. Hence,
	\begin{equation*}
		\Prob
		\Bigg[
		\liminf_{n \to + \infty}
		\bigg(
		\inf_{\lambda \in [l, u] }
		\bigg(
		\inf_{r \geq r_0(\lambda) + \varepsilon} \Big[ \mathcal{d}_H \big( \mathcal{X}_n^{+} (\lambda) \cap \big( \mathcal{X}_n^{-} (\lambda) \oplus r B \big)^c , \mathcal{X}_n^{+} (\lambda) \big)
		- r \Big] 
		\bigg) > h_n
		\bigg)
		\Bigg]
		=
		1.
	\end{equation*}
	This equality jointly with Equation~\eqref{eq:defrn} guarantee that
	\begin{equation}
		\label{eq:rnupperbound}
		\Prob \bigg[ \liminf_{n \to + \infty} \Big( \sup_{\lambda \in [l, u] } \big( r_n (\lambda) - r_0(\lambda) \big) < \varepsilon \Big) \bigg]
		=
		1.
	\end{equation}
	Since both \eqref{eq:rnlowerbound} and \eqref{eq:rnupperbound} hold for any $\varepsilon > 0$ such that $\varepsilon < \inf_{\lambda \in [l, u] } r_0(\lambda)$, the result holds.
\end{proof}

\section{Relation between Assumptions M, L and P and the smoothness of $f$ and $M$}
\label{sec:appC}

As explained in Sections~\ref{sec:assM} and~\ref{sec:assA}, some assumptions must be made on the manifold $M$ that supports the data and the HDRs of the population for the consistency of the estimator $L_n (\lambda)$. These assumptions, that we refer as Assumptions~M and Assumptions~L, are the following:

\begin{assM}
	
	$M$ is a $d$-dimensional Riemannian manifold verifying:
	
	\begin{enumerate}[label = {(M\arabic*)}]
		\item $M$ is complete and connected.
		\item There exist two constants, $a, \rho > 0$, such that
		\begin{equation*}
			\mathrm{Vol}_{M}  \big( B_{r_1} [x_1] \cap B_{r_2} [x_2] \big) \geq a \min \{ r_1, r_2, \rho \}^{(d-1)/2} \min \{ \varepsilon, \rho \}^{(d+1)/2}.
		\end{equation*}
		for every pair of points $x_1, x_2 \in M$ and radii $r_1, r_2 > 0$ such that $\mathcal{d} (x_1, x_2) < r_1 + r_2$ and every $\varepsilon \leq r_1 + r_2 - \mathcal{d} (x_1, x_2)$.
	\end{enumerate}
	
\end{assM}

\begin{assL}
	Let $M$ be a connected Riemannian manifold, \hbox{$f: M \rightarrow \R$} a continuous density function, and $l$ and $u$ two constants such that $0 < l \leq u < \sup f$.
	There exist a radius, $r > 0$, and two positive constants $k$ and $\delta$, with $k > 0$ and $0 < \delta < r/2$, verifying
	\begin{enumerate}[label = {(L\arabic*)}]
		\item $\delta < r_0 (\lambda) < + \infty$ for all $\lambda \in [l - \delta , u + \delta]$.
		\item $L (\lambda)^c = \Psi_{\delta} \big[ L(\lambda)^c \big]$ for all $\lambda \in [l - \delta, u + \delta]$.
		\item $L (\lambda)$ is a compact set for all $\lambda \in [l - \delta , u + \delta]$.
		\item $\mathcal{d}_H \big( L(\lambda), L(\lambda + \eta) \big) \leq k \vert \eta \vert$ for all $\lambda \in [l, u]$ when $\eta \in [-\delta, \delta]$.
	\end{enumerate}
\end{assL}

In addition, proving the uniform consistency of the level estimator $\lambda_{\gamma, n}$ (see Equation~\eqref{eq:lambdan}) requires a further assumption.

\begin{assP}
	Define
	$\lambda_{\gamma} = \sup \Big\lbrace \lambda \in \R \colon \Prob \big[ L ( \lambda ) \big] \geq 1 - \gamma \Big\rbrace.$
	There exist $0 < \underline{\gamma} \leq \overline{\gamma}$ such that
	\begin{equation*}
		\sup_{\gamma \in [\underline{\gamma}, \overline{\gamma} ] } \abs{\lambda_{\gamma} - \lambda_{\gamma \pm \eta}} \leq k \eta
	\end{equation*}
	for all $\eta \in [0, \delta]$, where $\delta$ and $k$ are the same constants as Assumptions~L.
\end{assP}

In this appendix, these Assumptions M, L and P are carefully investigated, and it is shown that they are related to the differentiability of the density function $f$ and the curvature of $M$. The contents of this appendix are divided in three different sections, each one devoted to a group of assumptions. Specifically, Section~\ref{sec:proofsassM} contains the proofs of Proposition~\ref{prop:M2impliesM3} and Theorem~\ref{th:manifold}, Section~\ref{sec:proofsassA} includes the proofs of Propositions~\ref{prop:assA3} and~\ref{prop:assA4}, and Section~\ref{sec:proofassP} is entirely dedicated to the proof of Proposition~\ref{prop:assA5}.

Since this supplement involves more technical results about Riemannian geometry than the main paper, some additional notation must be introduced before proceeding.
Let $M$ be a connected and complete $d$-dimensional Riemannian manifold, and $\langle \cdot , \cdot \rangle_M$ its Riemannian metric.
Given a point $x \in M$ and a tangent vector $v \in T_x M$, the norm of $v$ is denoted as $\| v \|_M = \sqrt{\langle v , v \rangle_M}$ (the notation $\norm{v}$ will be also used when there is no possibility of confusion). Denote by $\exp_x: T_x M \rightarrow M$ the exponential map restricted at $x$ (see \citealp{Lee2018}, Ch.~5, pp.~126--131). Denote by $i_M (x)$ the injectivity radius of $M$ at $x$, that is, the supremum of all $\delta > 0$ such that $\exp_x$ is a diffeomorphism between $B_{\delta} (0) \subset T_x M$ and its image. Then, denote the injectivity radius of $M$ by $i_M = \sup_{x \in M} i_M (x)$. Given a two Riemannian manifolds, $M$ and $N$, and a smooth map $F: M \rightarrow N$, denote by $dF_x: T_x M \rightarrow T_{f(x)} N$ the differential of $F$ at $x$. Given a smooth function $f: M \rightarrow \R$, the gradient of $f$ is denoted by $\nabla f$. 

\subsection{About Assumptions M}
\label{sec:proofsassM}

This section is dedicated to the proofs of Proposition~\ref{prop:M2impliesM3} and Theorem~\ref{th:manifold} in Section~\ref{sec:assM}. For Proposition~\ref{prop:M2impliesM3}, an auxiliary lemma is required first.

\begin{lemma}
	\label{lemma:balls}
	Let $M$ be a Riemannian manifold verifying Assumption~\ref{ass:M2}. Then,
	\begin{equation*}
		\mathrm{Vol}_{M} \big( B_{r} [x] \big) \geq a \min \{r, \rho \}^d;
	\end{equation*}
	for any $x \in M$, where $a$ and $\rho$ are the constants in Assumption \ref{ass:M2}.
\end{lemma}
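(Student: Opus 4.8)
The plan is to obtain this volume bound as an immediate special case of Assumption~\ref{ass:M2}. I would apply the inequality in \ref{ass:M2} to the degenerate configuration $x_1 = x_2 = x$ and $r_1 = r_2 = r$. Since $\mathcal{d}(x, x) = 0 < 2r = r_1 + r_2$, the hypothesis of \ref{ass:M2} is satisfied, and in this configuration the intersection $B_{r_1}[x_1] \cap B_{r_2}[x_2]$ is simply $B_r[x]$.

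With these choices, \ref{ass:M2} yields
\begin{equation*}
	\mathrm{Vol}_M \big( B_r [x] \big) \geq a \min\{ r, r, \rho \}^{(d-1)/2} \min\{ \varepsilon, \rho \}^{(d+1)/2} = a \min\{ r, \rho \}^{(d-1)/2} \min\{ \varepsilon, \rho \}^{(d+1)/2}
\end{equation*}
for every $\varepsilon \leq r_1 + r_2 - \mathcal{d}(x_1, x_2) = 2r$. It then only remains to pick a convenient admissible value of $\varepsilon$; choosing $\varepsilon = r$ is allowed since $r \leq 2r$, and it gives $\min\{\varepsilon, \rho\} = \min\{r, \rho\}$, whence
\begin{equation*}
	\mathrm{Vol}_M \big( B_r [x] \big) \geq a \min\{ r, \rho \}^{(d-1)/2} \min\{ r, \rho \}^{(d+1)/2} = a \min\{ r, \rho \}^d.
\end{equation*}
As $x \in M$ was arbitrary, this is the assertion of the lemma.

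There is essentially no obstacle here: the lemma is a direct substitution into \ref{ass:M2}, and the only thing to verify is that the selected $\varepsilon = r$ respects the constraint $\varepsilon \leq r_1 + r_2 - \mathcal{d}(x_1, x_2)$, which is trivial since that quantity equals $2r$. The role of the lemma in the paper is purely auxiliary, supplying the lower bound on $\mathrm{Vol}_M(B_r[x])$ (hence on $\Prob(B_r[x])$ via the density) that is used, for instance, in the counting/packing arguments behind Proposition~\ref{prop:M2impliesM3} and the covering estimates.
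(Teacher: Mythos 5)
Your proof is correct and is essentially identical to the paper's: the paper also proves the lemma by substituting $x_1 = x_2 = x$, $r_1 = r_2 = \varepsilon = r$ into Assumption~\ref{ass:M2}. Your slightly more careful checking that $\varepsilon = r$ is admissible (since $r \leq 2r = r_1 + r_2 - \mathcal{d}(x_1,x_2)$) is fine and matches what the paper leaves implicit.
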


\begin{proof}
	This result is a direct consequence of Assumption~\ref{ass:M2}, taking $x_1 = x_2 = x$ and $r_1 = r_2 = \varepsilon = r$.
\end{proof}

\begin{proof}[Proof of Proposition~\ref{prop:M2impliesM3}]
	Let $A$ be a bounded subset of $M$. Hence, there exist a point $z \in M$ and a radius $r > 0$ such that $A \subset B_r (z)$. Let $\varepsilon \leq \rho$, and $T$ a subset of $A$ verifying:
	\begin{equation}
		\label{eq:packcond}
		\mathcal{d} (x, y) > \varepsilon \text{ for all } x, y \in T, x \neq y.
	\end{equation}
	Then:
	\begin{equation*}
		\bigcup_{ x \in T } B_{\varepsilon/2} (x) \subset B_{r+\rho} (z).
	\end{equation*}
	
	Since $B_{\varepsilon/2} (x) \cap B_{\varepsilon/2} (y) = \emptyset$ for all $x, y \in T, x \neq y$, one deduces that
	\begin{equation*}
		\mathrm{Vol}_M \big(  B_{r+\rho} (z) \big) \geq \mathrm{Vol}_M \left(  \bigcup_{ x \in T } B_{\varepsilon/2} (x) \right) = \sum_{x \in T}  \mathrm{Vol}_M \big(  B_{\varepsilon/2} (x) \big).
	\end{equation*}
	Then, Lemma~\ref{lemma:balls} yields
	\begin{equation*}
		\mathrm{Vol}_M \big(  B_{r+\rho} (z) \big) 
		\geq
		\sum_{x \in T}  \mathrm{Vol}_M \big(  B_{\varepsilon/2} (x) \big)
		\geq
		\mathrm{card} (T) \frac{a \varepsilon^d}{2^d}.
	\end{equation*}
	Consequently,
	\begin{equation*}
		\mathrm{card} (T) \leq  K \varepsilon^{-d},
		\qquad
		\text{with }
		K = \frac{2^d \mathrm{Vol}_{M} \big( B_{r+\rho} (z) \big)}{a}.
	\end{equation*}
	The previous inequality holds for every $T \subset A$ verifying \eqref{eq:packcond}, so:
	\begin{equation*}
		D(\varepsilon, A) \leq K \varepsilon^{-d}. \qedhere
	\end{equation*}
\end{proof}

The proof of Theorem~\ref{th:manifold} relies on the Rauch Theorem, which states that bounding the sectional curvatures of a manifold implies in turn bounding the norm of the differential of the exponential map. The version of the Rauch Theorem included here is taken from \cite{Dyer2015} (Lemma 8, p. 105). In the statement of the theorem the identification between the tangent spaces of a tangent space and the tangent space itself is implicitly used.

\begin{theorem}[Rauch Theorem, \citealp{Dyer2015}]
	Assume that all sectional curvatures of $M$ are bounded by $\kappa^{-} \leq \kappa_M \leq \kappa^{+}$. Taking $v \in T_x M$ with $\norm{v} = 1$, one has for any $w \in T_x M, w \perp v$ that
	\[\frac{S_{\kappa^{+}} (r)}{r} \norm{w} \leq \norm{ (d \exp_x)_{rv} w} \leq \frac{S_{\kappa^{-}} (r)}{r} \norm{w}\]
	for all $x \in M$ and all $r < \min \left\lbrace i_M, \dfrac{\pi}{2 \sqrt{\max \{ \abs{\kappa^{-}}, \abs{\kappa^{+}} \}}}\right\rbrace$; where
	\begin{equation*}
		S_{\kappa} (r) =
		\left\lbrace
		\begin{array}{ll}
			\frac{1}{\sqrt{\kappa}} \sin (\sqrt{\kappa} r), & \kappa > 0 \\
			r, & \kappa = 0 \\
			\frac{1}{\sqrt{\kappa}} \sinh (\sqrt{-\kappa} r), & \kappa < 0.
		\end{array}
		\right.
	\end{equation*}
\end{theorem}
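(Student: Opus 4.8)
The plan is to recognise $(d\exp_x)_{rv}\,w$ as a rescaled value of a Jacobi field along the unit-speed geodesic $\gamma(t)=\exp_x(tv)$, and then to compare the norm of that Jacobi field with the Jacobi fields of the constant-curvature model spaces by a Sturm-type argument.

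First I would invoke the standard relation between the differential of the exponential map and Jacobi fields: if $J$ is the Jacobi field along $\gamma$ with $J(0)=0$ and $\tfrac{D}{dt}J(0)=w$, then $J(t)=(d\exp_x)_{tv}(t w)$ for all $t$ in the domain of $\gamma$, so $\norm{(d\exp_x)_{rv}w}=\norm{J(r)}/r$. Because $w\perp v=\gamma'(0)$, the field $J$ stays normal to $\gamma$, and the Taylor expansion $J(t)=tw+O(t^{3})$ gives $\norm{J(t)}/t\to\norm{w}$ as $t\to0^{+}$. The hypothesis $r<i_M$ forces $\exp_x$ to be a diffeomorphism on $B_r(0)\subset T_xM$, so $\gamma$ has no conjugate point on $(0,r]$ and $J$ does not vanish there; hence $\varphi(t):=\norm{J(t)}$ is smooth and positive on $(0,r]$.

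Next, on $(0,r]$ I would differentiate $\varphi$ twice and use the Jacobi equation $J''=-R(J,\gamma')\gamma'$, the Cauchy--Schwarz inequality, and the identity $\langle R(J,\gamma')\gamma',J\rangle=K(J,\gamma')\norm{J}^{2}$ (where $\kappa^{-}\le K(J,\gamma')\le\kappa^{+}$ is the sectional curvature of the plane spanned by $J$ and $\gamma'$) to obtain $\varphi''+\kappa^{+}\varphi\ge0$; together with the boundary data $\varphi(0)=0$, $\varphi'(0^{+})=\norm{w}$, a Sturm comparison with the model solution $g(t)=\norm{w}\,S_{\kappa^{+}}(t)$ of $g''+\kappa^{+}g=0$ yields $\varphi(t)\ge\norm{w}\,S_{\kappa^{+}}(t)$, which after dividing by $t=r$ is the stated lower bound. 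For the upper bound one compares instead with $S_{\kappa^{-}}$: this is the ``lower curvature bound'' direction of the Rauch theorem and is obtained from an index-form (equivalently, Riccati) comparison along $\gamma$, giving $\varphi(t)\le\norm{w}\,S_{\kappa^{-}}(t)$ on $(0,r]$. The restriction $r<\pi/\big(2\sqrt{\max\{|\kappa^{-}|,|\kappa^{+}|\}}\big)$ is exactly what keeps $S_{\kappa^{+}}$ and $S_{\kappa^{-}}$ positive on $(0,r]$, so the comparison arguments apply; evaluating at $t=r$ concludes.

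The step I expect to be the main obstacle is this upper-bound direction, since—unlike the lower bound—it is not a pointwise consequence of a differential inequality for $\norm{J}$ but genuinely requires the index-form comparison, together with the careful bookkeeping near $t=0$ (justifying $\varphi'(0^{+})=\norm{w}$) and at possible zeros of $J$ (ruled out here by $r<i_M$). As all of this is precisely the content of the Rauch comparison theorem, in the paper the statement is simply quoted from \cite{Dyer2015}.
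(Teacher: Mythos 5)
Your sketch is mathematically sound, and you correctly identify that the paper does not prove this statement at all: it is quoted verbatim from \cite{Dyer2015} as an external ingredient, so there is no in-paper argument to compare against. Your outline is the standard one: the identity $J(t)=(d\exp_x)_{tv}(tw)$ reducing the claim to norms of Jacobi fields, the absence of conjugate points on $(0,r]$ from $r<i_M$, the Sturm comparison $\varphi''+\kappa^{+}\varphi\geq 0$ with the boundary data $\varphi(0)=0$, $\varphi'(0^{+})=\|w\|$ for the lower bound, and the observation that the upper bound (lower curvature bound $\kappa^{-}$) does not follow from a pointwise ODE inequality for $\|J\|$ and instead needs the index-form or Riccati comparison. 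One small remark: the hypothesis $r<\pi/\bigl(2\sqrt{\max\{|\kappa^{-}|,|\kappa^{+}|\}}\bigr)$ is stronger than what is needed merely to keep $S_{\kappa^{+}}$ positive (that only needs $r<\pi/\sqrt{\kappa^{+}}$ when $\kappa^{+}>0$, and $S_{\kappa^{-}}$ is automatically positive for $\kappa^{-}\leq 0$); the factor of $2$ is simply what \cite{Dyer2015} state, and the paper inherits it, so your explanation of that constant is a touch imprecise but harmless.
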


Since $d \exp_x$ preserves the length of any radial vector $v \in T_x M$, a bound on the distortion of the length of a vector orthogonal to $v$ means a bound for all tangent vectors. Furthermore, given any $\kappa > 0$, one can show using Taylor's theorem that:
\begin{equation*}S_{\kappa} (r) \geq r - \frac{\kappa r^3}{6},
\end{equation*}
for all $r \geq 0$. Therefore, the next result holds:

\begin{cor}
	\label{cor:raunch}
	Suppose that all sectional curvatures of $M$ are bounded by $0 \leq \kappa_M < \kappa$. If $v \in T_x M$ is such that $\norm{v} = r < \min \left\lbrace i_M, \frac{\pi}{2 \sqrt{ \kappa } }\right\rbrace$, then for all $w \in T_v (T_x M) \simeq T_x M$ one has
	\begin{equation*}
		\left( 1 - \frac{\kappa r^2}{6} \right) \norm{w} \leq \norm{(d \exp_x)_{v} w} \leq \norm{w}
	\end{equation*} 
	for any $x \in M$. 
\end{cor}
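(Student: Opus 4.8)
The plan is to reduce the statement to the Rauch Theorem just quoted together with the elementary inequality $S_{\kappa}(r) \geq r - \kappa r^{3}/6$ for $\kappa, r \geq 0$, already announced in the text (it follows from Taylor's theorem applied to $\sin$, the sign of the remainder being controlled because $\sqrt{\kappa}\,r < \pi/2$ under our hypothesis). First I would fix $x \in M$ and $v \in T_x M$ with $\norm{v} = r < \min\{i_M, \pi/(2\sqrt{\kappa})\}$ (the case $r = 0$ being trivial, since then $(d\exp_x)_v$ is the identity), and decompose an arbitrary $w \in T_v(T_x M) \simeq T_x M$ orthogonally as $w = w^{\parallel} + w^{\perp}$, where $w^{\parallel}$ is the component of $w$ along $v$ and $w^{\perp} \perp v$.

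The key structural input is the Gauss Lemma, which gives $\langle (d\exp_x)_v v, (d\exp_x)_v w\rangle_M = \langle v, w\rangle_M$ for every $w$; in particular $(d\exp_x)_v$ sends the radial direction to a vector of the same norm and sends $w^{\perp}$ to a vector orthogonal to that radial image, so $(d\exp_x)_v w^{\parallel}$ and $(d\exp_x)_v w^{\perp}$ are orthogonal with $\norm{(d\exp_x)_v w^{\parallel}} = \norm{w^{\parallel}}$. Hence $\norm{(d\exp_x)_v w}^{2} = \norm{w^{\parallel}}^{2} + \norm{(d\exp_x)_v w^{\perp}}^{2}$. For the orthogonal part I would apply the Rauch Theorem with $\kappa^{-} = 0$ and $\kappa^{+} = \kappa$ (legitimate since $0 \leq \kappa_M < \kappa$, and the admissibility range $r < \min\{i_M, \pi/(2\sqrt{\kappa})\}$ is precisely our hypothesis because $\max\{|\kappa^{-}|,|\kappa^{+}|\} = \kappa$), obtaining $\tfrac{S_{\kappa}(r)}{r}\norm{w^{\perp}} \leq \norm{(d\exp_x)_v w^{\perp}} \leq \tfrac{S_{0}(r)}{r}\norm{w^{\perp}} = \norm{w^{\perp}}$.

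It then remains to insert the bounds $S_{\kappa}(r)/r \geq 1 - \kappa r^{2}/6$ and $0 < 1 - \kappa r^{2}/6 \leq 1$ (the last because $\kappa r^{2} < \pi^{2}/4 < 6$, which also makes the asserted lower bound non-vacuous). Combining with the Pythagorean identity, the upper estimate $\norm{(d\exp_x)_v w}^{2} \leq \norm{w^{\parallel}}^{2} + \norm{w^{\perp}}^{2} = \norm{w}^{2}$ is immediate, while for the lower one I would write $\norm{(d\exp_x)_v w}^{2} \geq \norm{w^{\parallel}}^{2} + (1 - \kappa r^{2}/6)^{2}\norm{w^{\perp}}^{2} \geq (1 - \kappa r^{2}/6)^{2}\norm{w}^{2}$, using that the factor is at most $1$ to pull it out in front of $\norm{w^{\parallel}}^{2}$ as well; taking square roots concludes. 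The only mildly delicate point is the elementary estimate on $S_{\kappa}$ and verifying positivity of $1 - \kappa r^{2}/6$ on the admissible range; the rest is bookkeeping with the Gauss Lemma and Rauch's comparison.
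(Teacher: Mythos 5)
Your proof is correct and follows exactly the route the paper sketches in the two sentences preceding the corollary: combine the radial/orthogonal decomposition (which the paper only alludes to via ``$d\exp_x$ preserves the length of any radial vector'' and which you correctly make precise with the Gauss Lemma and the Pythagorean identity) with Rauch's comparison taking $\kappa^-=0$, $\kappa^+=\kappa$, and the Taylor estimate $S_\kappa(r)\geq r-\kappa r^3/6$. The only minor inefficiency is your qualification that $\sqrt{\kappa}\,r<\pi/2$ is needed to control the remainder's sign — the inequality $\sin u\geq u-u^3/6$ in fact holds for all $u\geq 0$ (as the paper states) — but since your hypothesis supplies the restriction anyway, this does no harm.
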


Corollary \ref{cor:raunch} clarifies how the exponential map distorts distance and volume. The following two lemmas concretize how this distortion actually works.

\begin{lemma}
	\label{lemma:contraction}
	Suppose that $M$ is under the conditions of Theorem 4.1. Then, given any $x \in M$, the exponential mapping:
	\begin{equation*}
		\exp_x: B_{\rho} (0) \subset T_x M \longrightarrow M
	\end{equation*}
	is a contraction. That is:
	\begin{equation*}
		\mathcal{d} \big( \exp_x (v_1), \exp_x (v_2) \big) \leq \norm{v_1 - v_2}, \quad \forall v_1, v_2 \in B_{\rho} (0);
	\end{equation*}
	where $\rho = \min \left\lbrace i_M, \frac{\pi}{2 \sqrt{\kappa} }\right\rbrace$.
\end{lemma}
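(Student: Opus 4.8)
The plan is to reduce the contraction property to the pointwise bound on the differential of the exponential map supplied by Corollary~\ref{cor:raunch}. Fix $x \in M$ and two vectors $v_1, v_2 \in B_{\rho}(0) \subset T_x M$. Since the open ball $B_{\rho}(0)$ is a convex subset of the vector space $T_x M$, the straight segment $\gamma(t) = (1-t) v_1 + t v_2$, $t \in [0,1]$, stays inside $B_{\rho}(0)$; indeed, by convexity of the norm, $\norm{\gamma(t)} \leq (1-t)\norm{v_1} + t\norm{v_2} < \rho$ for every $t$. Because $M$ is complete (Hopf--Rinow), $\exp_x$ is defined on all of $T_x M$ and is smooth, so the curve $c(t) = \exp_x\big(\gamma(t)\big)$ is an admissible curve in $M$ joining $\exp_x(v_1)$ to $\exp_x(v_2)$.

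Next I would estimate the length of $c$. Its velocity is $c'(t) = (d\exp_x)_{\gamma(t)} \gamma'(t)$ with $\gamma'(t) = v_2 - v_1$. For each $t$ with $\gamma(t) \neq 0$ we have $\norm{\gamma(t)} < \rho \leq \min\{ i_M, \pi/(2\sqrt{\kappa}) \}$, so Corollary~\ref{cor:raunch} gives $\norm{c'(t)} = \norm{(d\exp_x)_{\gamma(t)} \gamma'(t)} \leq \norm{\gamma'(t)} = \norm{v_2 - v_1}$; and if $\gamma(t) = 0$ for some $t$ then $(d\exp_x)_0 = \mathrm{id}_{T_x M}$ and the same bound holds trivially. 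Hence
\begin{equation*}
	\mathrm{Length}(c) = \int_0^1 \norm{c'(t)} \, dt \leq \int_0^1 \norm{v_2 - v_1} \, dt = \norm{v_1 - v_2}.
\end{equation*}

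Finally, the geodesic distance is the infimum of the lengths of admissible curves joining two points, so $\mathcal{d}\big( \exp_x(v_1), \exp_x(v_2) \big) \leq \mathrm{Length}(c) \leq \norm{v_1 - v_2}$, which is exactly the claimed contraction inequality. The only points requiring care — and thus the mild ``obstacle'' — are checking that the segment $\gamma$ remains in the region $\{\norm{v} < \rho\}$ where Corollary~\ref{cor:raunch} is valid (handled by convexity and the strict inequality defining the open ball) and that the degenerate case $\gamma(t)=0$ does not spoil the bound; both are immediate once isolated.
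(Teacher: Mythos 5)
Your proposal is correct and follows essentially the same argument as the paper: both parametrize the straight segment $(1-t)v_1 + t v_2$ in $T_x M$, push it forward through $\exp_x$, and bound the resulting curve length using Corollary~\ref{cor:raunch}. Your write-up is slightly more careful in explicitly verifying that the segment stays inside $B_\rho(0)$ and in isolating the degenerate case $\gamma(t)=0$, but these refinements do not change the substance of the proof.
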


\begin{proof}
	Given any $v_1, v_2 \in B_{\rho} (0) \subset T_x M$, consider the curve:
	\begin{equation*}
		\begin{array}{rcl}
			\alpha: [0,1] & \longrightarrow & B_{\rho} (0);\\
			t & \longmapsto & (1 - t) v_1 + t v_2.
		\end{array}
	\end{equation*}
	
	Then $\exp_x \circ \alpha$ is a curve from $\exp_x (v_1)$ to $\exp_x (v_2)$. Since the distance between two points on $M$ is smaller that the length of any curve joining them, one deduces that
	\begin{equation*}
		\mathcal{d} \big( \exp_x (v_1), \exp_x (v_2) \big) \leq \int_{0}^{1} \norm{ (\exp_x \circ \alpha)' (t)} dt = \int_{0}^{1} \norm{ (d \exp_x)_{\alpha (t)} (\alpha' (t) )} dt
	\end{equation*}
	Corollary \ref{cor:raunch} guarantees that $\norm{ (d \exp_x)_{\alpha (t)} (\alpha' (t) )} \leq \norm{ \alpha' (t) }$ for all $t \in [0, 1]$. So
	\begin{equation*}
		\mathcal{d} \big( \exp_x (v_1), \exp_x (v_2) \big) \leq \int_{0}^{1} \norm{ \alpha' (t) } dt = \norm{v_1 - v_2}. \qedhere
	\end{equation*}
\end{proof}

\begin{lemma}
	\label{lemma:area}
	Suppose that $M$ is a $d$-dimensional manifold under the conditions of Theorem 4.1. Let $x$ be an arbitrary point on $M$ and $\exp_x$ the exponential mapping.
	
	Any measurable set $A \subset B_{\rho} (0) \subset T_x M \simeq \R^d$ satisfies that
	\begin{equation*}
		\mathrm{Vol}_M \big( \exp_x (A) \big) \geq \left( 1 - \frac{\kappa \rho^2}{6}\right)^d \mathrm{Vol}_{\R^d} (A);
	\end{equation*}
	where $\rho = \min \left\lbrace i_M, \frac{\pi}{2 \sqrt{ \kappa } }\right\rbrace$.
\end{lemma}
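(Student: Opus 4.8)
The plan is to compute $\mathrm{Vol}_M\big(\exp_x(A)\big)$ by the change-of-variables formula and to bound the Jacobian determinant of $\exp_x$ from below via Corollary~\ref{cor:raunch}. First I would observe that, since $\rho = \min\{i_M, \pi/(2\sqrt{\kappa})\} \le i_M$, the exponential map $\exp_x$ is a diffeomorphism from $B_\rho(0) \subset T_xM$ onto the geodesic ball $B_\rho(x)$; in particular it is a diffeomorphism on $A$, so $\exp_x^{-1}$ is defined on $\exp_x(A)$. Choosing an orthonormal basis of $(T_xM, \langle\cdot,\cdot\rangle_M)$ identifies $T_xM$ with $\R^d$ so that the induced Lebesgue measure agrees with $\mathrm{Vol}_{\R^d}$, and the change-of-variables formula for the Riemann--Lebesgue measure yields
\[
\mathrm{Vol}_M\big(\exp_x(A)\big) = \int_A \big|\det (d\exp_x)_v\big| \, d\mathrm{Vol}_{\R^d}(v),
\]
where $\det(d\exp_x)_v$ denotes the determinant of the linear map $(d\exp_x)_v \colon T_xM \to T_{\exp_x(v)}M$ regarded between the two inner-product spaces.

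Next I would bound the integrand pointwise. For $v \in B_\rho(0)$ with $v \ne 0$, set $r = \norm{v} < \rho$. Corollary~\ref{cor:raunch} gives $\big(1 - \kappa r^2/6\big)\norm{w} \le \norm{(d\exp_x)_v w} \le \norm{w}$ for all $w \in T_xM$, which says precisely that the smallest and largest singular values of $(d\exp_x)_v$ satisfy $\sigma_{\min} \ge 1 - \kappa r^2/6$ and $\sigma_{\max} \le 1$; hence all singular values lie in $[1 - \kappa r^2/6,\, 1]$ and $\big|\det (d\exp_x)_v\big| \ge \big(1 - \kappa r^2/6\big)^d$. Because $\rho \le \pi/(2\sqrt{\kappa})$ forces $\kappa \rho^2/6 \le \pi^2/24 < 1$, the map $r \mapsto 1 - \kappa r^2/6$ is positive and decreasing on $[0,\rho]$, so $\big|\det (d\exp_x)_v\big| \ge \big(1 - \kappa \rho^2/6\big)^d$ for every $v \in B_\rho(0)$ (the case $v = 0$ being trivial, since $(d\exp_x)_0 = \mathrm{id}$). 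Substituting this into the displayed integral gives
\[
\mathrm{Vol}_M\big(\exp_x(A)\big) \ge \Big(1 - \frac{\kappa \rho^2}{6}\Big)^d \int_A d\mathrm{Vol}_{\R^d}(v) = \Big(1 - \frac{\kappa \rho^2}{6}\Big)^d \mathrm{Vol}_{\R^d}(A),
\]
which is the assertion.

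The routine ingredients here — the change-of-variables formula on $M$, the identification $T_xM \simeq \R^d$, and the elementary estimate $\kappa\rho^2/6 < 1$ — demand only care. The one step carrying actual content is the passage from the two-sided operator bounds of Corollary~\ref{cor:raunch} to the determinant estimate: this is where I would recall that, for a linear map $L$ between inner-product spaces, the extreme singular values equal $\min_{w \ne 0}\norm{Lw}/\norm{w}$ and $\max_{w \ne 0}\norm{Lw}/\norm{w}$, so that every singular value lies between the two bounds supplied by the Rauch comparison, and therefore so does each factor of $|\det L| = \prod_i \sigma_i$.
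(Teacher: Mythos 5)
Your proposal is correct and follows essentially the same route as the paper: change of variables (parametrization by $\exp_x$) plus a pointwise lower bound on the Jacobian via Corollary~\ref{cor:raunch}. The only cosmetic difference is that the paper phrases the Jacobian bound through the eigenvalues of the Gram matrix $G(y) = (d\exp_x)_y^{\top}(d\exp_x)_y$ (which equal the squared singular values of $(d\exp_x)_y$, so $\sqrt{\det G} = \abs{\det(d\exp_x)_y}$), whereas you work directly with the singular values of $(d\exp_x)_v$ — these are the same estimate.
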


\begin{proof}
	The mapping
	\begin{equation*}
		\begin{array}{rcl}
			\exp_x: A \subset \R^d & \longrightarrow & M \\
			y & \longmapsto & \exp_x (y)
		\end{array}
	\end{equation*}
	is a parametrization of the set $\exp_x (A) \subset M$. So:
	\begin{equation*}
		\mathrm{Vol}_M \big( \exp_x (A) \big) = \int_{A} \sqrt{ \det \big( G (y) \big) } dy_1 \ldots d y_d;
	\end{equation*}
	where $G(y)$ is a $(d \times d)$ matrix with coefficients
	\begin{equation*}
		G_{ij} (y) = \big\langle (d \exp_x)_y e_i, (d \exp_x)_y e_j \big\rangle_M,
	\end{equation*}
	$\langle \cdot, \cdot \rangle_M$ is the Riemannian metric of $M$, and $e_i \in \R^d$ denotes the $i$-th vector of the standard basis, i.e., $e_i$ is the vector with a $1$ in the $i$-th coordinate and $0$'s elsewhere.
	
	Corollary \ref{cor:raunch} ensures that
	\begin{multline*}
		\left( 1 - \frac{\kappa \norm{y}^2}{6} \right) \norm{w} \leq \norm{(d \exp_x)_{y} w} \Rightarrow \left( 1 - \frac{\kappa \norm{y}^2}{6} \right)^2 \norm{w}^2 \leq \norm{(d \exp_x)_{y} w}^2 \Rightarrow \\
		\Rightarrow \left( 1 - \frac{\kappa \rho^2}{6} \right)^2 \leq \left( 1 - \frac{\kappa \norm{y}^2}{6} \right)^2 \leq \frac{\norm{(d \exp_x)_{y} w}^2}{\norm{w}^2} = \frac{w' G (y) w}{w' w}
	\end{multline*}
	holds for all $y \in B_{\rho} (0)$ and all $w \in \R^d \simeq T_x M$. Hence, all the eigenvalues of $G (y)$ are larger than $\left( 1 - \kappa \rho^2/6 \right)^2$, and, therefore,
	\begin{equation*}
		\det (G) \geq \left( 1 - \frac{\kappa \rho^2}{6} \right)^{2d}.
	\end{equation*}
	Finally, one deduces that 
	\begin{equation*}
		\mathrm{Vol}_M \big( \exp_x (A) \big) 
		\geq \left( 1 - \frac{\kappa \rho^2}{6} \right)^{d} \int_{A} d y_1 \ldots d y_d
		= \left( 1 - \frac{\kappa \rho^2}{6} \right)^{d} \mathrm{Vol}_{\R^d} (A). \qedhere
	\end{equation*}
\end{proof}

Lemmas~\ref{lemma:contraction} and \ref{lemma:area} imply Theorem~\ref{th:manifold}. First, one proves that Assumption~\ref{ass:M2} holds when $r_1$ and $r_2$ are small enough, and one then slowly extends the result for any radii. These results are collected in Lemmas~\ref{lemma:M3.1} and~\ref{lemma:condMrad} below.

\begin{lemma}
	\label{lemma:M3.1}
	Suppose that $M$ is a $d$-dimensional manifold under the conditions of Theorem 4.1.
	
	Let $0 < r_1, r_2 \leq \rho/2$ with $\rho = \min \left\lbrace i_M, \frac{\pi}{2 \sqrt{ \kappa } }\right\rbrace$. Let $x_1, x_2$ be two points on $M$ such that $\mathfrak{e} = r_1+r_2-\mathcal{d}(x_1,x_2) > 0$.
	
	Then:
	\begin{equation*}
		\mathrm{Vol}_{M}  \big( B_{r_1} [x_1] \cap B_{r_2} [x_2] \big) \geq a r^{(d-1)/2} \mathfrak{e}^{(d+1)/2}
	\end{equation*}
	where $r = \min \{ r_1, r_2 \}$ and $a > 0$ is a constant.
\end{lemma}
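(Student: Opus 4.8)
Write $D=\mathcal d(x_1,x_2)$, so that $\mathfrak e=r_1+r_2-D$, and assume without loss of generality $r=r_1\le r_2$. The plan is to exhibit a Euclidean ``lens'' lying inside the intersection, push it forward through the exponential map $\exp_{x_1}\colon B_\rho(0)\subset T_{x_1}M\to M$, and compare volumes; the curvature hypotheses will be used only via Lemmas~\ref{lemma:contraction} and~\ref{lemma:area}. If $D<r_2-r_1$ the balls are nested, $B_{r_1}[x_1]\subseteq B_{r_2}[x_2]$, so the intersection equals $B_{r_1}[x_1]$, a case we treat separately (directly from Lemma~\ref{lemma:area} applied to $\exp_{x_1}(B_{r_1}(0))\subseteq B_{r_1}[x_1]$); from now on we assume $D\ge r_2-r_1$, equivalently $0<\mathfrak e\le 2r$.

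\emph{Transplanting the lens.} By completeness there is a unit-speed minimizing geodesic from $x_1$ to $x_2$, which yields $v_0\in T_{x_1}M$ with $\|v_0\|=1$ and $\exp_{x_1}(Dv_0)=x_2$; note $D<r_1+r_2\le\rho$. Identifying $T_{x_1}M$ with $\R^d$, set
\[\widetilde L=B_{r_1}(0)\cap B_{r_2}(Dv_0)\subset\R^d,\]
the intersection of the two Euclidean balls. Since $\widetilde L\subset B_{r_1}(0)\subset B_\rho(0)$ and $Dv_0\in B_\rho(0)$, Lemma~\ref{lemma:contraction} gives, for every $v\in\widetilde L$,
\[\mathcal d\big(\exp_{x_1}v,x_1\big)\le\|v\|\le r_1,\qquad\mathcal d\big(\exp_{x_1}v,x_2\big)=\mathcal d\big(\exp_{x_1}v,\exp_{x_1}(Dv_0)\big)\le\|v-Dv_0\|\le r_2,\]
so $\exp_{x_1}(\widetilde L)\subseteq B_{r_1}[x_1]\cap B_{r_2}[x_2]$.

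\emph{Volume estimates.} Lemma~\ref{lemma:area} (whose constant is positive because $\rho\le\pi/(2\sqrt\kappa)$, so $1-\kappa\rho^2/6>0$) then yields
\[\mathrm{Vol}_M\big(B_{r_1}[x_1]\cap B_{r_2}[x_2]\big)\ge\mathrm{Vol}_M\big(\exp_{x_1}(\widetilde L)\big)\ge\Big(1-\tfrac{\kappa\rho^2}{6}\Big)^{d}\,\mathrm{Vol}_{\R^d}(\widetilde L),\]
and it remains to bound the Euclidean lens. Place the origin at $0$ and write $Dv_0=(D,0,\dots,0)$; the slice of $\widetilde L$ at first coordinate $t$ is a $(d-1)$-ball of radius $\min\{\sqrt{r_1^2-t^2},\sqrt{r_2^2-(D-t)^2}\}$, nonempty exactly for $t\in[r_1-\mathfrak e,\,r_1]$ (using $r_1-\mathfrak e\ge -r_1$ and $r_1\le D+r_2$, both from $\mathfrak e\le 2r\le 2r_2$). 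On the middle subinterval $[\,r_1-\tfrac34\mathfrak e,\ r_1-\tfrac14\mathfrak e\,]$, of length $\mathfrak e/2$, elementary factoring together with $\mathfrak e\le 2r_1$ and $\mathfrak e\le 2r_2$ gives $r_1^2-t^2\ge r_1\mathfrak e/8$ and $r_2^2-(D-t)^2\ge r_2\mathfrak e/8\ge r\mathfrak e/8$, so each such slice contains a $(d-1)$-ball of radius $\sqrt{r\mathfrak e/8}$; integrating over the subinterval,
\[\mathrm{Vol}_{\R^d}(\widetilde L)\ge\frac{\mathfrak e}{2}\,\omega_{d-1}\Big(\frac{r\mathfrak e}{8}\Big)^{(d-1)/2}=\frac{\omega_{d-1}}{2\cdot 8^{(d-1)/2}}\,r^{(d-1)/2}\mathfrak e^{(d+1)/2},\]
where $\omega_{d-1}$ is the volume of the Euclidean unit $(d-1)$-ball. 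Combining the displays gives the claim with $a=\big(1-\kappa\rho^2/6\big)^{d}\,\omega_{d-1}\,2^{-1}8^{-(d-1)/2}$, which depends only on $M$ (through $d,\kappa,\rho$).

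\emph{Main obstacle.} The only genuine work is the Euclidean lens volume. A ball inscribed in $\widetilde L$ has radius only $\mathfrak e/2$, hence volume $\sim\mathfrak e^{d}$, which is too small when $\mathfrak e\ll r$; one must instead exploit that the lens is transversally wide ($\sim\!\sqrt{r\mathfrak e}$) while longitudinally thin ($\sim\!\mathfrak e$). The slicing argument, and in particular the choice of the middle subinterval on which both radicands $r_1^2-t^2$ and $r_2^2-(D-t)^2$ are comparable to $r\mathfrak e$, is precisely what produces the exponents $(d-1)/2$ and $(d+1)/2$. A minor but essential point is keeping the whole construction inside $B_\rho(0)$ so that Lemmas~\ref{lemma:contraction} and~\ref{lemma:area} apply; this is exactly why the hypotheses restrict $r_1,r_2$ to $[0,\rho/2]$ and force $D<\rho$.
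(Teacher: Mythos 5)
Your proof follows the paper's route exactly---transplant a Euclidean lens through $\exp_{x_1}$ using Lemmas~\ref{lemma:contraction} and \ref{lemma:area}---but adds value by proving the Euclidean lens estimate directly with a slicing argument rather than citing Walther's Lemma~1. The slicing on $[r_1-\tfrac{3}{4}\mathfrak{e},\,r_1-\tfrac{1}{4}\mathfrak{e}]$ is correct, and its two lower bounds $r_1^2-t^2\ge r_1\mathfrak{e}/8$ and $r_2^2-(D-t)^2\ge r_2\mathfrak{e}/8$ require exactly $\mathfrak{e}\le 2r_1$ and $\mathfrak{e}\le 2r_2$, i.e.\ the non-nested case $D\ge|r_2-r_1|$.

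The gap is in the nested case, which you set aside as handled ``directly from Lemma~\ref{lemma:area}''. When $D<r_2-r_1$ (so $\mathfrak{e}>2r$), Lemma~\ref{lemma:area} applied to $\exp_{x_1}\big(B_{r_1}(0)\big)$ only gives $\mathrm{Vol}_M\big(B_{r_1}[x_1]\big)\gtrsim r^d$, and this does \emph{not} dominate $r^{(d-1)/2}\mathfrak{e}^{(d+1)/2}$ once $\mathfrak{e}$ is substantially larger than $r$. Concretely, for $M=\R^d$ with $x_1=x_2$, $r_2$ fixed, and $r_1\to 0$, the true intersection volume is $\omega_d r_1^d$ while the claimed lower bound scales like $r_1^{(d-1)/2}$, so no fixed $a>0$ works. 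This is precisely the regime where the lens argument degenerates, and the paper's own proof---which invokes Walther's Lemma~1 without discussing unequal radii or nesting---is equally silent on it. The lemma as stated implicitly needs $\mathfrak{e}\le 2\min\{r_1,r_2\}$ (equivalently $D\ge|r_2-r_1|$); you should record this as a hypothesis rather than fold the excluded case in with an argument that does not close.
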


\begin{proof}
	Consider the exponential map centered in $x_1$:
	\begin{equation*}
		\begin{array}{rcl}
			\exp_{x_1}: B_{\rho} (0) \subset \R^d & \longrightarrow & M \\
			v & \longmapsto & \exp_{x_1} (v).
		\end{array}
	\end{equation*}
	
	Let $v_2 \in B_{\rho} (0)$ a preimage of $x_2$. Lemma \ref{lemma:contraction} guarantees that $\exp_{x_1}$ is a contraction in $B_{\rho} (0)$. Consequently,
	\begin{equation*}
		\exp_{x_1} \big( B_{r_1} [0] \cap B_{r_2} [v_2] \big) \subset B_{r_1} [x_1] \cap B_{r_2} [x_2].
	\end{equation*}
	
	Hence, Lemma \ref{lemma:area} guarantees that
	\begin{align*}
		\mathrm{Vol}_{M}  \big( B_{r_1} [x_1] \cap B_{r_2} [x_2] \big) & \geq \mathrm{Vol}_{M} \Big( \exp_{x_1} \big( B_{r_1} [0] \cap B_{r_2} [v_2] \big) \Big) \\
		& \geq \left( 1 - \frac{\kappa \rho^2}{6}\right)^d \mathrm{Vol}_{\R^d} \big( B_{r_1} [0] \cap B_{r_2} [v_2] \big).
	\end{align*}
	And Lemma 1 from \cite{Walther1997} yields
	\begin{equation*}
		\mathrm{Vol}_{M}  \big( B_{r_1} [x_1] \cap B_{r_2} [x_2] \big) 
		\geq \left( 1 - \frac{\kappa \rho^2}{6}\right)^d c_d r^{(d-1)/2} \mathfrak{e}^{(d+1)/2};
	\end{equation*}
	where
	\begin{equation*}
		c_d = \frac{\pi^{(d-1)/2}}{ 2^{(d-3)/2} (d+1) \Gamma \big( 1 + (d-1)/2 \big)}.
	\end{equation*}
	Then, taking $a = \left( 1 - \frac{\kappa \rho^2}{6}\right)^d c_d$ ends the proof.
\end{proof} 

\begin{lemma}
	\label{lemma:condMrad}
	Suppose that $M$ is a $d$-dimensional manifold under the conditions of Theorem 4.1.
	
	Let $r_1, r_2 > 0$, and let $x_1, x_2 \in M$ such that $\mathfrak{e} = r_1+r_2-\mathcal{d}(x_1,x_2) > 0$. Assume that $\mathfrak{e} \leq \rho/2$ with $\rho = \min \left\lbrace i_M, \frac{\pi}{2 \sqrt{ \kappa } }\right\rbrace$.
	
	Then:
	\begin{equation*}
		\mathrm{Vol}_{M}  \big( B_{r_1} [x_1] \cap B_{r_2} [x_2] \big) \geq a r^{(d-1)/2} \mathfrak{e}^{(d+1)/2}
	\end{equation*}
	where $r = \min \{ r_1, r_2, \rho/2 \}$ and $a > 0$ is a constant.
\end{lemma}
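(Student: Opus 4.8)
The plan is to reduce to Lemma~\ref{lemma:M3.1}, which already settles the case $r_1,r_2\le\rho/2$, by replacing the two balls with smaller ones whose radii do not exceed $\rho/2$ while keeping the overlap $\mathfrak e$ unchanged. The device is an elementary ``sliding'' observation. Since $M$ is complete and connected, the Hopf--Rinow theorem (see the discussion in Appendix~\ref{sec:propMink}) provides a length-minimizing geodesic $\alpha\colon[0,\mathcal d(x_1,x_2)]\to M$ from $x_1$ to $x_2$. For $0\le t\le\min\{r_2,\mathcal d(x_1,x_2)\}$ set $x_2^{\ast}=\alpha\big(\mathcal d(x_1,x_2)-t\big)$; then $\mathcal d(x_1,x_2^{\ast})=\mathcal d(x_1,x_2)-t$ and $\mathcal d(x_2^{\ast},x_2)=t$, so the triangle inequality gives $B_{r_2-t}[x_2^{\ast}]\subset B_{r_2}[x_2]$, while the new overlap is
\[ r_1+(r_2-t)-\mathcal d(x_1,x_2^{\ast})=r_1+r_2-\mathcal d(x_1,x_2)=\mathfrak e . \]
Thus sliding the centre of a ball along $\alpha$ towards the other centre, decreasing the radius by the slide amount, shrinks the ball without changing $\mathfrak e$; the same works for the first ball.

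Carrying this out, put $r_i'=\min\{r_i,\rho/2\}$ and $t_i=r_i-r_i'\ge 0$. First slide $x_2$ to $x_2^{\ast}=\alpha(\mathcal d(x_1,x_2)-t_2)$: this is admissible because $\mathcal d(x_1,x_2)=r_1+r_2-\mathfrak e\ge r_2-\rho/2\ge t_2$ (here $\mathfrak e\le\rho/2$ is used), and it yields $\mathcal d(x_1,x_2^{\ast})=r_1+r_2'-\mathfrak e$, radius $r_2'\le\rho/2$, and overlap still $\mathfrak e$. Since sub-arcs of a minimizing geodesic are minimizing, $\alpha|_{[0,\,\mathcal d(x_1,x_2^{\ast})]}$ is a minimizing geodesic from $x_1$ to $x_2^{\ast}$; slide $x_1$ along it to $x_1^{\ast}=\alpha(t_1)$, which is admissible because a short computation from $\mathfrak e\le\rho/2$ gives $t_1\le\mathcal d(x_1,x_2^{\ast})=r_1+r_2'-\mathfrak e$, and then $\mathcal d(x_1^{\ast},x_2^{\ast})=r_1'+r_2'-\mathfrak e\ge 0$, again with unchanged overlap $\mathfrak e$. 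Now $B_{r_i'}[x_i^{\ast}]\subset B_{r_i}[x_i]$ with $r_1',r_2'\le\rho/2$ and $r_1'+r_2'-\mathcal d(x_1^{\ast},x_2^{\ast})=\mathfrak e>0$, so Lemma~\ref{lemma:M3.1} applies to $B_{r_1'}[x_1^{\ast}]$ and $B_{r_2'}[x_2^{\ast}]$ and yields
\[ \mathrm{Vol}_{M}\big(B_{r_1}[x_1]\cap B_{r_2}[x_2]\big)\ \ge\ \mathrm{Vol}_{M}\big(B_{r_1'}[x_1^{\ast}]\cap B_{r_2'}[x_2^{\ast}]\big)\ \ge\ a\,\min\{r_1',r_2'\}^{(d-1)/2}\,\mathfrak e^{(d+1)/2}. \]
As $\min\{r_1',r_2'\}=\min\{r_1,r_2,\rho/2\}=r$, this is exactly the claimed bound, with the same constant $a$ as in Lemma~\ref{lemma:M3.1}.

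The only real work is the bookkeeping: one must verify that the slide amounts $t_1,t_2$ never exceed the length available along $\alpha$ and that the intermediate geodesic distances $\mathcal d(x_1,x_2^{\ast})$ and $\mathcal d(x_1^{\ast},x_2^{\ast})$ stay nonnegative — this is precisely where the hypothesis $\mathfrak e\le\rho/2$ enters, and all of it amounts to the elementary inequalities displayed above. Degenerate configurations ($x_1=x_2$, or $x_1^{\ast}=x_2^{\ast}$, or $t_i=0$) require no separate treatment since Lemma~\ref{lemma:M3.1} only demands $\mathfrak e>0$. I do not anticipate any genuine obstacle; the substantive geometric content has already been extracted in Lemmas~\ref{lemma:M3.1}, \ref{lemma:contraction} and~\ref{lemma:area}.
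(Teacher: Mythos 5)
Your argument is correct and takes essentially the same route as the paper: you slide each centre along a minimizing geodesic and shrink the radius to $\min\{r_i,\rho/2\}$ while preserving the overlap $\mathfrak{e}$, then invoke Lemma~\ref{lemma:M3.1}; the points $x_i^{\ast}$ you construct are exactly the paper's $\tilde{x}_i$. One minor slip worth flagging: the chain $\mathcal{d}(x_1,x_2)\ge r_2-\rho/2\ge t_2$ does not hold as written when $r_2\le\rho/2$ (then $r_2-\rho/2\le 0=t_2$), but in that case $t_2=0$ so admissibility is trivial, and the conclusion is unaffected.
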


\begin{proof}
	Since $\mathfrak{e} \leq r_1 + r_2$ and $\mathfrak{e} \leq \rho / 2$, it follows that
	\begin{equation}
		\label{eq:Delta}
		\mathfrak{e} \leq \min \{ r_1 + r_2, \rho / 2 \} \leq 
		\tilde{r_1} + \tilde {r_2};
	\end{equation}
	where $\tilde{r_1} = \min \{ r_1 , \rho / 2 \}$ and $\tilde{r_2} = \min \{ r_2 , \rho / 2 \}$.
	
	Let $\alpha$ be a geodesic segment from $x_1$ to $x_2$, $\alpha: \big[ 0, \mathcal{d} (x_1, x_2) \big] \rightarrow M$. From \eqref{eq:Delta} one easily deduces that
	$$
	r_1 - \tilde{r}_1 \leq 
	\mathcal{d} (x_1, x_2)
	\quad
	\text{and}
	\quad
	\mathcal{d}(x_1, x_2) - r_2 + \tilde{r}_2
	\geq 0.
	$$
	
	\begin{figure}[htbp]
		\centering
		\def\scale{1}
\begingroup%
  \makeatletter%
  \providecommand\color[2][]{%
    \errmessage{(Inkscape) Color is used for the text in Inkscape, but the package 'color.sty' is not loaded}%
    \renewcommand\color[2][]{}%
  }%
  \providecommand\transparent[1]{%
    \errmessage{(Inkscape) Transparency is used (non-zero) for the text in Inkscape, but the package 'transparent.sty' is not loaded}%
    \renewcommand\transparent[1]{}%
  }%
  \providecommand\rotatebox[2]{#2}%
  \newcommand*\fsize{\dimexpr\f@size pt\relax}%
  \newcommand*\lineheight[1]{\fontsize{\fsize}{#1\fsize}\selectfont}%
  \ifx\svgwidth\undefined%
    \setlength{\unitlength}{300.79965186bp}%
    \ifx\svgscale\undefined%
      \relax%
    \else%
      \setlength{\unitlength}{\unitlength * \real{\svgscale}}%
    \fi%
  \else%
    \setlength{\unitlength}{\svgwidth}%
  \fi%
  \global\let\svgwidth\undefined%
  \global\let\svgscale\undefined%
  \makeatother%
  \begin{picture}(1,0.12264616)%
    \lineheight{1}%
    \setlength\tabcolsep{0pt}%
    \put(0,0){\includegraphics[width=\unitlength,page=1]{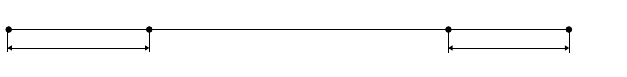}}%
    \put(0.06758698,0.00519859){\color[rgb]{0,0,0}\makebox(0,0)[lt]{\lineheight{1.25}\smash{\begin{tabular}[t]{l}$r_1 - \tilde{r}_1$\end{tabular}}}}%
    \put(0.76088482,0.00519859){\color[rgb]{0,0,0}\makebox(0,0)[lt]{\lineheight{1.25}\smash{\begin{tabular}[t]{l}$r_2 - \tilde{r}_2$\end{tabular}}}}%
    \put(-0.00138387,0.09946551){\color[rgb]{0,0,0}\makebox(0,0)[lt]{\lineheight{1.25}\smash{\begin{tabular}[t]{l}$x_1$\end{tabular}}}}%
    \put(0.89020493,0.09946551){\color[rgb]{0,0,0}\makebox(0,0)[lt]{\lineheight{1.25}\smash{\begin{tabular}[t]{l}$x_2$\end{tabular}}}}%
    \put(0.69694542,0.09943559){\color[rgb]{0,0,0}\makebox(0,0)[lt]{\lineheight{1.25}\smash{\begin{tabular}[t]{l}$\tilde{x}_1$\end{tabular}}}}%
    \put(0.22205436,0.09943559){\color[rgb]{0,0,0}\makebox(0,0)[lt]{\lineheight{1.25}\smash{\begin{tabular}[t]{l}$\tilde{x}_2$\end{tabular}}}}%
  \end{picture}%
\endgroup%

		\caption{\label{fig:lemma10}The points $x_1, \tilde{x}_1, x_2$ and $\tilde{x}_2$ and the distance between them.}
	\end{figure}
	
	Therefore, one can define $\tilde{x}_1 = \alpha \big( r_1 - \tilde{r}_1 \big)$ and $\tilde{x}_2 = \alpha \big( \mathcal{d} (x_1, x_2) -( r_2 - \tilde{r}_2 )\big)$ (check Figure \ref{fig:lemma10}). Then:
	\begin{equation*}
		\mathcal{d} (\tilde{x}_1, \tilde{x}_2) = \mathcal{d} (x_1, x_2) - (r_2 - \tilde{r}_2) - (r_1 - \tilde{r}_1) = \tilde{r}_1 + \tilde{r}_2 - \mathfrak{e}.
	\end{equation*}
	
	Moreover, given any $x \in B_{\tilde{r}_1} ( \tilde{x}_1 ) \cap B_{\tilde{r}_2} ( \tilde{x}_2 )$ one deduces
	\begin{itemize}
		\item $\mathcal{d} (x, x_1) \leq \mathcal{d} (x, \tilde{x}_1) + \mathcal{d} (\tilde{x}_1, x_1) \leq \tilde{r}_1 + r_1 - \tilde{r}_1 = r_1 \Rightarrow x \in B_{r_1} (x_1)$.
		\item $\mathcal{d} (x, x_2) \leq \mathcal{d} (x, \tilde{x}_2) + \mathcal{d} (\tilde{x}_2, x_2) \leq \tilde{r}_2 + r_2 - \tilde{r}_2 = r_2 \Rightarrow x \in B_{r_2} (x_2)$.
	\end{itemize}
	So, $B_{\tilde{r}_1} ( \tilde{x}_1 ) \cap B_{\tilde{r}_2} ( \tilde{x}_2 ) \subset B_{{r}_1} ( {x}_1 ) \cap B_{{r}_2} ( {x}_2 )$ and
	\begin{equation*}
		\mathrm{Vol}_M \big(  B_{{r}_1} ( {x}_1 ) \cap B_{{r}_2} ( {x}_2 ) \big) \geq \mathrm{Vol}_M \big(  B_{\tilde{r}_1} ( \tilde{x}_1 ) \cap B_{\tilde{r}_2} ( \tilde{x}_2 ) \big).
	\end{equation*}
	Finally, Lemma \ref{lemma:M3.1} guarantees that there exists a constant $a > 0$ such that
	\begin{align*}
		\mathrm{Vol}_M \big(  B_{{r}_1} ( {x}_1 ) \cap B_{{r}_2} ( {x}_2 ) \big) & \geq \mathrm{Vol}_M \big(  B_{\tilde{r}_1} ( \tilde{x}_1 ) \cap B_{\tilde{r}_2} ( \tilde{x}_2 ) \big) \\
		& \geq a \min \{ r_1, r_2, \rho / 2 \}^{(d-1)/2} \mathfrak{e}^{(d+1)/2},
	\end{align*}
	concluding the proof.
\end{proof}

\begin{proof}[Proof of Theorem~\ref{th:manifold}]
	Take $\rho = \min \left\lbrace i_M, \frac{\pi}{2 \sqrt{ \kappa } }\right\rbrace$. If $r_1 + r_2 - \mathcal{d} (x_1, x_2) \leq \rho /2$, the result is a direct consequence of Lemma \ref{lemma:condMrad}.
	Suppose then that $r_1 + r_2 - \mathcal{d} (x_1, x_2) > \rho /2$. We distinguish two cases:
	\begin{enumerate}[label = \textit{Case \arabic*:}, wide = \parindent, leftmargin = 0pt]
		\item If $r_1 - \rho/4 \geq \mathcal{d} (x_1, x_2)$, take
		\begin{equation*}
			\tilde{r}_1 = \mathcal{d} (x_1, x_2) + \rho / 4, \quad \tilde{r}_2 = \min \{r_2, \rho / 4\}, \quad \mathfrak{e} = \tilde{r}_1 + \tilde{r}_2 - \mathcal{d} (x_1, x_2).
		\end{equation*}
		Hence:
		\begin{equation*}
			\mathfrak{e} = \mathcal{d} (x_1, x_2) + \rho/4 + \min \{r_2, \rho / 4\} - \mathcal{d} (x_1, x_2) = \rho/4 + \min \{r_2, \rho / 4\} \leq \rho/2.
		\end{equation*}
		So, Lemma \ref{lemma:condMrad} guarantees that there exists a constant $a > 0$ such that
		\begin{equation*}
			\mathrm{Vol}_{M}  \big( B_{\tilde{r}_1} [x_1] \cap B_{\tilde{r}_2} [x_2] \big) \geq a \min \{ \tilde{r}_1, \tilde{r}_2, \rho/2 \}^{(d-1)/2} \mathfrak{e}^{(d+1)/2}.
		\end{equation*}
		Furthermore, $\rho / 4 \leq \tilde{r}_1 \leq r_1$ and $\rho/4 \leq \tilde{r}_2 \leq r_2$. Therefore:
		\begin{align*}
			\mathrm{Vol}_{M}  \big( B_{r_1} [x_1] \cap B_{r_2} [x_2] \big) \geq& a \min \{ \tilde{r}_1, \tilde{r}_2, \rho/2 \}^{(d-1)/2} \mathfrak{e}^{(d+1)/2} \\
			\geq& a \min \{ r_1, r_2, \rho/4 \}^{(d-1)/2} \mathfrak{e}^{(d+1)/2} \\
			\geq& a \min \{ r_1, r_2, \rho/4 \}^{(d-1)/2} \min \{ \varepsilon, \rho / 4 \} ^{(d+1)/2};
		\end{align*}
		where the last inequality follows from $\mathfrak{e} > \rho / 4$.
		
		\item If $r_1 - \rho/4 < \mathcal{d} (x_1, x_2)$, take 
		\begin{equation*}
			\tilde{r}_2 = \rho/2 + \mathcal{d} (p_1, p_2) - r_1, \quad \mathfrak{e} = r_1 + \tilde{r}_2 - d(x_1, x_2).
		\end{equation*}
		Hence:
		\begin{equation*}
			\mathfrak{e} = r_1 + \rho/2 + \mathcal{d}(x_1, x_2) - r_1 - \mathcal{d}(x_1, x_2) = \rho/2.
		\end{equation*}
		Then, Lemma \ref{lemma:condMrad} ensures that there exists a constant $a > 0$ such that
		\begin{equation*}
			\mathrm{Vol}_{M}  \big( B_{r_1} [x_1] \cap B_{\tilde{r}_2} [x_2] \big) \geq a \min \{ r_1, \tilde{r}_2, \rho/2 \}^{(d-1)/2} (\rho / 2) ^{(d+1)/2}.
		\end{equation*}
		From $r_1 + r_2 - \mathcal{d} (x_1, x_2) > \rho /2$ one deduces
		\begin{equation*}
			\tilde{r}_2 =  \rho/2 + \mathcal{d} (x_1, x_2) - r_1 < r_2.
		\end{equation*}
		So,
		\begin{equation*}
			\mathrm{Vol}_{M}  \big( B_{r_1} [x_1] \cap B_{r_2} [x_2] \big) \geq a \min \{ r_1, \tilde{r}_2, \rho/2 \}^{(d-1)/2} (\rho / 2 )^{(d+1)/2}.
		\end{equation*}
		In addition, it follows from $r_1 - \rho/4 < \mathcal{d}(x_1, x_2)$ that:
		\begin{equation*}
			\tilde{r}_2 = \rho/2 + \mathcal{d} (x_1, x_2) - r_1 = \mathcal{d}(x_1, x_2) - (r_1 - \rho/4) + \rho/4 > \rho/4.
		\end{equation*}
		Consequently,
		\begin{align*}
			\mathrm{Vol}_{M}  \big( B_{r_1} [x_1] \cap B_{r_2} [x_2] \big) 
			\geq& a \min \{ r_1, \tilde{r}_2, \rho/2 \}^{(d-1)/2} (\rho / 2 )^{(d+1)/2} \\
			\geq& a \min \{ r_1, r_2, \rho/4 \}^{(d-1)/2} (\rho / 2 )^{(d+1)/2} \\
			\geq& a \min \{ r_1, r_2, \rho/4 \}^{(d-1)/2} \min \{ \varepsilon, \rho / 4 \}^{(d+1)/2},
		\end{align*}
		which ends the proof. \qedhere
	\end{enumerate}
\end{proof}

\subsection{About Assumptions L}
\label{sec:proofsassA}

This section contains the proofs of Propositions~\ref{prop:assA3} and~\ref{prop:assA4} in Section~\ref{sec:assA}. We start with Proposition~\ref{prop:assA3}.

\begin{proof}[Proof of Proposition~\ref{prop:assA3}]
	$M$ is a complete Riemannian manifold by assumption. So, the Hopf-Rinow Theorem (see \citealp{Gallot2004}, Cor.~2.105) ensures that every bounded and closed subset of $M$ is a compact set. $L(\lambda)$ is a closed set since $f$ is continuous. We will prove that is bounded by contradiction.
	
	If $L(\lambda) = \emptyset$, $L(\lambda)$ is compact. Suppose that $L (\lambda) \neq \emptyset$ is not bounded. Let $x_1 \in L(\lambda)$. Since $L(\lambda)$ is not bounded, there exists a point $x_2 \in L(\lambda)$ such that $\mathcal{d} (x_1, x_2) > 4 \delta$. Iterating this argument, one gets a sequence $\{x_n\}_{n \in \N} \subset L(\lambda)$ verifying that 
	\begin{equation*}
		\min_{1 \leq i < n}  \mathcal{d} (x_n, x_i)  > 4 \delta, \quad \forall n \geq 2. 
	\end{equation*}

	By hypothesis, $L(\lambda) = \Psi_{\delta} \big[ L (\lambda) \big]$. Then, for each $x_n \in L(\lambda)$ there exists a point $y_n \in L(\lambda)$ such that $x_n \in B_{\delta} [y_n] \subset L (\lambda)$. Thus, given $i, j \in \N, i \neq j$:
	\begin{equation*}
		4 \delta < \mathcal{d} (x_i, x_j) \leq \mathcal{d} (x_i, y_i) + \mathcal{d} (y_i, y_j) + \mathcal{d} (y_j, x_j) \leq \mathcal{d} (y_i, y_j) + 2 \delta.
	\end{equation*}
	Consequently, $\mathcal{d} (y_i, y_j) > 2 \delta$ and $B_{\delta} [y_i] \cap B_{\delta} [y_j] = \emptyset$. Therefore, Lemma~\ref{lemma:balls} yields
	\begin{equation*}
		\mathrm{Vol}_{M} \big[ L (\lambda) \big] \geq \mathrm{Vol}_{M} \bigg( \bigcup_{n \in N} B_{\delta} [y_i] \bigg) = \sum_{n \in \N} \mathrm{Vol}_{M} \big( B_{\delta} [y_i] \big) \geq \sum_{n \in \N} a \min \{r, \rho \}^d = + \infty;
	\end{equation*}
	where $a, \rho > 0$ are the constants present in Assumption~\ref{ass:M2}. But then $$\Prob \big[ L (\lambda) \big] \geq \lambda \mathrm{Vol}_{M} \big[ L (\lambda) \big] = + \infty,$$ and that is a contradiction.
\end{proof}

The proof of Proposition~\ref{prop:assA4} makes use of the integral flow generated by $\nabla f$ (see \citealp{Lee2012}, Ch. 9, for an introduction to flows) to establish an explicit relation between the level sets of $f$ and the geodesic distance of $M$.

\begin{proof}[Proof of Proposition~\ref{prop:assA4}]
	Let $\eta \leq \delta$ and $\lambda \in [l, u]$. By definition $L(\lambda + \eta) \subset L (\lambda)$. So, in order to prove $\mathcal{d}_H \big( L (\lambda), L(\lambda + \eta) \big) \leq \eta/c$, we only need to show that
	\begin{equation*}
		L (\lambda) \subset L (\lambda + \eta) \oplus (\eta/c) B.
	\end{equation*}
	
	Let $p \in L (\lambda)$. Consider the following vector field in $U$:
	$$
	X = \frac{\nabla f}{\norm{\nabla f}}.
	$$
	Let $\alpha$ be the maximal integral curve of $X$ such that $\alpha (0) = p$: 
	\begin{equation*}
		\alpha: I \rightarrow U; \qquad \alpha' (t) = \frac{\nabla f \big( \alpha (t) \big)}{\norm{\nabla f \big( \alpha (t) \big)}}, \quad \forall t \in I.
	\end{equation*}
	Notice that $\norm{\alpha ' (t)} = 1$ for all $t \in I$. Then,
	\begin{equation}
		\label{eq:alphaanddist}
		\mathcal{d} \big( p, \alpha (t) \big)
		=
		\mathcal{d} \big( \alpha (0), \alpha (t) \big)
		\leq
		\abs{t},
		\quad
		\forall t \in I.
	\end{equation}
	Furthermore, given $t \in I$, we deduce:
	\begin{align*}
		f \big( \alpha (t) \big) - \lambda
		&=
		f \big( \alpha (t) \big) - f \big( \alpha (0) \big)
		\\
		&= 
		\int_{0}^{t} \langle \nabla f \big( \alpha (s) \big), \alpha' (s) \rangle_M ds
		\\
		&=
		\int_{0}^{t} \norm{ \nabla f \big( \alpha (s) \big) } ds \geq \int_{0}^{t} c ds = ct.
	\end{align*}
	Hence,
	\begin{equation}
		\label{eq:fandalpha}
		f \big( \alpha (t) \big) \geq \lambda + ct, \quad \forall t \in I.
	\end{equation}
	
	We distinguish two cases.
	\begin{itemize}
		\item If $\eta/c \in I$, then \eqref{eq:fandalpha} ensures that $\alpha (\eta/c) \in L( \lambda + \eta )$. Moreover, \eqref{eq:alphaanddist} guarantees that $\mathcal{d} \big( p, \alpha (\eta/c) \big) \leq \eta/c$. Hence, $p \in L( \lambda + \eta ) \oplus (\eta/c) B$.
		\item If $\eta/c \notin I$, then $\eta/c$ is an upper bound of $I$. The Escape Lemma (Theorem 9.20, \citealp{Lee2012}) guarantees that $\alpha \big( I \cap [0, \eta/c] \big) \not\subset f^{-1} \big( [\lambda, \lambda + \eta] \big)$. I.e., there exists a $\tilde{t} \in I \cap [0, \eta/c]$ such that $\alpha(\tilde{t}) \notin f^{-1} \big( [\lambda, \lambda + \eta] \big)$. Since $\tilde{t} \geq 0$, \eqref{eq:fandalpha} ensures that $\alpha (\tilde{t}) \in L (\lambda + \eta)$. In addition, $\mathcal{d} \big( p, \alpha (\tilde{t}) \big) \leq \eta/c$ by \eqref{eq:alphaanddist}. Consequently, $p \in L( \lambda + \eta ) \oplus (\eta/c) B$.
	\end{itemize}
	
	The proof of $\mathcal{d}_H \big( L (\lambda), L(\lambda - \eta) \big) \leq \eta/c$ is totally analogous.
\end{proof}

\subsection{About Assumption P \label{sec:assA5}}
\label{sec:proofassP}

This section is fully devoted to the proof of Proposition~\ref{prop:assA5} in Section~\ref{sec:lambdan}. The basic idea is to make use of the flow generated by $\nabla f$ again, this time to connect the level sets of $f$ and the volume measure of $M$. This allows to prove that the function $\lambda \mapsto \Prob \big[ L (\lambda) \big]$ is derivable, and its derivative is continuous and bounded away from zero. Then, one can translate that bound to the derivative of the function $\gamma \mapsto \lambda_{\gamma}$ through the Inverse Function Theorem.

First, Proposition~\ref{prop:difprobs} below ensures the derivability of $\Prob \big[ L (\lambda) \big]$.

\begin{prop}
	\label{prop:difprobs}
	Let $M$ be a Riemannian manifold under Assumptions M, and let \mbox{$f: M \rightarrow \R$} be a density function. Let $0 < l \leq u < \sup f$, and let $U$ be an open set such that $f^{-1} \big( [l, u] \big)\subset U$. Suppose that $f$ is a $\mathcal{C}^{1}$ function in $U$ and there exists a positive constant $c > 0$ satisfying such that $\norm{ \nabla f } \geq c$ in $U$.
	
	Therefore, the function $ H(\lambda) = P \big[ L (\lambda) \big] $ is derivable in $(l, u)$, its derivative $H'(\lambda)$ is continuous in $(l, u)$, and $H'(\lambda) < 0$ for all $\lambda \in (l, u)$.
\end{prop}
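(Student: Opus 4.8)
The plan is to turn $H$ into an explicit one–dimensional integral by slicing $M$ into the level sets of $f$, and then to differentiate it. First I would localise: for $\lambda\in(l,u)$ the set $L(\lambda)$ is the disjoint union of $\{f>u\}$ and $\{\lambda\le f\le u\}$, so $H(\lambda)=\Prob[\{f>u\}]+H_0(\lambda)$ with $H_0(\lambda)=\int_{\{\lambda\le f\le u\}}f\,d\mathrm{Vol}_M$. The first term is a finite constant independent of $\lambda$, and the domain of $H_0$ lies in $W:=f^{-1}([l,u])\subset U$, where $f$ is $\mathcal C^1$ and $\norm{\nabla f}_M\ge c>0$; in particular no level $f^{-1}(t)$, $t\in[l,u]$, contains a critical point of $f$.

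On $W$ I would introduce the rescaled gradient field $X=\nabla f/\norm{\nabla f}_M^2$. Along an integral curve $\alpha$ of $X$ one has $\frac{d}{ds}f(\alpha(s))=1$ and $\norm{\alpha'(s)}_M=\norm{\nabla f(\alpha(s))}_M^{-1}\le 1/c$; since $M$ is complete (Assumption~\ref{ass:M1}, Hopf--Rinow), these curves extend for as long as their $f$–value stays in $[l,u]$, so the flow $\Phi_s$ is defined on $f^{-1}((l,u))$ for the relevant times and carries $f^{-1}(t_0)$ onto $f^{-1}(t_0+s)$. Fixing a reference level $\Sigma_0=f^{-1}(\lambda_0)$ with $\int_{\Sigma_0}\norm{\nabla f}_M^{-1}\,d\mathcal H^{d-1}<\infty$, the map $(s,p)\mapsto\Phi_s(p)$ is a bijection of $(l-\lambda_0,u-\lambda_0)\times\Sigma_0$ onto $f^{-1}((l,u))$ whose Jacobian factor $\omega(s,p)$ (with respect to $ds\,d\mathcal H^{d-1}$) satisfies $\omega(0,\cdot)=\norm{\nabla f}_M^{-1}$, because the derivative of the map sends $\partial_s$ to the normal vector $X(p)$ of norm $\norm{\nabla f(p)}_M^{-1}$ and is the identity on $T_p\Sigma_0$. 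Changing variables (equivalently, invoking the smooth coarea formula with weight $f/\norm{\nabla f}_M$ supported on $W$) and using $f\equiv t$ on $f^{-1}(t)$ gives, for every $\lambda\in(l,u)$,
\begin{equation*}
	H_0(\lambda)=\int_\lambda^{u} t\,A(t)\,dt,\qquad A(t):=\int_{f^{-1}(t)}\frac{d\mathcal H^{d-1}}{\norm{\nabla f}_M}=\int_{\Sigma_0}\omega(t-\lambda_0,p)\,d\mathcal H^{d-1}(p).
\end{equation*}
Taking instead the weight $1/\norm{\nabla f}_M$ shows $\int_l^u A(t)\,dt=\mathrm{Vol}_M(W)\le\tfrac1l\int_M f\,d\mathrm{Vol}_M=\tfrac1l<\infty$, so $A$ is locally integrable on $(l,u)$.

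It then remains to show $A$ is continuous and strictly positive on $(l,u)$. Continuity follows from the representation $A(t)=\int_{\Sigma_0}\omega(t-\lambda_0,p)\,d\mathcal H^{d-1}(p)$ by dominated convergence, the integrable dominating function being furnished by the bound $\int_l^u A<\infty$ (a uniform–integrability argument over the $s$–interval). Positivity holds because each $f^{-1}(t)$, $t\in(l,u)$, is a nonempty $\mathcal C^1$ hypersurface of positive $(d-1)$–dimensional measure, $t$ being a genuine intermediate value of $f$. Granting this, $t\mapsto tA(t)$ is continuous on $(l,u)$, and the fundamental theorem of calculus applied to the displayed formula yields that $H_0$, hence $H$, is $\mathcal C^1$ on $(l,u)$ with $H'(\lambda)=-\lambda A(\lambda)$: this is continuous, and strictly negative since $\lambda\ge l>0$ and $A(\lambda)>0$.

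The main obstacle is the control of $A$. Because the hypotheses do not force $W$ to be compact, one cannot use a tubular–neighbourhood trivialisation and must instead build the global change of variables from the flow of $X$ and push continuity through the possibly noncompact level–set integrals, which is exactly what the finiteness estimate $\int_l^u A\le 1/l$ is for. A minor, routine technical point is that $\nabla f$ is only $\mathcal C^0$, so $X$ and its flow have limited regularity; this is absorbed either by mollifying $f$ away from its critical set, or by replacing the flow with the $\mathcal C^1$ product structure that the submersion $f$ already provides on $f^{-1}((l,u))$ via the implicit function theorem and a partition of unity, which is all the argument actually needs.
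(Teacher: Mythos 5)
Your argument is, at its core, the paper's: introduce the rescaled gradient field $X=\nabla f/\|\nabla f\|_M^2$, use its flow to carry one level set of $f$ onto the others, rewrite $H(\lambda)$ as a one-dimensional integral $\int_\lambda^{u} t\,A(t)\,dt$ up to an additive constant, and differentiate to get $H'(\lambda)=-\lambda A(\lambda)<0$; the paper reaches the same formula with $A(\lambda)=\int_{f^{-1}(l)}\sqrt{|\det G(p,\lambda-l)|}\,d\mathrm{Vol}_{f^{-1}(l)}$ after pulling everything back to the fixed slice $f^{-1}(l)$ via its Lemma~\ref{lemma:thetadiff}, which is exactly your change of variables, packaged explicitly rather than through the coarea formula. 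The decomposition $H(\lambda)=\Prob[f>u]+\int_{\{\lambda\le f\le u\}}f$ is the mirror image of the paper's $H(l)-H(\lambda)=\int_{L(l)\setminus L(\lambda)}f\,d\mathrm{Vol}_M$, and the remaining bookkeeping (local integrability via the bound $\int_l^u A\le 1/l$, continuity and positivity of $A$, the Escape-Lemma/Hopf--Rinow argument that the flow survives while $f$ stays in $[l,u]$) matches the paper's reasoning; the one point where you and the paper are both somewhat terse is the passage from pointwise continuity of the Jacobian weight to continuity of the level-set integral $A$, which requires either compactness of a level slice or a genuine dominating function — merely knowing $\int_l^u A<\infty$ does not by itself produce one.
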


\begin{proof}
	Let $\lambda \in (l, u)$. Then,
	\begin{equation*}
		H(l) - H(\lambda) = \Prob \big[ L (l) \big] - \Prob \big[ L (\lambda ) \big]= \int_{L (l) \setminus L (\lambda)} f d\mathrm{Vol}_{M}.
	\end{equation*}
	The basis of the proof is to rewrite the set $L (l) \setminus L (\lambda)$ in a more convenient way. Consider the following vector field in $U$
	\begin{equation*}
		X = \frac{\nabla f}{\norm{\nabla f}^2}.
	\end{equation*}
	Let $\Upsilon$ be the smooth maximal flow over $U$ with $X$ as infinitesimal generator:
	\begin{equation*}
		\Upsilon: \mathcal{D} \subset U \times \R \rightarrow U.
	\end{equation*}
	Take $\mathcal{O} = \big( f^{-1} (l) \times \R \big) \cap \mathcal{D}$. The next lemma (that will be proven later on) guarantees that the restriction of the flow $\Upsilon$ to this set has nice properties.
	\begin{lemma}
		\label{lemma:thetadiff}
		Consider the set $\mathcal{O} = \big( f^{-1} (l) \times \R \big) \cap \mathcal{D}$. Then:
		\begin{enumerate}[label = (\alph*)]
			\item The map $\Upsilon \vert_{\mathcal{O}}$ is a diffeomorphism between $\mathcal{O}$ and an open submanifold of $M$.
			\item $f^{-1} (l) \times [0, u - l] \subset \mathcal{O}$.
			\item The map $\Upsilon \vert_{\mathcal{O}}$ takes horizontal fibers to level sets of $f$. That is,
			$$
			\Upsilon \vert_{\mathcal{O}}
			\big(
			f^{-1} (l) \times \{\eta\}
			\big)
			=
			f^{-1} (l + \eta),
			$$
			for all $\eta \in [0, u - l]$.
		\end{enumerate} 
	\end{lemma}

	In fact, $\Upsilon \vert_{\mathcal{O}}$ is the required reparametrization of $L (l) \setminus L (\lambda)$. For the sake of simplicity, denote $\Upsilon = \Upsilon \vert_{\mathcal{O}}$. Lemma~\ref{lemma:thetadiff} guarantees that
	\begin{align*}
		\int_{L (l) \setminus L (\lambda)} f d \mathrm{Vol}_{M}
		&=
		\int_{f^{-1} ( [l, \lambda ) )} f d \mathrm{Vol}_{M}
		\\
		&=
		\int_{f^{-1} (l) \times [0, \lambda - l)} (f \circ \Upsilon) \Big( \sqrt{ \abs{\det G} } \Big) d \mathrm{Vol}_{f^{-1} (l) \times \R}
		,
	\end{align*}
	where $G \equiv G(p, t)$ is the $m \times m$ matrix with coefficients
	\begin{equation*}
		G_{ij}: \mathcal{O} \longrightarrow M, \quad G_{ij} = \langle d \Upsilon E_i, d \Upsilon E_j \rangle_{M}
	\end{equation*}
	and $E_1, \ldots, E_n$ is an orthonormal frame defined in a neighborhood of $(p, t) \in \mathcal{O}$ (the determinant of $G$ does not depend on the choice of this frame). Keep in mind that, since $d \Upsilon$ and $E_i$ are continuous, the functions $G_{ij}$ are also continuous in $\mathcal{O}$ for all $i, j \in \{1, \ldots, n\}$, and then $\sqrt{ \abs{\det G} }$ is also continuous in $\mathcal{O}$.
	
	Lemma~\ref{lemma:thetadiff}c ensures that $f \big( \Upsilon (p, t) \big) = l + t$ for all $(p, t) \in f^{-1} (l) \times [0, \lambda - l)$. So,
	\begin{align*}
		\int_{L (l) \setminus L (\lambda)} f d \mathrm{Vol}_{M}
		&=
		\int_{0}^{\lambda - l} (l + t) \bigg( \int_{f^{-1} (l)} \Big( \sqrt{ \abs{\det G (p, t)} } \Big) d \mathrm{Vol}_{f^{-1} (l)} \bigg) dt
		\\
		&=
		\int_{l}^{\lambda} t \bigg( \int_{f^{-1} (l)} \Big( \sqrt{ \abs{\det G (p, t - l)} } \Big) d \mathrm{Vol}_{f^{-1} (l)} \bigg) dt.
	\end{align*}
	
	Therefore,
	\begin{multline*}
		H(\lambda)
		=
		H (l) - \Big( \Prob \big[ L (l) \big] - \Prob \big[ L (\lambda ) \big] \Big) =
		\\
		=
		H (l) - \int_{l}^{\lambda} t \bigg( \int_{f^{-1} (l)} \Big( \sqrt{ \abs{\det G (p, t - l)} } \Big) d \mathrm{Vol}_{f^{-1} (l)} \bigg) dt,
	\end{multline*}
	for all $\lambda \in (l, u)$. So, $H$ is derivable at $\lambda$ and
	\begin{equation*}
		H'(\lambda)
		=
		- \lambda \bigg( \int_{f^{-1} (l)} \Big( \sqrt{ \abs{\det G (p, \lambda - l)} } \Big) d \mathrm{Vol}_{f^{-1} (l)} \bigg).
	\end{equation*}
	Since $\sqrt{ \abs{\det G} }$ is continuous in $\mathcal{O}$ and $f^{-1} (l) \times \{\lambda - l \} \subset \mathcal{O}$ for all $\lambda \in (l, u)$ by Lemma~\ref{lemma:thetadiff}b, $H'$ is continuous in $(l, u)$. Moreover, $\Upsilon$ is a diffeomorphism, so $\abs{\det G} > 0$ in $\mathcal{O}$. Hence, $H' (\lambda) < 0$ for all $\lambda \in (l, u)$.
	
	The only thing left is to prove Lemma~\ref{lemma:thetadiff}.
	
	\begin{proof}[Proof of Lemma~\ref{lemma:thetadiff}]
		By assumption, $\nabla f \neq 0$ in $f^{-1} (l)$. So, $f^{-1} (l)$ is a regular submanifold of $M$ with codimension $1$. Furthermore, $X$ is not tangent to $f^{-1} (l)$ anywhere since the gradient is orthogonal to all the level sets. Hence, the Flowout Theorem \cite[Th. 9.20]{Lee2012} guarantees that $\Upsilon \vert_{\mathcal{O}}$ is a diffeomorphism between $\mathcal{O}$ and an open submanifold of $M$.
		
		The proof of (b) and (c) requires a deeper understanding of the relation between $f$ and the flow $\Upsilon$. Consider the set $\mathcal{D}^{(p)} = \left\lbrace t \in \R \colon (p, t) \in \mathcal{D} \right\rbrace$ and the map $\Upsilon^{(p)} : \mathcal{D}^{(p)} \rightarrow U $ defined by $\Upsilon^{(p)} (t) = \Upsilon (p, t)$. Given any $(p, t) \in \mathcal{D}$, we deduce
		\begin{align*}
			f \big( \Upsilon (p, t) \big) - f(p)
			=& f \big(\Upsilon^{(p)} (t) \big) - f \big(\Upsilon^{(p)} (0) \big) \\
			=& \int_0^t \langle \nabla f \big( \Upsilon^{(p)} (s) \big), \Upsilon^{(p) \prime}  (s) \rangle ds \\
			=& \int_0^t \langle \nabla f \big( \Upsilon^{(p)} (s) \big), X_{\Upsilon^{(p)}  (s)} \rangle ds \\
			=& \int_0^t \frac{\langle \nabla f \big( \Upsilon^{(p)} (s) \big), \nabla f \big( \Upsilon^{(p)} (s) \big) \rangle}{\norm{\nabla f \big( \Upsilon^{(p)} (s) \big)}^2} ds \\
			=& \int_0^t ds = t.
		\end{align*}
		So,
		\begin{equation}
			\label{eq:fandtheta}
			f \big( \Upsilon (p, t) \big) = f(p) + t, \quad \forall (p, t) \in \mathcal{D}.
		\end{equation}
		Furthermore, it holds that:
		\begin{equation}
			\label{eq:Dsuperp}
			\big[ l - f(p), u - f(p) \big] \subset \mathcal{D}^{(p)},
			\quad
			\forall p \in f^{-1} \big( [l , u ] \big).
		\end{equation}
		The latter result is proven by contradiction. Assume that $\big[ l - f(p), u - f(p) \big] \not\subset \mathcal{D}^{(p)}$. Then, either $l - f(p)$ is a lower bound of $\mathcal{D}^{(p)}$, or $u - f(p)$ is an upper bound of $\mathcal{D}^{(p)}$. The reasoning is a little bit different (but analogous) in each case.
		
		\begin{itemize}
			\item If $l - f(p)$ is a lower bound of $\mathcal{D}^{(p)}$, then the Escape Lemma \cite[Lemma 9.19]{Lee2012} guarantees that $\Upsilon^{(p)} \big(\mathcal{D}^{(p)} \cap (l - f(p), 0] \big)$ is not contained in any compact subset of $U$. But, at the same time, \eqref{eq:fandtheta} ensures that $$\Upsilon^{(p)} \big(\mathcal{D}^{(p)} \cap (l - f(p), 0] \big) \subset f^{-1} \big( [l , u ]\big),$$ which is clearly a contradiction.
			\item If $u - f(p)$ is an upper bound of $\mathcal{D}^{(p)}$, then the Escape Lemma again ensures that $\Upsilon^{(p)} \big(\mathcal{D}^{(p)} \cap [0, u - f(p)) \big)$ is not contained in any compact subset of $U$. However, it follows from \eqref{eq:fandtheta} that $$\Upsilon^{(p)} \big(\mathcal{D}^{(p)} \cap [0, u - f(p)) \big) \subset f^{-1} \big( [l, u ]\big),$$ and that is a contradiction.
		\end{itemize}
		
		Once \eqref{eq:fandtheta} and \eqref{eq:Dsuperp} are known, the proof of (b) and (c) is straightforward. In fact, (b) is a direct consequence of \eqref{eq:Dsuperp}. Moreover, \eqref{eq:fandtheta} yields
		$$\Upsilon \vert_{\mathcal{O}}
		\big(
		f^{-1} (l) \times \{\eta\}
		\big)
		=
		\Upsilon
		\big(
		f^{-1} (l) \times \{\eta\}
		\big)
		\subset
		f^{-1} (l + \eta).$$
		Take $p \in f^{-1} (l + \eta)$. From~\eqref{eq:Dsuperp}, it follows that $-\eta \in \mathcal{D}^{(p)}$. Hence, $\Upsilon (p, -\eta)$ exists and \eqref{eq:fandtheta} ensures that
		\begin{equation*}
			f \big( \Upsilon (p, -\eta ) \big) = f(p) - \eta = l + \eta - \eta = l.
		\end{equation*}
		Then, $\Upsilon (p, -\eta ) \in f^{-1} (l)$. Furthermore,
		\begin{equation*}
			\Upsilon \big( \Upsilon ( p, -\eta ), \eta \big) = \Upsilon ( p, 0 ) = p,
		\end{equation*}
		which shows that $p \in \Upsilon
		\big(
		f^{-1} (l) \times \{\eta\}
		\big)$, completing the proof.
	\end{proof}
	\renewcommand{\qedsymbol}{}
\end{proof}

Finally, the proof of Proposition~\ref{prop:assA5} uses the Inverse Function Theorem to translate this bound of the derivative of $\Prob \big[ L (\lambda) \big]$ into a bound of the derivative of the function $\gamma \mapsto \lambda_{\gamma}$.

\begin{proof}[Proof of Proposition~\ref{prop:assA5}]
	Proposition~\ref{prop:difprobs} and Inverse Function Theorem guarantee that $H(\lambda) = \Prob \big[ L(\lambda) \big]$ has a derivable inverse function, $H^{-1}: \big( H(u), H(l) \big) \rightarrow (l, u)$. We will show that this inverse function of $H$ is closely related to $\lambda_{\gamma}$.
	
	Let $\gamma \in \big[ \underline{\gamma} - \delta, \overline{\gamma} + \delta \big]$. From the definition of $\lambda_{\gamma}$ (see (5.5)) it follows that \mbox{$H(\lambda_{\gamma}) \geq 1 - \gamma$}. Assume by contradiction that $H(\lambda_{\gamma}) > 1 - \gamma$. Since $\lambda_{\gamma} \in [\lambda_{\underline{\gamma} - \delta}, \lambda_{\overline{\gamma} + \delta} ] \subset (l, u)$, $H$ is continuous in $\lambda_{\gamma}$. Then, there exists a constant $\epsilon > 0$ such that
	\begin{equation*}
		H( \lambda_{\gamma} + \eta) > 1 - \gamma, \quad \forall \eta \in [0, \epsilon]
	\end{equation*}
	which contradicts the definition of $\lambda_{\gamma}$. Therefore, $H (\lambda_{\gamma}) = 1 - \gamma$ and consequently \hbox{$\lambda_{\gamma} = H^{-1} (1 - \gamma)$} for all $\gamma \in \big[ \underline{\gamma} - \delta, \overline{\gamma} + \delta \big]$. Hence, $\lambda_{\gamma}$ is derivable with respect to $\gamma$ and its derivative is
	\begin{equation*}
		\frac{d \lambda_{\gamma}}{d \gamma} = (H^{-1})' (1 - \gamma) = \frac{-1}{H' (\lambda_{\gamma})},
	\end{equation*}
	which is a continuous function in $\big[ \underline{\gamma} - \delta, \overline{\gamma} + \delta \big]$.
	
	Now consider the function:
	\begin{equation*}
		F(\gamma, \eta)
		= 
		\left\lbrace
		\begin{array}{ll}
			\dfrac{\lambda_{\gamma + \eta} - \lambda_{\gamma}}{\eta}, & \text{if } \eta \in [- \delta, \delta] \setminus \{ 0 \};
			\\[5mm]
			\dfrac{-1}{H' (\lambda_{\gamma})}, & \text{if } \eta = 0.
		\end{array}
		\right.
	\end{equation*}
	From the continuity of $\lambda_{\gamma}$ and $H' (\lambda_{\gamma})$, and the differentiability of $\lambda_{\gamma}$, it follows that $F$ is a continuous function in $[\underline{\gamma}, \overline{\gamma}] \times [-\delta, \delta]$. Then, by compactness, there exists a constant such that $\sup_{(\gamma, \eta) \in [\underline{\gamma}, \overline{\gamma}] \times [-\delta, \delta]} \abs{F(\gamma, \eta)} \leq k$, which implies
	\begin{equation*}
		\sup_{\gamma \in [\underline{\gamma}, \overline{\gamma}]} \abs{\lambda_{\gamma + \eta} - \lambda_{\gamma}} \leq k \abs{\eta}
	\end{equation*}
	for all $\eta \in [-\delta, \delta]$.
\end{proof}

\end{document}